\theoremstyle{plain}
\newtheorem{thm}{Theorem}[section]
\newtheorem{cor}[thm]{Corollary}
\newtheorem{pro}[thm]{Proposition}
\newtheorem{lem}[thm]{Lemma}
\newtheorem{proposition-principale}[thm]{Proposition principale}
\newtheorem{thm-principal}{Th\'eor\`eme principal}[section]
\newtheorem{thm-C}{Theorem}[section]
\theoremstyle{definition}
\newtheorem{eg}[thm]{Example}
\newtheorem{rem}[thm]{Remark}
\def\C{\mathbf{C}}
\def\bfk{\mathbf{k}}
\def\bfK{\mathbf{K}}
\def\bfF{{\mathbf{F}}}
\def\R{\mathbf{R}}
\def\Q{\mathbf{Q}}
\def\Z{\mathbf{Z}}
\def\N{\mathbf{N}}
\def\Id{{\mathsf{Id}}}
\def\bfe{\mathbf{e}}
\def\bfo{\mathbf{o}}
\def\bfw{\mathbf{w}}
\def\Hyp{{\mathbb{H}}}
\def\Hor{{\mathrm{Hor}}}
\def\ph{{\mathfrak{hor}}}
\def\Axis{{{\mathrm{Ax}}}}
\def\jj{{\sf{j}}}
\def\bbP{\mathbb{P}}
\def\bbA{\mathbb{A}}
\def\dist{\mathsf{dist}}
\def\Aut{{\sf{Aut}}}
\def\Bir{{\sf{Bir}}}
\def\PGL{{\sf{PGL}}\,}
\def\GL{{\sf{GL}}\,}
\def\Mat{{\sf{Mat}}\,}
\def\PSO{{\sf{PSO}}\,}
\def\SL{{\sf{SL}}\,}
\def\Isom{{\sf{Isom}}\,}
 \def\NS{{\mathrm{NS}}}
 \def\tr{{\mathrm{tr}}}
 \def\Tr{{\mathrm{Tr}}}
 \def\rk{{\mathrm{rank}}}
 \def\HP{{\mathfrak{p}}}
\def\Ind{{\mathrm{Ind}}}
\def\Exc{{\mathrm{Exc}}}
\def\Pic{{\mathrm{Pic}}}
\def\Irr{{\mathrm{Irr}}}
\def\Spec{{\mathrm{Spec\,}}}
\def\Frac{{\mathrm{Frac\,}}}
\def\Gal{{\mathrm{Gal\,}}}
\newcommand{\Ell}{\mathrm{Ell}}
\newcommand{\Jon}{\mathrm{Jon}}
\newcommand{\Hal}{\mathrm{Hal}}
\newcommand{\Lox}{\mathrm{Lox}}
\numberwithin{equation}{section}       
\begin{document}

\setlength{\baselineskip}{0.54cm}        
%
%
\title[Families]
{Three chapters on Cremona groups}
\date{2020}
\author{Serge Cantat}
\address{Univ Rennes, CNRS, IRMAR - UMR 6625, F-35000 Rennes, France}
\email{serge.cantat@univ-rennes1.fr}

\author{Julie D\'eserti}
\thanks{The second
 author was partially supported by the ANR grant Fatou ANR-17-CE40-
0002-01 and the ANR grant Foliage ANR-16-CE40-0008-01.}
\address{Universit\'e C\^ote d'Azur, Laboratoire J.-A. Dieudonn\'e, UMR 7351, Nice, France}
\email{Julie.Deserti@math.cnrs.fr}

\author{Junyi Xie}
\thanks{The third
 author was partially supported by the ANR grant Fatou ANR-17-CE40-
0002-01}
\address{Univ Rennes, CNRS, IRMAR - UMR 6625, F-35000 Rennes, France}
\email{junyi.xie@univ-rennes1.fr}

%
%

%
%

%
%


\maketitle

%
%

This article is made of three independent parts, the three of them concerning the Cremona group in $2$ variables, which can be read almost independently.
 It contains in particular the following results:

\medskip

{\bf{1.--}} In the first part, we answer a question of Igor Dolgachev, which is related to the following problem: {\sl{given 
a birational transformation $f\colon \bbP^m_\bfk\dasharrow \bbP^m_\bfk$, and a linear projective transformation $A\in \PGL_{m+1}(\bfk)$,
when is $A\circ f$ regularizable~?}} In other words, when do there exist a variety $X$ and a birational map $\varphi\colon X\dasharrow \bbP^m_\bfk$
such that $\varphi^{-1}\circ(A\circ f)\circ \varphi$ is regular ? Dolgachev's initial question  is whether this may happen for
all $A$ in $\PGL_{m+1}(\bfk)$, and the answer is negative in all dimensions $m\geq 2$ (Theorem~\ref{thm:dolgachev-answer}).

\medskip

{\bf{2.--}} In the second part, we look at the degree $\deg(f)$ of the formulas defi\-ning $f\in \Bir(\bbP^2_\bfk)$, and at the sequence $n\mapsto \deg(f^n)$. 
We show that there is no constraint on the oscillations of this function, that is on the sequence 
\begin{equation}
n\mapsto \deg(f^n)-\deg(f^{n-1})
\end{equation}
for {\sl{small}} values of $n$:
given any bounded interval $[0, N]$, the oscillations on that interval are arbitrary (see Theorem~\ref{thm:Oscillations}).

\medskip

{\bf{3.--}}  The third part studies the degree of pencils of curves which are invariant by a birational transformation.  In the most interesting case  (i.e. when $f$ is a Halphen or Jonqui\`eres 
twist),  we prove that this degree is bounded by a function of $\deg(f)$. 
We derive corollaries on the structure of conjugacy classes, and their properties with respect to the Zariski topology of $\Bir(\bbP^2_\bfk)$
(see Theorems~\ref{thm:bounds for halphen} and~\ref{thm:Halphen-Conjugacy-Classes} for instance, and \S~\ref{par:limits}).

\medskip

These three themes are related together by the properties of degrees of birational maps and their iterates and in particular to the
question:
how do the  sequence $(\deg(f^n))_{n\geq 0}$ and its asymptotic behavior vary when $f$ moves in a fa\-mily of birational transformations of the plane ?
A second feature of this article is that we address the problem of finding examples which are defined over number fields (i.e. over ${\overline{\Q}}$)
instead of $\C$ (see Theorems~\ref{thm:DolgachevQ} and~\ref{thm:realbounded} for instance).

\medskip

{\bf{Acknowledgements.--}} Thanks to Vincent Guirardel, St\'ephane Lamy, Anne Lonjou,  and Christian Urech for interesting discussions. We are also grateful to the referee for interesting questions and suggestions, from which we could improve greatly our article. 

\medskip

\begin{center}
{\bf{-- Part A. --}}
\end{center}

\subsection*{Introduction} Fix a degree $d\geq 2$, and denote by $\Bir_d(\bbP^m_\bfk)$ the variety
of birational maps of $\bbP^m_\bfk$ of degree $d$. An element $f$ of  $\Bir_d(\bbP^m_\bfk)$ is regularizable if there is 
a smooth variety $V$ and a birational map $\varphi\colon V\to \bbP^m_\bfk$ such that $\varphi^{-1}\circ f\circ \varphi$
is an automorphism of $V$; it is algebraically stable if $\deg(f^n)=\deg(f)^n$ for all $n\geq 0$. As we shall see, 
regularizability and algebraic stability are not compa\-tible: regularizable maps form a set of {\sl{special points}} in  
$\Bir_d(\bbP^m_\bfk)$, which does not intersect the set of algebraically stable maps. It would be interesting to classify 
the density loci of regularizable maps: for instance, what are the irreducible algebraic curves in $\Bir_d(\bbP^2_\bfk)$ containing an infinite set of regularizable maps ? Are regularizable maps everywhere dense in $\Bir_d(\bbP^2_\bfk)$ for 
the Zariski topology ? In this first part, we address this type of questions by looking at regularizable maps in 
orbits $A\circ f$, for $A\in \Aut(\bbP^m_\bfk)$. In particular, we prove (see Theorems~\ref{thm:dolgachev-answer} and~\ref{thm:DolgachevQ})

\medskip 

\noindent{\bf{Theorem.}} 
{\emph{Let $f$ be a birational transformation of $\bbP^m$ which is 
defined over a  field $\bfk$, {i.e.} the formulas defining $f$ have coefficients in $\bfk$, 
and assume that $\bfk$ is uncountable, or that its characteristic is zero.
Then, there exists an element $A$ of $\PGL_{m+1}(\bfk)$ such that $\deg((A\circ f)^n)=\deg(f)^n$ for all $n\geq 1$.}}
\section{Degrees and regularization}

\subsection{Regularization and degree growth}

Let $M$ be a smooth, irreducible projective variety of dimension $m$, defined over an algebraically closed field $\bfk$.
A birational transformation $f\colon M\dasharrow M$ is {\bf{regularizable}} if there exist a smooth projective variety
$V$ and a birational map $\varphi\colon V\dasharrow M$ such that 
\begin{equation}
f_V:= \varphi^{-1}\circ f\circ \varphi
\end{equation}
is an automorphism of $V$. 

Let $H$ be a polarization of $M$. Let $k$ be an integer between $0$ and $m$.
The degree of $f$ in codimension $k$ with respect to
$H$ is the positive integer 
given by the intersection product
\begin{equation}
\deg_{H,k}(f)=(f^*(H^k))\cdot (H^{m-k}).
\end{equation}
When $M$ is the projective space $\bbP^m_\bfk$, we shall choose the  polarization ${\mathcal{O}}(1)$ given by hyperplanes. 
For simplicity, we denote by $\deg_k(f)$ the degree when the polarization is clear or irrelevant, and  by 
$\deg(f)$ the degree in codimension $1$. 

Say that the sequence $(u_n)$ dominates $(v_n)$ if $\vert v_n\vert \leq A\vert u_n\vert + B$ for some constants $A$ and $B$, 
and that $(u_n)$ and $(v_n)$ have the same growth type if each one dominates the other: this defines an equivalence relation on 
sequences of real numbers. Then, the growth type of 
the sequence $(\deg_k(f^n))_{n\geq 0}$ is a conjugacy invariant and does not depend on the 
polarization: $(\deg_{H,k}(f^n))$ and $(\deg_{H',k}(f_V^n))$ are equivalent sequences  if $f_V$ 
is conjugate to $f$ via a birational map $\varphi \colon V\dasharrow M$ and $H'$ is a polarization of $V$. In particular, the $k$-th dynamical degree 
\begin{equation}
\lambda_k(f)=\lim_{n\to +\infty} \deg_{H,k}(f^n)^{1/n}
\end{equation}
is  invariant under conjugacy and  
change of polarization (see \cite{NguyenBD:2017, TTTruong}). 

\subsubsection{Bounded degrees} 
The degree sequence $(\deg_{H,k}(f^n))_{n\geq 0}$ is bounded for at least 
one codimension $1\leq k\leq m-1$ if and only if it is bounded for all of them; 
in that case, a theorem of Weil asserts that $f$ is regularizable (\cite{Weil:1955, Edixhoven-Romagny:Weil, Kraft:regularization, Zaitsev:1995}).
For instance, automorphisms of $\bbP^{m_1}\times \cdots \times \bbP^{m_k}$ have bounded degree sequences.

\subsubsection{Exponential degree growth} 
There are also birational transformations of the projective space $\bbP^m_\bfk$ which are regularizable and have an exponential 
degree growth (see \cite{Diller-Favre} for instance). Such a transformation is not conjugate to an 
automorphism of $\bbP^m_\bfk$ because the growth type is a conjugacy invariant. 

\begin{pro}\label{pro:exp-degree-growth}
Let $f$ be a birational transformation of a smooth, irreducible, projective variety $M$. 
If, for some $1\leq k\leq \dim(M)-1$, 
the dynamical degree $\lambda_k(f)$ is an integer $>1$, 
then $f$ is not regularizable. 
\end{pro}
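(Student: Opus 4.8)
The plan is to argue by contradiction: suppose $f$ is regularizable, so there is a smooth projective variety $V$ and a birational map $\varphi\colon V\dasharrow M$ with $f_V:=\varphi^{-1}\circ f\circ\varphi\in\Aut(V)$. The first step is to invoke the invariance of the growth type of $(\deg_{H,k}(f^n))_n$ under birational conjugacy and change of polarization, as recalled in the excerpt: this gives $\lambda_k(f_V)=\lambda_k(f)$, so $f_V$ is an automorphism of a smooth projective variety whose $k$-th dynamical degree is an integer $\geq 2$. Hence it suffices to prove the statement when $f$ itself is an automorphism of $M$, and derive a contradiction from $\lambda_k(f)\in\Z_{\geq 2}$.

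Now assume $f\in\Aut(M)$. The key point is that when $f$ is a genuine automorphism, the pullback $f^*$ acts on the Néron--Severi group $\NS(M)$ (or, over $\C$, on $H^{2k}(M;\Z)$ or the relevant piece of cohomology), and by functoriality $(f^n)^*=(f^*)^n$ on this lattice. Therefore $\deg_{H,k}(f^n)=\bigl((f^*)^n(H^k)\bigr)\cdot H^{m-k}$ is, up to the fixed choice of $H$, a matrix coefficient of the $n$-th power of the integer matrix $M_k$ representing $f^*$ acting on the finitely generated free abelian group $\NS^k(M)$ (the codimension-$k$ algebraic cycles modulo numerical equivalence, or $H^{k,k}$). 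Consequently $\lambda_k(f)$ is the spectral radius of $M_k$, an integer matrix, hence an algebraic integer all of whose Galois conjugates are bounded in modulus by $\lambda_k(f)$ itself; in fact the whole sequence $\deg_{H,k}(f^n)$ satisfies a linear recurrence with integer coefficients, namely the characteristic polynomial of $M_k$.

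From here one concludes as follows. Since $\lambda_k(f)$ is the spectral radius of the integer matrix $M_k$ and is assumed to be an integer $\delta\geq 2$, consider its Galois conjugates among the eigenvalues of $M_k$: being an integer, $\delta$ is its own unique Galois conjugate, but the other eigenvalues of $M_k$ need not be conjugate to it. The clean way to finish is to use that $f^*$ preserves more structure: it preserves the nef cone (image of an ample class stays ample for an automorphism), and on $\NS^1$ it preserves the intersection form, which has signature $(1,\rho-1)$ by Hodge index. A Perron--Frobenius / hyperbolic-isometry argument then shows that if $\lambda_1(f)=\delta$ is an eigenvalue, the characteristic polynomial of $f^*|_{\NS^1}$ is (a power of the cyclotomic part times) a Salem or reciprocal polynomial, whose largest root is a Salem number or a reciprocal Pisot-type number — and an integer $\geq 2$ cannot be a Salem number (its conjugates do not lie on the unit circle) nor arise this way, giving the contradiction. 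For general $k$ one reduces to $k=1$ via the product formula / log-concavity of dynamical degrees ($\lambda_k(f)\leq\lambda_1(f)\lambda_{k-1}(f)$ and $\lambda_k=\lambda_{m-k}$ for automorphisms when $M$ is, e.g., a torus or more generally using that $\lambda_k(f)$ integer forces $\lambda_1(f)$ to be as well and to coincide appropriately), or one argues directly that the eigenvalue $\delta$ of the integer matrix $M_k$ would have to be accompanied by conjugates of the same modulus, contradicting that $\delta>1$ is rational unless $M_k$ has a rational eigenvector — which it does, but then the growth would be exactly $\delta^n$ up to polynomial factors, consistent, so the real obstruction must come from the bilinear form and positivity, i.e. the $k=1$ Hodge-index argument pushed through $\NS^k$.

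The main obstacle I expect is precisely this last step: in codimension $1$ the Hodge index theorem plus preservation of the ample cone gives a rigid structure (hyperbolic isometry of a Lorentzian lattice) that rules out an integer $>1$ as the leading eigenvalue; but for general $k$ the intersection form on $\NS^k$ is not Lorentzian and the analogous rigidity is subtler. The cleanest route is likely to show first that $\lambda_k(f)>1$ for some $k$ forces $\lambda_1(f)>1$, hence reduce to $k=1$; then handle $k=1$ by the classical dichotomy for automorphisms of projective varieties — $f^*|_{\NS^1}$ is either quasi-unipotent (all eigenvalues roots of unity, so $\lambda_1=1$) or has a unique eigenvalue $>1$ which is a Salem number, never a rational integer $\geq 2$. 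That Salem-number rigidity, and the reduction to $k=1$, are the two points needing genuine care.
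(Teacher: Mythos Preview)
Your setup is correct up to the point where you identify $\lambda_k(f)$ with the spectral radius of $f^*$ acting on the lattice $N^k(V;\Z)$. But from there you miss the short argument and instead head into territory that does not work.

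The paper's proof is a two-line lattice observation. First, $\lambda_k(f_V)$ is not merely the spectral radius of $(f_V)^*$ on $N^k(V;\R)$: it is an actual \emph{eigenvalue}, because $(f_V)^*$ preserves a salient convex cone in $N^k(V;\R)$ and Perron--Frobenius applies. Second, an element $g\in\GL_r(\Z)$ cannot have an integer eigenvalue $\lambda$ other than $\pm 1$: if $\lambda\in\Z$, the $\lambda$-eigenspace is defined over $\Q$, hence meets $\Z^r$ in a sublattice $L\simeq\Z^s$ with $s\geq 1$; on $L$ the map $g$ is multiplication by $\lambda$, so $\lambda^s=\det(g|_L)=\pm 1$. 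That is the whole proof, and it works uniformly for every $k$.

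Your proposed route has genuine gaps. The reduction to $k=1$ does not go through: log-concavity gives $\lambda_1(f)>1$ whenever $\lambda_k(f)>1$, but there is no reason for $\lambda_1(f)$ to be an \emph{integer} just because $\lambda_k(f)$ is. And even at $k=1$, the claim that $\lambda_1$ of an automorphism must be a Salem number is a theorem for \emph{surfaces} (via the Hodge index theorem making $\NS^1$ Lorentzian); in higher dimension no such structure is available, and the statement is open in general. You actually brush against the right idea when you write ``unless $M_k$ has a rational eigenvector --- which it does,'' but then dismiss it as ``consistent'': that rational eigenvector is exactly what produces the contradiction, once you remember that $(f_V)^*$ is invertible over $\Z$.
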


\begin{proof}
If $f_V$ is an automorphism of a smooth projective variety which is conjugate to $f$ via a birational map $\varphi\colon V\dasharrow M$, 
then $\lambda_k(f)=\lambda_k(f_V)$  is the largest eigenvalue of $(f_V)^*$ on the space $N^k(X)$ of numerical 
classes of codimension $k$ cycles. This   follows from the invariance of $\lambda_k$ under conjugacy, the existence of a salient, convex
and $(f_V)^*$-invariant cone in  $N^k(V;\R)$, and the Perron-Frobenius theorem (see \cite{NguyenBD:2017}). 
The abelian group $N^k(V;\Z)$ determines an $(f_V)^*$-invariant
lattice in $N^k(V;\R)$. On the other hand, if $g$ is an isomorphism of a lattice 
$\Z^\ell$ and $\lambda$ is an eigenvalue of $g$, then either $\lambda=\pm 1$ or
$\lambda$ is not an integer; indeed, if $\lambda\in \Z$, the eigenspace 
corresponding to the eigenvalue $\lambda$ is defined over $\Q$, hence it intersects
$\Z^\ell$ on a submodule $L\simeq \Z^r$ with $1\leq r\leq \ell$: since $g_{\vert L}\colon L\to L$ 
is at the same time an invertible endomorphism and the multiplication by $\lambda$, we obtain
$\lambda^r=\det(g_{\vert L})=\pm 1$. These two remarks conclude the proof. 
\end{proof}

As a corollary, \textsl{if $f$ is a birational transformation of $\bbP^m_\bfk$ with (i) $\deg(f)>1$ and (ii) $\deg(f^n)=\deg(f)^n$
for all $n\geq 1$, then $f$ is not regularizable}. 

\subsubsection{Polynomial degree growth}

Another example 
is given by Halphen and Jonqui\`eres twists of surfaces. If $X$ is a surface, and $f$ is a birational transformation of $X$ such that $\deg(f^n)\simeq \alpha n^2$
for some constant $\alpha >0$ ({i.e.} $f$ is a Halphen twist), then $f$ is regularizable on a surface $Y\dasharrow X$ which supports an $f_Y$-invariant genus $1$
fibration (see \cite{Diller-Favre, Gizatullin:1980, Cantat-Dolgachev}, and Section~\ref{par:Halphen_Jonquieres_basics} below). If $f$ is a birational transformation of a surface $X$, and $\deg(f^n)\simeq \alpha n$
for some constant $\alpha >0$ ({i.e.} $f$ is a Jonqui\`eres twist), then $f$ preserves a unique pencil of rational curves, but $f$ is not regula\-rizable (see \cite{Diller-Favre, Blanc-Deserti:2015}, and Section~\ref{par:Halphen_Jonquieres_basics} below). 
Here is a  statement that generalizes this last remark.

\begin{pro}
Let $f$ be a birational transformation of a variety $M$ with 
$$
\limsup_{n\to +\infty}\frac{1}{n^\ell}\deg_k(f^n)=\alpha$$ for some $1\leq k\leq \dim(M)$, some $\alpha \in \R_+^*$ and some {\sl{odd}} integer~$\ell$. 
Then, $f$ is not regularizable.
\end{pro}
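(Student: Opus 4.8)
The plan is to argue by contradiction. Suppose $f$ is regularizable, and let $\varphi\colon V\dasharrow M$ conjugate $f$ to an automorphism $f_V$ of a smooth projective variety $V$; fix a polarization $H'$ of $V$. Since the growth type of $(\deg_k(f^n))_n$ is a conjugacy invariant insensitive to the polarization, the sequences $(\deg_k(f^n))_n$ and $(\deg_{H',k}(f_V^n))_n$ dominate each other; in particular the latter is $O(n^\ell)$ while $\limsup_n \deg_{H',k}(f_V^n)/n^\ell$ is still $>0$. Because $f_V$ is an \emph{automorphism}, $(f_V^n)^{\ast}=(f_V^{\ast})^n$ on numerical classes, so $\deg_{H',k}(f_V^n)=\big((f_V^{\ast})^n[H'^k]\big)\cdot[H']^{m-k}$; writing $g=(f_V)^{\ast}$ on the lattice $N^k(V;\Z)/(\mathrm{tors})\simeq\Z^r$ and $w$ for the linear form $\xi\mapsto \xi\cdot[H']^{m-k}$, this reads $\deg_{H',k}(f_V^n)=\langle g^n v,w\rangle$ with $v=[H'^k]$. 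The $O(n^\ell)$ bound gives $\lambda_k(f_V)=\lim\deg_{H',k}(f_V^n)^{1/n}=1$ (the bound $\ge 1$ holds because each $\deg_{H',k}(f_V^n)$ is an intersection number of ample classes, hence $\ge 1$), and then — exactly as in the proof of Proposition~\ref{pro:exp-degree-growth}, via Perron--Frobenius applied to a salient convex $g$-invariant cone in $N^k(V;\R)$ — the spectral radius of $g$ equals $1$. As $g\in\GL_r(\Z)$, its eigenvalues are nonzero algebraic integers all of whose conjugates have modulus $\le 1$, so by Kronecker's theorem they are roots of unity: $g$ is quasi-unipotent, say $g^N=\Id+N_0$ with $N_0$ nilpotent.

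The second step is the elementary but decisive computation. For $n=Nq+s$ with $0\le s<N$, expanding $(\Id+N_0)^q$ (a finite sum, since $N_0$ is nilpotent) gives $\deg_{H',k}(f_V^n)=\langle g^{s}(\Id+N_0)^q v,w\rangle=P_s(q)$ for a polynomial $P_s$ of degree $\le\ell$; since $\limsup_n \deg_{H',k}(f_V^n)/n^\ell>0$, some $P_{s_0}$ has degree exactly $\ell$ with positive leading coefficient. Now I would let $q\to-\infty$. Setting $u=g^{s_0}v$, the identity $P_{s_0}(q)=\langle (\Id+N_0)^q u,w\rangle=\langle g^{s_0+Nq}v,w\rangle$ holds for \emph{all} $q\in\Z$, because the entries of $(\Id+N_0)^q$ are honest polynomials in $q\in\Z$ and because $g^{-1}=(f_V^{-1})^{\ast}$, whence $g^j v=(f_V^j)^{\ast}[H'^k]$ for every $j\in\Z$; hence $P_{s_0}(q)=\deg_{H',k}(f_V^{s_0+Nq})$ for all $q\in\Z$. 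Since $\ell$ is \emph{odd} and the leading coefficient of $P_{s_0}$ is positive, $P_{s_0}(q)\to-\infty$ as $q\to-\infty$; but $f_V^{s_0+Nq}$ is an automorphism of $V$ for every $q$, so $\deg_{H',k}(f_V^{s_0+Nq})$ is an intersection number of ample classes and is therefore $\ge 1$. This contradiction proves that $f$ is not regularizable.

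The step I expect to be the main obstacle is the quasi-unipotence of $g$ — more precisely, checking that the polynomial bound on the degree sequence genuinely forces the spectral radius of $g$ on $N^k(V)$ to \emph{equal} $1$, rather than merely bounding it above. This is where the invariant-cone / Perron--Frobenius input (already used in Proposition~\ref{pro:exp-degree-growth}, following \cite{NguyenBD:2017}) is really needed, since a priori $v$ or $w$ might fail to detect a large eigenvalue of $g$. Granting that, the rest reduces to the point of the argument: a degree sequence that is polynomial of \emph{odd} degree along the forward orbit would be forced to take negative values far enough along the backward orbit, contradicting the positivity of the codimension-$k$ degrees of automorphisms.
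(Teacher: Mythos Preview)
Your proof is correct and follows the same architecture as the paper's: assume regularizability, transfer the polynomial growth hypothesis to $f_V$ via invariance of growth type, use the lattice structure on $N^k(V)$ together with Kronecker's theorem to conclude that $g=(f_V)^*$ is quasi-unipotent, and then derive a contradiction from the oddness of~$\ell$ by examining $n\to-\infty$. The only substantive difference is in this last step. The paper works with vectors in the salient invariant cone: for a generic interior point $u$, the limits $\lim_{n\to\pm\infty}|n|^{-\ell}g^n(u)$ are nonzero opposite vectors, and a salient cone cannot contain both (citing \cite{Marquis:2014}, Proposition~2.2). You instead pair against the fixed linear form $[H']^{m-k}$ to reduce to a scalar statement: along each residue class modulo $N$, the degree sequence is a polynomial of odd degree~$\ell$ with positive leading coefficient, hence eventually negative as $q\to-\infty$, contradicting positivity of the intersection of ample classes. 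Your version is marginally more self-contained (no external cone lemma is needed), while the paper's phrasing makes explicit that the obstruction is the even size $\ell+1$ of the maximal Jordan block; the underlying idea is the same.
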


Before proving this result, note that there are examples of such birational transformations in all
dimensions $m \geq 2$ (see \cite{Diller-Favre, Deserti}).

\begin{proof}
  If $f$ were regularizable, there would be an automorphism $f_V$
of a projective variety $V$ whose action on $N^k(V)$ would satisfy $\limsup n^{-\ell}\parallel (f_V^*)^n\parallel= \alpha$, 
where $\parallel \cdot\parallel$ is any norm on ${\mathsf{End}}_\R(N^k(V))$. 
On the other hand $f_V^*$ preserves the integral structure of $N^k(V)$; it is given by an element of 
$\GL_r(\Z)$ if $r$ is the rank of $N^k(V)$. So, all eigenvalues of $f_V^*$ are algebraic integers, and since the 
growth property implies that the spectral radius of $f_V^*$ is $1$, we deduce from Kronecker's theorem 
that its eigenvalues are roots of $1$.  Thus some iterate $(f_V^*)^s$ is
a non-trivial unipotent matrix. Now, the growth property shows that the largest Jordan block of $(f_V^*)^s$ has size $\ell+1$. 
But $f_V^*$ and its inverse $(f_V^{-1})^*$ preserve a salient, open, convex cone
(see~\cite{NguyenBD:2017}), and the
existence of such an invariant cone is not compatible with a maximal Jordan block of even size, because 
if one takes a generic vector $u$ in the interior of such a cone, then $\lim_{+\infty} \vert n\vert^{-\ell} (f_V^*)^n(u)$ and $
\lim_{-\infty} \vert n\vert ^{-\ell} (f_V^*)^n(u)$ would be non-zero opposite vectors of that cone (\cite{Marquis:2014}, Proposition 2.2).
\end{proof}

\subsection{Semi-continuity and dimension $2$}

Let $\Bir_d(\bbP^m_\bfk)$ be the algebraic variety of all birational transformations of $\bbP^m_\bfk$ of degree $d$ (with respect to the polarization ${\mathcal{O}}(1)$). When $d$ is large these va\-rieties have many distinct components, of various dimensions. This is shown in \cite{BisiCalabriMella}
when $m=2$; in dimension $m\geq 3$ it is better to look at the subvarieties obtained by fixing the $m-1$ degrees
$\deg_1(f)$, $\ldots$, $\deg_{m-1}(f)$, but even these varieties may have many components.

The group $\Aut(\bbP^m_\bfk)=\PGL_{m+1}(\bfk)$ acts by left translations, by right translations, and by conjugacy on 
$\Bir_d(\bbP^m_\bfk)$. Since this group is connected, these actions preserve each connected component. 

\begin{thm}[Junyi Xie, \cite{Xie:Duke}]\label{thm:Xie} Let $\bfk$ be a field, and let $d$ be a positive integer.
\begin{enumerate} 
\item In $\Bir_d(\bbP^2_\bfk)$ the subsets 
$
Z(d;\lambda) =\left\{ f\in \Bir_d(\bbP^2)\; \vert \; \; \lambda_1(f)\leq \lambda \right\}
$
are Zariski closed. 

\item For every $f\in \Bir_d(\bbP^2_\bfk)$, the set 
$
B(f;\lambda)= \left\{ A\in \PGL_3(\bfk)\; \vert \; \lambda_1(A\circ f)\leq \lambda\right\}
$
is a Zariski closed subset of $\PGL_3(\bfk)$. 

\item When $\lambda<d$, $Z(d;\lambda)$ is a nowhere dense subset of $\Bir_d(\bbP^2_\bfk)$
and $B(f;\lambda)$ is a proper subset of $\PGL_3(\bfk)$ $($for every  $d>1$ and $f\in \Bir_d(\bbP^2_\bfk))$. 

\end{enumerate}
\end{thm}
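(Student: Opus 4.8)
I treat (1) as the heart of the matter; (2) and (3) will follow with little extra work. Since every $A\in\PGL_3(\bfk)$ is an automorphism, $\deg(A\circ f)=\deg(f)$, so $A\mapsto A\circ f$ is a morphism of $\bfk$-varieties $\PGL_3\to\Bir_d(\bbP^2_\bfk)$; hence $B(f;\lambda)$ is the preimage of $Z(d;\lambda)$ under this morphism, and (2) is an immediate consequence of (1). The dynamical degree is computed from the integers $\deg(f^n)$ and so is unchanged by extension of the base field; therefore, applying descent of Zariski-closed subsets along field extensions to the set obtained over $\overline{\bfk}$, one reduces (1) to the case of an algebraically closed $\bfk$, and then — enlarging $\bfk$ — to an uncountable one. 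I will use throughout that $\deg(f^{m+n})\le\deg(f^m)\deg(f^n)$, so that $\lambda_1(f)=\inf_n\deg(f^n)^{1/n}\le d$, and that $\lambda_1(f)=d$ if and only if $\deg(f^n)=d^n$ for every $n$ (equivalently, $f$ is algebraically stable on $\bbP^2$): a single drop $\deg(f^{n_0})<d^{n_0}$ already forces $\lambda_1(f)\le\deg(f^{n_0})^{1/n_0}<d$.

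\textbf{Proof of (1).}
The first ingredient is that, for each fixed $n$, the function $f\mapsto\deg(f^n)$ is lower semicontinuous on $\Bir_d(\bbP^2_\bfk)$: the degree of an iterate can only drop, never jump, under specialization, because it equals the generic value minus the cancellation forced by the special position of the base points of $f$; consequently each $\{f:\deg(f^n)\le c\}$ is closed and each $\{f:\deg(f^n)\ge c\}$ is open. The second ingredient concerns an arbitrary irreducible locally closed subvariety $Y\subseteq\Bir_d(\bbP^2_\bfk)$ with generic point $\eta$: by the theorem of Diller and Favre (valid over any field) the transformation $f_\eta$ can be made algebraically stable on a surface $X_\eta$ obtained from $\bbP^2_{\bfk(Y)}$ by finitely many blow-ups, after which $\deg(f_\eta^n)=\langle(\tilde f_\eta^{\,*})^{\,n}\,\ell,\ell\rangle$ for the class $\ell$ of a line, and $\lambda_1(f_\eta)$ is the spectral radius of the integer matrix $\tilde f_\eta^{\,*}$ acting on $N^1(X_\eta)$. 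Spreading $X_\eta$ and the blow-up centres out over a dense open $U\subseteq Y$, the lattice $N^1(X_u)$ and the matrix $\tilde f_u^{\,*}$ are locally constant along $U$, and — using that over an uncountable field a countable intersection of dense open sets is still dense — there is a dense subset of $U$ along which $\tilde f_u$ remains algebraically stable, so that $\lambda_1(f_u)=\lambda_1(f_\eta)=:\mu_Y$ there, while the lower semicontinuity of the functions $f\mapsto\deg(f^n)$ gives $\lambda_1(f_v)\le\mu_Y$ for every $v\in Y$. Thus on each irreducible piece $\lambda_1$ equals the generic value $\mu_Y$ on a dense set and never exceeds it; a Noetherian induction on $\Bir_d(\bbP^2_\bfk)$, peeling off the locus where $\lambda_1$ drops strictly below the generic value of the stratum, then yields that $Z(d;\lambda)=\{\lambda_1\le\lambda\}$ is closed for every $\lambda$. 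This proves (1), hence (2).

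\textbf{Proof of (3), and the main obstacle.}
For $\lambda<d$ the set $Z(d;\lambda)$ is closed by (1), and it contains no irreducible component of $\Bir_d(\bbP^2_\bfk)$: the connected group $\PGL_3$ acts by left translations preserving every component, and for fixed $f$ the map $A\circ f$ is algebraically stable whenever $A$ avoids a certain countable union of proper Zariski-closed subsets of $\PGL_3$ — for each curve $C$ contracted by $f$, each $p\in\mathrm{Ind}(f)$ and each $k\ge0$, the equation $(A\circ f)^{k}\big(A(f(C))\big)=p$ cuts out a proper closed subset, and by Diller--Favre's criterion $A\circ f$ is algebraically stable exactly when none of these equations hold. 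Hence every component of $\Bir_d(\bbP^2_\bfk)$ contains a map with $\lambda_1=d>\lambda$, so $Z(d;\lambda)$ is nowhere dense, and the same computation shows that $A\mapsto A\circ f$ does not take values in $Z(d;\lambda)$, i.e. $B(f;\lambda)\subsetneq\PGL_3(\bfk)$. The step I expect to be genuinely hard is the Noetherian induction in the proof of (1): one must show that the locus $\{v\in Y:\lambda_1(f_v)<\mu_Y\}$ is contained in a \emph{single} proper closed subset of $Y$, and not merely in the countable union $\bigcup_n\{v:\deg(f_v^n)<\deg(f_\eta^n)\}$ of proper closed subsets. This amounts to controlling \emph{delayed cancellations} — degree sequences tracking the generic one for an arbitrarily long time before dropping — and cannot be done by soft semicontinuity alone; it requires the finite combinatorial structure of an algebraically stable model together with the fact that $\lambda_1$ is the spectral radius of an integral matrix, so that the values of $\lambda_1$, and the defects $d^n-\deg(f^n)$, are sufficiently rigid along each stratum. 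This is where the substance of Xie's theorem lies; once it is available, the rest is bookkeeping.
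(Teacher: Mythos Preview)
Your derivations of (2) from (1), and of (3) from the algebraic–stability/countable–union argument, match the paper's: the paper derives (2) exactly as you do, and for (3) it invokes its own Theorem~1.5 (the statement that the set of $A$ for which $\deg((A\circ f)^n)\neq\deg(f)^n$ for some $n$ is a countable union of proper Zariski–closed subsets of $\PGL_3$), which is precisely your ``each incidence $(A\circ f)^k(A(f(C)))\in\Ind(f)$ cuts out a proper closed set'' argument. One point you gloss over: you assert these closed sets are \emph{proper} without proof; in the paper this is the content of Lemma~1.6, proved by a degeneration to rank-one matrices, and it is not entirely obvious. Also, your passage from ``countable union of proper closed subsets'' to ``there exists an algebraically stable $A\circ f$'' requires $\bfk$ uncountable; the paper's Theorem~1.5 has the same limitation, and for general $\bfk$ one falls back on \cite{Xie:Duke} directly (nowhere-density of $Z(d;\lambda)$ survives base change, but the existence of a $\bfk$-rational $A\notin B(f;\lambda)$ needs care over small fields).

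For (1), the paper gives no self-contained argument at all: it simply cites Theorem~1.5 of \cite{Xie:Duke}. Your sketch therefore goes further than the paper's own proof, and your outline (Diller--Favre model at the generic point, spread out, Noetherian induction on strata) is a reasonable roadmap. But you yourself flag the genuine gap, and it is real: from lower semicontinuity of $n\mapsto\deg(f^n)$ one only gets that the drop locus $\{v\in Y:\lambda_1(f_v)<\mu_Y\}$ lies in the countable union $\bigcup_n\{\deg(f_v^n)<\deg(f_\eta^n)\}$, and nothing in your argument forces this into a single proper closed subset. Controlling these delayed cancellations is exactly the content of Xie's theorem, and it requires the quantitative input from \cite{Xie:Duke} (a uniform bound of the type ``if $\deg(f^N)>C\cdot\deg(f)$ for a suitable explicit $N,C$, then $\lambda_1(f)>1$'', and more refined estimates) rather than soft semicontinuity. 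So your proposal for (1) is an honest outline with the hard step left open, consistent with the paper's decision to cite rather than reprove.
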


Note that the first assertion says that $\lambda_1$ is a lower-semicontinuous function for the Zariski topology.

\begin{proof} The first statement follows from the semi-continuity result obtained in \cite{Xie:Duke}, Theorem~1.5. 
The second is a direct consequence of the first, because the action of $\PGL_3(\bfk)$ on $\Bir_d(\bbP^2_\bfk)$ is algebraic. 
The third assertion is a consequence of Theorem~1.5 below and of its proof (see also the pages 917-918 of \cite{Xie:Duke}). 
\end{proof}

Fix $d\geq 2$, and list the 
dynamical degrees  
$\delta_1(d)<\delta_2(d)< \ldots < \delta_j(d)\ldots \leq d$ which are realized by birational maps of the plane of degree
$d$; this is, indeed,  the union of $\{ d\}$ and of an increasing sequence $(\delta_i(d))$(see \cite{Urech, Blanc-Cantat}). Choose an element $g$ of $\Bir_d(\bbP^2_\bfk)$. The Zariski closed subsets $B(g;\delta_i(d))$ form a countable family 
of proper Zariski closed subsets of $\PGL_3(\bfk)$. Thus, Theorem~\ref{thm:Xie} and Proposition~\ref{pro:exp-degree-growth} provide the following result.

\begin{cor}\label{coro:Xie}
Assume that $\bfk$ is not countable. If $f$ is a birational map of $\bbP^2_\bfk$ of degree $d\geq 2$, then there
exists an automorphism $A$ of the plane such that $\lambda_1(A\circ f)=d$  $($and $A\circ f$ is $1$-stable, see~\cite{Diller-Favre}$)$. In particular, $A\circ f$ is not 
regularizable.
\end{cor}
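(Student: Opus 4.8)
The plan is to combine Theorem~\ref{thm:Xie} (semi-continuity of $\lambda_1$ and the properness of the subsets $B(g;\lambda)$) with Proposition~\ref{pro:exp-degree-growth}. Fix $f\in\Bir_d(\bbP^2_\bfk)$ of degree $d\geq 2$. I want to produce $A\in\PGL_3(\bfk)$ with $\lambda_1(A\circ f)=d$; since $d$ is an integer $>1$, Proposition~\ref{pro:exp-degree-growth} then immediately gives that $A\circ f$ is not regularizable, and the parenthetical $1$-stability is the standard fact (Diller--Favre) that a degree $d$ map with $\lambda_1=d$ must satisfy $\deg((A\circ f)^n)=d^n$, i.e. no drop of degree occurs under iteration.

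The core step is to show that $\bigcup_i B(f;\delta_i(d))$ is a proper subset of $\PGL_3(\bfk)$, where $\delta_1(d)<\delta_2(d)<\cdots\leq d$ enumerates the dynamical degrees strictly below $d$ that are realized in degree $d$ (plus possibly those $\leq d$ not equal to $d$). By the cited results of Urech and Blanc--Cantat, the set of realizable dynamical degrees in fixed degree $d$ is $\{d\}$ together with a countable increasing sequence, so this is a countable union of proper Zariski closed subsets of $\PGL_3(\bfk)$ by part (2)--(3) of Theorem~\ref{thm:Xie}. Over an uncountable field, an affine algebraic variety cannot be a countable union of proper Zariski closed subsets (the complement of a countable union of proper closed subsets is nonempty), so there exists $A\in\PGL_3(\bfk)$ lying in none of the $B(f;\delta_i(d))$. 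For such an $A$, $\lambda_1(A\circ f)$ is a realizable dynamical degree in degree $d$ (it is at most $d$ since $\deg(A\circ f)=d$) that is not any $\delta_i(d)$, hence $\lambda_1(A\circ f)=d$.

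The main obstacle, or rather the point requiring care, is the countability bookkeeping: one must know that the set of dynamical degrees realized by degree-$d$ maps of $\bbP^2$ is at most countable, so that the union $\bigcup_i B(f;\delta_i(d))$ is a countable — not arbitrary — union of proper closed sets; this is exactly what the references to \cite{Urech, Blanc-Cantat} supply, and it is indispensable since the Baire-type argument over an uncountable field only handles countable unions. A secondary subtlety is that $\lambda_1$ is an algebraic number but a priori not known to vary algebraically in a way that would bound the number of realizable values without those references; we sidestep this by appealing directly to the cited classification. Finally, the hypothesis that $\bfk$ is uncountable is used precisely here and cannot be dropped in this argument; the characteristic-zero version (Theorem~\ref{thm:DolgachevQ}, e.g. over $\overline{\Q}$) requires a different, more explicit construction and is not obtained this way.

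\begin{proof}
List the dynamical degrees $\delta_1(d)<\delta_2(d)<\cdots$ realized by birational maps of $\bbP^2_\bfk$ of degree $d$ and strictly smaller than $d$; by \cite{Urech, Blanc-Cantat} this is a countable set, and together with $\{d\}$ it exhausts all dynamical degrees realizable in degree $d$. Apply Theorem~\ref{thm:Xie}(2)--(3) with $g=f$: each $B(f;\delta_i(d))$ is a proper Zariski closed subset of $\PGL_3(\bfk)$. Since $\bfk$ is uncountable, the quasi-affine variety $\PGL_3(\bfk)$ is not a countable union of proper Zariski closed subsets, so there exists $A\in\PGL_3(\bfk)$ with $A\notin B(f;\delta_i(d))$ for all $i$, i.e. $\lambda_1(A\circ f)>\delta_i(d)$ for every $i$. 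As $\deg(A\circ f)=d$, we have $\lambda_1(A\circ f)\leq d$, and since $\lambda_1(A\circ f)$ is a dynamical degree realized in degree $d$ distinct from every $\delta_i(d)$, necessarily $\lambda_1(A\circ f)=d$. Then $\deg((A\circ f)^n)=d^n$ for all $n\geq 1$, so $A\circ f$ is $1$-stable (see \cite{Diller-Favre}). Finally, $\lambda_1(A\circ f)=d$ is an integer $>1$, so by Proposition~\ref{pro:exp-degree-growth}, $A\circ f$ is not regularizable.
\end{proof}
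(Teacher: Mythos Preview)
Your proof is correct and follows essentially the same approach as the paper: enumerate the countably many dynamical degrees $\delta_i(d)<d$ realized in degree $d$ (via \cite{Urech, Blanc-Cantat}), use Theorem~\ref{thm:Xie}(2)--(3) to see that each $B(f;\delta_i(d))$ is a proper Zariski closed subset of $\PGL_3(\bfk)$, invoke uncountability of $\bfk$ to find $A$ outside their union, and conclude with Proposition~\ref{pro:exp-degree-growth}. Your write-up is in fact slightly more detailed than the paper's (you spell out why $\lambda_1(A\circ f)=d$ and why $1$-stability follows), but the argument is the same.
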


\subsection{Dolgachev's question} This is related to the following question, asked by Dolgachev to the second author of this paper  for $m=2$: 

\smallskip

\noindent{\bf{Dolgachev's question.--}} {\sl{Set
\begin{equation}
Reg(f):=\left\{ A\in \Aut(\bbP^m_\bfk)\; \vert \; A\circ f \; {\text{ is regularizable}} \right\}.
\end{equation}
Does there exist a birational transformation $f$ of $\bbP^m_\bfk$ 
such that $\deg(f)>1$ and $Reg(f)$ is equal to $\Aut(\bbP^m_\bfk)$ ?}} 

\smallskip

This question depends on $m$ and $\bfk$. Corollary~\ref{coro:Xie} provides a negative answer for $m=2$, at least if $\bfk$ is uncountable. 
We progressively extend the range of application of this argument in higher dimension: in \S~\ref{par:Dolgachev-Higher-Dimension} we consider any uncountable field $\bfk$, and in \S~\ref{par:Dolgachev-Qbar} we consider any field $\bfk$ of characteristic zero.

\subsection{Higher dimensions over uncountable fields}\label{par:Dolgachev-Higher-Dimension}


If $f$ is a birational transformation of $\bbP^m_\bfk$, we denote by $\Ind(f)$ its indeterminacy set, and 
by $\Exc(f)$ its exceptional locus: it is the union of the Zariski closures
of all proper
Zariski closed subsets $W$ of $\bbP^m_\bfk\setminus\Ind(f)$ whose strict transform $f_\circ(W)$ satisfies $\dim(f_\circ(W))<\dim(W)$.
The exceptional locus is a non-empty union of hypersurfaces, except when $f$ is in $\Aut(\bbP^m_\bfk)$.

Consider two birational transformations $f$ and $g$ of $\bbP^m_\bfk$. Then, $\deg(f\circ g)\leq \deg(f)\deg(g)$
and this inequality is strict if and only if one can find a hypersurface $W\subset \bbP^m_\bfk$ which is contracted
by $f$ into the indeterminacy locus $\Ind(g)$. Thus, $\deg(f^n)=\deg(f)^n$, unless there is an 
hypersurface in the exceptional locus $\Exc(f)$ which is ultimately mapped in $\Ind(f)$ by some positive
iterate $f^n$.

\begin{thm}\label{thm:dolgachev-answer}
Let $f$ be a birational transformation of $\bbP^m_\bfk$ of degree $d\geq 2$. The set of automorphisms
$A$ in $\PGL_{m+1}(\bfk)$ such that $\deg((A\circ f)^n)\neq (\deg(A\circ f))^n$ for some $n>0$ is
a countable union of proper Zariski closed subsets of $\PGL_{m+1}(\bfk)$.
\end{thm}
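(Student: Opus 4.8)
The plan is to translate the dynamical condition "$\deg((A\circ f)^n)\neq \deg(A\circ f)^n$ for some $n$" into a countable union of algebraic conditions on $A$, using the characterization recalled just before the statement: this failure of multiplicativity happens if and only if there is a hypersurface $W$ in the exceptional locus $\Exc(f)$ and an integer $n\geq 1$ such that the strict transform $(A\circ f)^n_\circ(W)$ is contained in $\Ind(A\circ f)=\Ind(f)$ (note $A$ is an automorphism, so it does not change indeterminacy or exceptional loci). So the first step is to fix, once and for all, the finitely many irreducible hypersurface components $W_1,\dots,W_r$ of $\Exc(f)$; it suffices to prove that for each fixed $i$ and each fixed $n\geq 1$, the set
\begin{equation}
E_{i,n}=\{A\in\PGL_{m+1}(\bfk)\;\vert\; (A\circ f)^n_\circ(W_i)\subset\Ind(f)\}
\end{equation}
is Zariski closed in $\PGL_{m+1}(\bfk)$, and proper. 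Properness (i.e. $E_{i,n}\neq\PGL_{m+1}(\bfk)$) is the easy half: since $\Ind(f)$ has codimension $\geq 2$ and a strict transform of a hypersurface that is not contracted is again a hypersurface, a generic choice of $A$ will move $(f^{n-1}\circ A\circ\cdots)_\circ(W_i)$ off the low-dimensional set $\Ind(f)$; more carefully, one uses Corollary~\ref{coro:Xie}-type reasoning, or simply exhibits one $A$ (e.g. a suitable translation-like element) for which the iterate is algebraically stable along $W_i$. Then the theorem follows by taking the union over $i\in\{1,\dots,r\}$ and $n\geq 1$.

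The main work is the Zariski-closedness of $E_{i,n}$, and here I would set up the incidence variety. Consider the universal situation: inside $\PGL_{m+1}(\bfk)\times\bbP^m_\bfk$ (or a suitable fiber product/graph closure) one has the family of maps $(A,x)\mapsto (A\circ f)(x)$, and one can form the closure of the graph of the rational map $\Phi_n\colon\PGL_{m+1}\times\bbP^m\dasharrow\bbP^m$, $(A,x)\mapsto (A\circ f)^n(x)$. Restricting to $W_i$ and taking the strict transform fiberwise defines a family of cycles over the parameter space $\PGL_{m+1}(\bfk)$, and the condition "$(A\circ f)^n_\circ(W_i)\subset\Ind(f)$" is a closed incidence condition between that family and the fixed closed subscheme $\Ind(f)\times\PGL_{m+1}$. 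Concretely: let $\Gamma_n\subset\PGL_{m+1}\times W_i\times\bbP^m$ be the closure of $\{(A,x,(A\circ f)^n(x))\;\vert\;x\notin\Ind((A\circ f)^n)\}$, let $p\colon\Gamma_n\to\PGL_{m+1}$ be the projection, and let $\Gamma_n^\circ\subset\Gamma_n$ be the open locus where $x$ is a point of indeterminacy/contraction removed appropriately. The image of the third projection of $\Gamma_n$ over a point $A$ contains $(A\circ f)^n_\circ(W_i)$ together with possibly lower-dimensional "extra" pieces coming from bad fibers; the set of $A$ for which this image lies inside $\Ind(f)$ is closed by properness of $p$ (the third projection composed with "lands in the closed set $\Ind(f)$" is a closed condition, and $p$ is proper since everything sits in products of projective spaces over $\PGL_{m+1}$), modulo the care that we really want the strict transform rather than the total transform.

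The step I expect to be the genuine obstacle is exactly this last subtlety: over special $A$, the total transform of $W_i$ under $(A\circ f)^n$ may acquire lower-dimensional components landing in $\Ind(f)$ even though the dominant (strict-transform) component does not, so a naive "image lands in $\Ind(f)$" condition on the incidence variety is not quite the condition "$\deg$ drops." The fix is either (a) to restrict the incidence variety to the unique component $\Gamma_n^{\mathrm{main}}$ dominating $W_i$ (which exists and is well-defined because $W_i$ is irreducible and not contracted by generic $A$), and observe that the locus where the fiber of $p$ over $A$ along $\Gamma_n^{\mathrm{main}}$ maps into $\Ind(f)$ is still closed by semicontinuity of fiber dimension together with properness; or (b) to argue directly with the degree function: $A\mapsto\deg((A\circ f)^n)$ is, by the intersection-theoretic formula $\deg_k(f)=(f^*H^k)\cdot H^{m-k}$ applied in the family, an upper-semicontinuous (constructible, in fact) function of $A$, with generic value $d^n$, so $\{A\;\vert\;\deg((A\circ f)^n)<d^n\}$ is a constructible set contained in a proper closed set, hence contained in a proper Zariski closed set. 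Route (b) is cleaner and avoids dissecting the incidence variety by hand: the only thing to check is the semicontinuity of $A\mapsto(( (A\circ f)^n)^*\mathcal O(1))\cdot\mathcal O(1)^{m-1}$ in the flat (or at least generically nice) family of graphs over $\PGL_{m+1}$, which is standard once one resolves the rational map $\Phi_n$ over the parameter space. I would write the proof along route (b), flagging route (a) as the geometric heart in case a purely scheme-theoretic argument is wanted.
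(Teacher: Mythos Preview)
You have the emphasis exactly backwards. The closedness of each set
\[
\{A\in\PGL_{m+1}(\bfk)\;:\;\deg((A\circ f)^n)<d^n\}
\]
is indeed immediate from lower semicontinuity of the degree in the family $A\mapsto (A\circ f)^n$ (your route (b)); the paper also treats this as clear, simply asserting that the complementary set $F(j;n)$ is Zariski open. The substantive part of the theorem is \emph{properness}: showing that for each irreducible component $W_i$ of $\Exc(f)$ and each $n\ge 1$ there exists at least one $A$ for which none of the strict transforms $(A\circ f)^k_\circ(W_i)$, $0\le k\le n$, lands in $\Ind(f)$.

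Your three offered reasons for properness do not work. The dimension heuristic (``$\Ind(f)$ has codimension $\geq 2$, a strict transform of a hypersurface is again a hypersurface'') is irrelevant: $W_i$ lies in $\Exc(f)$, so already $(A\circ f)_\circ(W_i)=A(f_\circ W_i)$ has codimension $\ge 2$, and from then on you are tracking a small subvariety that could a priori sit inside $\Ind(f)$. Saying ``a generic $A$ moves it off $\Ind(f)$'' is circular, because $A$ occurs $n$ times in $(A\circ f)^n$ and you have not shown the bad locus is proper. Appealing to Corollary~\ref{coro:Xie} is also circular (and in any case restricted to $m=2$ over uncountable fields), and you never actually exhibit a ``translation-like'' $A$; note that $A=\Id$ fails since $f$ is not assumed algebraically stable.

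The paper supplies the missing idea in Lemma~\ref{lem:transversality}: extend the incidence equation~\eqref{eq:incidence} from $\GL_{m+1}$ to $\Mat_{m+1}$ and evaluate at a \emph{rank-one} matrix $A$ whose image projects to a point $q_A\notin\Ind(f)$ and whose kernel avoids both $f(q_A)$ and the relevant subvariety. Then $(A\circ f)(A(z))=A(f(q_A))=q_A$ for every admissible $z$, so the entire forward orbit collapses to the single point $q_A\notin\Ind(f)$. This exhibits, for every $n$, a point of $\Mat_{m+1}$ (hence, by openness, a Zariski-dense set of points of $\PGL_{m+1}$) where all the conditions hold simultaneously. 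That degeneration is the heart of the proof; without it your argument has a genuine gap.
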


To prove this result, we present a simple variation on Xie's argument for the third assertion of Theorem~\ref{thm:Xie}.

\begin{proof}
Let $D_1$, $\ldots$, $D_\ell$ be the irreducible components of codimension $1$ in $\Exc(f)$. Let $C_{j,0}=f_\circ D_j$ be the
strict transform of $D_j$. The subsets 
$C_{j,0}$ are Zariski closed, of codimension $\geq 2$.  For $A$ in $\PGL_{m+1}(\bfk)$, set $g_A=A\circ f$. Then, 
the exceptional and indeterminacy loci satisfy $\Exc(g_A)=\Exc(f)$, $\Exc(g_A^{-1})=A(\Exc(f^{-1}))$, 
$\Ind(g_A)=\Ind(f)$, and $\Ind(g_A^{-1})=A(\Ind(f^{-1}))$.
For every index $1\leq j\leq \ell$, and every integer
$n\geq 0$, consider the subset $F(j;n)\subset  \PGL_{m+1}(\bfk)$ of automorphisms $A$ such that 
$A(C_{j,0})$ is not contained in $\Ind(f)$, and then $g_A(A(C_{j,0}))$ is not contained in $\Ind(f)$, 
up to $g_A^n(A(C_{j,0}))$ which is not contained in $\Ind(f)$ either. 
This set $F(j,n)$ is Zariski open in $\PGL_{m+1}(\bfk)$, and to prove the theorem our goal is to show that all these Zariski 
open subsets are non-empty. This follows from Lemma~\ref{lem:transversality} below. 
\end{proof}

\begin{lem}\label{lem:transversality}
Let $W$ and $Q$ be non-empty, irreducible, Zariski closed subsets of $\bbP^m_\bfk$, 
with $\Ind(f)\subset W\neq \bbP^m_\bfk$.
Let $n\geq 0$ be an integer. There exists an element $A$ of $ \PGL_{m+1}(\bfk)$ such that 
$A(Q)$ is not contained in $W$, $(A\circ f)(A(Q))$ is not contained in $W$, $\ldots$, $(A\circ f)^n(A(Q))$ is not contained in $W$.
\end{lem}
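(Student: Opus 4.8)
The plan is to proceed by induction on $n$, pushing the set $Q$ around by a single generic $A$ and checking at each step that the image avoids $W$. First I would set up the induction hypothesis so that it carries enough information: for a fixed $A$, write $g_A = A\circ f$ and $Q_0 = A(Q)$, $Q_{i+1} = g_A(Q_i)$ whenever this makes sense (i.e. $Q_i\not\subset\Ind(f)$); I want to choose $A$ so that $Q_0,\dots,Q_n$ are all well-defined and none of them is contained in $W$. The base case $n=0$ is the assertion that a generic $A$ moves the fixed proper closed set $Q$ off the fixed proper closed set $W$: since $W\neq\bbP^m_\bfk$ and $\PGL_{m+1}(\bfk)$ acts transitively on points of $\bbP^m_\bfk$ (indeed on the open dense locus where things are nice), the condition $A(Q)\subset W$ is a proper Zariski closed condition on $A$, so its complement is nonempty (over an arbitrary field one uses that $\PGL_{m+1}$ is a rational variety with plenty of $\bfk$-points, or one simply exhibits an explicit $A$ translating a chosen smooth point of $Q$ to a point outside $W$ and argues the image is not swallowed by $W$).

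For the inductive step the key point is a dimension/genericity argument for the composition. Suppose the statement holds for $n-1$. I would fix a point $q\in Q$ lying outside $\Ind(f)$ (possible since $\Ind(f)\subset W\neq\bbP^m$ and $Q$ is irreducible, so $Q\not\subset\Ind(f)$ unless $Q\subset W$, in which case I first translate), and I would track the orbit of $q$ rather than of all of $Q$: the condition "$g_A^i(A(q))$ is defined and avoids $W$ for $i=0,\dots,n$" is an open condition on $A$, and if it holds for at least one $A$ then, $Q$ being irreducible, $g_A^i(A(Q))$ is a well-defined irreducible closed set not contained in $W$ (a closed irreducible set containing a point outside $W$ is not contained in $W$). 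So the problem reduces to producing a single $A$ such that the \emph{point} orbit $A(q), g_A(A(q)), g_A^2(A(q)),\dots, g_A^n(A(q))$ stays away from the fixed hypersurface-containing set $W\supset\Ind(f)$. This I would do greedily: the map $A\mapsto A(q)$ is a dominant (in fact surjective onto $\bbP^m\setminus\{\text{nothing}\}$) morphism from a Zariski-open subset of $\PGL_{m+1}$, and having chosen the first several $A$ on an open dense set so that $A(q),\dots,g_A^{i}(A(q))$ avoid $W$, I claim the further constraint coming from step $i+1$ is again only a proper closed condition — because $W$ has codimension $\geq 1$ while the "evaluation after $i+1$ steps" map $A\mapsto g_A^{i+1}(A(q))$ is dominant onto $\bbP^m$; here one uses that $f$ is birational (hence $f_\circ$ dominant) together with the freedom in $A$ to correct the position after applying $f$.

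The main obstacle, and the one I would spend the most care on, is precisely the dominance of the composite evaluation maps $A\mapsto g_A^{j}(A(q))$: one must be sure that composing with $f$ does not collapse the image into a subvariety contained in $W$, and that the successive choices of $A$ remain compatible (the same $A$ must work for all steps, since $g_A$ depends on $A$). The clean way around this is to \emph{not} reuse the same $A$ naively but to observe that enlarging the analysis to generic $A$ makes each bad locus $F(j,n)^{c}$ a proper closed subset and a finite intersection of complements of proper closed sets is still nonempty (over an infinite field; if $\bfk$ is finite pass to $\overline{\bfk}$, note the relevant open sets are defined over $\bfk$ and nonempty, hence have $\bfk$-points after a harmless field extension, or argue directly that $\PGL_{m+1}(\bfk)$ is Zariski dense in $\PGL_{m+1}(\overline\bfk)$). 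Assembling: the set of $A$ failing the conclusion for a given $n$ is contained in a finite union of proper Zariski closed subsets of $\PGL_{m+1}(\bfk)$, so a good $A$ exists, which is exactly what is needed to conclude that the $F(j,n)$ in the proof of Theorem~\ref{thm:dolgachev-answer} are nonempty.
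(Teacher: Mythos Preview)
Your framework matches the paper's: show that the ``good'' set of $A$ is Zariski open at each step and nonempty, reducing the question to a single point $q\in Q$. You correctly isolate the crux as the dominance of the composite evaluation maps $A\mapsto g_A^{\,j}(A(q))$ (equivalently, that the bad locus at step $j$ is a \emph{proper} closed subset of $\PGL_{m+1}$). But you do not prove this. The phrase ``freedom in $A$ to correct the position after applying $f$'' is not an argument, because the same $A$ appears at every stage of the orbit and cannot be adjusted step by step; and your last paragraph is circular, since ``observe that each bad locus $F(j,n)^c$ is a proper closed subset'' is exactly the statement to be established. Note also that a naive attempt to prove dominance by prescribing $A$ on the $j{+}1$ points $q,f(r_0),\dots,f(r_{j-1})$ breaks down once $j{+}1>m{+}2$.

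The paper fills this gap with a degeneration trick you are missing. Arguing by contradiction, suppose every $A$ in the dense open good locus for step $n{-}1$ satisfies $(A\circ f)^{n}(A(Q))\subset W$. After replacing $W$ by a hypersurface $\{P=0\}$ containing it, the relation $P\bigl((A\circ f)^{n}(A(z))\bigr)=0$ becomes a polynomial identity in the entries of $A$, valid on a Zariski-dense subset of $\GL_{m+1}$, hence on all of $\Mat_{m+1}$ wherever the expression makes sense. Now specialize $A$ to a rank-one matrix whose image is a single point $q_A\notin W$ and whose kernel hyperplane $K_A$ misses both $f(q_A)$ and $Q$. For such $A$ and any $z\in Q\setminus K_A$ one has $A(z)=q_A$, and since $f(q_A)\notin K_A$ also $(A\circ f)(q_A)=q_A$; inductively $(A\circ f)^{k}(A(z))=q_A$ for every $k$. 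As $P(q_A)\neq 0$, this contradicts the identity. This rank-one specialization is the missing idea; without it, or an honest independent proof of dominance, your proposal has a genuine gap at precisely the point you yourself flagged as ``the main obstacle.''
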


\begin{proof}
Pick a nonzero homogeneous polynomial $P$ which vanishes on $W$. After replacing $W$ by an irreducible component of $\{P=0\}$, we  assume that $W$ is a hypersurface.
Start with $n=0$. We look for $A$ such that $A(Q)$ is not contained in $W$. The set of such matrices $A$ is Zariski open and
non-empty, because $W$ has codimension $1$ and the action of  $\PGL_{m+1}(\bfk)$ is transitive. 

Now, assume that the property is not satisfied for $n=1$. This means that among the elements $A$ in $\PGL_{m+1}(\bfk)$ 
such that $A(Q)$ is not contained in $W$, all of them satisfy $(A\circ f)(A(Q))\subset W$; fixing an equation $P$ of $W$, we obtain 
\begin{equation}\label{eq:incidence}
P((A\circ f)A(z))=0 \quad (\forall z\in Q \setminus A^{-1}\Ind(f)).
\end{equation}
This equation makes sense on the set of points $(z,A)$ in $Q\times \GL_{m+1}(\bfk)$ such that $A(z)\notin \Ind(f)$ and, 
more generally, on the set of points $(z,A)$ in $Q\times \Mat_{m+1}(\bfk)$ such that $z$ is not in the kernel of the linear
map $A$ and $A(z)$ is not in $\Ind(f)$. Now, take such a pair $(z,A)$ with $A$ of rank one. We choose $A$ such that
\begin{itemize}
\item[(1)] the image  of $A$  
projects to a point $q_A$ of $\bbP^m_\bfk$  which is not contained in $W$ (in 
particular, $q_A$ is not in $\Ind(f)$);
\item[(2)] the projection  $K_A\subset \bbP^m_\bfk$ of  its kernel does not contain  $f(q_A)$
and does not contain $Q$. 
\end{itemize}
Then, for every point $z\in Q\setminus K_A$ we obtain $(A\circ f)(A(z))=A(f(q_A))=q_A\notin W$ and $P(A \circ f)(A(z))\not=0$. This provides a contradiction 
with Equation~\eqref{eq:incidence}. 

Thus, the set of automorphisms $A$ for which $A(Q)$  and $(A\circ f)(A(Q))$ are not contained in $W$ is generic in $\PGL_{m+1}(\bfk)$. 
The same argument proves the lemma by recursion (choosing the same matrix of rank one).
\end{proof}

\begin{cor}\label{cor:dolgachev-higher-dimension}
Let $\bfk$ be an uncountable, algebraically closed field. Let $f$ be a birational transformation 
of  $\bbP^m_\bfk$ of degree $d\geq 2$. There exists an automorphism $A$ in $\PGL_{m+1}(\bfk)$
such that $A\circ f$ is not regularizable. 
\end{cor}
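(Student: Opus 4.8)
The plan is to combine Theorem \ref{thm:dolgachev-answer} with the "exceptional locus versus indeterminacy" criterion for degree drop, together with Proposition \ref{pro:exp-degree-growth} (or rather the corollary following it, that a map of $\bbP^m_\bfk$ with $\deg(f)>1$ and $\deg(f^n)=\deg(f)^n$ for all $n$ is not regularizable). So the whole task reduces to producing an automorphism $A$ with $\deg((A\circ f)^n)=(\deg(A\circ f))^n$ for all $n\geq 1$, and then invoking that corollary.

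First I would record that, since $f$ and $A$ both have degree $d\geq 2$ and left-translation by $A$ is an automorphism of $\bbP^m_\bfk$, one has $\deg(A\circ f)=\deg(f)=d\geq 2$; so the hypotheses of the corollary following Proposition \ref{pro:exp-degree-growth} will be met as soon as $A\circ f$ is algebraically stable. Next, by the remark just before Theorem \ref{thm:dolgachev-answer}, for a birational transformation $g$ of $\bbP^m_\bfk$ one has $\deg(g^n)=\deg(g)^n$ unless some hypersurface in $\Exc(g)$ is ultimately mapped into $\Ind(g)$ by a positive iterate of $g$; and Theorem \ref{thm:dolgachev-answer} tells us precisely that the set of $A\in\PGL_{m+1}(\bfk)$ for which this bad behaviour occurs for $g_A=A\circ f$ is a countable union $\bigcup_{j,n}(\PGL_{m+1}(\bfk)\setminus F(j,n))$ of proper Zariski closed subsets.

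The key point is then a Baire-type/cardinality argument: a proper Zariski closed subset of $\PGL_{m+1}(\bfk)$ — an irreducible rational variety of positive dimension over the algebraically closed field $\bfk$ — cannot be covered by countably many proper Zariski closed subsets when $\bfk$ is uncountable. Concretely, $\PGL_{m+1}(\bfk)$ is a dense Zariski open subset of $\bbP^{(m+1)^2-1}_\bfk$, hence contains a line's worth (indeed an $\bbA^1_\bfk$-worth, or more) of $\bfk$-points lying outside any finite union of proper subvarieties; intersecting with a countable family, one still has points avoiding all of them because $\bfk$ (and hence $\bbA^1_\bfk(\bfk)$) is uncountable while each proper closed subset of a curve is finite. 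Therefore the intersection $\bigcap_{j,n}F(j,n)$ is non-empty: pick $A$ in it. For that $A$, no component of $\Exc(g_A)=\Exc(f)$ is ever sent into $\Ind(g_A)=\Ind(f)$, so $\deg(g_A^n)=\deg(g_A)^n=d^n$ for all $n\geq 1$. Since $d\geq 2$, the corollary following Proposition \ref{pro:exp-degree-growth} yields that $g_A=A\circ f$ is not regularizable, which is what we wanted.

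The main obstacle is purely the uncountability input: everything else is a direct chaining of the already-proven statements, but one must be a little careful that the union in Theorem \ref{thm:dolgachev-answer} is genuinely countable (it is indexed by the finitely many components $D_1,\dots,D_\ell$ and by $n\in\N$) and that "proper Zariski closed in $\PGL_{m+1}$" together with $\bfk$ uncountable really does forbid a countable cover — the algebraic-closedness of $\bfk$ is used here to guarantee enough $\bfk$-points on a generic line. No other delicate step is expected.
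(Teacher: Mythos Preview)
Your argument is correct and is exactly the chain of implications the paper intends: Theorem~\ref{thm:dolgachev-answer} plus the uncountability of $\bfk$ yields an $A$ with $\deg((A\circ f)^n)=d^n$, and then the italicized corollary after Proposition~\ref{pro:exp-degree-growth} gives non-regularizability. One small slip: you write ``since $f$ and $A$ both have degree $d\geq 2$'', but $A\in\PGL_{m+1}(\bfk)$ has degree $1$; the correct (and sufficient) reason, which you also state, is that left-composition with an automorphism preserves the degree.
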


\begin{rem}
Let $f$ be a birational transformation of a projective variety $M$. Assume that $f$ contracts an irreducible 
subvariety $D_0$ of $V$ onto a Zariski closed subset $C_0$ (i.e. $C_0=f_\circ (D_0)$ and $\dim(C_0)<\dim(D_0)$). 
Then, assume that the positive orbit of $C_0$, namely $C_1=f_\circ (C_0)$, $\ldots$, $C_{n+1}=f_\circ (C_n)$ 
is made of subsets which are never contained in $\Ind(f)$. Suppose, in the opposite time direction, that the preimages
$D_{-n-1}=f^\circ(D_{-n})$ are never contracted. Then, the $C_n$ and the $D_{-n}$ form two sequences of pairwise 
distinct algebraic subsets; and if the union of the $D_{-n}$ is Zariski dense in $M$ (this is automatic if they have
codimension $1$), the transformation $f$ is not regularizable. Indeed, if $\varphi\colon V\dasharrow M$ is 
a birational map, one can find large integers $N>1$ such that $D_{-N}$ is not contracted  and $C_N$ is 
not blown-up by  $\varphi^{-1}$. Then, $f_V=\varphi\circ f\circ \varphi^{-1}$ cannot be an automorphism because
$f_V^{2N}$ contracts $\varphi^{-1}(D_{-N})$ onto $\varphi^{-1}(C_N)$.

This argument provides the following extension of Corollary~\ref{cor:dolgachev-higher-dimension}: {\sl{
Let $\bfk$ be an uncountable, algebraically closed field. Let $f$ be a birational 
transformation of a homogeneous variety $M=G/H$. Assume that $\Exc(f)$ is not empty. Then there is
an element $A$ of $G$ such that $A\circ f$ is not regularizable.}}
\end{rem}

\subsection{Dolgachev's question in characteristic zero}\label{par:Dolgachev-Qbar}

\begin{thm}\label{thm:DolgachevQ} Let $\bfk$ be a field of characteristic zero.
Let $f$ be a birational transformation of $\bbP^m$ which is 
defined over the field $\bfk$, i.e. the formulas defining $f$ have coefficients in $\bfk$. 
Then, there exists an element $A$ of $\PGL_{m+1}(\bfk)$ such that $\deg((A\circ f)^n)=\deg(f)^n$ for all $n\geq 1$.
\end{thm}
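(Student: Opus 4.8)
The plan is to reduce to a countable field, embed it into $\C$, and then exhibit a suitable $A$ by a degeneration argument. First I would observe that the finitely many coefficients of the formulas defining $f$ generate a subfield $\bfk_0\subseteq\bfk$ which is finitely generated over $\Q$; since $\PGL_{m+1}(\bfk_0)\subseteq\PGL_{m+1}(\bfk)$, it is enough to treat $\bfk=\bfk_0$, and then to fix an embedding $\bfk\hookrightarrow\C$ (possible because $\bfk$ has finite, hence countable, transcendence degree over $\Q$). We may assume $d:=\deg(f)\geq 2$, since otherwise $A=\Id$ works. Recall from the paragraph preceding Theorem~\ref{thm:dolgachev-answer} that, writing $D_1,\dots,D_\ell$ for the codimension $1$ components of $\Exc(f)$, $C_{j,0}=f_\circ D_j$, and $g_A=A\circ f$, one has $\deg(g_A^n)=d^n$ for every $n\geq 1$ as soon as, for every $j$ and every $k\geq 0$, the strict transform $(g_A)_\circ^{\,k}\big(A(C_{j,0})\big)$ is \emph{not} contained in $\Ind(f)=\Ind(g_A)$.

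Then I would introduce a degenerate ``target''. Since $\bfk$ is infinite, I can choose a point $q\in\bbP^m_\bfk$ and a hyperplane $K_0\subset\bbP^m_\bfk$ in general position, so that $q\notin\Ind(f)$, $q\notin K_0$, $f(q)\notin K_0$, and $C_{j,0}\not\subset K_0$ for all $j$ (each condition defines a non-empty Zariski open subset, because $\Ind(f)$ has codimension $\geq 2$ and each $C_{j,0}$ has positive codimension). Let $A_0\in\Mat_{m+1}(\bfk)$ be a rank-one linear map with image the line over $q$ and kernel the hyperplane over $K_0$, so that, as a rational self-map of $\bbP^m$, $A_0$ is constant equal to $q$ on $\bbP^m\setminus K_0$. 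Working in the complex topology on $\bbP^m(\C)$, I can pick a compact neighbourhood $\overline U$ of $q$ with $\overline U\cap(\Ind(f)\cup K_0)=\emptyset$ and $f(\overline U)\cap K_0=\emptyset$; then $g_{A_0}=A_0\circ f$ is the constant map $\equiv q$ on $\overline U$, i.e. $g_{A_0}(\overline U)=\{q\}\subset U$. In other words $q$ is ``super-attracting'' for $g_{A_0}$, a whole neighbourhood being crushed strictly inside itself.

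Then I would perturb and conclude. Put $A_s=A_0+s\,\Id$; this lies in $\GL_{m+1}$ for all but finitely many $s$, and $g_{A_s}\to g_{A_0}$ uniformly on the compact set $\overline U$ as $s\to 0$, so there is $\varepsilon>0$ such that $A_s\in\GL_{m+1}$ and $g_{A_s}(\overline U)\subset U$ whenever $0<|s|<\varepsilon$. For each $j$ fix a point $x_j$ of $C_{j,0}$ off $K_0$; since $A_s(x_j)\to A_0(x_j)=q$, after shrinking $\varepsilon$ we may also assume $A_s(x_j)\in U$ for all $j$ when $0<|s|<\varepsilon$. Now pick $s_0\in\bfk$ with $0<|s_0|<\varepsilon$ and $A_{s_0}\in\GL_{m+1}$ --- such $s_0$ exists since $\Q\subset\bfk\subset\C$ contains elements of arbitrarily small positive absolute value --- and set $A=A_{s_0}\in\PGL_{m+1}(\bfk)$. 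For each $j$, the point $a_j:=A(x_j)$ lies in $A(C_{j,0})$ and in $U$, hence $a_j\notin\Ind(f)$; inductively $g_A^{\,k}(a_j)\in g_A(\overline U)\subset U$ for all $k\geq 1$, so the whole forward orbit of $a_j$ is well defined and avoids $\Ind(f)$, and $g_A^{\,k}(a_j)\in(g_A)_\circ^{\,k}\big(A(C_{j,0})\big)$. Hence none of the strict transforms $(g_A)_\circ^{\,k}\big(A(C_{j,0})\big)$ is contained in $\Ind(f)$, and by the criterion above $\deg\big((A\circ f)^n\big)=d^n$ for all $n\geq 1$.

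The main obstacle is precisely the passage from an uncountable field --- where Theorem~\ref{thm:dolgachev-answer} together with a Baire-type argument would already conclude, a countable union of proper Zariski closed subsets being unable to cover $\PGL_{m+1}(\bfk)$ --- to a field that may be countable, where one genuinely has to produce an explicit $A$. The delicate point in the construction is that the orbit of $a_j$ must remain in $U$ for \emph{every} $k$, not just for finitely many of them; this works only because $g_{A_0}$ contracts the full neighbourhood $\overline U$ of $q$ into $\{q\}$, a contracting behaviour that survives small perturbation ($g_{A_s}(\overline U)\subset U$), and because it suffices to control the strict transforms through a single honest orbit point $g_A^{\,k}(a_j)\notin\Ind(f)$ lying in each of them, rather than tracking the images of the whole subvarieties $A(C_{j,0})$.
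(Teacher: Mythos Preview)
Your proof is correct and takes a genuinely different route from the paper's. Both arguments reduce to exhibiting, for each codimension-one component $D_j$ of $\Exc(f)$, a point of $A(C_{j,0})$ whose entire forward $g_A$-orbit is well-defined and avoids $\Ind(f)$; they differ in how this point is produced. The paper first specializes from $\bfk$ to a number field, then for each component $W_i$ invokes Chebotarev's theorem to find a prime $\mathfrak{p}_i$ modulo which $W_i$ acquires a rational point $x_i$ outside $\Ind(f)$, chooses $A_i\in\SL_{m+1}(\bfF_{\mathfrak{p}_i})$ with $A_i(\bar f(x_i))=x_i$ so that $x_i$ becomes a \emph{fixed point} of $A_i\circ\bar f$, and finally lifts the $A_i$ to a single $A\in\SL_{m+1}(\mathcal O_\bfk)$ by weak approximation. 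You instead embed the finitely generated field $\bfk_0\hookrightarrow\C$ and use the euclidean topology: degenerate $A$ to a rank-one matrix $A_0$ with image a well-chosen point $q$, so that $g_{A_0}$ collapses a compact neighbourhood $\overline U$ of $q$ to $\{q\}$; after a small perturbation $A=A_0+s_0\,\Id$ with $s_0\in\bfk$, the inclusion $g_A(\overline U)\subset U$ persists, and an orbit starting at $a_j=A(x_j)\in U$ is \emph{trapped} in $U\subset\bbP^m(\C)\setminus\Ind(f)$ forever. The rank-one degeneration already appears in the paper's Lemma~\ref{lem:transversality}, but there it handles only finitely many iterates at a time (whence the need for $\bfk$ uncountable in Corollary~\ref{cor:dolgachev-higher-dimension}); your contribution is to combine it with a contracting-dynamics argument that controls all iterates simultaneously. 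Your approach is more elementary---no Chebotarev, no weak approximation---while the paper's yields the extra information that $A$ can be taken in $\SL_{m+1}(\mathcal O_\bfk)$ when $\bfk$ is a number field.
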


Theorem~\ref{thm:dolgachev-answer} does not imply directly this result: a countable union of proper Zariski closed 
subsets of $\PGL_{m+1}(\C)$ can contain $\PGL_{m+1}(\bfk)$ if $\bfk$ is countable.

\begin{proof}
Write the formulas for $f$ and $f^{-1}$ in homogeneous coordinates, and list the coefficients of these formulas: $c_1$, $\ldots$, $c_N\in \bfk$. They generate a ring $R=\Z[c_1, \ldots, c_N]$, and we can find a number field $K$ and a homomorphism of rings $\iota\colon R\to K$ such that 
\begin{enumerate}
\item the fraction field of $\iota(R)$ coincides with $K$: $\Frac(\iota(R))=K$; 
\item when $\iota$ is applied to the coefficients of the formulas defining $f$ and $f^{-1}$, one gets birational 
maps $f_\iota$ and $f_\iota^{-1}$ such that $\deg(f)=\deg(f_\iota)$, $\deg(f^{-1}) = \deg(f_\iota^{-1})$, 
and $f_\iota\circ f_\iota^{-1}=\Id$.
\end{enumerate} 
If we prove the result for $f_\iota$, over the number field $K$ in place of $\bfk$, one gets 
a matrix $A_\iota\in \GL_{m+1}(K)$ such that $\deg(A_\iota\circ f_\iota)^n=\deg(f)^n$. The coefficients $a_{ij}$ of $A_\iota$
being in $K=\Frac(\iota(R))$, there is a matrix $A=[a_{ij}]$ with coefficients in $\Frac(R)$ such that $\iota(A):=[\iota(a_{ij})]$
is equal to $A_\iota$. Then, $A\circ f$ satisfies the desired conclusion. 
So, from now on, we assume that $\bfk$ is a number field, and we denote by ${\mathcal{O}}_\bfk$ its ring of integers.
We are going to use the theorem of Chebotarev and the weak approximation theorem for such rings of integers
(see \cite[Theorems 1.2 and 7.7]{Platonov-Rapinchuk:Book}).

If $\deg(f)=1$, we can just take $A=\Id$, so we assume $\deg(f)\geq 2$. 

The exceptional locus of $\Exc(f)$ is a non-empty hypersurface, defined by an equation $J(x_0, \ldots, x_m)=0$
with coefficients in ${\mathcal{O}}_\bfk$; let $J=\prod_{i=1}^sJ_i$ be the decomposition of $J$ into irreducible factors over $\bfk$, i.e. 
as elements of $\bfk[x_0, \ldots, x_m]$, and let
$(W_i)_{i=1}^s$ be the corresponding irreducible components of $\Exc(f)$. We are loo\-king for an element $A$ of $\PGL_{m+1}(\bfk)$
such that $A\circ f$ and its iterates $(A\circ f)^n$ do not map $W_i$ into $\Ind(f)$, for all $i=1, \ldots s$ and all $n\geq 1$. Note that the 
$W_i$ may be reducible over ${\overline{\bfk}}$; but if the strict transform of $W_i$ by $(A\circ f)^n$ is not contained in $\Ind(f)$, 
then none of its irreducible components is mapped into $\Ind(f)$ (because $A\circ f$, $\Ind(f)$, and $W_i$ are all defined over $\bfk$).

Let $D_i$ be the intersection of 
$\Ind(f)$ with $W_i$: it is a Zariski closed subset of positive codimension in $W_i$. There exists a line $L_i$, 
defined over $\bfk$, such that $(L_i\cap W_i)(\overline{\bfk})$ is non-empty and $L_i$ does not intersect 
$\Ind(f)$(\footnote{\samepage Take a point $a\in \bbP^m(\bfk)$ outside $W_i\cup \Ind(f)$, and consider the projective space
$\bbP^{m-1}$ of lines through $a$; the linear projection that maps a point $b\neq a$ to the line $(ab)$ provides
a ramified cover $\pi\colon W_i\to \bbP^{m-1}$. The codimension of $\pi(\Ind(f))$ is positive because $\dim(\Ind(f))\leq m-2$, and this implies
that there are points of $\bbP^{m-1}(\bfk)$ in the complement of $\pi(\Ind(f))$. This corresponds to a line $L_i$
with the desired properties.}); in particular, $L_i$ does not intersect $D_i$. Restricting $J_i$ to the line $L_i$, we get a homogeneous polynomial in one
variable, with integer coefficients, that we denote $P_i$. By the theorem of Chebotarev, there are infinitely many 
maximal ideals ${\mathfrak{p}}$ of ${\mathcal{O}}_\bfk$  such that
$P_i$ splits into linear factors in the finite field ${\mathcal{O}}_\bfk/{\mathfrak{p}}$ (see~\cite{Stevenhagen-Lenstra} and~\cite{Janusz:ANF}); set $\bfF ={\mathcal{O}}_\bfk/{\mathfrak{p}}$.

We shall now work modulo such an ideal.
Write $f$ and its inverse $f^{-1}$ in homogeneous coordinates, with homogeneous formulas
whose coefficients are in ${\mathcal{O}}_\bfk$. Denote
by $\overline{f}$ the reduction of (the formulas defining) $f$ modulo ${\mathfrak{p}}$; we choose
${\mathfrak{p}}$ large enough to be sure that ${\overline{f}}$ is a birational map, of degree equal to the degree of $f$, 
and with inverse equal to ${\overline{f^{-1}}}$. We denote by ${\overline{L_i}}$ the reduction of $L_i$ modulo ${\mathfrak{p}}$, etc.
For ${\mathfrak{p}}$ large, the line ${\overline{L_i}}$ does not intersect $\Ind({\overline{f}})$. 
So, fix such an ideal, and pick an $\bfF$-point $x$ of ${\overline{L_i}}\cap {\overline{W_i}}$, corresponding to one of the roots of $\overline{P_i}$. 
This point is not in $\Ind(f)$, 
hence $\overline{f}$ is regular on an open neighborhood ${\overline{U}}$ of $x$; set ${\overline{U}}_i={\overline{U}}\cap {\overline{W_i}}$.
Since $\SL_{m+1}(\bfF)$ 
acts transitively on $\bbP^m(\bfF)$ there is an element $A_i\in \SL_{m+1}(\bfF)$ such that $A_i(\overline{f}(x))=x$.
Then $A_i\circ \overline{f}$ satisfies the following property: for every integer $n\geq 0$, $(A_i\circ {\overline{f}})^n$ is regular at $x$ and 
$(A_i\circ {\overline{f}})^n(x)=x$. This implies that {\sl{the strict transform of $\overline{W_i}$ by $(A_i\circ {\overline{f}})^n$ is not contained in $\Ind(f)$}}.

We do such a construction for each of the $s$ irreducible components $W_i$ of $\Exc(f)$. 
This gives a finite sequence of ideals ${\mathfrak{p}}_i$ 
and matrices $A_i\in \SL_{m+1}(\bfF_{i})$, with $\bfF_i={\mathcal{O}}_\bfk/{\mathfrak{p}}_i$; since for each $i$ we have infinitely many possible choices for ${\mathfrak{p}}_i$, we can choose them pairwise distinct. By the weak approximation theorem, there is a matrix $A\in \SL_{m+1}({\mathcal{O}}_\bfk)$ such that 
$A\equiv A_i\mod({\mathfrak{p}}_i)$ for every $i=1, \ldots, s$. Set $g_A=A\circ f$, and pick an irreducible component $W_i$ of $\Exc(f)$. Then,  $(g_A)^n$
never maps $W_i$ into $\Ind(g_A)=\Ind(f)$, because otherwise its reduction modulo ${\mathfrak{p}}_i$ would map ${\overline{W_i}}$ into $\Ind({\overline{f}})$. 
This shows that no component of $\Exc(g_A)$ is mapped into $\Ind(g_A)$ and, according to~\cite[\S~1.4]{Sibony:Panorama} this implies $\deg(g_A^n)=\deg(g_A)^n=\deg(f)^n$ for all $n\geq 0$.
\end{proof}

\medskip

\begin{center}
{\bf{-- Part B. --}}
\end{center}

\subsection*{Introduction} Possible sequences of degrees $(\deg(f^n))$, say for 
rational transformations of $\bbP^m$ and  for the polarization 
${\mathcal{O}}(1)$, are hard to describe (see~\cite{Deserti, Urech}). For instance, 
let $(u_n)$ be a sequence of positive integers that satisfy a linear recurrence
relation. Does there exist a dimension $d$ and a monomial map $f_A$
of $\bbP^d_\C$, given by some $d\times d$ matrix with integer coefficients, 
such that $\deg(f_A^n)=u_n$ for every $n\geq 1$. A similar question can be
asked for endomorphisms of abelian varieties instead of monomial maps. 
Fixing the dimension, one expects important constraints on such sequences.
For instance, there are strong constraints on $(\deg(f^n))_{n\geq 0}$ when $f\in
\Bir(\bbP^2_\bfk)$: 
\begin{itemize}
\item[(a)] if $\deg(f^2) > 3^{19} \deg(f)$ 
then $\lambda_1(f)>1$ (see~\cite{Xie:Duke}), and if $\lambda_1(f)>1$ then it is a Pisot or Salem number, larger than the Lehmer number (see~\cite{Diller-Favre, Blanc-Cantat});
\item[(b)] if $\deg(f^n)_{n\geq 0}$ does not grow exponentially fast, then asymptotically we have $\deg(f^n)\simeq \alpha(f) n^2$ or $\alpha(f) n$ for some $\alpha(f)\in \Q_+^*$. If the growth is linear, $\alpha=m/2$ for some 
integer $m\leq \vert \Ind(f)\vert \leq 3\deg(f)-3$ (see \cite{Blanc-Deserti:2015}).
\end{itemize}
 
In higher dimension, we refer to \cite{Dang-Favre} for a recent breakthrough. Here, we consider the case of bounded degree sequences, and show that, already in dimension $2$, there is no constraint on their initial oscillation:
\medskip 

\noindent{\bf{Theorem.}}  (see Theorems~\ref{thm:Oscillations} and~\ref{thm:realbounded}) 
{\emph{Let $D\colon \N^*\to \N$ be a function which vanishes identically on the complement of a finite set. 
There exist a birational transformation $h\colon \bbP^2_{\overline{\Q}}\dasharrow \bbP^2_{\overline{\Q}}$ and an integer $d\geq 1$ such that $\deg(h^n)=d-D(n) \quad {\text{for all }} \;  n\geq 1.$
}}

\medskip 

Here,  $\N$ denotes the set of non-negative integers, and $\N^*$ the set of positive integers.
This theorem shows that there is no constraint on the initial additive oscillations $\deg(f^{n+1})-\deg(f^n)$, for 
$n\geq 1$, even if we look for birational transformations defined over $\overline\Q$. The above Assertion (a) 
gives a strong constraint on the initial multiplicative oscillations $\deg(f^{n+1})/\deg(f^n)$.

This theorem is obtained in two steps. First, we prove it  
over an uncountable field instead of $\overline\Q$ (see Theorem~\ref{thm:Oscillations}). Then,  we show that 
bounded degree sequences   realized by elements of $\Bir(\bbP^m_\C)$ are also 
realized by elements of $\Bir(\bbP^2_{\overline\Q})$; we suspect that such a result holds for
all degree sequences: see \S~\ref{par:bds}.


\section{Oscillations}
 
\begin{thm}\label{thm:Oscillations}
Let $D\colon \N^*\to \N$ be a function which vanishes identically on the complement of a finite set. 
Let $\bfk$ be an uncountable, algebraically closed field.
There exist a birational transformation $h\colon \bbP^2_\bfk\dasharrow \bbP^2_\bfk$ and an integer $d\geq 1$ 
such that 
$
\deg(h^n)=d-D(n) \quad {\text{for all }} \;  n\geq 1. 
$
When $\bfk$ has characteristic $0$, one may find $h$ in $\Bir(\bbP^2_{\overline{\Q}})$.
\end{thm}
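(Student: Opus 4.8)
The plan is to realise $h$ as a conjugate of a linear automorphism. Fix $B\in\PGL_3(\bfk)$ and a plane Cremona transformation $\varphi\colon\bbP^2_\bfk\dasharrow\bbP^2_\bfk$, and set
\[
h=\varphi\circ B\circ\varphi^{-1},\qquad d=\deg(\varphi)^2 .
\]
Since $\deg(B^n)=1$ for all $n$, we have $\deg(h^n)=\deg(\varphi\circ B^n\circ\varphi^{-1})\le\deg(\varphi)\deg(\varphi^{-1})=d$, so the degree sequence is automatically bounded (hence $h$ is regularizable, by the theorem of Weil recalled above). More precisely, let $E_1,\ldots,E_r$ be the curves contracted by $\varphi^{-1}$ onto the base points $p_1,\ldots,p_r$ of $\varphi$; the classical formula for the degree of a composition of plane Cremona maps yields
\[
d-\deg(h^n)=\sum_{i\,:\,B^n(p_i)\in\Ind(\varphi)}\mathrm{mult}_{B^n(p_i)}(\varphi)\cdot\deg(E_i).
\]
Thus $\deg(h^n)=d$ unless the $B$-orbit of some base point of $\varphi$ hits a base point of $\varphi$ at time $n$, and the whole problem becomes one of prescribing these ``orbit collisions''.

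Let $S=\{n\in\N^*:D(n)>0\}$, a finite set. I would take for $\varphi$ a suitable Cremona transformation of large degree $\delta$ --- for instance a de Jonqui\`eres transformation --- having at least $2\sum_n D(n)$ simple base points $q_1,q_2,\ldots$ in general position, onto each of which $\varphi^{-1}$ contracts a line. Group $2\sum_n D(n)$ of these base points into pairwise disjoint ordered pairs, assigning to each $n\in S$ exactly $D(n)$ pairs $(q_a,q_b)$. Now choose $B$ and the positions of the source points $q_a$ in general position, put $q_b:=B^n(q_a)$ for the pair $(q_a,q_b)$ assigned to time $n$, and choose the remaining base points of $\varphi$ in general position as well; a transformation $\varphi$ realising this cluster of base points exists, the configuration being generic. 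With these choices, for each $n\in S$ the $D(n)$ collisions assigned to $n$ each contribute $\mathrm{mult}_{q_b}(\varphi)\cdot\deg(E_a)=1\cdot1=1$ to the right-hand side of the displayed identity, so $d-\deg(h^n)\ge D(n)$.

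What remains is to guarantee that there is \emph{no other} collision. The conditions ``$B^n(p_i)=p_j$'', for all triples $(n,i,j)$ other than the finitely many prescribed ones --- each of them proper, as it fails for generic parameters --- together with the usual non-degeneracy conditions on the cluster, cut out a countable union of proper Zariski-closed subsets of the (positive-dimensional) parameter space of pairs (linear map $B$, configuration of base points of $\varphi$). Since $\bfk$ is uncountable, one may choose the parameters outside this union; for such a choice the sum in the displayed identity equals $D(n)$ for $n\in S$ and vanishes for $n\notin S$, whence $\deg(h^n)=d-D(n)$ for all $n\ge1$. When $\mathrm{char}(\bfk)=0$, one first runs the construction over $\C$ (which is uncountable), obtaining a bounded degree sequence, and then invokes the descent of bounded degree sequences (Theorem~\ref{thm:realbounded}) to produce a birational transformation of $\bbP^2_{\overline{\Q}}$ realising the same sequence.

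The step I expect to be the main obstacle is the second paragraph: producing a Cremona transformation whose cluster of base points simultaneously is homaloidal (satisfies Noether's equations), contains, in general position, the prescribed points of a $B$-orbit, and is such that $\varphi^{-1}$ contracts a \emph{line} onto each base point involved in a collision, so that a single collision drops the degree by exactly $1$. De Jonqui\`eres maps make these requirements transparent, but one still has to check that imposing the collisions $B^n(q_a)=q_b$ does not force unwanted ones --- for instance that no source point is simultaneously a target point, and that composing two prescribed collisions does not create a collision at an unintended time --- which is precisely where the disjointness of the chosen pairs and the genericity of the free parameters enter.
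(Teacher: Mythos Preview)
Your approach is the paper's: conjugate a linear automorphism $B$ by a de Jonqui\`eres map whose simple base points are arranged in pairs along $B$-orbits to produce the prescribed degree drops, rule out all other collisions by genericity over an uncountable field, and descend to $\overline{\Q}$ via Theorem~\ref{thm:realbounded}. Your degree formula agrees with the paper's Picard-Manin computation (indeed $\deg(E_i)=a_i$ for any plane Cremona map with only proper base points, so your displayed identity is exactly $\deg(h^n)=\deg(\varphi)^2-\sum_{i,j}a_ia_j\,\delta(B^n(p_i),p_j)$), and your decomposition of $D$ into Dirac masses is Lemma~\ref{lem:D-dirac}.

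The obstacle you flag is real, and it is precisely where the paper spends its effort. Once half of the simple base points $q_a$ are free and the other half are forced by $q_b=B^{n}(q_a)$, you cannot simply invoke ``general position'': you must check that the constrained $(2m{+}1)$-tuple $(q_0,p_1,\ldots,p_{2m})$ generically satisfies the incidence conditions $(\text{Jon.}\,1),\ldots,(\text{Jon.}\,m)$ that guarantee the existence of a de Jonqui\`eres transformation with exactly those base points (Theorem~\ref{thm:Jonq}), and that no unintended collision $B^n(p_i)=p_j$ occurs. The paper does this by a double induction, on $k$ and on the number of pairs already placed, proving $(\text{Jon.}\,k)$ together with an auxiliary uniqueness statement $(\text{Uniq.}\,k)$. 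The mechanism is the one you anticipate: if adding a new pair $\bigl(q_m,B^{s_m}(q_m)\bigr)$ forced a forbidden incidence for \emph{generic} $q_m$, then $B^{s_m}$ would preserve the corresponding linear system (a pencil of degree-$k$ curves through $q_0$ and some earlier $p_i$), hence would permute its finite base locus, contradicting the choice of those points on infinite $B$-orbits. Your outline is correct; this induction is what turns it into a proof.
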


This means that $\deg(h^n)$ oscillates arbitrarily around its asymptotic value $d$, at least on the 
support of $D$. Since
$\deg(h^n)=d$ for large values of $n$, $h$ is an elliptic transformation; in fact, we shall construct 
$h$ as a conjugate $g\circ A\circ g^{-1}$ of a linear projective transformation $A\in \Aut(\bbP^2_\bfk)$
by a Jonqui\`eres transformation~$g$. 

The first part of this theorem is proved in Sections~\ref{par:prelim-oscill} to \ref{par:Jonq-conclusion}; the last assertion is 
a corollary of Theorem~\ref{thm:realbounded} from Section~\ref{par:realization-qbar}. 

\subsection{Preliminaries (see~\cite{Cantat:Annals, Cantat:Survey, Favre:Bourbaki, Manin:CubicForms})}\label{par:prelim-oscill}
Recall that the Picard-Manin space is generated by the class $\bfe_0$ of the polarization ${\mathcal{O}}(1)$
(i.e. the class of a line in $\bbP^2$) and by the classes $\bfe(p)$ of the exceptional divisors $E_p$ which 
are obtained by blowing up proper or infinitely near points $p$ of the plane.  

Let $A$ be an element of $\Aut(\bbP^2_\bfk)=\PGL_3(\bfk)$. It acts by automorphisms on $\bbP^2(\bfk)$; for every 
point $p\in \bbP^2(\bfk)$, $A$ lifts to an isomorphism from the blow-up of $\bbP^2$ at $p$ to the blow-up at $A(p)$,
sending the exceptional divisor $E_p$ to $E_{A(p)}$. Thus, its action on the Picard-Manin space satisfies 
\begin{equation}
A_\bullet\bfe_0=\bfe_0\quad {\text{ and }}\quad A_\bullet \bfe(p)=\bfe(A(p))
\end{equation}
for every proper or infinitely near point $p$ of $\bbP^2(\bfk)$ (see \cite{Cantat:Survey}). 

Let $g$ be a birational transformation of the plane, and let $h$ be the birational transformation 
$h=g\circ A \circ g^{-1}.$
The sequence of degrees $(\deg(h^n))_n$ is given by
\begin{equation}
\deg(h^n)= \langle (g\circ A^n \circ g^{-1})_\bullet \bfe_0 \vert \bfe_0 \rangle = \langle  A^n_\bullet( (g^{-1})_\bullet \bfe_0) \vert g^{-1}_\bullet \bfe_0 \rangle. 
\end{equation}
Write 
\begin{equation}
g^{-1}_\bullet \bfe_0=\deg(g)\bfe_0 - \sum_{i=1}^\ell a_i \bfe(p_i)
\end{equation}
for a finite set of base points $p_i$ and multiplicities $a_i>0$;
we assume for simplicity that there is no infinitely near point involved in this formula:
\begin{itemize}
\item[(Hyp. 1)] Each $\bfe(p_i)$ is the class of a proper point $p_i\in \bbP^2(\bfk)$. 
\end{itemize}
In particular, the $p_i$ are exactly the indeterminacy points of $g$, and the $a_i$ are their multiplicities. 
Then, $( A^n \circ g^{-1})_\bullet \bfe_0= \deg(g)\bfe_0 - \sum_{i=1}^\ell a_i \bfe(A(p_i))$ and we obtain
\begin{equation}
\deg(h^n)=\deg(g)^2-\sum_{i,j=1}^\ell a_i a_j \delta(A^n(p_i), p_j)
\end{equation}
where $\delta(\cdot, \cdot)$ is the Kronecker symbol. For instance, if $A^n(p_i)\neq p_j$ for all $i$, $j$, and $n\geq 1$, then 
$\deg(h^n)=\deg(g)^2$ for all $n\geq 1$. 

Now, let us organize the points $p_i$ with respect to the different orbits of $A$. This means that we index the points $p_i$ 
in such a way that there exist a sequence of integers $k_0=0< k_1 < \cdots < k_m=\ell$ and a sequence of points $q_1$, 
$q_2$, $\ldots$, $q_m\in \bbP^2(\bfk)$ satisfying 
\begin{itemize}
\item[(Orb. 1)] the points $p_{k_{j-1}+1}$, $\ldots$, $p_{k_j}$ lie on the orbit of $q_j$ under the action of $A$ ($\forall j\in \{1, \ldots, m\}$).

\item[(Orb. 2)] the points $q_j$ are in pairwise distinct orbits.
\end{itemize}
The subset $F_j:=\{p_{k_{j-1}+1}, \ldots, p_{k_j}  \}$ is therefore a finite part of the orbit $A^{\Z} (q_j)$; it coincides with the intersection of $A^\Z (q_j)$ with 
$\Ind(g)$. Now, assume that 
\begin{itemize}
\item[(Hyp. 2)] The multiplicity function $p_i\mapsto a_i$ is constant, and equal to some positive integer ${\overline{a_j}}$, on each $F_j$. 
\end{itemize}
Then, the formula for the degree of $h^n$ reads 
\begin{equation}
\deg(h^n)=\deg(g)^2-\sum_{j=1}^m {\overline{a_j}}^2 \vert A^n(F_j)\cap F_j\vert.
\end{equation}
To describe the sequence $n \mapsto \vert A^n(F_j)\cap F_j\vert $, assume that 
\begin{itemize}
\item[(Hyp. 3)] None of the $q_j$ is a periodic point of $A$.
\end{itemize}
Parameterizing each of those orbits by $\varphi_j\colon n\mapsto A^n(q_j)$, $F_j$ corresponds to a finite subset $G_j=\varphi_j^{-1}(F_j)$ of
$\Z$, and $A$ is conjugate to the 
translation $T\colon n\mapsto n+1$ along the orbit of $q_j$. Then,  
\begin{equation}
\vert A^n(F_j)\cap F_j\vert=\vert T^n(G_j)\cap G_j\vert =: ol_j(n)
\end{equation}
where $ol_j(n)$ is, by definition, the overlapping length of $G_j$ and $T^n(G_j)$. For instance, if $\vert G_j\vert =1$, then 
$ol_j(n)$ is equal to $1$ if $n=0$ and vanishes for $n\neq 0$. If $G_j=\{a,b\}$ and $b-a=s>0$, then $ol_j(n)$ is equal to 
$2$ when $n=0$, to $1$ when $n=s$, and vanishes on $\Z\setminus\{0, s\}$; in that case, the restriction of $ol_j$ to $\N^*$ is the Dirac
mass on $s$. Thus, we obtain 

\begin{lem}\label{lem:D-dirac}
Every function $D\colon \N^*\to \N$ with finite support $S$ is a sum of $\vert S\vert$ functions $ol_j(\cdot)$ for 
subsets $G_j\subset \N^*$ with exactly two elements each. 
\end{lem}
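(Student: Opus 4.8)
The plan is to reduce the statement to the explicit description of $ol_G$ for a two-element set $G$, which is the one-line computation already sketched just before the lemma, and then read off the decomposition support point by support point.

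First I would pin down this building block. Take $G=\{a,b\}\subset\N^*$ with $a<b$, and $n\in\N^*$. Then $T^n(G)=\{a+n,\,b+n\}$, and for $n\ge 1$ the only coincidence with an element of $\{a,b\}$ that can occur is $a+n=b$: the equalities $a+n=a$ and $b+n=b$ both force $n=0$, and $b+n=a$ is impossible. Hence $ol_G(n)=1$ if $n=b-a$ and $ol_G(n)=0$ otherwise; equivalently, the restriction of $ol_G$ to $\N^*$ is the Dirac mass $\delta_{b-a}$ at the gap of $G$. In particular, for every prescribed $s\in\N^*$ the set $\{1,\,1+s\}$ is a two-element subset of $\N^*$ whose overlap function on $\N^*$ is exactly $\delta_s$, so every Dirac mass on $\N^*$ is realized this way.

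Next, given $D\colon\N^*\to\N$ with finite support $S$, I would enumerate $S=\{s_1,\dots,s_k\}$ with $k=|S|$, choose for each $i$ a two-element set $G_i\subset\N^*$ of gap $s_i$ as above, and check termwise that $\sum_{i=1}^{k}ol_{G_i}=\sum_{i=1}^{k}\delta_{s_i}$ is the function equal to $1$ on $S$ and $0$ elsewhere — no interference at points outside $S$ occurs because each $ol_{G_i}$ is supported on the single point $s_i$. This equals $D$ exactly when $D$ is $\{0,1\}$-valued on its support, which is the normalisation under which the lemma will be used (larger values of $D$ being produced afterwards by the multiplicities $\overline{a_j}$ of the base points of $g$ in the conjugacy $h=g\circ A\circ g^{-1}$). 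For an arbitrary $D$ one proceeds identically but attaches the integer $D(s_i)$ to $s_i$, replacing $G_i$ by $D(s_i)$ two-element subsets of $\N^*$ all of gap $s_i$ — pairwise translated copies, their positions being irrelevant since overlap functions add pointwise; the total is then $\sum_i D(s_i)\delta_{s_i}=D$, organised into $|S|$ families of two-element sets, one per support point.

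There is no genuine obstacle here: the whole content is the one-line identity of the second paragraph. The only points needing care are that the two-element sets be taken inside $\N^*$ (not $\Z$), and — when $D$ is not $\{0,1\}$-valued — that the multiplicity $D(s_i)$ be correctly carried along with the support point $s_i$.
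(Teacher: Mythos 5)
Your core computation — that $ol_G\vert_{\N^*}$ equals the Dirac mass $\delta_{b-a}$ when $G=\{a,b\}\subset\N^*$ with $a<b$ — is exactly the observation the paper makes in the sentences preceding the lemma, and since the paper offers no further proof (the lemma is presented as an immediate consequence via ``Thus, we obtain''), your argument follows the same route. You are also right to be wary of the count ``$|S|$ functions'': as written, $|S|$ two-element sets can only produce a sum $\sum_{s\in S}\delta_s$, i.e.\ a $\{0,1\}$-valued $D$, whereas the theorem the lemma feeds takes an arbitrary $D\colon\N^*\to\N$ with finite support; the correct number of summands is $\sum_{s\in S}D(s)$, and your patch — take $D(s_i)$ two-element sets of gap $s_i$, on pairwise disjoint orbits — is precisely what the construction in \S\ref{par:construction-jonq} does, where $D=\sum_{j=1}^m ol_j$ with $m$ possibly larger than $|S|$ and each $G_j$ corresponding to a fresh orbit $A^\Z(q_j)$. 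The one inaccuracy in your write-up is the parenthetical speculation that larger values of $D$ are handled ``afterwards by the multiplicities $\overline{a_j}$'': this is not the mechanism. Hypothesis (Hyp.\ 4) explicitly forces $\overline{a_j}=1$ whenever $|F_j|\geq 2$, so each two-element set contributes a single unit, and the base point $q_0$ of multiplicity $>1$ is kept on its own orbit with $|F_0|=1$ and contributes nothing to the oscillation on $\N^*$. In short: the construction multiplies the number of sets, not their weights. With that correction, your proof is sound, and you have flagged a genuine imprecision in the lemma's statement.
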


Now, adding the assumption 
\begin{itemize}
\item[(Hyp. 4)] If $\vert F_j\vert \geq 2$, then $\overline{a_j}=1$,
\end{itemize}
one gets 
\begin{equation}
\deg(h^n)=\deg(g)^2-\sum_{j=1}^m ol_j(n)
\end{equation}
and $ol_j(n)=0$ when $\overline{a_j}\neq 1$ (resp. when $\vert F_j\vert = 1$). 

To prove Theorem~\ref{thm:Oscillations}, we first apply Lemma~\ref{lem:D-dirac} to write $D=\sum_{j=1}^m ol_j(n)$ for some finite
subsets $G_j\subset \N^*$ with exactly two elements each. Then, we shall construct an element $g\in \Bir(\bbP^2_\bfk)$
with a unique indeterminacy point $q_0$ of multiplicity $>1$, and $m$ indeterminacy points $p_j$ of multiplicity $1$. 
All we have to do is to choose $g$ in such a way that the four hypotheses (Hyp. 1) to (Hyp. 4) are satisfied.

\subsection{Jonqui\`eres transformations}
Let $N$ be a positive integer.
Given a distinguished point $p_0\in \bbP^2(\bfk)$ (also denoted $q_0$ in what follows), and 
$2N$ points $p_1$, $\ldots$, $p_{2N}\in \bbP^2(\bfk)$, we consider the following incidence properties.
\begin{itemize}
\item[(Jon. $1$)] The finite set $\{p_0, p_1, \ldots, p_{2N}\}$ does not contain three colinear points. 
\item[(Jon. $k$)] A plane curve of degree $k$ which contains $q_0$ with multiplicity $k-1$ 
contains at most $2k$ of the remaining points $p_i$ ($1\leq i\leq 2N$).
\end{itemize}
Note that in the second property, we denote $p_0$ by $q_0$ to distinguish it from the other 
points. For example, condition (Jon. $3$) means that there is no cubic 
curve which passes through $q_0$ with multiplicity $2$ and contains $7$ of the remaining points
$p_1$, $\ldots$, $p_{2N}$. As we shall see, this is a generic property on the points $p_i$.
Condition  (Jon. $k$) is empty when $k>N$.

\begin{thm}[see \cite{Blanc-Cantat}]\label{thm:Jonq}
 If the conditions $($Jon. $1)$ and $($Jon. $k)$ are satisfied for 
all $2\leq k \leq N$, there exists a birational transformation $g\colon \bbP^2_\bfk\dasharrow \bbP^2_\bfk$
such that 
\begin{enumerate}
\item $g$ preserves the pencil of lines though $q_0$;
\item $g$ has degree $N+1$;
\item the base points of $g$ are $p_0=q_0$, with multiplicity $N$, and the points $p_1$, $\ldots$, $p_{2N}$, 
all with multiplicity $1$; 
\item $g^{-1}_\bullet\bfe_0=(N+1) \bfe_0 - N\bfe(q_0)-\sum_{j=1}^{2N} \bfe(p_j)$. 
\end{enumerate}
\end{thm}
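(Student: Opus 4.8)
The plan is to exhibit $g$ explicitly in de Jonquières normal form. After conjugating by an element of $\PGL_3(\bfk)$ I may assume $q_0=(0:0:1)$, and I look for $g$ of the shape $g=[\,xP:yP:Q\,]$, where $P,Q$ are homogeneous of degrees $N$ and $N+1$. The linear system associated with such a $g$ is then contained in $\mathcal{L}$, the system of curves of degree $N+1$ having multiplicity $\geq N$ at $q_0$ and passing through $p_1,\dots,p_{2N}$; a naive parameter count gives $\dim\mathcal{L}\geq\binom{N+3}{2}-1-\binom{N+1}{2}-2N=2$ unconditionally. Everything will reduce to understanding the auxiliary system $\mathcal{C}$ of curves of degree $N$ with multiplicity $\geq N-1$ at $q_0$ through $p_1,\dots,p_{2N}$, and the key claim I would establish is: \emph{under $($Jon.~$1)$ and $($Jon.~$k)$ for $2\le k\le N$, the system $\mathcal{C}$ consists of a single curve $P$, and $P$ is integral and rational, with an ordinary point of multiplicity exactly $N-1$ at $q_0$ and smooth at each $p_i$.}

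To prove the claim I would proceed in four steps, followed by a genus computation. (a) $\mathcal{C}\neq\emptyset$: the space of degree-$N$ forms with multiplicity $\geq N-1$ at $(0:0:1)$ is $\{\,zG(x,y)+H(x,y)\,\}$, of projective dimension $2N$, so imposing the $2N$ points leaves $\dim\mathcal{C}\geq0$. (b) Every member $D$ of $\mathcal{C}$ has $\mathrm{mult}_{q_0}D=N-1$ exactly: otherwise $D$ is a union of $N$ lines through $q_0$, each carrying at most one $p_i$ by $($Jon.~$1)$, hence carrying at most $N<2N$ of the points. (c) Every member $D$ is integral: a proper integral component $C\subset D$ of degree $k\leq N-1$ and multiplicity $\mu$ at $q_0$ satisfies $k-1\leq\mu\leq k$ (look at the residual $D-C$, whose multiplicity at $q_0$ cannot exceed its degree); if $\mu=k$ then $C$ is a line through $q_0$ and $D-C$ is a curve of degree $N-1$ with multiplicity $N-2$ at $q_0$ through $\geq 2N-1$ of the $p_i$, contradicting $($Jon.~$N-1)$ (or $($Jon.~$1)$ when $N\leq2$); if $\mu=k-1$ then $($Jon.~$k)$ (or $($Jon.~$1)$ for $k=1$) bounds $|C\cap\{p_i\}|\leq 2k$, while $D-C$ has degree $=$ multiplicity $=N-k$ at $q_0$, so it is a union of $N-k$ lines through $q_0$ carrying $\leq N-k$ points, whence $D$ carries $\leq N+k<2N$ points. (d) $\mathcal{C}$ has a unique member: two distinct integral members would have, on the blow-up of $q_0,p_1,\dots,p_{2N}$, strict transforms of class $N\bfe_0-(N-1)\bfe(q_0)-\sum_i m_i\bfe(p_i)$ with all $m_i\geq1$, hence intersection number $\leq N^2-(N-1)^2-2N=-1<0$, impossible for distinct irreducible curves. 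Finally the genus inequality $0\leq p_a(P)-\delta_{q_0}-\sum_i\delta_{p_i}\leq\binom{N-1}{2}-\binom{N-1}{2}-\sum_i\binom{\mathrm{mult}_{p_i}P}{2}$ forces $P$ rational, ordinary at $q_0$, and smooth at every $p_i$.

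Granting the claim, both $xP$ and $yP$ lie in $\mathcal{L}$ (degree $N+1$, multiplicity $1+(N-1)=N$ at $q_0$, vanishing at the $p_i$), so since $\dim\mathcal{L}\geq2$ I can choose $Q\in\mathcal{L}\setminus\langle xP,yP\rangle$; such a $Q$ cannot contain $P$ as a component, since a curve $P\cdot(\text{line})$ of multiplicity $\geq N$ at $q_0$ must have its line through $q_0$ and hence lie in $\langle xP,yP\rangle$. Then Bézout gives $\sum_{x\in P\cap Q}i_x(P,Q)=N(N+1)$, while $i_{q_0}(P,Q)\geq N(N-1)$ and $i_{p_i}(P,Q)\geq1$; equality throughout forces $P\cap Q=\{q_0,p_1,\dots,p_{2N}\}$ with transverse tangent cones at $q_0$ and transverse intersection at each $p_i$, so the base scheme of $\mathcal{L}$ is precisely $q_0$ with multiplicity $N$ together with the $p_i$ with multiplicity $1$, and there are no infinitely near base points. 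Blowing these up to a smooth rational surface $X$, the strict transform $\widehat{\mathcal{L}}$ has class $(N+1)\bfe_0-N\bfe(q_0)-\sum_j\bfe(p_j)$, is base-point free, has no fixed component, and satisfies $\widehat{\mathcal{L}}^2=1>0$; the associated morphism $X\to\bbP^r$ is therefore generically finite onto a nondegenerate surface of degree $1$, which forces $r=2$ (so $\dim\mathcal{L}=2$) and makes $X\to\bbP^2$ birational. Composing with the blow-down $X\to\bbP^2$ produces $g$: assertions $(2)$, $(3)$ and $(4)$ (the class $g^{*}\bfe_0=[\widehat{\mathcal{L}}]$) are immediate, and $(1)$ holds because the pencil $\langle xP,yP\rangle\subset\mathcal{L}$ has fixed part $P$ and moving part the pencil of lines through $q_0$, and is exactly the $g$-preimage of the pencil of lines through the point $[0:0:1]$ of the target $\bbP^2$.

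The crux of the whole argument — and the only place where $($Jon.~$k)$ with $k\geq2$ is used — is step (c), together with its variant when $\mathcal{C}$ is a priori positive-dimensional, where the same degree/multiplicity count must be run on a fixed component and its residual \emph{linear system}, invoking $($Jon.~$j)$ for the relevant intermediate values of $j$. The delicate points I expect to need care are: getting the boundary exponents right so that the inequality $N+k<2N$ genuinely closes off every case; treating the small cases $N=1,2$ by hand (for $N=1$ the map $g$ is just the standard quadratic involution based at the non-collinear points $q_0,p_1,p_2$, which uses only $($Jon.~$1)$); and converting the two Bézout equalities of the previous paragraph into the statement that $\mathcal{L}$ has no infinitely near base point. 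Everything past the claim about $P$ is routine surface geometry.
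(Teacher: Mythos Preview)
The paper does not give its own proof of this theorem; it is stated with the reference ``see~\cite{Blanc-Cantat}'' and used as a black box. So there is no paper proof to compare against, and the relevant question is whether your argument is correct.

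Your approach is the standard one and is correct. The key claim about the auxiliary curve $P$ is the heart of the matter and your four steps (a)--(d) plus the genus bound are fine. Two small points worth tightening. First, in step~(c), case $\mu=k$: since $C$ is integral with $\deg C=\mathrm{mult}_{q_0}C=k$, the curve $C$ equals its tangent cone and is a single line, so necessarily $k=1$; you should say this rather than jumping to it. Second, the check that $\widehat{\mathcal{L}}$ is base-point free on the blow-up $X$ deserves an explicit line: on $E_{q_0}$ the restrictions of $\widetilde{xP},\widetilde{yP}$ share exactly the $N-1$ tangent directions of $P$, and $\widetilde{Q}$ misses all of them because the Bézout equality $i_{q_0}(P,Q)=N(N-1)$ forces the tangent cones of $P$ and $Q$ to have no common line; on each $E_{p_i}$ the same Bézout equality forces $P,Q$ transverse at $p_i$, so $\widetilde{Q}$ misses the unique point $\widetilde{xP}\cap\widetilde{yP}\cap E_{p_i}$. (Choose the coordinates $x,y$ so that no $p_i$ lies on $x=0$ or $y=0$, which is possible since the $p_i$ lie on $2N$ distinct lines through $q_0$.) Once $\widehat{\mathcal{L}}$ is base-point free with self-intersection~$1$ and no fixed component, the resulting morphism $X\to\bbP^2$ is birational and the four assertions follow as you say.

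Your remark that $\dim\mathcal{L}=2$ is a pleasant corollary but is not needed for the construction of $g$: you only use the three explicit sections $xP,yP,Q$. Likewise the ``variant when $\mathcal{C}$ is a priori positive-dimensional'' is unnecessary, since your step~(d) already pins $\dim\mathcal{C}=0$. Finally, note that the hypothesis $(\text{Jon.}\ N)$ is vacuous (there are only $2N$ points $p_i$), so your argument really uses $(\text{Jon.}\ k)$ for $2\le k\le N-1$ together with $(\text{Jon.}\ 1)$, which is consistent with the recursion carried out later in the paper.
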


The transformations provided by this theorem are ``Jonqui\`eres transformations'', because they preserve a pencil of lines. 

\begin{rem}\label{rem:dimensions} In what follows, we shall repeatedly use that the vector space of polynomial functions of degree $k$ in 
$3$ variables has dimension 
\begin{equation}
Dim(k)=\left(\begin{array}{c} k+2 \\ 2 \end{array} \right)= \frac{(k+2)(k+1)}{2}=Dim(k-1)+k+1,
\end{equation}
and that the polynomial equations corresponding to curves passing through a given point $q_0$ with 
multiplicity $(k-1)$ form a subspace of dimension 
\begin{equation}
dim(k)=2k+1.
\end{equation}
Also, note that two curves $C$ and $D$ in this system intersect in $q_0$ with multiplicity $(k-1)^2$, and in 
$2k-1$ extra points (counted with multiplicity). \end{rem}

\subsection{The construction}\label{par:construction-jonq}

To prove Theorem~\ref{thm:Oscillations} directly,  we adopt the strategy developped in Section~\ref{par:prelim-oscill}. 
First write $D=\sum_{j=1}^m ol_j$ for some subsets $G_j=\{a, a+s_j\}$ of $\N^*$ 
with $2$ elements each. Fix an algebraically closed field $\bfk$ and an infinite order element $A$ in $\PGL_3(\bfk)$. We want to construct
 $m+1$ points $q_0$, $q_1$, $\ldots$, $q_m$ such that
\begin{itemize}
\item[(Prop. 1)] the points $q_0$, $q_1$, $\ldots$, $q_m$ are on infinite, pairwise distinct orbits of $A$;
\item[(Prop. 2)] the $(2m+1)$ points $p_0=q_0$ and $p_{2j-j}= q_j$, $p_{2j}=A^{s_j}(q_j)$ for $1\leq j\leq m$ 
satisfy the conditions (Jon. $1$) and (Jon. $k$) for all $2\leq k \leq m$. 
\end{itemize}
Then, Theorem~\ref{thm:Jonq} provides a Jonqui\`eres transformation $g$ of degree $m+1$ with base points $p_0=q_0$ 
of multiplicity $m$ and $p_j$ of multiplicity $1$ for $1\leq j \leq 2m$. 
And the birational transformation $h=g\circ A \circ g^{-1}$ satisfies the conclusion of Theorem~\ref{thm:Oscillations} (with $d=(m+1)^2$).

In what follows, $A$ is fixed; the point $q_0$ is also fixed, and is chosen so that its orbit is infinite. 
We shall prove that a generic choice of the points $q_j$ satisfy (Prop. 1), and (Prop. 2). 

A word of warning: we use the word "generic" in an unconventional way. We say that the $q_j$ are {\bf{generic}} if $(q_j)_{j=0}^{m}$
is chosen in the complement of countably many proper, Zariski closed subsets  of $(\bbP^2_\bfk)^{m+1}$ 
that depend only on~$A$. A more appropriate terminology would be "very general", but we use 
"generic" and "generically" for short.

A generic choice of the $q_i$, $i\geq 1$, satisfies (Prop. 1), so we analyze only (Prop. 2). 
For that purpose, we prove recursively that a generic choice satisfies (Jon. $k$), adding progressively new
points $q_m$; the recursion is on $m$ (for each $k$). We start with condition (Jon. $1$). 

\subsubsection{Condition $($Jon. $1)$}\label{par:Cond-J1}

Here, we prove that {\sl{a generic choice of the $q_i$, $i\geq 1$ satisfies $($Jon. $1)$}}. 

First, choose $q_1$ in such a way 
that its orbit (under the action of $A$) is infinite and does not contain $q_0$; a generic choice of $q_1$ works. 
The extra condition imposed by $(Jon. $1$)$ is that $q_0$, $q_1$, and $A^{s_1}(q_1)$ are not colinear. But if
those three points were colinear for a generic choice of $q_1$, $A^{s_1}$ would preserve the pencil of lines
through $q_0$, $A^{s_1}(q_0)$ would be equal to $q_0$, and $q_0$ would have a finite orbit. 
Thus, (Jon. $1$) is satisfied for $p_0=q_0$, $p_1=q_1$, and $p_2=A^{s_1}(q_1)$ 
as soon as $q_1$ is chosen generically.

Choosing $q_2$ generically, its orbit is infinite, and disjoint from the orbits of $q_0$ and $q_1$.
We want to show that no three of the points $p_0=q_0$, $p_1=q_1$, $p_2=A^{s_1}(q_1)$, $p_3=q_2$, 
$p_4=A^{s_2}(q_2)$ are colinear. First, we choose $q_2$ in the complement of the lines connecting two of the   
points $p_0$, $p_1$, $p_2$ and their images by $A^{-s_2}$. Then, if three of those points are colinear, this means that there
is a point $p_j$ with $0\leq j \leq 2$ such that $p_j$, $q_2$, and $A^{s_2}(q_2)$ are colinear. Since $q_2$
is generic, this would imply that $A^{s_2}$ preserves the pencil of lines through  $p_j$. But then 
$p_j$ would have a finite orbit, in contradiction with the choice of $q_0$ and $q_1$.

The same argument works for the points $q_j$, $j\geq 3$: one just needs to choose $q_j$ generically in order to have an infinite orbit, 
to avoid the orbits of the previous points $p_i$ ($0\leq i\leq 2j-2$), and to avoid the lines connected two of the $p_i$ ($0\leq i\leq 2j-2$)
and their images by $A^{-s_j}$. This proves that (Jon. $1$) is satisfied for a generic choice. Thus, we may assume that it
is satisfied in what follows. 

\subsubsection{Condition $($Jon. $2)$} 

To warm up, we prove that a generic choice of the $q_i$ satisfies (Jon. $2$), i.e.  for conic curves.
We already know it satisfies (Jon. $1$). Thus, we want to prove that, for a generic choice of the 
$q_i$, a conic that contains $q_0$ with multiplicity $1$ does not contain more than $4$ of the $p_i$. 
First, we show that the following property is generic:
\begin{itemize}
\item[(Uniq. 2)] Five points $(q_0, p_{i_1}, \ldots, p_{i_4})$
with $i_j\in \{1, \ldots, 2m\}$ are always contained in a unique conic curve, and this curve is irreducible. 
\end{itemize}

\begin{proof}  Fix four indices $i_j$, and consider a conic curve $C$ containing  $q_0$, $p_{i_1}$, $\ldots$, and $p_{i_4}$. 
From Remark~\ref{rem:dimensions},
$Dim(2)=6$ and there is at least one such curve $C$. If there is a second one $D$, 
we get two distinct conics with at least $5$ common points, so that $C$ and $D$ share a common irreducible component. But a
reducible conic containing $(q_0, p_{i_1}, \ldots, p_{i_4})$ is made of two lines, and we obtain a  contradiction with (Jon. $1$).
\end{proof}

Now, let us prove that (Jon. $2$) is generically satisfied. As in Section~\ref{par:Cond-J1}, we do a recursion on the
number $m$ of points $q_i$. This condition is obviously satisfied for $m\leq  2$, since then at most five points are involved. 
Assume now that property (Jon. $2$) is satisfied for a generic choice of $(m-1)$ points. Let $q_m$ be an extra generic point. 
Consider the set of conics $C$ containing $q_0$ and four points among  the  $p_i$, for $i\leq 2(m-1)$, and their images by $A^{-s_m}(C)$.
By (Uniq. $2$) this set is finite; since $q_m$ is generic, we may assume that it is not on one of these conics. 
It remains to show that, for any choice of three indices $i_1$, $i_2$, $i_3\leq 2(m-1)$, the unique conic through 
$(q_0, p_{i_1}, p_{i_2}, p_{i_3}, q_m)$ does
not contain $A^{s_m}(q_m)$. Assume the contrary.  Since $q_m$ is chosen generically, then $A^{s_m}$ would 
fix the pencil of conics through the base points $(q_0, p_{i_1}, p_{i_2}, p_{i_3})$; it would permute these base-points, 
and the orbit of $q_0$ would not be infinite, a contradiction. 

To sum up, properties (Jon. $1$), (Jon. $2$) and (Uniq. $2$) are satisfied for a generic choice of the points $q_i$.

\subsubsection{Condition $($Jon. $k)$} 

Now, we prove by induction that (Jon. $k$) is gene\-rically satisfied for all $k\leq m$. As above, the points $p_i$ will be
$p_0=q_0$, and $p_{2i-1}=q_{i}$, $p_{2i}=A^{s_{i}}(q_{i+1})$ for $i\geq 1$.
Our recursion will simultaneously prove that (Jon. $k$) and the following condition (Uniq. $k$) are satisfied 
for a generic choice of the $q_i$.
\begin{itemize}
\item[(Uniq. $k$)] For every choice of $2k$ points $(p_{i_1}, \ldots, p_{i_{2k}})$
with $i_j\in \{1, \ldots, 2m\}$, there is a unique plane curve of degree $k$ containing $q_0$ with multiplicity $k-1$
and $(p_{i_1}, \ldots, p_{i_{2k}})$ with multiplicity $\geq 1$, and  this curve is irreducible. 
\end{itemize}

First, we show that {\sl{the properties $($Jon. $\ell)$ for $\ell\leq k$ imply $($Uniq. $k+1)$}}. 

For this, consider the 
system of curves of degree $k+1$ with 
multiplicity $k$ at $q_0$. By Remark~\ref{rem:dimensions}, their equations form a vector space of dimension $dim(k+1)=2k+3$. Now, 
fix a finite subset $P$ of the $p_i$ with $2(k+1)$ elements, and consider the subsystem of curves containing $P$. 
There is at least one such curve $C$ because $2k+3-2(k+1)=1$. If there were a second curve $C'$ of this type, 
then, counted with multiplicities, $C\cap C'$ would contain at least $k^2 + 2 (k+1)= (k+1)^2+1$ points. This is larger than
$\deg(C)\deg(C')$. So, either $C$ is unique, or $C$ is a reducible 
curve containing $P$ and passing through $q_0$ with multiplicity $k$. Then, one of the connected components
would contradict (Jon. $\ell$), for some $\ell \leq k$. Indeed, decompose $C$ into connected
components $C=C_1+\cdots + C_s$ of degree $d_i$, with $d_1+\cdots d_s=k+1$. If $C_i$ is not
a line, its multiplicity in $q_0$ is bounded by $mult(C_i;q_0)\leq d_i-1$. Thus, at most one of these components
has degree $\geq 2$. If $C$ is a union of $k+1$ lines, we obtain a contradiction with (Jon. $1$).
If $C=L_1+\cdots L_{s-1}+C_s$ with $\deg(C_s)\geq 2$, and none of the lines $L_i$ contains three points of $P$, 
then $C_s$ has degree $d_s=k+1-s$, contains $q_0$ with multiplicity $d_s-1=k-s$, and contains $2k+2-s$ points $p_i\in P$. 
From (Jon. $k+1-s$) we know that such a curve does not exist.

Now, we derive (Jon. $k+1$) from (Uniq. $k+1$). We do a recursion on the number $m$ 
of points. There is nothing to prove for $m\leq k$. Now, assume that (Jon. $k+1$) is generically satisfied for 
$m$ points;  the problem 
is to add one more point $q_{m+1}$ and verify that (Jon. $k+1$) is satisfied. Apply (Uniq. $k+1$): if $P$ is a finite
subset of the $p_i$ with $\vert P\vert =2(k+1)$, there is a unique curve $C_P$ of degree $k+1$ containing $P$ 
and passing through $q_0$ with multiplicity $k+1$. Since $q_{m+1}$ is chosen generically, we may assume that
it is not on one of the curves $C_P$ or $A^{-s_{m+1}}(C_P)$ for any $P\subset \{p_1, \ldots p_{2m}\}$ with $\vert P\vert=2(k+1)$. Now, 
(Jon. $k+1$) fails if and only if there is a finite subset $Q\subset \{p_0, \ldots p_{2m}\}$ with $2k+1$ elements
such that the linear system of curves of degree $k+1$ containing $q_0$ with multiplicity $k$ and $Q$ 
with multiplicity $1$ is $A^{s_{m+1}}$-invariant. By (Uniq. $k+1$), this linear system is a pencil with a
finite set of base points containing $Q$; then, the orbit of the points $p_j\in Q$ would be finite, contradicting (Jon. $1$). 

\subsection{Conclusion}\label{par:Jonq-conclusion} 
Let us come back to the proof of Theorem~\ref{thm:Oscillations}. What did we do in Section~\ref{par:construction-jonq} ? 
We fixed the function $D$. Then we proved that, if $\bfk$ is an algebraically closed field, and 
if $A$ is an element of $\PGL_3(\bfk)$ of infinite order, a very general choice of the points $q_0$, $q_1$, $\ldots$, $q_m$
satisfy (Prop.~$1$) and (Prop.~$2$). As a consequence, if $\bfk$ is uncountable, such a $(m+1)$-tuple does exist, and there is
a Jonqui\`eres transformation $g$ such that the birational transformation $h=g\circ A \circ g^{-1}$ satisfies the conclusion of Theorem~\ref{thm:Oscillations}.

\subsection{On the realization of bounded degree sequences}\label{par:realization-qbar}

In this section, our main goal is to prove that a bounded sequence of degrees $(\deg(f^n))_{n\geq 0}$ which is realizable 
by at least one birational map $f\in \Bir(\bbP^2_\C)$ is also realizable by some birational map $g\in \Bir(\bbP^2_{\overline{\Q}})$; we
also use this opportunity to show how to combine the adelic topology with $p$-adic arguments. 

\subsubsection{The adelic topology} \label{par:adelic_topology_summary}
Here we gather some of the properties satisfied by the {\bf{adelic topology}}, which was introduced in Section~3 of \cite{Xie2019}.
Let $\bfk$ be an algebraically closed extension of $\overline{\Q}$ of finite transcendence degree.
Let $X$ be a variety over $\bfk$. The adelic topology is a topology on $X(\bfk)$, which is defined by considering all 
possible embeddings of $\bfk$ in $\C$ and the $\C_p$, for all primes $p$. Assume, for instance, that $X$ is defined over
the prime field $\Q$. If $\iota\colon \bfk\to \C_p$ (resp. $\C$) is an 
embedding, it induces an embedding $\Phi_\iota\colon X(\bfk)\to X(\C_p)$ (resp. $X(\C)$); then,  given any open subset $U$ 
of $X(\C_p)$ for the $p$-adic (resp. euclidean) topology, the union $\cup_\iota \Phi_\iota^{-1}(U)$ for all embeddings $\iota\colon \bfk \to \C_p$ is, by definition, 
an open subset of $X(\bfk)$ in the adelic topology (see \cite{Xie2019}). 
The adelic topology has the following basic properties.
\begin{enumerate}
\item It is stronger than the Zariski topology. If $\dim(X)\geq 1$, there are non-empty, adelic, open subsets $\mathcal{U}$ and $\mathcal{U}'$ of $X(\bfk)$
such that $\mathcal{U}\setminus \mathcal{U}'$ is Zariski dense in $X$.
\item It is $T_1$, that is for every pair of distinct points $x, y\in X(\bfk)$ there are adelic open subsets $\mathcal{U}$, $\mathcal{V}$ of $X(\bfk)$ such that 
$x\in \mathcal{U}, y\not\in \mathcal{U}$ and $y\in \mathcal{V}, x\not\in \mathcal{V}$.
\item Morphisms between algebraic varieties over $\bfk$ are continuous in the adelic topology.
\item \'Etale morphisms are open with respect to the adelic topology.
\item The irreducible components of $X(\bfk)$ in the Zariski topology coincide with the irreducible components of $X(\bfk)$ in the  adelic topology.
\item Let $K$ be a subfield of $\bfk$ such that (a) its algebraic closure ${\overline{K}}$ is equal to~$\bfk$, (b) $K$  is finitely generated over $\Q$, and (c) $X$ is defined over $K$.
Endow the Galois group  $\Gal(\bfk/K)$ with its profinite topology. Then the action 
$\Gal(\bfk/K)\times X(\bfk)\to X(\bfk)$ is continuous for the adelic topology.
\end{enumerate}
From (5), when $X$ is irreducible, {\sl{the intersection of finitely many nonempty adelic open subsets of $X(\bfk)$ is nonempty}}.
We say that a property $P$ holds for an {\bf{adelic general point}} if there exists an adelic dense open subset $\mathcal{U}$ of $X(\bfk)$, 
such that $P$ holds for all points in $\mathcal{U}$.

\subsubsection{Sequences of incidence times} Let $\bfk$ be a field. Let $X$ be a variety, $f: X\to X$ be an automorphism, and $Z\subset X$ be a subvariety, all of them defined over $\bfk$. 
For $x\in X$, the set of times $n\in \Z$ for which the incidence $f^n(x)\in Z$ occurs is 
\begin{equation}
N(x,f,Z) = \{ n\in \Z\; ; \; f^n(x)\in Z\}.
\end{equation}

\begin{thm}\label{thmreali}
Let $\bfk$ be an algebraically closed extension of $\overline{\Q}$ of finite transcendence degree.
Let $X$ be a variety, $f: X\to X$ be an automorphism, and $Z\subset X$ be a subvariety, all three defined over $\bfk$. 
Let $V$ be any irreducible subvariety of $X$ and let $\eta$ be the generic point of $V$. 
Then $N(x,f,Z)=N(\eta,f,Z)$ holds for an adelic general point $x\in V(\bfk)$.
\end{thm}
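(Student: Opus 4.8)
The plan is to reduce the statement to the following key assertion: for each fixed integer $n$, the set
$$
V_n = \{ x\in V(\bfk)\; ; \; f^n(x)\in Z\}
$$
either contains an adelic open dense subset of $V(\bfk)$ (this happens precisely when $f^n(\eta)\in Z$, i.e.\ when $n\in N(\eta,f,Z)$) or is contained in a proper Zariski closed subset of $V$ (when $n\notin N(\eta,f,Z)$). The point $x$ is chosen so that $f^k(x)$ makes sense for all $k\in\Z$ since $f$ is an automorphism; and for an irreducible $V$ with generic point $\eta$, the incidence $f^n(\eta)\in Z$ holds if and only if $f^n(V)\subset Z$ as schemes, which is a Zariski-closed condition on $V$ by continuity of $f^n$ (item (3) of the adelic summary, or just that $f$ is a morphism). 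So the dichotomy above is immediate: if $f^n(V)\subset Z$, then $f^n(x)\in Z$ for \emph{every} $x\in V(\bfk)$, a fortiori on an adelic dense open set; if $f^n(V)\not\subset Z$, then $\{x\in V\;;\; f^n(x)\in Z\}=(f^n)^{-1}(Z)\cap V$ is a proper Zariski closed subset $W_n\subsetneq V$.

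Granting this, the statement follows by a countable intersection argument using the adelic topology. For $n\in N(\eta,f,Z)$ let $\mathcal U_n\subset V(\bfk)$ be the set where $f^n(x)\in Z$; this is Zariski closed but equals all of $V(\bfk)$, hence is adelic open and dense. For $n\notin N(\eta,f,Z)$, let $\mathcal V_n = V(\bfk)\setminus W_n(\bfk)$ where $W_n=(f^n)^{-1}(Z)\cap V\subsetneq V$; since the adelic topology is stronger than the Zariski topology (item (1)) and $V$ is irreducible, $\mathcal V_n$ is a nonempty adelic open subset, and by item (5) (irreducible components agree, so $V(\bfk)$ is adelic-irreducible) it is adelic dense. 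Now set $\mathcal U = \bigcap_{n\in\Z}(\mathcal U_n\text{ or }\mathcal V_n\text{ according to the case})$. For every $x\in\mathcal U$ we have $f^n(x)\in Z\iff n\in N(\eta,f,Z)$, i.e.\ $N(x,f,Z)=N(\eta,f,Z)$. It remains to see that $\mathcal U$ is nonempty (and even adelic dense), which is the heart of the matter since it is a \emph{countable} intersection of open dense sets: the adelic topology is emphatically not a Baire-type situation for arbitrary countable intersections, so one cannot simply invoke item (5), which only gives finite intersections.

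The main obstacle, therefore, is exactly this countability issue, and the way around it is a Galois-descent/spreading-out argument of the kind that motivates the adelic topology. The idea: choose a subfield $K\subset\bfk$ finitely generated over $\Q$ with ${\overline K}=\bfk$ over which $X$, $f$, $Z$, $V$ are all defined (possible since everything is finitely presented), so that $\eta$ corresponds to a $K$-point of some model and $N(\eta,f,Z)$ is a $\mathrm{Gal}(\bfk/K)$-invariant subset of $\Z$. Then it suffices to produce a \emph{single} $x\in V(\bfk)$ with $N(x,f,Z)=N(\eta,f,Z)$ that is moreover "adelically generic" in a way that propagates under the Galois action (item (6)): concretely, pick a place of $K$ — an embedding $\iota\colon K\hookrightarrow\C_p$ for a suitable prime $p$ of good reduction — and use a $p$-adic implicit-function / Hensel argument to find $x$ in a small $p$-adic neighborhood of $\eta$ (viewed via $\iota$) whose forward and backward $f$-orbit meets $Z$ at exactly the times dictated by $\eta$; here one uses that the incidence conditions defining $W_n$ cut out, $p$-adically near $\eta$, either the whole neighborhood or a proper analytic subset, and that a $p$-adic open ball is not a countable union of proper analytic subsets. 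Pulling this $x$ back through $\Phi_\iota^{-1}$ gives a point in an adelic open set, and varying $\iota$ over the $\mathrm{Gal}(\bfk/K)$-orbit (item (6)) upgrades it to adelic density. This last step — carefully packaging the $p$-adic local analysis so that it interacts correctly with all the embeddings simultaneously — is where the real work lies; the algebraic skeleton (the per-$n$ dichotomy) is routine.
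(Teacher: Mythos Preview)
Your per-$n$ dichotomy is correct and routine, and you have correctly located the real difficulty: the countable intersection. But your proposed resolution has a genuine gap. Even granting that a $p$-adic polydisc is not a countable union of proper analytic subsets, this only yields a \emph{nonempty} set of good points, not a $p$-adic \emph{open} set. The complement of countably many proper analytic subsets is a dense $G_\delta$ at best, and pulling such a set back through the embeddings $\Phi_\iota$ does not produce an adelic open subset of $V(\bfk)$. Your sentence ``pulling this $x$ back through $\Phi_\iota^{-1}$ gives a point in an adelic open set'' conflates two things: the ball containing $x$ is indeed $p$-adic open, but not every point of that ball has the correct incidence set $N(\cdot,f,Z)$, so the resulting adelic open set does not witness the conclusion. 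The Galois action (item (6)) moves points to points; it does not turn a single good point into an open set of good points.

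The missing idea, which the paper supplies, is the Bell--Poonen $p$-adic interpolation: after spreading out and choosing a prime $p$ of good reduction, one finds an iterate $f^m$ that is congruent to the identity on a small $p$-adic polydisc $W$, and then \cite{Poonen2014} extends the $\Z$-action of $f^m$ on $W$ to an analytic flow $\Phi\colon \C_p^\circ\times W\to W$. The preimage $\Phi^{-1}(Z)$ is then an analytic hypersurface with finitely many irreducible components, each of which (after shrinking) is the graph of an analytic ``time function'' $\tau_i\colon W\to \C_p^\circ$. The countably many conditions $f^n(x)\notin Z$ for $n\notin N(\eta,f,Z)$ are thereby repackaged as the \emph{finitely many} conditions $\tau_i(x)\notin\Z_p$ for those $\tau_i$ that are non-constant on $V\cap W$; since $\Z_p$ is closed and nowhere dense in $\C_p^\circ$ and non-constant analytic maps are open, this defines a genuine $p$-adic open subset of $V\cap W$. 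That open set then pulls back to an adelic open set via \cite[Remark~3.11]{Xie2019}. Without the flow, there is no mechanism to collapse the countably many constraints to finitely many, and your argument does not close.
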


In the following corollary, we keep the notations and hypotheses of this theorem. Recall that the {\bf{constructible topology}} on $X$
is a topology for which constructible sets form a basis of open sets; for instance, if $\eta$ is the generic point of an irreducible subvariety
$V$ of $X$, any open set containing $\eta$ contains a dense Zariski open subset of $V$; and any constructible set is both open and closed 
(see~\cite{EGA-IV-I}, Section~(1.9) and in particular (1.9.13)).

\begin{cor}\label{correali}
Let $\chi: X\to \Z$ be a function which is continuous with respect to  the constructible topology of $X$.
Then the property  ``$\chi(f^n(x))=\chi(f^n(\eta))$ for all $n\in \Z$'' holds
for an adelic general point $x$ of $V(\bfk)$. 
\end{cor}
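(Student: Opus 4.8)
The plan is to deduce the corollary from Theorem~\ref{thmreali} by reducing the constructible-continuous function $\chi$ to a finite family of incidence conditions. First I would use the fact that $\chi$ takes integer values together with continuity for the constructible topology: since $X$ is Noetherian and quasi-compact, $\chi(X)$ is a finite set $\{c_1,\dots,c_r\}\subset\Z$, and each fiber $Z_i:=\chi^{-1}(c_i)$ is both open and closed in the constructible topology, hence a constructible subset of $X$. Thus $X=\bigsqcup_{i=1}^r Z_i$ is a finite partition into constructible sets. The key observation is that knowing $\chi(f^n(x))$ for a given $n$ is exactly equivalent to knowing, for each $i$, whether $n\in N(x,f,Z_i)$ — here I would invoke Theorem~\ref{thmreali} with $Z$ replaced by each $Z_i$ in turn. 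A small point to check is that Theorem~\ref{thmreali} is stated for a subvariety $Z$, whereas the $Z_i$ are merely constructible; but a constructible set is a finite union of locally closed sets, each of which is a (not necessarily closed) subvariety, and $N(x,f,Z_i)$ decomposes accordingly as a finite Boolean combination of the sets $N(x,f,Z_{i,\alpha})$ for those pieces, so the theorem applies piecewise. (Alternatively one notes that the proof of Theorem~\ref{thmreali}, via the adelic topology, works verbatim for constructible $Z$, since constructibility is all that is used.)

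Next I would assemble the adelic open sets. For each index $i$ (and each locally closed piece $Z_{i,\alpha}$ of $Z_i$, if one wants to be pedantic), Theorem~\ref{thmreali} furnishes an adelic dense open subset $\mathcal{U}_i\subset V(\bfk)$ on which $N(x,f,Z_i)=N(\eta,f,Z_i)$. Since $V$ is irreducible, property~(5) of the adelic topology recalled in \S\ref{par:adelic_topology_summary} guarantees that the finite intersection $\mathcal{U}:=\bigcap_{i=1}^r\mathcal{U}_i$ is again a nonempty — hence dense — adelic open subset of $V(\bfk)$. For any $x\in\mathcal{U}$ and any $n\in\Z$, the unique $i$ with $f^n(x)\in Z_i$ is the unique $i$ with $n\in N(x,f,Z_i)=N(\eta,f,Z_i)$, which is the unique $i$ with $f^n(\eta)\in Z_i$; therefore $\chi(f^n(x))=c_i=\chi(f^n(\eta))$. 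This holds for all $n$ simultaneously because we intersected over all $i$ before quantifying over $n$, so the equality of the full sequences is built in.

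I expect the only genuine subtlety to be the bookkeeping around constructible versus closed $Z$: one must be slightly careful that "$f^n(x)\in Z_i$" for a constructible $Z_i$ is still captured by Theorem~\ref{thmreali}, which I would handle either by the piecewise decomposition into locally closed strata described above, or by observing that the adelic-topology machinery in the cited proof only uses that $Z$ is constructible (definable in a first-order sense), not closed. Everything else — finiteness of the image of $\chi$, the partition of $X$, the finite-intersection property of adelic opens, and the final matching argument — is routine. I would present the argument in roughly that order: reduce $\chi$ to a finite partition, apply Theorem~\ref{thmreali} to each part, intersect the resulting adelic opens, and conclude pointwise.
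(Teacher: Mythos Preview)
Your proposal is correct and follows essentially the same route as the paper: finiteness of $\chi(X)$ by compactness of the constructible topology, decomposition of each fiber $\chi^{-1}(c_i)$ as a finite Boolean combination of \emph{closed} subvarieties, application of Theorem~\ref{thmreali} to each closed piece, and finite intersection of the resulting adelic opens via Property~(5). The paper makes the Boolean step explicit by writing $\chi^{-1}(r)=\bigcup_i(Z^r_i\setminus S^r_i)$ with $Z^r_i,S^r_i$ closed, which is exactly your ``piecewise decomposition into locally closed strata.'' One caveat: your parenthetical alternative---that the proof of Theorem~\ref{thmreali} ``works verbatim for constructible $Z$''---is not accurate as stated, since that proof reduces to the case of an irreducible hypersurface and genuinely uses closedness; stick with the Boolean decomposition, which is both what you primarily propose and what the paper does.
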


\begin{proof}
Since $X$ is compact with respect to the constructible topology, $\chi(X)$ is compact and therefore finite.
For $r\in \chi(X)$, $\chi^{-1}(r)$ is both open and closed in the constructible topology; since it is compact,  it
is a finite union
\begin{equation}
\chi^{-1}(r)=\bigcup_{i=1}^{m_r}(Z^r_i\setminus S^r_i)
\end{equation}
where the $Z^r_i$ and $S^r_i$ are closed subvarieties of $X$.
Let $B$ be the set of subvarieties $\{Z^r_i, S^r_i \; ; \;  r\in \chi(X), i=1,\dots, m_r\}$.
For every $x\in X$, the sequence $(\chi(f^n(x)))_{n\in \Z}$ is determined by the sets $N(x,f,Z)$,  for $Z\in B$.
By Theorem \ref{thmreali}, for every $Z\in B$, there exists a dense
adelic open subset ${\mathcal{U}}_Z$ of $V(\bfk)$ such that for every $x\in {\mathcal{U}}_Z$, $N(x,f,Z)=N(\eta,f,Z)$. By Property~(5) of~\S\ref{par:adelic_topology_summary}, $\cap_{Z\in B}{\mathcal{U}}_Z$ is nonempty, and every $x$ in this intersection satisfies $N(x,f,Z)=N(\eta,f,Z)$ for each $Z\in B$. For such a point, $\chi(f^n(x))=\chi(f^n(\eta))$ for all $n\in \Z$. \end{proof}

\begin{cor}\label{correaliex}
Let $\bfk$ be an algebraically closed field of characteristic zero.
Let $X$ be a variety and $f: X\to X$ be an automorphism of $X$, both defined over $\bfk$. 
Let $\chi: X\to \Z$ be a function which is continuous with respect to the constructible topology of $X$.
Let $V$ be an irreducible subvariety of $X$ and $\eta$ be its generic point. Then there exists a point $x\in V(\bfk)$ such that $\chi(f^n(x))=\chi(f^n(\eta))$ for all $n\in \Z.$
\end{cor}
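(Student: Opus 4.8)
The plan is to descend all the data to a subfield of $\bfk$ of finite transcendence degree over $\Q$ and then to apply Corollary~\ref{correali}. As in the proof of that corollary, $X$ is quasi-compact for the constructible topology, so $\chi$ takes only finitely many values, and for each value $r$ the set $\chi^{-1}(r)$ is constructible, hence a finite union $\bigcup_i(Z^r_i\setminus S^r_i)$ of differences of closed subvarieties of $X$. Choose a subfield $K_0\subset\bfk$, finitely generated over $\Q$, over which $X$, $f$, $V$ and all of the finitely many subvarieties $Z^r_i$, $S^r_i$ are defined, and let $K=\overline{K_0}$ denote the algebraic closure of $K_0$ inside $\bfk$. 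Since $\mathrm{char}(\bfk)=0$, the field $K$ is an algebraically closed extension of $\overline{\Q}$, and $\mathrm{trdeg}(K/\overline{\Q})=\mathrm{trdeg}(K_0/\Q)<\infty$; hence $K$ is a base field of the type allowed in Corollary~\ref{correali}.

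Let $X_0$, $f_0$, $V_0$ be $K$-models of $X$, $f$, $V$, let $\pi\colon X\to X_0$ be the base-change projection, and let $\chi_0\colon X_0\to\Z$ be the constructible function whose level set $\chi_0^{-1}(r)$ is the union of the differences of the $K$-models of the $Z^r_i$ and $S^r_i$; then $\chi_0$ is continuous for the constructible topology, $\chi=\chi_0\circ\pi$, and $\pi\circ f=f_0\circ\pi$. Since $V$ is irreducible over $\bfk$ and defined over the algebraically closed field $K$, it is geometrically irreducible, so $V_0$ is irreducible and $\pi$ sends its generic point $\eta$ to the generic point $\eta_0$ of $V_0$. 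Apply Corollary~\ref{correali} over $K$ to $(X_0,f_0,\chi_0)$ and the subvariety $V_0$: there is a point $x\in V_0(K)$ (any adelic general point of $V_0(K)$ works) with $\chi_0(f_0^n(x))=\chi_0(f_0^n(\eta_0))$ for all $n\in\Z$. Viewing $x$ in $V(\bfk)$ and using $\chi=\chi_0\circ\pi$, $\pi\circ f=f_0\circ\pi$, and $\pi(\eta)=\eta_0$, this says exactly $\chi(f^n(x))=\chi(f^n(\eta))$ for all $n\in\Z$.

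The only step requiring genuine care is the descent: one must know not merely that the variety $X$ and the automorphism $f$ are defined over a finitely generated subfield---which is automatic---but that the constructible function $\chi$ descends as well, and that this descent is compatible with passing to the generic point of $V$. Both points are settled by the two facts that the level sets of $\chi$ are cut out by finitely many subvarieties and that extension of an algebraically closed ground field takes generic points to generic points; the rest of the argument is formal.
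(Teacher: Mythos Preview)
Your proof is correct and follows the same approach as the paper's: descend $X$, $f$, $V$, and the finitely many subvarieties cutting out the level sets of $\chi$ to an algebraically closed subfield $\bfk_0\subset\bfk$ of finite transcendence degree over $\overline{\Q}$, then invoke Corollary~\ref{correali}. The paper compresses this into two sentences, while you spell out why $\chi$ descends and why the generic point of $V$ maps to the generic point of the model $V_0$; these are exactly the points that need checking, and your treatment of them is sound.
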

\begin{proof}
There exists a subfield $\bfk_0$ of $\bfk$ which is algebraically closed and of finite transcendence degree over $\overline{\Q}$ such that $X$, $f$, $\chi$ and $V$ are defined over $\bfk_0$. Replacing $\bfk$ by $\bfk_0$, the conclusion follows from Corollary~\ref{correali}.
\end{proof}

\begin{rem}\label{rem:2.8} The proofs of Theorem \ref{thmreali} and its corollaries hold when $f$ is a non-ramified endomorphism, if we consider only the forward orbit of $x$.
\end{rem}

\subsubsection{Proof of Theorem \ref{thmreali}}  After replacing $X$ by the Zariski closure of the set $\{f^n(\eta)\}_{n\in \Z}$, we may assume that $X$ is smooth at $\eta$. We leave the proof of the following lemma to the reader.
\begin{lem}\label{lembasop}Let $Z$ and $Z'$ be subvarieties of $X$ and let $\ell$ be a nonzero integer. Then for every $x\in X$,
\begin{enumerate} 
\item $N(x,f,Z\cap Z')=N(x,f,Z)\cap N(x,f,Z')$;
\item $N(x,f,Z\cup Z')=N(x,f,Z)\cup N(x,f,Z')$;
\item $N(x, f^\ell,Z)=-N(x,f^{-\ell},Z)$;
\item $N(x,f,Z)=\cup_{i=1}^{|\ell|-1}(i+|\ell|\times N(x,f^{|\ell|}, f^{-i}(Z)))$;
\item $N(f^\ell(x),f,Z)=N(x,f,Z)-\ell.$
\end{enumerate} 
\end{lem}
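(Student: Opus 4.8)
The plan is to check each of the five identities by directly unwinding the definition $N(x,f,Z)=\{n\in\Z\;;\;f^n(x)\in Z\}$; each assertion is a purely formal consequence, and no ingredient beyond the fact that $f$ is an automorphism (so that $f^{-1}$, and hence the preimages $f^{-i}(Z)$, make sense as subvarieties) is needed.

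First I would dispatch (1) and (2) together: for a fixed $x$ and any $n\in\Z$, the statement $f^n(x)\in Z\cap Z'$ (resp. $f^n(x)\in Z\cup Z'$) is by definition ``$f^n(x)\in Z$ \emph{and} $f^n(x)\in Z'$'' (resp. ``\emph{or}''), which translates verbatim into ``$n\in N(x,f,Z)$ and $n\in N(x,f,Z')$'' (resp. ``or''). Next, for (3), since $f$ is invertible one has $(f^\ell)^n=f^{\ell n}=(f^{-\ell})^{-n}$ for all $\ell,n\in\Z$, so $n\in N(x,f^\ell,Z)$ iff $(f^{-\ell})^{-n}(x)\in Z$ iff $-n\in N(x,f^{-\ell},Z)$, that is $N(x,f^\ell,Z)=-N(x,f^{-\ell},Z)$. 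For (5), the relation $f^n(f^\ell(x))=f^{n+\ell}(x)$ gives immediately that $n\in N(f^\ell(x),f,Z)$ iff $n+\ell\in N(x,f,Z)$, hence $N(f^\ell(x),f,Z)=N(x,f,Z)-\ell$.

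The remaining identity (4) is the only one that is not a single-line manipulation. I would set $\ell'=|\ell|$ and use Euclidean division to write every $n\in\Z$ uniquely as $n=i+\ell' m$, where $i$ runs over a complete system of residues modulo $\ell'$ (say $0\le i\le \ell'-1$) and $m\in\Z$. Then $f^n(x)\in Z$ iff $f^{\ell' m}(x)\in f^{-i}(Z)$ iff $(f^{\ell'})^m(x)\in f^{-i}(Z)$ iff $m\in N(x,f^{\ell'},f^{-i}(Z))$, so $n$ belongs to $N(x,f,Z)$ precisely when it belongs to $i+\ell'\times N(x,f^{\ell'},f^{-i}(Z))$ for the residue $i$ it represents; taking the union over all residues $i$ gives the stated formula, the convention $\ell'=|\ell|$ making the case $\ell<0$ transparent as well.

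There is no genuine obstacle in any of this: the only points deserving a moment's attention are, in (4), that the index $i$ must exhaust a full set of residues modulo $|\ell|$ and that each $f^{-i}(Z)$ is again a subvariety — which is exactly where invertibility of $f$ is used — while every other step is a direct rewriting of the definition. (We also note, in passing, that the same arguments apply when $f$ is a non-ramified endomorphism and one restricts to $n\ge 0$, as needed for Remark~\ref{rem:2.8}.)
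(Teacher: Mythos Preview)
Your proof is correct and is exactly the kind of direct verification the authors have in mind: in the paper the lemma is stated with the sentence ``We leave the proof of the following lemma to the reader,'' so there is no argument to compare against. One small point worth flagging: in (4) you (rightly) let $i$ range over a full set of residues $0\le i\le |\ell|-1$, whereas the displayed union in the statement starts at $i=1$; this is a typo in the statement (as written it would even give $N(x,f,Z)=\emptyset$ when $|\ell|=1$), and your remark that ``the index $i$ must exhaust a full set of residues modulo $|\ell|$'' is precisely the needed correction.
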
 
By (4) of Lemma~\ref{lembasop}, we may assume $X$ to be irreducible, and 
by (1) and (2), we only need to treat the case where $Z$ is an irreducible hypersurface.
Since $X$ is smooth at $\eta$, we can pick a point $b\in V(\bfk)$ such that $X$ and $V$ are smooth at $b$.
Let $K$ be a finitely generated extension of $\Q$ such that  $\overline{K}=\bfk$ and $X$, $Z$, $V$, $f$, and $b$ are all defined over $K$.
There exists a projective 
variety $X_K\to \Spec(K)$ such that $X= X_K \times_{\Spec(K)} \Spec(\bfk)$ and  $f_K: X_K\to X_K$ such that 
$f=f_K\times_{\Spec (K)}\Id.$ 
There exists a subring $R$ of $K$ that is finitely generated over $\Z$
such  that $\Frac(R)=K$, and a model $\pi : X_R \to \Spec(R)$ which is projective over $\Spec(R)$ and whose generic fiber is $X_K$. 
Shrinking $\Spec R$, we may assume that
\begin{itemize}
\item[(i)] all fibers of $X_R$ are absolutely irreducible of dimension $d=\dim X$, 
\item[(ii)] $f$ extends to an automorphism $f_R: X_R\to X_R$ over $R$,
\end{itemize}
and, denoting by $Z_R$, $V_R$, $b_R$ the Zariski closures of $Z$, $V$, $b$ in $X_R$,
\begin{itemize}
\item[(iii)] all fibers of $Z_R$ and $V_R$ are absolutely irreducible,
\item[(iv)] $b_R$ is a section of $V_R\to \Spec R$,
\item[(v)] $\Spec R$ is smooth, and $Z_R$ and $X_R$ are smooth along $b_R$.
\end{itemize}

\begin{lem}[see \cite{Lech1953}, or Lemma 3.2 in \cite{Bell2006}]\label{lem:Lech-Bell}
Let $L$ be a finitely generated extension of $\Q$ and let $B$ be any finite subset of $L$. The set of primes
$p$ for which there is an embedding $L\to \Q_p$ that maps $B$ into $\Z_p$ has positive density
among the set of all primes. 
\end{lem}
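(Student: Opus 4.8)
The plan is to prove the Lech--Mahler--Bell type statement (Lemma~\ref{lem:Lech-Bell}) by reducing to the classical Lech--Mahler theorem on the density of splitting primes, phrased adelically. First I would reduce to the case where $L$ is a finitely generated field and $B=\{\beta_1,\dots,\beta_r\}$ is a fixed finite set of generators of the ring $R=\Z[\beta_1,\dots,\beta_r]\subset L$ (enlarging $B$ if necessary to include enough generators; this does not change the statement since an embedding mapping the larger set into $\Z_p$ also maps the smaller one in). Then $L=\Frac(R)$ and $R$ is a finitely generated $\Z$-algebra, so $\Spec R$ is an integral scheme of finite type over $\Z$. An embedding $L\to \Q_p$ sending $B$ into $\Z_p$ is exactly the same datum as a ring homomorphism $R\to \Z_p$ whose generic point maps to the generic point of $\Spec \Z_p$, i.e.\ a $\Z_p$-point of $\Spec R$ lying over the generic point of $\Spec \Z$ — in other words a point of $(\Spec R)(\Z_p)$ that does not land in a fibre of characteristic $\neq p$.

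Next I would invoke the fact that for a dominant finite-type morphism $\Spec R\to \Spec \Z$, the image of $(\Spec R)(\Z_p)\to (\Spec \Z)(\Z_p)=\Z_p$ contains, for a density-one (indeed all but finitely many) set of primes $p$, a nonempty $p$-adically open set; more precisely one spreads out: shrink $\Spec R$ so that the structure morphism factors as an \'etale (or smooth with a section) cover of an open subscheme of $\Spec \Z[t_1,\dots,\dots]$, then use Hensel's lemma. Concretely, choose a transcendence basis $t_1,\dots,t_d$ for $L/\Q$ among the $\beta_i$, so that $L$ is finite separable over $\Q(t_1,\dots,t_d)$; by the primitive element theorem $L=\Q(t_1,\dots,t_d)[\theta]$ with $\theta$ a root of an irreducible polynomial $F\in \Q(t_1,\dots,t_d)[Y]$. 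Clearing denominators and localizing, one gets an element $h\in \Z[t_1,\dots,t_d]$ and an identification of $R$ (after inverting $h$ and possibly some integer) with a quotient of $\Z[t_1,\dots,t_d,Y]_h/(F)$ that is finite \'etale over $\Z[t_1,\dots,t_d]_h$. For all primes $p$ not dividing the relevant discriminant/leading coefficients, and for any choice of $t_i\in \Z_p$ with $h(t_i)\in \Z_p^\times$ (such choices exist $p$-adically since the bad locus is a proper closed subset and $\Z_p$ is infinite), Hensel's lemma lifts any simple root of $F$ mod $p$ to a root $\theta\in \Z_p$; this produces the desired embedding $L\hookrightarrow \Q_p$ with $B\subset \Z_p$.

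The main obstacle is the bookkeeping in the spreading-out step: one must ensure the chosen model is genuinely \'etale (or smooth) over a base that has lots of $\Z_p$-points, and simultaneously that the finitely many elements of $B$ really do land in $\Z_p$ and not just in $\Q_p$. This is handled by enlarging $B$ at the outset to a generating set of the \'etale algebra and by choosing the $t_i$ to avoid the (codimension $\geq 1$, hence measure-zero and non-dense, but crucially \emph{proper}) bad locus where $h$ or a discriminant vanishes; since the set of primes where the reduction of $F$ has a simple root over $\mathbf{F}_p(\bar t)$ has positive density by the Lang--Weil estimates or by Chebotarev applied to the splitting field of $F$ over $\Q(t_1,\dots,t_d)$, one gets a positive-density set of good primes. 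I would then simply cite \cite{Lech1953} and \cite{Bell2006} for the precise statement, remarking that the argument above is exactly the standard one, and leave the routine verifications to the reader as indicated.
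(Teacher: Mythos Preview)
The paper does not prove this lemma at all: it is stated with the attribution ``see \cite{Lech1953}, or Lemma~3.2 in \cite{Bell2006}'' and used as a black box. So there is no proof in the paper to compare against; your sketch is essentially the standard argument from those references.

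That said, two points in your sketch deserve tightening. First, you never say that the chosen $t_i\in\Z_p$ must be algebraically independent over $\Q$; without this the resulting map $R\to\Z_p$ need not be injective, and you do not get an embedding $L\hookrightarrow\Q_p$. The usual trick is to take the $t_i$ in $p\Z_p$ and algebraically independent (possible since $\Z_p$ is uncountable), which also forces the reduction mod $p$ to coincide with the specialization $t_i=0$. Second, your invocation of ``Chebotarev applied to the splitting field of $F$ over $\Q(t_1,\dots,t_d)$'' is not quite right: Chebotarev is a statement about number fields, and the actual argument applies it to the number field obtained by specializing $t_i\mapsto 0$ (or any fixed rational values), yielding a positive-density set of primes $p$ for which the specialized polynomial has a simple root in $\mathbf{F}_p$, which Hensel then lifts. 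Your Lang--Weil alternative does not work without additional input, since the fibres of $\Spec R\to\Spec\Z$ need not be geometrically irreducible (e.g.\ $R=\Z[x]/(x^2+1)$). With these two fixes your outline matches Bell's proof.
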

By {\bf{positive density}}, we mean that the proportion of primes $p$  that satisfy the statement among the first $N$ primes
is bounded from below by a positive number if $N$ is large enough; we shall only use that this set is infinite. 

Denote by $\C_p^{\circ}$ the closed unit ball in $\C_p$, and by $\C_p^{\circ\circ}$ the open unit ball. 
Since $R$ is integral and finitely generated over $\Z$, Lemma~\ref{lem:Lech-Bell} provides
infinitely many primes $p \geq 3$ such that $R$ embeds into $\Z_p\subseteq \C_p^{\circ}$. This induces an embedding
$\Spec(\Z_p) \to \Spec(R)$. Set $X_{\C_p} := X_R \times_{\Spec(R)}\Spec(\C_p)$, and $f_{\C_p}:=f_R\times_{\Spec(R)} \Id.$
All fibers $X_y$, for $y\in \Spec(R)$, are absolutely irreducible and of dimension $d$;
hence, the special fiber $X_{\overline{\bfF}_p}$ of $X_{\C_p^{\circ}} \to  \Spec(\C_p^{\circ})$ is absolutely irreducible. Denote by $b_{\overline{\bfF}_p}\in X_{\overline{\bfF}_p}$ the specialization of $b_R$. It is a  smooth point of $X_{\C_p^{\circ}}$.

There exists $m\geq 1$ such that  $(f|_{X_{\overline{\bfF}_p}})^m(b_{\overline{\bfF}_p})=b_{\overline{\bfF}_p}$ and  $(df^m|_{X_{\overline{\bfF}_p}})_{b_{\overline{\bfF}_p}}=\Id$. 
(Here we just used that $f$ is not ramified, see Remark~\ref{rem:2.8}.)
Let $U$ be the open subset of $X(\C_p)$ consisting of points whose specialization is $b_{\overline{\bfF}_p}$; then $U\simeq (\C_p^{\circ\circ})^{d}$.
We have $f^m(U)=U$, and the orbit of every point of $U$ is well-defined.
The restriction of $f^m$ to $U$ is an analytic automorphism 
\begin{equation}
f^m|_U: (x_1,\dots,x_d)\mapsto (F_1,\dots, F_d)
\end{equation}
where the $F_n$ are analytic functions on $U$; more precisely, $F_n(x)=x_n+\sum_{I}a_I^nx^I$
with coefficients $a_I^n\in \C_p^{\circ\circ}\cap \Z_p$. 
We may assume that the coordinate of $b\in U$ is $(0,\dots,0)$.
Then, there exists $\ell\in \Q^{+}$, such that 
\begin{equation}
f^m|_U=\Id \mod p^{2/\ell},
\end{equation}
where by definition $f\vert_U=\Id  \mod p^\alpha$ if $\vert a_I^n \vert \leq \vert p\vert^{\alpha}$ for all multi-indices~$I$.
Set $W=\{(x_1,\dots,x_d)\in U\; ; \; |x_i|\leq \vert p\vert^{1/\ell}\}\simeq (\C^{\circ}_p)^d$; it is $f^m$-invariant and  
\begin{equation}
f^m|_{W}=\Id \mod p^{1/\ell}.
\end{equation}
After replacing $m$ by some of its multiples $km$, we get
\begin{equation}
f^m|_{W}=\Id \mod p.
\end{equation}
By Lemma \ref{lembasop}(4), we may replace $f$ by $f^m$ and suppose  $m=1$; doing so, $Z$ must be replaced by the union of the irreducible hypersurfaces $f^{-1}(Z), f^{-2}(Z), \dots,$ up to  $f^{-m+1}(Z)$. For simplicity, in the rest of the proof, we only consider $Z$, but the same argument applies to any irreducible hypersurface, hence to each $f^{-j}(Z)$.

By \cite[Theorem 1]{Poonen2014}, 
there exists an analytic action $\Phi: \C_p^{\circ}\times W\to W$ of
$(\C_p^{\circ},+)$ on $W$ such that for every $n\in \Z$,  $\Phi(n,\cdot)=f^{n}|_{W}(\cdot).$ 
Denote by $\pi_1: \C_p^{\circ}\times W\to \C_p^{\circ}$  and $\pi_2: \C_p^{\circ}\times W\to W$ the projections onto the first and second factors.
Observe that $Y:=\Phi^{-1}(Z)$ is an analytic hypersurface of $\C_p^{\circ}\times W$.

Replacing $f$ by a suitable iterate and $W$ by a smaller polydisc meeting~$V$, we may assume that every irreducible component $Y_i$, $i=1,\,\ldots,\,\ell$, of $Y$ is smooth and $\pi_2 |_{Y_i}: Y_i\to W$ is surjective; for otherwise
$Z$ is $f$-invariant, and then $N(\eta, f, Z)$ is non-empty if and only if $V\subset Z$, if and
only if every $x\in V$ satisfies $N(x,f,Z)=\Z$. Shrinking $W$ again, we may assume that all restrictions  $\pi_2|_{Y_i}: Y_i\to W$ are isomorphisms.
Define $\tau_i\colon W\to \C_p^{\circ}$ by $\tau_i:=\pi_1\circ\pi_2|_{Y_i}^{-1}.$
Permuting the indices, we suppose that $\tau_i\vert_V$ is non-constant for $1\leq i\leq s$ 
and $\tau_i\vert_V$ is constant for $s+1\leq i \leq \ell$.
If $x$ is a point of $W\cap V(\bfk)$, then 
\begin{equation}
N(x,f,Z)=\{ \tau_i(x)|\,\, i=1,\,\ldots,\,\ell\}\cap \Z\subseteq \{ \tau_i(x)|\,\, i=1,\,\ldots,\,\ell\}\cap \Z_p
\end{equation}
and for the generic point,
\begin{equation}
N(\eta,f,Z)=\{ \tau_i|\,\, i=s+1,\,\ldots,\,\ell\}\cap \Z,
\end{equation}
because $\tau_i$ is a constant (an element of $\C_p^\circ$) if and only if $i\geq s+1$.

Now, note the following: if $i\leq s$, $\tau_i$ is not constant and is therefore an open mapping;  
since $\Z_p$ is a nowhere dense and closed subset of $\C_p^\circ$, $\tau_i^{-1}(\C_p^\circ\setminus \Z_p)$ is open and dense in $W$.
Thus,  $U':=\cap_{i=1}^{s}\tau_i^{-1}(\C_p^\circ\setminus \Z_p)$ is open and dense in $W$, and for $x\in U'\cap V(\bfk)$ we get 
$N(x,f,Z)=N(\eta,f,Z)$. By  \cite[Remark 3.11]{Xie2019}, this equality holds for an adelic general point $x\in V(\bfk)$. This concludes the proof.

\subsubsection{Bounded degree sequences}\label{par:bds} Let $m\geq \ell\geq 0$ be  integers, and $\bfk$ be a field.
Denote by $R(\bfk,m)\subset (\N)^\Z$ (resp. $\Lambda_\ell(\bfk, m)\subset \R$) the set of sequences $(\deg (f^n))_{n\in \Z}$ (resp. dynamical degrees $\lambda_\ell(f)$) for $f\in \Bir(\bbP^m_\bfk)$.

\medskip

\noindent{\bf{Question.-}} Do we have $\Lambda_\ell(\bfk,m)=\Lambda_\ell(\overline{\Q},m)$, or even $R(\bfk,m)=R(\overline{\Q},m)$, for every algebraically closed field $\bfk$ of characteristic zero ? 

\medskip

Here, we answer positively this question for the subset  $R^b(\bfk,m)\subset R(\bfk,m)$ of sequences $(\deg (f^n))_{n\in \Z}$ which are {\sl{bounded}}.  Note that $(\deg (f^n))_{n\in \Z}$ is bounded if and only if  $(\deg (f^n))_{n\in \Z_{\geq 0}}$ is bounded. The last assertion of 
Theorem \ref{thm:Oscillations} follows from the following statement. 
 
\begin{thm}\label{thm:realbounded} 
If $\bfk$ is an algebraically closed field of characteristic zero, then $$R^b(\bfk,m)=R^b(\overline{\Q},m).$$
\end{thm}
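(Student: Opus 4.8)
The plan is: (i) use Weil's regularization theorem to replace $f$ by an automorphism of a variety; (ii) encode the whole sequence $n\mapsto\deg(f^n)$ as the values, along an orbit, of a constructible function attached to an automorphism of a parameter variety \emph{defined over $\overline{\Q}$}; (iii) specialize to a $\overline{\Q}$-point by Corollary~\ref{correaliex}. The inclusion $R^b(\overline{\Q},m)\subseteq R^b(\bfk,m)$ is trivial and $m=1$ is vacuous, so assume $m\geq 2$ and fix $(d_n)_{n\in\Z}\in R^b(\bfk,m)$ with $d_n=\deg(f^n)$, $f\in\Bir(\bbP^m_\bfk)$. Since the degree sequence is bounded, Weil's theorem (recalled above) gives a smooth projective variety $V$, a birational map $\varphi\colon V\dasharrow\bbP^m$, and $g:=\varphi^{-1}\circ f\circ\varphi\in\Aut(V)$, all defined over a subfield $K\subseteq\bfk$ finitely generated over $\Q$. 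Boundedness of the degree sequence is a conjugacy invariant, so $g^{*}$ has bounded powers on $N^1(V)$; being an invertible endomorphism of the lattice $N^1(V)_{\Z}$, it is then quasi-unipotent with roots-of-unity eigenvalues (Kronecker) and without nontrivial Jordan blocks, hence of finite order. Thus the line bundles $(\varphi\circ g^n)^{*}\mathcal{O}(1)=(g^{*})^n\varphi^{*}\mathcal{O}(1)$ take finitely many values, and — a birational map $V\dasharrow\bbP^m$ being given by $m+1$ sections of such a bundle — the maps $\varphi\circ g^n$ ($n\in\Z$) form a bounded family.

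For (ii), let $X$ be the variety over $\overline{\Q}$ parametrizing quadruples $(W,\gamma,\alpha,\beta)$ with $W$ smooth irreducible projective (a point of a Hilbert scheme, smoothness and irreducibility being open conditions), $\gamma\in\Aut(W)$, and $\alpha,\beta\colon W\dasharrow\bbP^m$ birational maps ranging over the bounded family above; a routine boundedness argument (again using that $\gamma^{*}$ has finite order, so that post-composition with $\gamma$ does not escape the relevant Hilbert polynomials) lets one take $X$ of finite type and the assignment $\sigma(W,\gamma,\alpha,\beta):=(W,\gamma,\alpha,\beta\circ\gamma)$ to be an automorphism of $X$. Set $\chi(W,\gamma,\alpha,\beta):=\deg(\beta\circ\alpha^{-1})\in\Z$; as the degree of the composite of two families of birational maps is a constructible function of the data, $\chi$ is continuous for the constructible topology of $X$. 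Then $\chi(\sigma^n(W,\gamma,\alpha,\beta))=\deg\big((\beta\circ\gamma^n)\circ\alpha^{-1}\big)$, which on the closed diagonal $\{\alpha=\beta\}\subseteq X$ equals $\deg\big((\alpha\circ\gamma\circ\alpha^{-1})^n\big)$. In particular $(V,g,\varphi,\varphi)$ is a $K$-point $p$ of $X$ on the diagonal, with $\chi(\sigma^n(p))=\deg(f^n)=d_n$ for all $n$.

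For (iii), let $\eta$ be the scheme point underlying $p$ and $V_0:=\overline{\{\eta\}}\subseteq X$; since $X$ is of finite type over $\overline{\Q}$ the residue field of $\eta$ is finitely generated over $\overline{\Q}$, so $V_0$ is an irreducible subvariety defined over $\overline{\Q}$ with generic point $\eta$, and $\chi(\sigma^n(\eta))=d_n$ for all $n$ (degrees are insensitive to field extension). By Corollary~\ref{correaliex}, applied over $\overline{\Q}$ to $X$, $\sigma$, $\chi$, $V_0$, $\eta$, there is a point $x\in V_0(\overline{\Q})$ with $\chi(\sigma^n(x))=d_n$ for every $n\in\Z$. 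As $V_0$ lies in the diagonal, $x=(W_x,\gamma_x,\alpha_x,\alpha_x)$, and $h:=\alpha_x\circ\gamma_x\circ\alpha_x^{-1}\in\Bir(\bbP^m_{\overline{\Q}})$ satisfies $\deg(h^n)=\chi(\sigma^n(x))=d_n$ for all $n$; hence $(d_n)_{n\in\Z}\in R^b(\overline{\Q},m)$.

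The hard part is step (ii): recovering the whole degree sequence as $n\mapsto\chi(\sigma^n(\cdot))$ for a \emph{single} automorphism of a parameter variety over $\overline{\Q}$. It works for two reasons. First, regularizability: an automorphism has empty exceptional locus, so the dynamics $g^n$ is clean and every degree drop of $f^n$ is concentrated in the conjugating map $\varphi$ — this is why recording $(V,g,\varphi)$, i.e.\ the orbit of $\varphi$ under post-composition by $g$, is enough. Second, the finite order of $g^{*}$ on $N^1(V)$, which confines the family $\{\varphi\circ g^n\}$ to a bounded family and thereby makes the parameter space genuinely of finite type with $\sigma$ a bona fide automorphism (checking this, together with the constructibility of $\chi$ and the fact that the closure of a $\bfk$-parameter point is defined over $\overline{\Q}$, is the genuinely fiddly but routine technical point). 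Granting all this, the passage to $\overline{\Q}$ is precisely what Corollary~\ref{correaliex} provides, the latter resting on Theorem~\ref{thmreali} and the adelic/constructible-topology machinery of \S\ref{par:adelic_topology_summary}.
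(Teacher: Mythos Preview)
Your strategy matches the paper's: Weil regularization, then encoding $n\mapsto\deg(f^n)$ as $n\mapsto\chi(\sigma^n(\cdot))$ for an automorphism $\sigma$ of a finite-type variety over $\overline{\Q}$ and a constructible function $\chi$, then specializing via Corollary~\ref{correaliex}. The difference lies in step~(ii), the choice of parameter variety.

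The paper spreads $f$ out over $B=\Spec A$ (with $A$ a finitely generated $\overline{\Q}$-subalgebra of $\bfk$), regularizes in family to an automorphism $H_{\mathcal{X}}$ of a family $\mathcal{X}\to B$, and then takes as its parameter variety the Zariski closure $\mathcal{G}\subset\Aut_B(\mathcal{X})$ of the cyclic group generated by $H$ on the generic fiber. Because the degree sequence is bounded, $\mathcal{G}$ is automatically a commutative group scheme of finite type over $B$ (hence over $\overline{\Q}$). The automorphism is left translation $L_H\colon g\mapsto H_{\phi(g)}\cdot g$, and the constructible function is $g\mapsto\deg$ of the birational self-map of $\bbP^m$ obtained by conjugating $g$ by~$\pi$. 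One then applies Corollary~\ref{correaliex} with $V$ the section $H\subset\mathcal{G}$.

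You instead build a moduli space of quadruples $(W,\gamma,\alpha,\beta)$ over $\overline{\Q}$ via Hilbert schemes, letting $W$ itself vary. This is workable, but your ``routine boundedness argument'' making $X$ of finite type with $\sigma$ a genuine automorphism hides real effort: one must fix a projective embedding and Hilbert polynomial for $W$, bound the linear systems defining $\alpha$ and $\beta$, and---crucially for $\sigma^{\pm1}$ to stay in $X$---arrange that the bounded family of $\beta$'s is closed under post-composition by $\gamma$ and $\gamma^{-1}$. The last condition depends on the finite $\gamma^*$-orbit of $[\beta^*\mathcal{O}(1)]$ in $N^1(W)$, which is not a priori uniform across a moduli space where $W$ and $\gamma$ both move. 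These points can be handled (e.g.\ by restricting to the irreducible component through your point $p$, or by imposing a bound on the order of $\gamma^*$ on $N^1$), but the paper's relative group-scheme approach sidesteps all of it: once the family $\mathcal{X}\to B$ is fixed, $W$ is no longer a moving target, and finite type for $\mathcal{G}$ is immediate from boundedness of the degree sequence.
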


\begin{proof}[Proof of Theorem \ref{thm:realbounded}]

Since $\bfk$ has characteristic zero and is algebraically clo\-sed, we may view it as an extension of $\overline{\Q}$.
In particular, $R^b(\overline{\Q},m)\subset R^b(\bfk,m)$.
Let $f\colon \bbP^m_{\bfk}\dashrightarrow \bbP^m_{\bfk}$ be a birational transformation for which the degree sequence
$ (\deg (f^n))_{n\in \Z}$ is bounded. We want to  show that this sequence is in $R^b(\overline{\Q},m)$.

There exists a $\overline{\Q}$-subalgebra $A$ of $\bfk$ such that $A$ is finitely generated over $\overline{\Q}$ and $f$ is defined over $A$.
Then we may assume $\bfk=\overline{\Frac A}$.
We view 
\begin{equation}
B:=\Spec A
\end{equation}
as a variety over $\overline{\Q}$, and we denote by $\eta$ its generic point.
There is a birational transformation $F: \bbP^m_B\dashrightarrow \bbP^m_B$ over $B$ whose restriction to the generic fiber is $f$.
For every point $b\in B$, denote by $F_b$ the restriction of $F$ to the special fiber at $b$. 
All we need to show is the existence of a point $b\in B(\overline{\Q})$ such that $\deg(F_b^n)=\deg(f^n)$ for all $n\in \Z.$

Since $(\deg f^n)$ is bounded,  we may apply Weil's regularization theorem (see~\cite{Weil:1955, Huckleberry-Zaitsev, Zaitsev:1995, Kraft:regularization} for proofs, and~\cite[\S~2]{Blanc:TrGroups} or~\cite[Theorem~2.5]{Cantat:Compositio} for further references).
\begin{equation}
H_{\mathcal{X}}:=\pi^{-1}\circ F\circ \pi\colon \mathcal{X}\to \mathcal{X}
\end{equation} 
is an automorphism on the generic fiber $X:=  \mathcal{X}_\eta$, where $\eta$ denotes the generic point of $B$. After shrinking $B$, we may assume that $B$ is smooth, $\pi$ is birational on all fibers and $H_{\mathcal{X}}$ is an automorphism over~$B$. 

Denote by $\Aut_B(\mathcal{X})$ the automorphism group scheme of $\mathcal{X}$ over $B$. Then $H_{\mathcal{X}}$ can be viewed as a section of $\Aut_B(\mathcal{X})(B).$ 
We view $\Aut(X)$ as the generic fiber of $\Aut_B(\mathcal{X})$ and denote by $H$ the element of $\Aut(X)$ induced by $H_{\mathcal{X}}$.

Since $(\deg (f^n))_{n\in\Z}$ is bounded, the closure $G$ of $\{H^n\; ; \; n\in \Z\}$ in $\Aut(X)$ is a commutative subgroup of $\Aut(X)$ of finite type. After shrinking $B$, we may assume that the closure $\mathcal{G}$ of $G$ in $\Aut_B(\mathcal{X})$ is a group scheme of finite type. We have  $H\in \mathcal{G}(B)$.

Denote by $\phi: \mathcal{G}\to B$ the structure morphism. 
Denote by $L_H: \mathcal{G}\to \mathcal{G}$ the automorphism $g\mapsto H_{\phi(g)}\cdot g$, where $\cdot$ is the product for the group structure.
View $\mathcal{G}$ as a variety over $\overline{\Q}$. The function $\delta\colon \mathcal{G}\to \N$ defined by 
$$
\delta : g\mapsto \deg(F_{\phi(g)}) = \deg((\pi\circ H_{\chi}\circ \pi^{-1})_{\phi(g)})
$$
is lower-semi continuous with respect to the Zariski topology; in particular, $\delta$ is continuous with respect to the constructible topology. 
View (the image of the section) $H$ as a subvariety of $\mathcal{G}$ over $B$ and denote by $\alpha$ its generic point.
Then, by construction, $\deg(f^n)=\delta(L_H^n(\alpha))$ for all $n\in \Z$.
All we need to show is the existence of a point $x\in F(\overline{\Q})$ such that $\delta(L_F^n(\alpha))=\delta(L_F^n(x))$ for all $n$: this follows from Corollary \ref{correaliex}.
\end{proof}

\begin{center}
{\bf{-- Part C. --}}
\end{center}

\subsection*{Introduction} Recall 
that there are three types (or classes) of birational transformations of $\bbP^2$: elliptic, parabolic, and loxodromic; and the parabolic class splits into the disjoint union of Jonqui\`eres twists and Halphen twists (see \S~\ref{par:Halphen_twists_basics}, or \cite{Cantat:Survey}). These
transformations can be characterized by their degree growth, $\deg(f^n)$ being bounded when $f$ is elliptic, or growing polynomially with $n$ 
when $f$ is parabolic (the growth is linear for Jonquières twists and quadratic for Halphen twists), or growing exponentially fast (like $\lambda_1(f)^n$) when $f$ is loxodromic. Also, loxodromic transformations do not preserve any pencil of curves, while parabolic ones preserve a unique pencil (of genus $0$ or $1$ for Jonquières and Halphen twists respectively), and elliptic ones preserve infinitely many pencils. 

Fix a degree $d\geq 1$, and consider the algebraic variety $\Bir_d(\bbP^2_\bfk)$ of birational transformations of degree $d$. 
Set 
\begin{align}
\notag Lox(d) & =  \{ g \in \Bir_d(\bbP^2_\bfk)\; ;\; g \;  {\text{ is loxodromic}} \}, 
\end{align}
and define similarly the subsets $\Hal(d)$, $\Jon(d)$,  and $\Ell(d)$ (see~Equations~\eqref{eq:hald} to~\eqref{eq:elld} below). 
Then, $\Bir_d(\bbP^2_\bfk)$ is the disjoint union 
$\Ell(d) \sqcup \Jon(d)\sqcup \Hal(d) \sqcup \Lox(d)$, $\Bir_1(\bbP^2_\bfk)$ coïncides with $\Ell(1)$, and by Theorem~\ref{thm:Xie}, $\Lox(d)$ is a dense open subset of $\Bir_d(\bbP^2_\bfk)$ for every $d\geq 2$.
The next sections study the structure of these subsets, the degrees of invariant pencils when such a pencil 
exists, {i.e.} for $f\notin \Lox(d)$, and the intersections of conjugacy classes of $\Bir(\bbP^2_\bfk)$ with $\Bir_d(\bbP^2_\bfk)$.  
Most of these questions where given a complete answer in \cite{Blanc-Cantat} 
for loxodromic transformations; so, here we focus on elliptic and parabolic ones.

The first result we shall prove says that {\sl{$\Hal(d)$ is a constructible subset of $\Bir_d(\bbP^2)$}} (see~Corollary~\ref{cor:HT-degree-bounds}), while in Examples \ref{eg:jonnotcons} and \ref{eg:jonnotconscarp}, we show that {\sl{$\Ell(d)$ and $\Jon(d)$ are not constructible 
in general for $d\geq 2$}}. Then, we obtain the following bound for degrees of invariant pencils (See Theorem~\ref{thm:bounds for halphen},~\ref{thm:bounds for jonquieres}, and~\ref{thm:bounds for elliptic}).

\medskip

\noindent {\bf{Theorem.}} 
{\emph{For $d\geq 1,$ there is a positive integer ${\mathrm{pen}}(d)$, such  every $f\in \Bir_d(\bbP^2_\bfk)$ which is not 
loxodromic preserves a pencil of degree $\leq  {\mathrm{pen}}(d)$. So, if $f$ is parabolic the degree of its unique invariant 
pencil is bounded by  ${\mathrm{pen}}(\deg(f))$, and if $f$ is elliptic  it preserves at least one pencil of 
degree $\leq {\mathrm{pen}}(\deg(f))$.}}

\medskip

This result may be considered as a positive answer to the Poincar\'e problem of bounding the degree of first integrals, 
but for birational transformations of the plane instead of algebraic foliations (see~\cite{Cerveau-Lins-Neto:1991, Lins-Neto:2002, Pereira:2002}). 

\medskip

\noindent {\bf{Example.}} For $a, b\in \bfk^*$, set $f: (x,y)\mapsto (ax,by)$. Then, for every pair of coprime integers $(c,d)$, the pencil $x^cy^d=t, t\in \bfk$, 
is preserved by $f$, so $f$ preserves pencils of arbitrary large degrees. If $f$ is an element of finite order, one can pull back pencils from
$\bbP^2_\bfk/\langle f\rangle$ to $\bbP^2_\bfk$ to construct invariant pencils of arbitrary high degree. 
Now, consider $f_a(ax,y)=(a x,x+y)$, for some $a\in \bfk^*$, then it preserves the pencil of affine lines through $(0,b)$ for any $b\in\bfk$. Over an algebraically closed field, any elliptic element of $\Bir(\bbP^2_\bfk)$ is conjugate to one of these three examples, so all of them preserve infinitely many pencils (see~\cite{Blanc-Deserti:2015}).

\medskip

Then, we study conjugacy classes
in $\Bir(\bbP^2_\C)$. What can be said of the elements $g$ which are in the closure of the conjugacy class of an element $f$ ?  How does it depend on the topology that one considers ? These questions are related to the following problem: {\sl{if $f$ and $g$ are conjugate, can we find a birational conjugacy $h\circ f\circ h^{-1}=g$ whose degree $\deg(h)$ is
controled by $\max(\deg(f),\deg(g))$?}} 
This last question has a positive answer when $f$ is loxodromic, this is 
one of the main results of \cite{Blanc-Cantat}; here, 
we answer this problem for the remaining classes.
 
 \medskip

\noindent {\bf{Theorem.}} 
{\emph{For every $d\geq 1$, there is an integer ${\mathrm{HalCo}}(d)$ such that, if
$f$ and $g$ are Halphen twists of degree $\leq d$, and if $f$ is conjugate to $g$ in $\Bir(\bbP^2_\bfk)$, 
then there is a birational map $h$ of degree $\leq {\mathrm{HalCo}}(d)$ such that 
$h\circ f \circ h^{-1}=g$. 
The conjugacy class of any Halphen twist is a constructible subset of $\Bir(\bbP^2_\bfk)$.}}

\medskip

As a consequence, for every Halphen twist $f$ and degree $d\geq 1$, the set $$\{g\in \Bir_d(\bbP^2)|\,\, g \text{ is conjugate to } f \text{ in } \Bir(\bbP^2) \}$$ is a constructible subset of $\Bir_d(\bbP^2).$
By the main results of \cite{Blanc-Cantat}, this property is also satisfied when $f$ is loxodromic, but as we shall 
see, it is not satisfied by Jonqui\`eres twists;  
we illustrate this in Theorem~\ref{thm:example-Jonq}. It is not satisfied by elliptic elements either (for instance, the conjugacy 
class of $(x,y)\mapsto (ax, by)$ contains $(x,y)\mapsto (ux, vy)$ for every point $(u,v)\in \bfk^\times \times \bfk^\times$ in the orbit of $(a,b)$ under the group of monomial transformations $\GL_2(\Z)$).

Section~\ref{par:limits} studies conjugacy classes but with respect to the euclidean topology, 
when the field is a local field. We provide an almost complete description of elements $g\in \Bir(\bbP^2_\bfk)$
of a given type (elliptic, parabolic, loxodromic) which are in the closure of the conjugacy class of an element of another type.
The only remaining case is the one of finite order elements of $\Bir(\bbP^2_\bfk)$ which are not conjugate to 
elements of $\Aut(\bbP^2_\bfk)$: for instance, can one write a Bertini involution as a limit of Jonqui\`eres or Halphen twists ?

All results mentionned so far will be proved in Section~\ref{par:pen_con_lim}, but to reach them, we need a detailed understanding of the 
automorphism groups $\Aut(X)$ of Halphen surfaces, and in particular some quantitative estimate regarding their conjugacy classes. 
This is done in Section~\ref{par:Halphen-Surfaces}, the goal of which is the following:

  \medskip

\noindent {\bf{Theorem.}} (See Theorem~\ref{thm:Conjugacy-in-Aut-Halphen})
{\emph{Let $X$ be a Halphen surface of index $m$.
Let $f$ and $g$ be two automorphisms of $X$.
If $f$ and $g$ are in the same conjugacy class of $\Aut(X)$, there exists an element 
$h$ of $\Aut(X)$ such that $h\circ f \circ h^{-1}=g$ and $
\deg(h):=(\bfe_0\cdot h^*\bfe_0)\leq A(m)(\deg(f)+\deg(g)),$ for some constant $A(m)$ that depends only
on $m$.}}

%
%
\section{Automorphisms of Halphen surfaces}\label{par:Halphen-Surfaces}
%
%

Sections~\ref{par:Halphen_surfaces} and~\ref{par:Halphen_NS} summarize and rephrase classical results concerning 
Hal\-phen surfaces, their N\'eron-Severi lattice, and their group of automorphisms: we refer to \cite{Dolgachev, Cantat-Dolgachev, Gizatullin:1980, Grivaux, Iskovskikh-Shafarevich} for proofs of these results. This is applied in Sections~\ref{par:Autt} and~\ref{par:Thm_Halphen} to control conjugacy classes and prove Theorem~\ref{thm:Conjugacy-in-Aut-Halphen}.

\subsection{Halphen pencils and  Halphen surfaces} \label{par:Halphen_surfaces}

\subsubsection{Halphen pencils $($see \cite{Dolgachev, Cantat-Dolgachev}$)$}\label{par:quasi-elliptic}
A Halphen pencil $\HP$ of index $m$ is a pencil of plane curves of degree $3m$, with $9$ base points of multiplicity $\geq m$ and no fixed component. 
We shall denote by $p_1$, $p_2$, $\ldots$, $p_9$
the nine base points (some of the $p_i$ may be infinitely near).  
Blowing up them, we get a rational surface  $X$ with Picard number $10$, together with a birational morphism $\epsilon\colon X\to \bbP^2_\bfk$; 
the pencil $\HP$ determines a 
fibration $\pi\colon X\to \bbP^1_\bfk$, with  fibers of arithmetic genus~$1$. Going back to $\bbP^2_\bfk$, $\pi$ corresponds to 
a rational function ${\overline{\pi}}=\pi\circ\epsilon^{-1}\colon \bbP^2_\bfk\dasharrow \bbP^1_\bfk$ whose fibers are the members of the pencil $\HP$. 
When $m>1$, $\pi$ has
a unique multiple fiber, of multiplicity $m$; it corresponds to a cubic curve with multiplicity $m$ in the pencil $\HP$ (see~\cite{Gizatullin:1980}). 
 In what follows, $\HP$ will always denote a Halphen pencil, and $X$ the associated Halphen surface. 
 
 \smallskip
 
{\bf{Warning.}} In characteristic $2$ or $3$, the fibration $\pi$ may be quasi-elliptic, all fibers being singular curves with a node or a cusp. 
{\sl{By convention}}, we shall always assume that the general fibers are smooth (so $\pi$ is a genus $1$ fibration whose 
generic fiber is smooth; we exclude quasi-elliptic fibrations: see Section~\ref{par:Halphen_twists_basics} which explains why we do so). 

\subsubsection{Automorphisms of the pencil $($see~\cite{Gizatullin:1980, Grivaux, Iskovskikh-Shafarevich}$)$}\label{par:rel-mini}
Let $\Bir(\bbP^2_\bfk; \HP)$ be the group of birational transformations of the plane preserving the pencil $\HP$. 
After conjugacy by the blow-up $\epsilon\colon X\to \bbP^2_\bfk$,  $\Bir(\bbP^2_\bfk; \HP)$ becomes a group of birational transformations of $X$ that
permutes the fibers of $\pi$. The fibration $\pi$ is relatively minimal, which amounts to say that there
is no exceptional divisor of the first kind in the fibers of $\pi$; this implies that $\Bir(\bbP^2_\bfk; \HP)$ is contained in $\Aut(X)$.
Moreover, the members of the linear system $\vert -mK_X\vert$ are given by the fibers of $\pi$; hence, $\Aut(X)$ permutes the
fibers of $\pi$, and we conclude that 
\begin{equation}
\epsilon^{-1}\circ \Bir(\bbP^2_\bfk; \HP)\circ \epsilon=\Aut(X)
\end{equation}
and there is a homomorphism $\tau\colon \Aut(X)\to \Aut(\bbP^1_\bfk)$ such that 
\begin{equation}
\pi \circ f = \tau(f)\circ \pi \quad \;  (\forall f \in \Aut(X)).
\end{equation}
Equivalently, there is a homomorphism $\tau\colon \Bir(\bbP^2; \HP)\to \Aut(\bbP^1)$
such that ${\overline{\pi}}\circ f=\tau(f)\circ {\overline{\pi}}$ for every $f\in \Bir(\bbP^2; \HP)$. 

\subsection{The N\'eron-Severi group and its geometry} \label{par:Halphen_NS}
 
\subsubsection{Picard group $($see~\cite{Cantat:Survey, Cantat-Dolgachev, Grivaux}$)$}\label{par:basis_for_NS}
The Picard group of $X$ is discrete, and coincides with the N\'eron-Severi group $\NS(X)$. 
A basis is given by the class $\bfe_0$ of a line (pulled back to $X$ by $\epsilon$)
and the nine classes $\bfe_i$ corresponding to the (total transform of the) exceptional 
divisors of the blow-ups of the $p_i$. The intersection form satisfies $\bfe_0^2=1$, $\bfe_i\cdot \bfe_j=0$ if $i\neq j$, and 
$\bfe_i^2=-1$ if $i\geq 1$. So, with this basis, the lattice $\NS(X)$ is isometric to $\Z^{1,9}$. We set $\NS(X;\R):=\NS(X)\otimes_\Z \R$
and view $\NS(X)$ as a lattice of $\NS(X;\R)$.
The canonical class of $X$ is the vector $k_X=-3\bfe_0+\bfe_1+\cdots +\bfe_9$. The anticanonical class 
\begin{equation}
\xi:=-k_X=3\bfe_0-\bfe_1-\cdots -\bfe_9
\end{equation}
will play an important role; the class of a general fiber of $\pi$ is equal to $m\xi$. 

\subsubsection{The spaces $\xi^\perp$, $P$ and $Q$.} The orthogonal complement $\xi^\perp$ of $\xi$ for the intersection form is a subgroup  
of $\NS(X)$ of rank $9$ that
contains $\xi$; we denote by $\xi^\perp_\R$ the real vector subspace of dimension $9$ in $\NS(X;\R)$ spanned by $\xi^\perp$. Denote by $P$ the quotient space $\xi^\perp_\R/\langle \xi\rangle_{\R}$, where  $\langle \xi\rangle_{\R}=\R\cdot \xi$ is the subspace of $\xi^{\perp}_\R$ spanned by $\xi$, and by $q_P\colon \xi^\perp_\R\to P$ the projection.
The intersection form induces a negative definite quadratic form on $P$; 
since $\xi$ is primitive,  $\xi^\perp/\xi$ is isomorphic to a lattice $P(\Z)\subset P$.
We denote by $\parallel \cdot \parallel_P$  the euclidean norm given by 
this quadratic form:
\begin{equation}
\parallel q_P(w) \parallel_P=(-w\cdot w)^{1/2}, \quad \forall w \in \xi^\perp_\R.
\end{equation}

Let $Q_0$ be the affine space 
\begin{equation}
Q_0=\bfe_0+\xi^\perp_{\R}\subset \NS(X;\R);
\end{equation}
then $Q_0(\Z)=\bfe_0+\xi^\perp=Q_0\cap \NS(X)$ is a lattice in $Q_0$. 
Denote by $Q$ the quotient of $Q_0$ by the $(\R,+)$-action $x\mapsto x+t\xi$, $t\in \R$, and
by $q_Q:Q_0\to Q$ the quotient morphism; set 
\begin{equation}
\bfe=q_Q(\bfe_0)  \quad {\text{ and }} \quad Q(\Z)=q_Q(Q_0(\Z)).
\end{equation} 
 Then $Q(\Z)=\bfe+\xi^\perp/\xi$ is a lattice of $Q$.
Observe that the composition of the translation $w\in \xi^\perp_\R\mapsto w+\bfe_0$ and the projection $q_Q$ induces a natural isomorphism $w\in P\mapsto w+\bfe\in Q$. Using this isomorphism to transport the metric $\parallel \cdot \parallel_P$ on $Q$, we get a natural metric 
$\parallel \cdot \parallel_Q$: for  $v_1,v_2\in Q$ and $w_i\in q_Q^{-1}(v_i), i=1,2$, we have 
\begin{equation}\label{equsqd}
\parallel v_1-v_2\parallel_Q^2=-(w_1-w_2)^2
\end{equation}
where the square, on the right hand side, denotes self-intersection. 

\subsubsection{Horospheres $($see~\cite{Cantat:Survey, Cantat-Guirardel-Lonjou}$)$} Let $\Hyp$ denote the half-hyperboloid 
\begin{equation}
\Hyp=\{u\in \NS(X;\R)\; ; \; u^2=1 {\text{ and }} u\cdot \bfe_0 > 0\}.
\end{equation}
With the riemannian metric induced by (the opposite of) the intersection form, $\Hyp$ is a model of 
the hyperbolic space of dimension $9$. The isotropic vector $\xi$ determines a point of the boundary $\partial\Hyp$.
By definition, the set 
\begin{equation}
\Hor=\{u\in \Hyp\; ; \; u\cdot \xi = 3\}=\{ u\in Q_0\; ; \; u^2=1\}
\end{equation}
is the {\bf{horosphere}} 
centered at $\xi $ that contains $\bfe_0$. We have
\begin{equation}
\Hor=\{ u = \bfe_0+ v\in \NS(X;\R)\; ; \; 2\bfe_0\cdot v+v^2=0,  {\text{ and }} v\cdot \xi=0\}.
\end{equation}
Observe that the restriction $q_Q|_{\Hor}: \Hor\to Q$ is a homeomorphism. In what follows, we denote its inverse by 
\begin{equation} 
\ph:=(q_Q|_{\Hor})^{-1}.
\end{equation}

\begin{rem}
The projection $q_P$ induces a linear isometry $\xi^\perp\cap \bfe_0^\perp\to P$.
If we compose it with the isomorphism $w\mapsto \bfe+w$ from $P$ to $Q$, and then apply $\ph$, 
we get the following parametrization   of the horosphere $\Hor$:
\begin{equation}\label{eq:para-hor}
w\in \xi^\perp\cap \bfe_0^\perp\mapsto \ph(\bfe+ q_P(w))=\bfe_0+w-\frac{w^2}{6}\xi .
\end{equation}
\end{rem}

\subsubsection{Isometries fixing $\xi$} 

If $\varphi$ is an isometry of $\NS(X;\R)$ we set 
\begin{equation}
\deg(\varphi)=\bfe_0\cdot \varphi(\bfe_0)
\end{equation}
and call it the {\bf{degree}} of $\varphi$.
When $\varphi=f_*$ for some  $f\in \Aut(X)$, then $\deg(\varphi)$ coincides
with the degree of the birational transformation $\epsilon \circ f \circ \epsilon^{-1}$; we denote
this number by $\deg(f)$ or by $\deg(f_*)$  and call it the degree of $f$.

If $\varphi$ fixes $\xi$, then $\varphi$ preserves $\Hyp$, the horosphere $\Hor$, and the affine subspace $Q_0$;
it induces an affine isometry $\varphi_Q$ of $Q$ such that 
\begin{equation}
\ph^{-1}\circ \varphi|_{\Hor}\circ \ph=\varphi_Q.
\end{equation}
Since $\Hor$ generates $\NS(X;\R)$, the isometry $\varphi_Q$ determines $\varphi$ uniquely:

\begin{lem}
The homomorphism $\varphi\mapsto \varphi_Q$, from the stabilizer of $\xi$ in the group $\Isom(\NS(X;\R))$ to the group 
of affine isometries of $Q$, is faithful. 
\end{lem}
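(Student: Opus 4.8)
The plan is to prove that the homomorphism $\varphi\mapsto\varphi_Q$ has trivial kernel. So I would start from an isometry $\varphi$ of $\NS(X;\R)$ which fixes $\xi$ and satisfies $\varphi_Q=\Id$, and aim to conclude $\varphi=\Id$.

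First, since $\varphi$ fixes $\xi$ it preserves the horosphere $\Hor$, and the defining relation $\ph^{-1}\circ\varphi|_{\Hor}\circ\ph=\varphi_Q$ reduces to $\ph^{-1}\circ\varphi|_{\Hor}\circ\ph=\Id$. As $\ph\colon Q\to\Hor$ is a bijection, with inverse $q_Q|_{\Hor}$, this forces $\varphi|_{\Hor}=\Id$; that is, $\varphi$ fixes every point of the horosphere $\Hor$.

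Next I would invoke the fact that $\Hor$ spans $\NS(X;\R)$ as a real vector space, so that a linear map fixing $\Hor$ pointwise is the identity. For this I would read it off the parametrization \eqref{eq:para-hor}: setting $v_w=\bfe_0+w-\tfrac{w^2}{6}\xi$ for $w\in\xi^\perp\cap\bfe_0^\perp$, the linear span of the $v_w$ contains $\bfe_0$ (take $w=0$), hence also $\xi$, because $2(v_w-\bfe_0)-(v_{2w}-\bfe_0)=\tfrac{w^2}{3}\xi$ while $w^2<0$ for $w\neq0$ (the intersection form is negative definite on $\xi^\perp\cap\bfe_0^\perp$, since $\xi\cdot\bfe_0=3\neq0$ so that $\xi^\perp_\R=\langle\xi\rangle_\R\oplus(\xi^\perp\cap\bfe_0^\perp)$ and $P$ carries a negative definite form); consequently the span contains every $w\in\xi^\perp\cap\bfe_0^\perp$ as well. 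As $\bfe_0\notin\xi^\perp_\R$, the vectors $\bfe_0,\xi$ and $\xi^\perp\cap\bfe_0^\perp$ together span all of $\NS(X;\R)$, whence $\varphi=\Id$, proving faithfulness.

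I do not expect a genuine obstacle: the lemma is essentially a restatement of the sentence preceding it, and the only point requiring a short computation is the spanning statement for $\Hor$ — which is transparent geometrically, the affine subspace $Q_0=\bfe_0+\xi^\perp_\R$ not passing through the origin, so the horosphere it contains cannot lie in a linear hyperplane.
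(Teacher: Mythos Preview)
Your proof is correct and follows exactly the approach the paper indicates: the paper merely asserts the sentence ``Since $\Hor$ generates $\NS(X;\R)$, the isometry $\varphi_Q$ determines $\varphi$ uniquely'' and states the lemma without further argument. You have supplied precisely the missing verification of that spanning claim via the parametrization~\eqref{eq:para-hor}, and your computation $2(v_w-\bfe_0)-(v_{2w}-\bfe_0)=\tfrac{w^2}{3}\xi$ together with the negative definiteness on $\xi^\perp\cap\bfe_0^\perp$ is correct.
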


The group  $\Aut(X)$ preserves the anti-canonical class $\xi$, so every element $f\in \Aut(X)$ 
determines such an affine isometry $f_Q$ of $Q$ as well as a linear isometry $f_P$ of $P$.
The lattices $Q(\Z)$ and $P(\Z)$ are invariant by these isometries. Hence,
$\Aut(X)$ is an extension of a discrete group
of affine isometries of $Q$ by the kernel of the representation 
$f\mapsto f_*$ of $\Aut(X)$ on $\NS(X)$. 

For every $v\in Q$, set $d(v)=\bfe_0\cdot \ph(v).$ 
Equations~\eqref{equsqd} and~\eqref{eq:para-hor} imply
\begin{equation}\label{eqdv}
d(v)=\bfe_0\cdot \ph(v)=1+\frac{1}{2}\parallel v-\bfe\parallel_Q^2.
\end{equation}
Then for every $f\in \Aut(X)$, we get $\deg(f)=d(f_Q(\bfe))$.

\subsubsection{Reducible fibers, and the subspace $R(X)$ $($see~\cite{Gizatullin:1980, Grivaux}$)$} \label{par:description-Irr}
Consider the finite set $\Irr(X)\subset \NS(X)$ of classes $[C_i]$ of irreducible components of fibers of $\pi$; for instance, $m\xi\in \Irr(X)$ (take a regular fiber);  if $m>1$ and $F=mF_0$ is the unique multiple 
fiber, we get $[F_0]=\xi\in \Irr(X)$, so $\xi\in \Irr(X)$ whatever the value of $m$. Taking two disjoint fibers, we obtain $[C_i]\cdot \xi=0$ for every $[C_i]\in \Irr(X)$. 

Let $R(X)\subset \NS(X)$ (resp. $R(X)_{\R}\subset \NS(X;\R)$) be the linear span of $\Irr(X)$.

If $F$ is a fiber, and $F=\sum_{i=1}^{\mu(F)} a_i C_i$ is its decomposition into irreducible components -- with 
$\mu(F)$ the number of such components --,
then there is a linear relation with positive coefficients
\begin{equation}\label{eq:linear-relation-F}
[F]=\sum_{i=1}^{\mu(F)} a_i [C_i]=m\xi
\end{equation}
between the elements $\xi$ and $[C_i]$ of $\Irr(X)$. It turns out that
all linear relations between the elements of $\Irr(X)$ 
are linear combinations of the relations given in Equation~\eqref{eq:linear-relation-F}, as $F$ describes the set 
of reducible fibers. 
So,
\begin{itemize}
\item[(1)] $R(X)$ is a free abelian group; its rank is equal to $1+\sum_F (\mu(F)-1)$, where the sum is over all reducible fibers;
\item[(2)] $\xi\in R(X)\subseteq \xi^{\perp}$.
\end{itemize}

The genus formula and the equality $\xi=-k_X$ show that there are only three possibilities for 
the elements $c$ of $\Irr(X)$: either $c=\xi$, or $c=m\xi$ (in these cases  $c^2=0$), or $c$ is the class of
a smooth rational curve of self-intersection $c^2=-2$. In fact, by the Hodge index theorem, an element $v$ of $R(X)$ satisfies $v^2=0$ 
if and only if it is contained in $\Z\xi$. This gives
\begin{itemize}
\item[(3)] every class $c\in \Irr(X)$ satisfies $c^2\in \{0, -2\}$, with $c^2=0$ if and only if $c=\xi$ or $m\xi$.
\end{itemize}
Then, using an Euler characteristic argument and Kodaira's classification of singular fibers, one can show
that there are at most $12$ singular fibers, each of them containing at most $10$ irreducible components (see~\cite{Cossec-Dolgachev:book}). 
If $C_i$ is such an irreducible component, either $C_i$ comes from a component of the pencil
$\HP$, and $1\leq \deg([C_i]):=[C_i]\cdot \bfe_0\leq 3m$, or $C_i$ comes from the blow-up of an infinitely near point, 
and $\deg([C_i]):=[C_i]\cdot \bfe_0=0$. So, we obtain
the existence of an integer $B_1(m)$, that depends only on the index $m$ of the Halphen pencil, and an integer $B_0$ 
(that does not depend on $X$ or $m$) such that 
\begin{itemize}
\item[(4)] the set $\Irr(X)$ has at most $B_0$ elements;
\item[(5)] every  class $c\in \Irr(X)$ has degree at most $B_1(m)$, i.e. $0\leq c\cdot \bfe_0\leq B_1(m)$.
\end{itemize} 

\begin{lem}\label{lemrfi}
Let $m$ be a positive integer. Consider the set of all Halphen surfaces $X$ of index $m$:
there are only finitely many possibilities for the subset $\Irr(X)\subset \Z^{1,9}=\NS(X)$
and for its linear spans $R(X)$ and $R(X)_{\R}$.
\end{lem}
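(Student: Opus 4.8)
The plan is to show that $\Irr(X)$, as a subset of the fixed lattice $\Z^{1,9}$, ranges over a finite set of possibilities when $X$ varies among Halphen surfaces of index $m$; the finiteness for $R(X)$ and $R(X)_\R$ is then immediate since these are the linear spans (over $\Z$ and $\R$ respectively) of $\Irr(X)$, and there are only finitely many subsets to take spans of. So the whole content is the statement about $\Irr(X)$.

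First I would use the identification of $\NS(X)$ with $\Z^{1,9}$ via the basis $\bfe_0,\bfe_1,\ldots,\bfe_9$, under which the intersection form and the anti-canonical class $\xi=3\bfe_0-\bfe_1-\cdots-\bfe_9$ are the \emph{same} for every Halphen surface of index $m$. By properties (3), (4), (5) recalled just above the lemma, every class $c\in\Irr(X)$ satisfies $c^2\in\{0,-2\}$, has degree $0\leq c\cdot\bfe_0\leq B_1(m)$, and $\Irr(X)$ has at most $B_0$ elements. The key point is then that \textsl{the set of vectors $c\in\Z^{1,9}$ with $c^2\in\{0,-2\}$, $c\cdot\xi=0$, and $0\leq c\cdot\bfe_0\leq B_1(m)$ is finite}: indeed the quadratic form restricted to $\xi^\perp$ is negative semi-definite with radical $\R\xi$, hence it is negative definite on $\xi^\perp/\R\xi = P$; the conditions $c^2\in\{0,-2\}$ and $c\cdot\bfe_0$ bounded pin down $q_P(c)$ to a bounded region of the negative-definite space $P$ (hence finitely many lattice points of $P(\Z)$ by discreteness), and then the extra datum $c\cdot\bfe_0\in\{0,1,\ldots,B_1(m)\}$ together with $c\cdot\xi=0$ recovers $c$ from $q_P(c)$ up to finitely many choices (the fibre of $\xi^\perp\to\xi^\perp/\Z\xi$ through a given point, intersected with the slab $0\leq c\cdot\bfe_0\leq B_1(m)$, is finite since $\bfe_0\cdot\xi=3\neq 0$). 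Therefore $\Irr(X)$ is always a subset of one fixed finite set $S_m\subset\Z^{1,9}$.

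Since $\Irr(X)\subseteq S_m$ and $S_m$ is finite, there are at most $2^{|S_m|}$ possibilities for the subset $\Irr(X)$, a fortiori finitely many possibilities for $R(X)=\Z\text{-span}(\Irr(X))$ and $R(X)_\R=\R\text{-span}(\Irr(X))$. This proves the lemma.

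The main (and really the only) obstacle is the finiteness of $S_m$, i.e. checking carefully that the three numerical constraints $c^2\in\{0,-2\}$, $c\cdot\xi=0$, $c\cdot\bfe_0\leq B_1(m)$ genuinely cut out a finite set in $\Z^{1,9}$; this is where the negative-definiteness of the intersection form on $P=\xi^\perp/\R\xi$ is essential — without quotienting by $\xi$ one only gets negative \emph{semi}-definiteness and the set would be infinite (it contains $\{c+n\xi : n\in\Z\}$ for any fixed $c$), so the bound on $c\cdot\bfe_0$ is exactly what is needed to break this degeneracy. Everything else is bookkeeping with the already-established facts (1)–(5).
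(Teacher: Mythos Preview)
Your proof is correct and takes essentially the same approach as the paper: show that the numerical constraints from properties (2)--(5) force $\Irr(X)$ to lie in a fixed finite subset of $\Z^{1,9}$, then take subsets and spans. The paper's own verification of finiteness is even shorter (it does not pass through the quotient $P$: writing $v=a_0\bfe_0-\sum_{i\geq 1} a_i\bfe_i$, the conditions $v^2=-2$ and $0\leq v\cdot\bfe_0=a_0\leq B_1(m)$ give $\sum_{i\geq 1} a_i^2=a_0^2+2\leq B_1(m)^2+2$, bounding all coordinates directly), but your argument via negative-definiteness on $P$ and then controlling the $\xi$-direction with the degree bound is equally valid.
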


Here, $\NS(X)$ is identified with $\Z^{1,9}$ by our choice of basis $\bfe_0$, $\ldots$, $\bfe_9$, as in Section~\ref{par:basis_for_NS}.

\begin{proof}
  According to Properties (2) and (4), 
\begin{equation}
\Irr(X)\subset \{\xi, m\xi\}\cup \{v\in \NS(X)\; ; \; v^2=-2 \; {\text{and }} \, v\cdot \bfe_0\leq B_1(m)\}.
\end{equation}
This set is finite, bounded by a uniform constant that depends on $m$ but not on $X$ (or $\HP$). 
The lemma follows. 
\end{proof}

\subsubsection{The cylinder $Q^+$} \label{par:cylinderQ+}
Now, let $c$ be an element of $\Irr(X)\subseteq \xi^{\perp}$. Since $c\cdot \xi=0$, the linear map 
$w\in \NS(X;\R)\mapsto w \cdot c\in \R$ induces an affine function on $Q$:  if $v=q_Q(w)$ for some $w$ in 
$Q_0$, then we set $v\cdot c=w\cdot c$ and this does not depend on the choice of $w$.
Set 
\begin{equation}
Q(c^\perp)=\{ u\in Q\; ; \; u\cdot c=0\} \; {\text{ and }} \;  Q(c^+)=\{ u\in Q\; ; \; u\cdot c\geq 0\}.
\end{equation}
If $c\in \Z_{>0}\xi$ then $Q(c^\perp)=\emptyset$   and $Q(c^+)=Q$.
If $c\in \Irr(X)$ satisfies $c^2=-2$, then $Q(c^\perp)$ is a hyperplane of $Q$ and $Q(c^+)$ is one of the two closed half-spaces
bounded by $Q(c^\perp)$.

We want to describe 
\begin{equation}
Q^+=\bigcap_{c\in \Irr(X)} Q(c^+).
\end{equation}
Since $c$ is represented by an effective curve, the point $\bfe=q_Q(\bfe_0)$ is in $Q^+$. 

Consider the spaces $A_0\subset \xi^\perp_\R$ and $A\subset P$ defined by 
\begin{equation}
A_0=R(X)_\R^\perp \quad {\text{and}} \quad A=A_0/\langle \xi\rangle_\R.
\end{equation}
They act respectively by translations on $Q_0$ and $Q$, and we denote by $q_{Q/A}\colon Q\to Q/A$ the affine quotient map associated to this action. Note that every fiber of $\pi$ is irreducible if and only if $R(X)_\R=\langle \xi\rangle_\R$, if and only if
$A=\xi^\perp_\R/\langle \xi\rangle_\R$, if and only if $A$ acts transitively on $Q$, if and only if $Q/A=0$.

\begin{lem}
The set $Q^+$ contains $\bfe$ and is invariant under the action of $A$ by translations. 
Its projection $q_{Q/A}(Q^+)$ is a compact polytope $D(X)$ in $Q/A$; the number of faces
of $D(X)$ is uniformly bounded $($by a constant $B_0$ that does not depend on $X$ and $m)$.
\end{lem}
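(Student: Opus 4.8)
The statement has three assertions about $Q^+=\bigcap_{c\in\Irr(X)}Q(c^+)$: that $\bfe\in Q^+$, that $Q^+$ is $A$-invariant, and that its image $D(X)=q_{Q/A}(Q^+)$ is a compact polytope with a uniformly bounded number of faces. The first assertion is already noted in the excerpt (each $c\in\Irr(X)$ is an effective class, so $\bfe_0\cdot c\geq 0$, i.e. $\bfe\cdot c\geq 0$), so I would just repeat that in one line. For $A$-invariance: $A=R(X)_\R^\perp/\langle\xi\rangle_\R$ acts on $Q$ by translations, and for $a\in A_0$ and $c\in\Irr(X)\subset R(X)$ we have $a\cdot c=0$; hence the affine function $u\mapsto u\cdot c$ on $Q$ is constant along $A$-orbits, so each half-space $Q(c^+)$ is $A$-invariant, and therefore so is the intersection $Q^+$. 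This shows $Q^+=q_{Q/A}^{-1}(D(X))$ with $D(X)=q_{Q/A}(Q^+)$.

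Next I would identify $D(X)$ as a polytope. Since the $c\in\Irr(X)$ span $R(X)_\R$, the linear functionals $w\mapsto w\cdot c$ descend to $Q/A$: indeed $Q/A$ is an affine space modeled on $\xi^\perp_\R/(R(X)_\R)$ (after quotienting by $\langle\xi\rangle_\R$ too), and the pairing with $c$ is well-defined there precisely because we killed $A_0=R(X)_\R^\perp$. So $D(X)$ is the intersection in $Q/A$ of the finitely many closed half-spaces $\{u\cdot c\geq 0\}$, $c\in\Irr(X)$; it is a (closed, convex) polyhedron, and the number of its facets is at most $|\Irr(X)|\leq B_0$ by Property (4) of \S\ref{par:description-Irr}. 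The key point still to be checked is \emph{compactness} of $D(X)$ (a priori a polyhedron in a Euclidean space could be unbounded), and this is where the real content lies.

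**The main obstacle — compactness.** To prove $D(X)$ is bounded I would argue by contradiction: if it were unbounded it would contain a ray $v+\R_{\geq 0}\,w$ with $w\neq 0$ in the model space $\xi^\perp_\R/R(X)_\R$ (mod $\langle\xi\rangle$), satisfying $w\cdot c\geq 0$ for all $c\in\Irr(X)$. Lift $w$ to $\tilde w\in\xi^\perp_\R$, orthogonal to $R(X)_\R$ (we may take $\tilde w\in A_0$ — but then its image in $Q/A$ is $0$, contradicting $w\neq 0$); more carefully, the point is that the quadratic form is \emph{negative definite} on $P=\xi^\perp_\R/\langle\xi\rangle_\R$ (stated in \S\ref{par:description-Irr}), hence negative definite on the subspace $R(X)_\R/\langle\xi\rangle_\R$ and on its orthogonal complement, which is exactly the model space of $Q/A$. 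On a negative-definite space the recession cone of $\{u\cdot c\geq 0 : c\in\Irr(X)\}$ must be $\{0\}$ unless the $c$'s fail to span — but they do span $R(X)_\R$ by definition, so in the quotient the images of the $c$'s together with the negative-definiteness force the recession cone to be trivial. Concretely: if $w$ lies in the recession cone, then for the metric $\parallel\cdot\parallel_Q$ one shows $\parallel w\parallel_Q^2 = -\tilde w^2$ where $\tilde w$ is the representative orthogonal to $R(X)_\R$; pairing $w$ against the $c$'s gives no information since they span $R(X)_\R$, so actually $D(X)$ being cut out only by such functionals means... here I would invoke that the effective classes $c$ generate a cone whose dual in the negative-definite space $Q/A$ is salient, equivalently that $\sum_c c$ lies in the interior — and $\sum_c a_i c = m\xi$ type relations together with Hodge index give that $R(X)_\R$ contains no further isotropic directions beyond $\Z\xi$. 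The cleanest route is probably: $D(X)$ is bounded iff the functionals $\{c : c\in\Irr(X)\}$, viewed in $(Q/A)^*$, positively span $(Q/A)^*$; and this holds because any $c'\perp R(X)_\R$ is zero in $Q/A$, so the span of the $c$'s is all of $(Q/A)^*$, and a negative-definite quadratic form has no nonzero vector $w$ with $w\cdot c\geq 0$ for all $c$ in a spanning set unless... — so I would need to be a little careful and likely replace this last clause by the standard fact that a convex polyhedron in a Euclidean space cut out by half-spaces whose normals span the whole space is exactly a point plus a cone, and then use that the cone is salient because the $c$'s span and one can find a strictly positive combination $m\xi$... Finally, the uniform bound $B_0$ on the number of faces of $D(X)$ is immediate from Property (4): $D(X)$ has at most $|\Irr(X)|\leq B_0$ facets. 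I expect the compactness step to require the most care, leaning on the negative-definiteness of the intersection form on $P$ and the Hodge index statement in (3).
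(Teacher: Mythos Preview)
Your treatment of $\bfe\in Q^+$, of the $A$-invariance, and of the bound $|\Irr(X)|\leq B_0$ on the number of faces is correct and matches the paper. The gap is entirely in the compactness step, and it is a genuine one.

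First, the model vector space for $Q/A$ is $P/A$, and since $P=\xi^\perp_\R/\langle\xi\rangle_\R$ carries a negative-definite form while $A=R(X)_\R^\perp/\langle\xi\rangle_\R$, this quotient is naturally identified with the orthogonal complement of $A$ in $P$, namely $\bar R:=R(X)_\R/\langle\xi\rangle_\R$. It is \emph{not} $\xi^\perp_\R/R(X)_\R$, nor is it $A$, as you write at different moments. The affine functions $\ell_c(v)=v\cdot c$ for $c\in\Irr(X)$ descend to $Q/A$ because $c\perp A_0$, and their linear parts $L_c$ span $(Q/A)^*\cong\bar R^*$ because $\Irr(X)$ spans $R(X)_\R$ and the form on $\bar R$ is non-degenerate.

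Second, negative-definiteness alone does \emph{not} force the recession cone $\{w:L_c(w)\geq 0\ \forall c\}$ to be trivial: in $\R^2$ with form $-(x_1^2+x_2^2)$ and $c_1=(1,0)$, $c_2=(0,1)$, the cone $\{w\cdot c_i\geq 0\}$ is a full quadrant. What kills the recession cone is the \emph{positive linear relation} among the $L_c$ coming from the fiber decompositions, which you mention but never use. This is exactly the paper's argument: for any reducible fiber $F=\sum_i a_i C_i$ one has $\sum_i a_i[C_i]=m\xi$, hence $\sum_i a_i L_{[C_i]}=L_{m\xi}=0$ with all $a_i>0$; so if $w$ is in the recession cone then $\sum_i a_i L_{[C_i]}(w)=0$ forces $L_{[C_i]}(w)=0$ for every component $C_i$ of $F$. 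Running over all reducible fibers (and noting $L_\xi=L_{m\xi}=0$), one gets $L_c(w)=0$ for every $c\in\Irr(X)$; since these span $(Q/A)^*$, $w=0$. Your attempts via ``normals span, so the polyhedron is a point plus a cone'' (false: it is a polytope plus its recession cone) and via salience of a dual cone do not reach this conclusion; the missing idea is precisely to exploit the relation $\sum a_i[C_i]=m\xi$ to force each $L_{[C_i]}(w)=0$.
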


\begin{proof}
The first assertion follows from the inclusion $A_0\subset R(X)_\R^\perp$. 

For the second, we shall use the following fact. Let $E$ be a finite dimensional vector space, and let $(L_i)_{1\leq i\leq s}$ be a
finite generating subset of the dual space $E^\vee$. Let $A_i$, $1\leq i \leq s$, be real numbers, and 
let ${\mathcal{C}}\subset E$ be the convex set determined by the 
affine inequalities 
\begin{equation}
\ell_i(x):=L_i(x)+A_i\geq 0.
\end{equation}
Assume that $\sum_i a_i\ell_i=0$ for some positive real numbers $a_i$. Then ${\mathcal{C}}$ is compact or empty.
In our case, the $\ell_i$ are given by the inequalities $[C_i]\cdot x \geq 0$ in $Q/A$. Consider a reducible fiber $F$ of $\pi$, and decompose it into irreducible components 
$F=\sum_{i=1}^{\mu(F)}a_i C_i$, with $[C_i]\in R(X)$. All $a_i$ are $>0$, and  
\begin{equation}
\sum_{i=1}^{\mu(F)}a_i([C_i]\cdot v)=m\xi \cdot v = 0
\end{equation} 
for every $v$ in $Q$. Since $D(X)$ contains $\bfe$ it is not empty, and we conclude that  $D(X)$ is compact. 

Since $Q^+$ is the intersection of the half spaces $Q(c^+)$ and $\sharp \Irr(X)\leq B_0$ is uniformly bounded (see \S~\ref{par:description-Irr}, Property~(4)), $D(X)$ has at most $B_0$ faces.
\end{proof}

Denote by ${\mathrm{Cent}}(D(X))$ the center of mass of $D(X)$ (for the Lebesgue measure on $Q/A$): 
${\mathrm{Cent}}(D(X))$ depends only on the affine structure of $Q/A$.
Set 
\begin{equation}
\Axis=q_{Q/A}^{-1}({\mathrm{Cent}}(D(X)))\subset Q;
\end{equation}
it is an affine subspace of $Q$ of dimension $\dim \Axis=\dim A=9-\rk(R(X)).$
\begin{pro}\label{pro:actpre}
The convex set $Q^+$ is a cylinder with base  $D(X)$ and axis $\Axis$. 
The action of $\Aut(X)$ on $Q$ preserves $Q^+$ and  $\Axis$.
\end{pro}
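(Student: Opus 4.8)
The plan is to read off the cylinder structure almost formally from the $A$-invariance of $Q^+$ established in the preceding lemma, and then to deduce the two invariance assertions from the fact that $\Aut(X)$ permutes the fibers of $\pi$. For the first part I would argue as follows. Every $c\in\Irr(X)$ lies in $R(X)\subseteq\xi^\perp$ while $A_0=R(X)_\R^\perp$, so for $v\in Q$ lifted to $w\in Q_0$ the value $w\cdot c$ is unchanged under translating $w$ by any element of $A_0$; hence the affine function $v\mapsto v\cdot c$ on $Q$ is constant along the translation orbits of $A$. It follows that each half-space $Q(c^+)$, and therefore $Q^+=\bigcap_{c\in\Irr(X)}Q(c^+)$, is a union of fibers of $q_{Q/A}\colon Q\to Q/A$; together with $q_{Q/A}(Q^+)=D(X)$ from the preceding lemma this yields $Q^+=q_{Q/A}^{-1}(D(X))$, i.e. $Q^+$ is swept out by the affine subspaces $q_{Q/A}^{-1}(\bar d)$, $\bar d\in D(X)$, all of them parallel to the fiber $\Axis=q_{Q/A}^{-1}(\mathrm{Cent}(D(X)))$: this is exactly the assertion that $Q^+$ is a cylinder with base $D(X)$ and axis $\Axis$. (If one wants ``cylinder'' in the metric sense, one also notes that, the intersection form being negative definite on $P$ and $\xi$ lying in $R(X)_\R$, the subspace $A=A_0/\langle\xi\rangle_\R$ is the orthogonal complement in $P$ of the image $\bar R$ of $R(X)_\R$, so that $P=\bar R\oplus A$ orthogonally and $q_{Q/A}$ becomes an orthogonal projection.)

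For the invariance of $Q^+$, fix $f\in\Aut(X)$. Since $f_*$ is an isometry fixing $\xi$, we have $f_*\bfe_0\cdot\xi=f_*\bfe_0\cdot f_*\xi=\bfe_0\cdot\xi$, so $f_*$ preserves $Q_0$ and induces on $Q$ the affine isometry $f_Q$, with $f_Q([w])=[f_*w]$ for $w\in Q_0$. Because $f$ permutes the fibers of $\pi$, the linear map $f_*$ permutes the finite set $\Irr(X)$ of classes of their irreducible components. Then for $v=[w]\in Q$ and $c\in\Irr(X)$ one has $(f_Q v)\cdot c=f_*w\cdot c=w\cdot f_*^{-1}c$, and since $c\mapsto f_*^{-1}c$ is a bijection of $\Irr(X)$, the inequalities cutting out $Q^+$ at $v$ are equivalent to those cutting out $Q^+$ at $f_Q v$; hence $f_Q(Q^+)=Q^+$.

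For the invariance of $\Axis$, note that $f_*$, permuting $\Irr(X)$, preserves its linear span $R(X)_\R$ and hence $A_0=R(X)_\R^\perp$; therefore $f_Q$ normalizes the translation action of $A$ and descends to an affine isometry $\bar f$ of $Q/A$ with $q_{Q/A}\circ f_Q=\bar f\circ q_{Q/A}$. From $f_Q(Q^+)=Q^+$ we get $\bar f(D(X))=D(X)$, and since the center of mass of a bounded set transforms covariantly under affine bijections, $\bar f$ fixes $\mathrm{Cent}(D(X))$; hence $f_Q$ fixes $\Axis=q_{Q/A}^{-1}(\mathrm{Cent}(D(X)))$. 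The whole argument is essentially bookkeeping; the one point deserving care is the orthogonality identity $A=\bar R^\perp$ needed to make ``cylinder'' precise in the metric sense — one must check that the complement $A_0=R(X)_\R^\perp$, formed inside the merely semi-definite space $\xi^\perp_\R$, projects onto the orthogonal complement of $\bar R$ in the definite space $P$, which is exactly where the inclusion $\xi\in R(X)_\R$ enters.
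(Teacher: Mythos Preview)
Your proof is correct and follows the same approach as the paper: the cylinder structure is read off from the $A$-invariance of $Q^+$ (each defining inequality $v\cdot c\geq 0$ is constant along $A$-orbits since $c\in R(X)$ and $A_0=R(X)_\R^\perp$), and the invariance of $Q^+$ and $\Axis$ comes from $\Aut(X)$ permuting $\Irr(X)$ and the affine covariance of the center of mass. The paper's own proof is a terse three-line version of exactly this argument; your added remarks on descending to $Q/A$ and on the orthogonality $A=\bar R^\perp$ needed for the metric sense of ``cylinder'' are useful clarifications but not a departure in method.
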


\begin{proof}
The group $\Aut(X)$ permutes the elements of $\Irr(X)$, so it 
preserves $Q^+$, and on $Q/A$ it preserves the polytope $D(X)$;
since it acts by affine automorphisms, it preserves its center of mass. 
\end{proof}

For every $f\in \Aut(X)$, denote by $f_{Q/A}$ its action on $Q/A$ and by $f_\Axis$ its action on $\Axis$. 
Denote by $p_\Axis: Q\to \Axis$ the orthogonal projection. 
Since $f_Q$ is an isometry, we have $p_\Axis\circ f_A=f_\Axis\circ p_\Axis$. 

Since $D(X)$ is compact, there exists $r\geq 0,$ such that $\parallel{p_\Axis(x)-x }\parallel_Q\leq r$ for every $x\in Q^+$.
This $r$ only depends on $\Irr(X).$
Then for every $x,y\in Q^+$, we have 
\begin{equation}\label{ieqpd}
\parallel x-y\parallel_Q^2\leq \parallel p_\Axis(x)-p_\Axis(y)\parallel_Q^2+4r^2.
\end{equation}

\subsection{The subgroup $\Aut^t(X)$}\label{par:Autt}

\subsubsection{Vertical translations $($see~\cite{Cantat-Dolgachev, Grivaux}$)$}\label{par:vertical-translations}

The generic fiber $\mathfrak{X}$ of $\pi$ is a genus $1$ curve over the field 
$\bfk(\bbP^1_\bfk)$. 
If we take an element $u$ in $\Pic(X)=\NS(X)$ and restrict it to the generic fiber, we get an element of $\Pic(\mathfrak{X})$; 
this restriction has degree $0$ if and only if $u$ is in $\xi^\perp$. This defines a homomorphism 
$\Tr\colon \xi^\perp\to \Pic^0(\mathfrak{X})(\bfk(\bbP^1_\bfk)).$ It is surjective, and its kernel coincides with the subgroup $R(X)$
of $\NS(X)$.
So, we get an isomorphism of abelian groups 
\begin{equation}
\tr\colon \xi^\perp/R(X) \to \Pic^0(\mathfrak{X})(\bfk(\bbP^1_\bfk))
\end{equation}
(we use the notation $\tr$ after taking the quotient by $R(X)$).  
Now, start with an element $q$ of   $\Pic^0(\mathfrak{X})(\bfk(\bbP^1_\bfk))$. On a general fiber $X_t$ of $X$, with $t\in \bbP^1_\bfk$, we get
a point $q(t)\in \Pic^0(X_t)$, and then  a translation $z\mapsto z+q(t)$ of the elliptic curve $X_t$. This gives a birational map $T(q)$ of
$X$ acting by translations along the general fibers; since the fibration $\pi\colon X\to \bbP^1_\bfk$ is relatively minimal,
this birational map is in fact an automorphism (see~\S~\ref{par:rel-mini}). So, we obtain a homomorphism 
\begin{equation}
T\colon \Pic^0(\mathfrak{X})(\bfk(\bbP^1_\bfk))\to \Aut(X).
\end{equation}
In what follows, $\iota: \xi^\perp/R(X)\to \Aut^t(X)$ will denote the composition $T\circ \tr$, 
as well as $T\circ \Tr$, with the same notation since this should not induce any confusion. 
Also, recall that $\tau\colon \Aut(X)\to \Aut(\bbP^1_\bfk)$ is defined in \S~\ref{par:rel-mini}.

\begin{pro}\label{pro:T} The homomorphism $T$ satisfies the following properties.
\begin{enumerate} 
\item $T$ is injective.
\item Its image is the normal subgroup $\Aut^t(X)\subset \Aut(X)$ of automorphisms $f$ such that
\begin{itemize}
\item[(i)] $f$ preserves each fiber of $\pi$, i.e. $\tau(f)=\Id_{\bbP^1_\bfk}$;
\item[(ii)] if $X_t$ is a smooth fiber, then $f$ acts as a translation on it.
\end{itemize}
\item If  $\alpha \in \NS(X)$ and $\gamma\in \xi^\perp$, then $\iota(\gamma)_*\alpha=\alpha+(\alpha\cdot \xi)\gamma \mod\,\, R(X)$ $($i.e.
$(T(\Tr(\gamma)))_*\alpha=\alpha+(\alpha\cdot \xi)\gamma \mod\,\, R(X))$.\end{enumerate}
\end{pro}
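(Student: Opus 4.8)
The plan is to prove the three assertions by reducing everything to the generic fiber $\mathfrak{X}$ of $\pi$, which is a smooth curve of genus one over $K:=\bfk(\bbP^1_\bfk)$. Recall that $\Pic^0(\mathfrak{X})$ is an abelian variety (the Jacobian of $\mathfrak{X}$) acting on the torsor $\mathfrak{X}$ by translations, and that for $b\in\Pic^0(\mathfrak{X})(\overline{K})$ the translation $\sigma_b$ acts on $\Pic(\mathfrak{X})$ by $[E]\mapsto [E]+(\deg E)\,b$. For assertion $(1)$: since $T$ is a homomorphism it suffices to show $T(q)=\Id_X\Rightarrow q=0$. If $T(q)=\Id_X$, then on every smooth fiber $X_t$ the translation by $q(t)$ is the identity automorphism of the torsor $X_t$; as a nontrivial translation of a genus one curve has no fixed point, this forces $q(t)=0$ for every $t$ in a dense open subset of $\bbP^1_\bfk$, hence $q=0$ in $\Pic^0(\mathfrak{X})(K)$. (Alternatively $(1)$ follows from $(3)$: $\iota(\gamma)_*=\Id$ gives $(\alpha\cdot\xi)\gamma\in R(X)$ for all $\alpha$, and choosing $\alpha=\bfe_1$, for which $\alpha\cdot\xi=1$, yields $\gamma\in R(X)$, i.e. $\Tr(\gamma)=0$.)

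For assertion $(2)$: the subgroup $\Aut^t(X)$ is normal because any $g\in\Aut(X)$ sends the fiber $X_t$ isomorphically onto $X_{\tau(g)(t)}$, hence conjugates a fiber-preserving automorphism acting by translations into another one of the same kind; and by construction $T(q)$ lies in $\Aut^t(X)$. Conversely, let $f\in\Aut^t(X)$. It induces an automorphism $f_\eta$ of $\mathfrak{X}$ over $K$ which acts by translation on every smooth closed fiber; by specialization (an automorphism of an elliptic curve fixing all its torsion points is trivial) $f_\eta$ acts trivially on $\Pic^0(\mathfrak{X}_{\overline{K}})$, so $f_\eta=\sigma_q$ for a unique $q\in\Pic^0(\mathfrak{X})(\overline{K})$; moreover $q$ is Galois invariant, since $f_\eta$ and the torsor action are defined over $K$ and this action is faithful, hence $q\in\Pic^0(\mathfrak{X})(K)$. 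As $f$ and $T(q)$ agree on the generic fiber and $X$ is irreducible, $f=T(q)$, so the image of $T$ is exactly $\Aut^t(X)$. (Relative minimality of $\pi$, already used in \S\ref{par:rel-mini}, is what guarantees that $T(q)$ and $f_\eta$ extend to genuine automorphisms of $X$.)

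For assertion $(3)$: write $q=\Tr(\gamma)=\gamma|_{\mathfrak{X}}$, so that $\iota(\gamma)=T(q)$ restricts to $\sigma_q$ on $\mathfrak{X}$. For $\alpha\in\NS(X)$, restriction to $\mathfrak{X}$ gives $(\iota(\gamma)_*\alpha)|_{\mathfrak{X}}=\alpha|_{\mathfrak{X}}+\deg(\alpha|_{\mathfrak{X}})\,q=(\alpha+(\alpha\cdot\xi)\gamma)|_{\mathfrak{X}}$, where $\deg(\alpha|_{\mathfrak{X}})=\alpha\cdot\xi$ is the degree of the restricted class, as recalled in the description of $\xi^\perp$ above. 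Since the kernel of the restriction map $\NS(X)\to\Pic(\mathfrak{X})$ is $R(X)$ — it is the kernel of $\Tr$ on $\xi^\perp$, and more generally a class restricts to $0$ on $\mathfrak{X}$ exactly when it is supported on fibers, i.e. lies in $R(X)$ — we conclude $\iota(\gamma)_*\alpha\equiv\alpha+(\alpha\cdot\xi)\gamma\pmod{R(X)}$, which is the asserted formula (one checks directly that the right-hand side is additive in $\gamma$, consistent with $\iota$ and $\gamma\mapsto\iota(\gamma)_*$ being homomorphisms, using $\gamma\cdot\xi=0$).

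The delicate point is the converse inclusion in $(2)$: identifying $\Aut^t(X)$ with the Mordell–Weil-type group $\Pic^0(\mathfrak{X})(K)$ requires knowing that a fiberwise translation actually comes from a single $K$-rational section of $\Pic^0$, which is the Galois-descent argument for the translation vector above, combined with the classical fact (relative minimality of $\pi$) that such a section extends to an automorphism of the surface and not merely a birational self-map; assertions $(1)$ and $(3)$ are then comparatively routine bookkeeping on the generic fiber.
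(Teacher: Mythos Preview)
Your proof is correct and follows essentially the same strategy as the paper: all three assertions are reduced to the generic fiber $\mathfrak{X}$ and the translation formula $\sigma_q^*[E]=[E]+(\deg E)\,q$ on $\Pic(\mathfrak{X})$. The only notable difference is in the presentation of (3): the paper argues concretely by representing $\alpha$ by an irreducible curve $V$, restricting to $V_\eta$, and then taking Zariski closures of the divisorial equality on $\mathfrak{X}$ (the ``vertical'' discrepancy $W$ that appears lies in $R(X)$); you phrase the same computation more abstractly by invoking directly that the kernel of the restriction $\NS(X)\to\Pic(\mathfrak{X})$ is $R(X)$. For (2), the paper's converse is terser---it simply sets $q(t)=f(x)-x$ on each smooth fiber---whereas you pass through $f_\eta$ and a Galois-descent argument for the translation vector; both amount to the same construction of the section of $\Pic^0$.
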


\begin{proof}
The homomorphism $T$ is injective because $q(t)\neq 0$ for at least one $t$ if $q\in \Pic^0(\mathfrak{X})(\bfk(\bbP^1_\bfk))\setminus \{0\}$.
Its image is contained in the normal subgroup $\Aut^t(X)$; it coincides with it, because if $f$ is an element of $\Aut^t(X)$, and $X_t$ is any 
smooth fiber, one can set $q(t)=f(x)-x$ for any $x\in X_t$, and this defines an element of $\Pic^0(\mathfrak{X})(\bfk(\bbP^1_\bfk))$ for which $T(q)=f$. 

So, we only need to prove (3). By  linearity, we may assume that $\alpha$ is represented by an irreducible curve $V\subset X$.
Denote by $V_{\eta}$ the fiber of $V$ at the generic point $\eta$ of $\bbP^1$. Observe that $V_{\eta}$ is a 
zero cycle in $\mathfrak{X}$ of degree $(\alpha\cdot \xi)$.
Since $\iota(\gamma)$ is an element of $\Aut^t(X)$, 
it fixes every fiber $X_t$ of $\pi$ and permutes its irreducible components. We can therefore assume  $V_\eta\neq \emptyset$;
then $\iota(\gamma)_*V$ is the Zariski closure of  $(\iota(\gamma))|_{\mathfrak{X}})_*V_{\eta}$.

Pick a divisor $D$ on $X$ which represents $\gamma$ and denote by $D_{\eta}$ its generic fiber.
Then $\Tr(\gamma)$ is represented by $D_{\eta}$.
On $X_\eta$,  $\iota(\gamma)|_{X_{\eta}}$  is the translation by $\Tr(\gamma)$; thus,  
$(\iota(\gamma)|_{X_\eta})_*V_{\eta}$ and $V_{\eta}+(\alpha\cdot \xi)D_{\eta}$ are linearly equivalent as divisors of $\Pic(X_\eta)$, and of $\Pic(\mathfrak{X})$. In other words, there exists a rational function 
$h$ in $\bfk(\bbP^1)(\mathfrak{X})=\bfk(X)$ such that 
\begin{equation}
{\mathrm{Div}}(h)|_{\mathfrak{X}}=(V_{\eta}+(\alpha\cdot \xi)D_{\eta})-(\iota(\gamma)|_{\mathfrak{X}})_*V_{\eta},
\end{equation}
where ${\mathrm{Div}}(h)$ is the divisor  associated to $h$.
Taking Zariski closures in $X$, we get 
\begin{equation}
{\mathrm{Div}}(h)=(V+(\alpha\cdot \xi)\gamma)-(\iota(\gamma)|_{\mathfrak{X}})_*V+W
\end{equation}
where $W$ is a vertical divisor i.e. $W\cap \mathfrak{X}=\emptyset$. It follows that 
$\iota(\gamma)_*\alpha=\alpha+(\alpha\cdot \xi)\gamma \mod\,\, R(X),$ which concludes the proof.
\end{proof}

\subsubsection{Action of $\xi^\perp$ on $\Axis$}

Denote by $\overline{\iota}\colon \xi^\perp/R(X)\to \Aut(\NS(X)/R(X))$ the induced action. 

\begin{lem}\label{lem:eqconj}
The homomorphism $\overline{\iota}$ is injective. 
For every $f\in \Aut(X)$ and $\gamma$ in $\xi^\perp$ $($resp. in $\xi^\perp/R(X))$, we have 
$
\iota(f^*\gamma)=f^{-1}\circ \iota(\gamma) \circ f.
$
\end{lem}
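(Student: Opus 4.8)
The plan is to prove the conjugation formula first, at the level of $\xi^\perp$ acting on $\NS(X)$ modulo $R(X)$, by directly checking equality of the two automorphisms of $\NS(X)/R(X)$ on an arbitrary class $\alpha$; the injectivity of $\overline{\iota}$ will then be an immediate consequence of the injectivity of $\iota$ (Proposition~\ref{pro:T}(1)) together with the fact that $\iota(\gamma)_*$ depends only on $\gamma \bmod R(X)$, which is exactly the content of Proposition~\ref{pro:T}(3). Indeed, Proposition~\ref{pro:T}(3) says $\iota(\gamma)_*\alpha = \alpha + (\alpha\cdot\xi)\gamma \bmod R(X)$, so the action $\overline{\iota}$ factors through $\xi^\perp/R(X)$ and is manifestly zero only when $\gamma\in R(X)$; for such $\gamma$, $\iota(\gamma)$ would be trivial on $\NS(X)/R(X)$, but since $\iota(\gamma)$ is a translation automorphism, triviality on $\NS(X)/R(X)$ forces $\Tr(\gamma)=0$ hence $\gamma\in R(X)$, giving injectivity of $\overline{\iota}$.

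For the conjugation formula, I would compute $f^{-1}\circ\iota(\gamma)\circ f$ acting on a class $\alpha\in\NS(X)$ modulo $R(X)$, using three inputs: (a) $f_*$ preserves $\xi$ (since $\xi=-k_X$ is canonical), preserves the intersection form, and preserves $R(X)$ (it permutes $\Irr(X)$, by the discussion in \S\ref{par:description-Irr}); (b) the explicit formula of Proposition~\ref{pro:T}(3); (c) the identity $(f_*\beta)\cdot\xi = \beta\cdot(f_*^{-1}\xi) = \beta\cdot\xi$ for all $\beta$, again because $f_*$ is an isometry fixing $\xi$. Concretely, writing $\beta = (f^{-1})_*\alpha$ one has $\bigl(f^{-1}\circ\iota(\gamma)\circ f\bigr)_*\alpha = (f^{-1})_*\bigl(\iota(\gamma)_*(f_*\beta)\bigr)$... wait, one must be careful with the variance: $f^{-1}\circ\iota(\gamma)\circ f$ acts on $\NS$ by $(f^{-1})_* \circ \iota(\gamma)_* \circ f_*$ is \emph{not} correct since $(f\circ g)_* = f_*\circ g_*$, so $(f^{-1}\circ\iota(\gamma)\circ f)_* = (f^{-1})_*\circ\iota(\gamma)_*\circ f_* = f_*^{-1}\circ\iota(\gamma)_*\circ f_*$. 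Then
\[
f_*^{-1}\bigl(\iota(\gamma)_*(f_*\alpha)\bigr) \equiv f_*^{-1}\bigl(f_*\alpha + (f_*\alpha\cdot\xi)\gamma\bigr) = \alpha + (\alpha\cdot\xi)\, f_*^{-1}\gamma \pmod{R(X)},
\]
where I used $R(X)$-invariance of $f_*^{-1}$ to move the congruence across $f_*^{-1}$, and $f_*\alpha\cdot\xi = \alpha\cdot\xi$. By Proposition~\ref{pro:T}(3) applied to the class $f^*\gamma := f_*^{-1}\gamma$, the right-hand side is exactly $\iota(f^*\gamma)_*\alpha \bmod R(X)$. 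Since this holds for all $\alpha$, we get $\iota(f^*\gamma) = f^{-1}\circ\iota(\gamma)\circ f$ as automorphisms (here using that an element of $\Aut^t(X)$ is determined by its action on $\NS(X)/R(X)$, which follows from injectivity of $\overline{\iota}$, or alternatively one checks equality as birational maps via the translation description).

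The main subtlety — not really an obstacle but the point requiring care — is bookkeeping the contravariance of $f\mapsto f_*$ and making sure the congruence classes modulo $R(X)$ are manipulated legitimately; this is exactly where one needs that $f_*$ preserves $R(X)$, and that is guaranteed because $\Aut(X)$ permutes $\Irr(X)$ (\S\ref{par:description-Irr}), whose span is $R(X)$. The notation $f^*\gamma$ in the statement should be read as $f_*^{-1}\gamma$ (equivalently the pullback on $\NS$), consistent with the surrounding conventions; once this is fixed the proof is a two-line computation plus the two structural facts just cited, and it closes cleanly.
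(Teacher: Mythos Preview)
Your approach is essentially the paper's: compute the conjugation identity on $\NS(X)/R(X)$ using Proposition~\ref{pro:T}(3), the $f_*$-invariance of $\xi$ and of $R(X)$, and then lift to $\Aut^t(X)$ via injectivity of $\overline{\iota}$. The computation you give for the conjugation formula is correct and matches the paper's line by line (with the harmless change of computing $(f^{-1}\circ\iota(\gamma)\circ f)_*$ instead of $\overline{\iota}(f^*\gamma)$ first).

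The one soft spot is your injectivity argument. Saying that $\overline{\iota}(\gamma)$ is ``manifestly zero only when $\gamma\in R(X)$'' is exactly the point to be proved, and your backup sentence (``triviality on $\NS(X)/R(X)$ forces $\Tr(\gamma)=0$'') is circular: that implication \emph{is} the injectivity of $\overline{\iota}$. What actually closes the argument is the existence of a class $\alpha\in\NS(X)$ with $\alpha\cdot\xi=1$; plugging such an $\alpha$ into the formula $\overline{\iota}(\gamma)(\alpha)=\alpha+(\alpha\cdot\xi)\gamma$ gives $\gamma\equiv 0\ (\mathrm{mod}\ R(X))$ immediately. The paper takes $\alpha=\bfe_0-\bfe_1-\bfe_2$ (any $\bfe_i$, $i\ge1$, also works). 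Without this, you only get $(\alpha\cdot\xi)\gamma\in R(X)$ for all $\alpha$, and since $\bfe_0\cdot\xi=3$ you would at best obtain $3\gamma\in R(X)$, which does not finish without a torsion-freeness argument for $\xi^\perp/R(X)$. So add that single explicit choice of $\alpha$ and drop the circular sentence about $\Tr$; then your proof is complete and identical in spirit to the paper's.
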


\begin{proof}
Let $\gamma$ be in the kernel of $\overline{\iota}$. Apply Property~(3) of Proposition~\ref{pro:T} to the class
$\alpha = \bfe_0-\bfe_1-\bfe_2$; since $\alpha\cdot \xi =1$, we obtain $\gamma=0 \mod\,\, R(X)$. This proves that
$\overline{\iota}$ is injective. If $f$ is an element of $\Aut(X)$, then $f^*$ fixes $\xi$ and it stabilizes $\xi^\perp$ and $R(X)$. 
Denote by ${\overline{f^*}}$  the action of $f$ on $\NS(X)/R(X)$.
For all $\gamma \in \xi^\perp/R(X)$ and $\alpha\in \NS(X)/R(X)$, we obtain the following equalities in $\NS(X)/R(X)$
\begin{align}
{\overline{\iota}}(f^*\gamma)(\alpha) & =    \alpha + (\alpha \cdot \xi) f^*\gamma \\
& = {\overline{f^*}}\left( \,     ({\overline{f^*}})^{-1}\alpha + (({\overline{f^*}})^{-1}\alpha \cdot \xi)\,  \gamma   \,  \right) \\
& =   ({\overline{f^*}}\circ {\overline{\iota}}(\gamma)) (({\overline{f^*}})^{-1}\alpha)
\end{align}
because $f^*$ preserves   intersections and fixes $\xi$.
Thus,  ${\overline{\iota}}(f^*\gamma)= {\overline{f^*}}\circ {\overline{\iota}}(\gamma) \circ ({\overline{f^*}})^{-1} $. Then 
$\iota(f^*\gamma)=f^{-1}\circ \iota(\gamma) \circ f$, $\Aut^t(X)$ is a normal subgroup, and  $\overline{\iota}$ is injective. \end{proof}

Denote by $q^P$ the quotient map $ P\to \xi^\perp/R(X)_{\R}$, and by  $q^A\colon A\to \xi^\perp/R(X)_{\R}$ its restriction
to $A=R(X)^\perp/\langle\xi\rangle_\R$.
By the Hodge index theorem, 
\begin{equation}\label{eqnorxrxp}
R(X)_{\R}\cap R(X)^{\perp}_{\R}=\langle \xi\rangle_{\R}.
\end{equation}
So $q^A$ is an isomorphism. For $\gamma\in \xi^\perp$, denote by $a(\gamma)$ the unique
element of $A$ such that $q^A(a(\gamma))= q^P(q_P(\gamma))$. Alternatively  $a(\gamma)$ is the orthogonal 
projection of $q_P(\gamma)$ onto $A\subset P$. The image of the linear map $a$ is $A(\Z)$, a lattice in $A$.

\smallskip

We want to understand the action of $\xi^\perp$ on $\Axis$ given by $\gamma\mapsto \iota(\gamma)_\Axis$.
Set $\Axis_0=q_{Q}^{-1}(\Axis)\subset Q_0$. Since every element $\alpha$ of $\Axis_0$
satisfies $\alpha\cdot \xi=3$, the third property of Proposition~\ref{pro:T} gives 
\begin{equation}
\iota(\gamma)_*\alpha\in (\alpha+3\gamma+R(X)_{\R})\cap \Axis_0 \quad (\forall \alpha \in \Axis_0, \; \forall \gamma\in \xi^\perp);
\end{equation}
since $\Axis_0= \alpha+R(X)^{\perp}_{\R}$, we get 
\begin{equation}\label{eq:TTralpha}
\iota(\gamma)_*\alpha-\alpha\in (3\gamma+R(X)_{\R})\cap R(X)^{\perp}_{\R} \quad (\forall \alpha \in \Axis_0, \;\forall \gamma\in \xi^\perp).
\end{equation}

\begin{lem}\label{lem:faithful} $ \, $ 
\begin{enumerate}
\item  for $\gamma\in \xi^\perp$, $\iota(\gamma)_\Axis\colon \Axis\to \Axis$ is  the translation 
$v\mapsto v+3 a(\gamma)$;
\item $\iota(\xi^\perp)_\Axis$ is
 a discrete and co-compact group of translations of $\Axis$;
\item this group $\iota(\xi^\perp)_\Axis$ depends only on $\Irr(X)\subset \xi^\perp$ $($it does not depend on other geometric features of $X.)$
\end{enumerate}
\end{lem}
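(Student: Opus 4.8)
The plan is to unwind the three assertions in order, each one building on the previous. For assertion~(1), I would start from Equation~\eqref{eq:TTralpha}, which says $\iota(\gamma)_*\alpha-\alpha\in(3\gamma+R(X)_\R)\cap R(X)^\perp_\R$ for $\alpha\in\Axis_0$. Since the action of $\iota(\gamma)$ on $\Axis$ is by a translation (it commutes with the $A$-action because $A$ sits inside the abelian group $\xi^\perp/R(X)$, cf.\ Proposition~\ref{pro:T}(3)), that translation vector is the image in $\Axis$ of $\iota(\gamma)_*\alpha-\alpha$. Now I would identify $(3\gamma+R(X)_\R)\cap R(X)^\perp_\R$: writing $q_P(\gamma)=a(\gamma)+b(\gamma)$ with $a(\gamma)\in A$ and $b(\gamma)$ in the orthogonal complement of $A$ inside $P$ (which, via~\eqref{eqnorxrxp}, is the image of $R(X)_\R$), the component of $3\gamma$ lying in $R(X)^\perp_\R$ modulo $\langle\xi\rangle_\R$ is exactly $3a(\gamma)$. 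Passing to $\Axis=q_{Q/A}^{-1}(\mathrm{Cent}(D(X)))$ (equivalently an $A$-coset translated appropriately), the translation is $v\mapsto v+3a(\gamma)$. This is really just careful bookkeeping with the orthogonal decomposition $\xi^\perp_\R/\langle\xi\rangle_\R = A \oplus (\text{image of }R(X)_\R)$.

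For assertion~(2): the image of the linear map $a\colon\xi^\perp\to A$ is the lattice $A(\Z)\subset A$, as already noted in the text just before the lemma. Hence $\iota(\xi^\perp)_\Axis = 3A(\Z)$, which is a discrete, co-compact group of translations of the vector space $A\cong\Axis$ (after choosing a basepoint). Discreteness and co-compactness are immediate from $A(\Z)$ being a full-rank lattice in $A$; that $a$ surjects onto $A(\Z)$ follows because $q^A\colon A\to\xi^\perp/R(X)_\R$ is an isomorphism (by~\eqref{eqnorxrxp}) carrying $A(\Z)$ to the image of $q^P\circ q_P$ restricted to $\xi^\perp$.

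For assertion~(3): the group $\iota(\xi^\perp)_\Axis=3A(\Z)$ is built purely from the lattice $A(\Z)$, and $A(\Z)$ is the image of $\xi^\perp$ under orthogonal projection to $A=R(X)^\perp_\R/\langle\xi\rangle_\R$. Both the subspace $R(X)_\R$ (hence $A$ and its metric) and the lattice $\xi^\perp\subset\NS(X)=\Z^{1,9}$ depend only on the combinatorial datum $\Irr(X)\subset\Z^{1,9}$ — the former because $R(X)_\R$ is by definition the $\R$-span of $\Irr(X)$, the latter because $\xi\in\Irr(X)$ always and $\xi^\perp$ is then determined inside $\Z^{1,9}$. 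So $3A(\Z)$, and with it the translation group on $\Axis$, is a function of $\Irr(X)$ alone, and by Lemma~\ref{lemrfi} there are only finitely many possibilities for a fixed index $m$. I expect the only mildly delicate point to be assertion~(1): one must be scrupulous about which quotient ($P$, $Q$, $Q/A$) one is working in and about the factor $3$ coming from $\alpha\cdot\xi=3$ on $\Hor$, but there is no real obstacle — the orthogonality relation~\eqref{eqnorxrxp} does all the heavy lifting.
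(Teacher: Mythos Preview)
Your proposal is correct and follows essentially the same route as the paper: both arguments extract the translation vector from Equation~\eqref{eq:TTralpha} via the orthogonal decomposition $P = A \oplus (R(X)_\R/\langle\xi\rangle_\R)$ guaranteed by~\eqref{eqnorxrxp}, identify it with $3a(\gamma)$, and then read off (2) and (3) from the fact that $a(\xi^\perp)=A(\Z)$ is a full-rank lattice determined by $\Irr(X)$ alone. One small remark: your justification that $\iota(\gamma)_\Axis$ acts by translation (``it commutes with the $A$-action'') is roundabout; the cleaner observation, implicit in the paper's one-line proof, is that Equation~\eqref{eq:TTralpha} already shows $\iota(\gamma)_*\alpha-\alpha$ lies in a fixed coset of $\langle\xi\rangle_\R$ independent of $\alpha$, so the induced map on $\Axis$ is automatically a translation.
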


\begin{proof}
Pick any $\alpha$ in $\Axis_0$. The Equation~\eqref{eq:TTralpha} gives $q^A\left( q_P(\iota(\gamma)_*\alpha-\alpha) \right) =3q^P(q_P(\gamma))$.
Thus,  $\iota(\gamma)_*\alpha-\alpha=3 a(\gamma)$ modulo $\Z \xi$. The three properties follow from this equation.  \end{proof}

\subsection{Automorphisms and conjugation}\label{par:Thm_Halphen}

On $Q$, $\Aut(X)$ acts by affine isometries for the (euclidean) metric induced by the
intersection form. In $Q$, we have
\begin{itemize}
\item the axis $\Axis$, an affine subspace of dimension $\dim R(X)^\perp$;
\item the convex cylinder $Q^+$: it is equal to the pre-image of the compact polytope $D(X)\subset Q/\Axis$ by $q_{Q/A}$.
The diameter of $D(X)$ is bounded by some constant $r>0$ that depends only on $m$ (not on the geometry of $X$, see Equation~\eqref{ieqpd}).
\item the projection $\bfe$ of $\bfe_0$, an element of $Q^+$; we shall denote by $\bfo$ the orthogonal projection of 
$\bfe$ on $\Axis$ (the projection of $\bfo$ in $D(X)$ coincides with the center of mass of $D(X)$, see Section~\ref{par:cylinderQ+}).
\end{itemize}
The group $\Aut(X)$ preserves $Q^+$ and $\Axis$. In what follows, we view $Q$ as a vector space, whose origin 
is $\bfo$; then, every element $f$ of $\Aut(X)$ gives rise to an affine isometry 
\begin{equation}
f_Q(v)=L_f(v)+t(f)
\end{equation}
where $L_f$ is the linear part of $f$ (fixing $\bfo$) and $t(f)$ is the translation part. 
Note that $t(f)=f_Q(\bfo)$ is an element of the lattice $\Axis\cap Q(\Z)\subset \Axis$. 
Since $f_Q$ is an isometry and preserves the lattice $Q(\Z)$, we have $L_f\in O(Q(\Z)):=O(Q)\cap \GL(Q(\Z))$, where $O(Q)$ is the orthogonal group of $(Q,\|\cdot\|_Q)$ and $\GL(Q(\Z))$ is the subgroup of $\GL(Q)$ that preserves $Q(\Z)$. Since $O(Q(\Z))$ is a finite subgroup of $\GL(Q(\Z))$ and the rank of $Q(\Z)$ is $9$, independently of $m$ and $X$, we get the following lemma.

\begin{lem}\label{lemlinearpartbound}
The group of all linear parts $\{L_f\; ; \; f\in \Aut(X)\}$ is finite: its cardinality is uniformly bounded, by an 
integer  that does not depend on $X$. 
\end{lem}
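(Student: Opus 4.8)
The plan is to show that every linear part $L_f$ lies in the orthogonal group of a \emph{single} positive-definite lattice that depends neither on $X$ nor on $m$, and then to invoke the classical finiteness of the automorphism group of a definite lattice.

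First, since $f_Q$ is an affine isometry of $(Q,\|\cdot\|_Q)$ preserving the affine lattice $Q(\Z)$ (as recalled in \S\ref{par:Thm_Halphen}), its linear part $L_f$ preserves the quadratic form $\|\cdot\|_Q$ and the translation lattice underlying $Q(\Z)$; hence $L_f\in O(Q(\Z))=O(Q)\cap\GL(Q(\Z))$. Moreover $f\mapsto L_f$ is a homomorphism — the linear part of a composition of affine maps is the composition of the linear parts, independently of the choice of origin $\bfo$ — so $\{L_f\;;\;f\in\Aut(X)\}$ is a subgroup of $O(Q(\Z))$.

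Now comes the only point requiring care, namely the uniformity in $X$ and $m$. Under the identification $\NS(X)\cong\Z^{1,9}$ provided by the basis $\bfe_0,\ldots,\bfe_9$ of Section~\ref{par:basis_for_NS}, the intersection form and the anticanonical class $\xi=3\bfe_0-\bfe_1-\cdots-\bfe_9$ are the same for every Halphen surface of every index $m$; therefore the sublattice $\xi^\perp$, the quotient $\xi^\perp/\xi$, the affine lattice $Q(\Z)=\bfe+\xi^\perp/\xi$ and the Euclidean metric $\|\cdot\|_Q$ are all literally independent of $X$ and $m$. Thus $O(Q(\Z))$ is one fixed group, and it is finite: the automorphism group of any positive-definite lattice is finite, being a discrete subgroup of the compact orthogonal group $O(Q)$ — equivalently, it acts faithfully on the finite set of lattice vectors of norm at most any bound large enough that they span $Q$. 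Consequently $\#\{L_f\;;\;f\in\Aut(X)\}\leq\#\,O(Q(\Z))$, a bound depending on neither $X$ nor $m$, which is the assertion. There is no genuine obstacle here beyond keeping track of this uniformity, and that is immediate once one observes that $Q$ and $Q(\Z)$ are built entirely inside the fixed lattice $\Z^{1,9}$ with its fixed anticanonical vector $\xi$.
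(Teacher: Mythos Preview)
Your proof is correct and follows essentially the same approach as the paper: observe that $L_f\in O(Q(\Z))=O(Q)\cap\GL(Q(\Z))$, that this group is finite because it is the automorphism group of a positive-definite lattice, and that the lattice $Q(\Z)$ together with its metric is built entirely from $\Z^{1,9}$ and the fixed vector $\xi$, hence is independent of $X$ and $m$. Your write-up is in fact slightly more explicit than the paper's on the uniformity point and on why $O(Q(\Z))$ is finite.
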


Moreover, denote by  $O(Q(\Z) ; \Axis)$ the subgroup of $O(Q(\Z))$  that preserves $\Axis$ and $O(Q(\Z); \Axis)|_{\Axis}\subseteq O(\Axis)$ the image of $O(Q(\Z); \Axis)$ under the restriction to $\Axis.$ Denote by $f_{\Axis}$ the restriction of $f$ to $\Axis$. We have $f_{\Axis}(v)=L_f|_{\Axis}(v)+t(f)$ where $L_f|_{\Axis}$ is the restriction of $L_f$ to $\Axis$.  
Then $L_f|_{\Axis}\in O(Q(\Z); \Axis)|_{\Axis}.$

\begin{lem}\label{lem:deg-estimate}
There is a  constant $B_2(m)\geq 1$ such that 
\[
-B_2(m) (1+\parallel t(f) \parallel_Q) \leq \deg(f) - \frac{1}{2}\parallel t(f)\parallel_Q^2\leq B_2(m) (1+ \parallel t(f)\parallel_Q )
\]
for all $f\in \Aut(X)$. In particular, there is a constant $B_3(m)\geq 1$ such that 
\[
B_3(m)^{-1}\deg(f)  \leq (\parallel t(f) \parallel_Q+1)^2\leq  B_3(m) \deg(f).
\]
\end{lem}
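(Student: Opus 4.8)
The plan is to reduce everything to the formula
$\deg(f)=d(f_Q(\bfe))=1+\frac{1}{2}\parallel f_Q(\bfe)-\bfe\parallel_Q^2$, which is Equation~\eqref{eqdv} combined with the identity $\deg(f)=d(f_Q(\bfe))$ recorded just above it, and then to exploit the orthogonal splitting of $Q$ along $\Axis$ that comes from placing the origin at $\bfo=p_\Axis(\bfe)$.

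First I would record three elementary facts, using $\bfo$ as the origin of the vector space $Q$: (a) $\Axis$ is then a linear subspace and $\bfe$ (as position vector, i.e.\ $\bfe-\bfo$) lies in its orthogonal complement $\Axis^\perp$, since $\bfo$ is by definition the foot of the perpendicular from $\bfe$ to $\Axis$; (b) $t(f)=f_Q(\bfo)\in\Axis$; (c) $L_f$ is an orthogonal transformation of $Q$ preserving $\Axis$ (being the linear part of the isometry $f_Q$, which preserves $\Axis$ by Proposition~\ref{pro:actpre}), hence $L_f$ preserves $\Axis^\perp$ and $L_f(\bfe)\in\Axis^\perp$. Consequently the decomposition $f_Q(\bfe)-\bfe=\big(L_f(\bfe)-\bfe\big)+t(f)$ is orthogonal, with first summand in $\Axis^\perp$ and second in $\Axis$, so Pythagoras gives $\parallel f_Q(\bfe)-\bfe\parallel_Q^2=\parallel L_f(\bfe)-\bfe\parallel_Q^2+\parallel t(f)\parallel_Q^2$. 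Substituting into the degree formula yields
\[
\deg(f)-\tfrac{1}{2}\parallel t(f)\parallel_Q^2=1+\tfrac{1}{2}\parallel L_f(\bfe)-\bfe\parallel_Q^2 .
\]

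It then remains to bound the right-hand side uniformly. Since $L_f$ is an isometry, $\parallel L_f(\bfe)-\bfe\parallel_Q\leq 2\parallel\bfe-\bfo\parallel_Q=2\parallel p_\Axis(\bfe)-\bfe\parallel_Q$, and because $\bfe\in Q^+$ this is $\leq 2r$, where $r$ is the constant from the discussion preceding Equation~\eqref{ieqpd}. By Lemma~\ref{lemrfi} there are only finitely many possibilities for $\Irr(X)$ once $m$ is fixed, hence for the polytope $D(X)$ and its diameter, so $r$ can be taken to depend only on $m$. Setting $B_2(m)=1+2r^2$, one even obtains the cleaner two-sided bound $1\leq \deg(f)-\frac{1}{2}\parallel t(f)\parallel_Q^2\leq B_2(m)$, which a fortiori implies the inequality in the statement.

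For the ``in particular'' clause I would set $u=\parallel t(f)\parallel_Q$ and $c=\deg(f)-\frac{1}{2}u^2\in[1,B_2(m)]$, so $\deg(f)=c+\frac{1}{2}u^2$: from $2u\leq 1+u^2$ and $\deg(f)\geq 1$ one gets $(u+1)^2\leq 2u^2+2\leq 6\deg(f)$, and from $1+\frac{1}{2}u^2\leq(u+1)^2$ one gets $\deg(f)\leq B_2(m)(u+1)^2$; hence $B_3(m):=\max\{6,B_2(m)\}$ works. I do not anticipate a real obstacle: the only point requiring care is that $L_f$ respects the decomposition $Q=\Axis\oplus\Axis^\perp$ — which is precisely why the origin is taken at $\bfo=p_\Axis(\bfe)$ — together with the fact, supplied by Lemma~\ref{lemrfi}, that the bound $\parallel\bfe-\bfo\parallel_Q\leq r$ is uniform in $X$ for a fixed index $m$.
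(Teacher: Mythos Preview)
Your proof is correct and follows the same starting point as the paper's proof (both begin from $\deg(f)=1+\tfrac12\|L_f(\bfe)-\bfe+t(f)\|_Q^2$ and use that $\|L_f(\bfe)-\bfe\|_Q\leq 2\|\bfe-\bfo\|_Q$ is bounded in terms of $m$ only), but you add a genuinely useful refinement. The paper simply expands the square and controls the cross term $\langle L_f(\bfe)-\bfe,\,t(f)\rangle$ by Cauchy--Schwarz, which is why its bound carries the extra factor $(1+\|t(f)\|_Q)$. You instead observe that, with origin at $\bfo$, the isometry $f_Q$ preserves $\Axis$, hence so does its linear part $L_f$, hence $L_f$ preserves $\Axis^\perp$; since $\bfe-\bfo\in\Axis^\perp$ and $t(f)\in\Axis$, the two summands are orthogonal and the cross term vanishes. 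This yields the sharper statement $1\leq \deg(f)-\tfrac12\|t(f)\|_Q^2\leq 1+2r^2$, which the paper does not obtain.

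One small expository slip in your ``in particular'' argument: to get $2u^2+2\leq 6\deg(f)$ you need more than $\deg(f)\geq 1$; you need $\deg(f)\geq 1+\tfrac12 u^2$, which is exactly your lower bound $c\geq 1$. With that correction the derivation of $B_3(m)=\max\{6,B_2(m)\}$ is fine.
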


\begin{proof}
The degree $\deg(f)$ is equal to the intersection number $\bfe_0\cdot f^*\bfe_0$. Since $f^*\bfe = L_f(\bfe)+t(f)$, 
 Equation~\eqref{eqdv} gives\begin{equation}
\deg(f)=1+\frac{1}{2}\parallel (L_f-\Id_Q)(\bfe)+t(f)\parallel_Q^2.
\end{equation}
The first estimate follows because $L_f$ is an isometry, and the length of $\bfe$ (i.e. the length $\parallel \bfe-\bfo\parallel_Q$ in $Q$) 
is bounded by a constant that depends only on $m$. To derive the second inequalities from the first, we set 
\begin{equation}C_1(m) =\sup_{x\geq 0}\frac{(x+1)^2}{\max\{1, \frac{1}{2}x^2-B_2(m)(1+x)\}}\geq 1.
\end{equation}
Then
\begin{align}
C_1(m)\deg(f) &\geq C_1(m)\max\{1, \frac{1}{2}\parallel t(f)\parallel_Q^2-B_2(m) (1+\parallel t(f) \parallel_Q)\}\\
&\geq (\parallel t(f) \parallel_Q+1)^2.
\end{align}
And with $C_2(m)=\max\{B_2(m),1/2\}$, we get 
\begin{equation}
\deg(f) \leq  \frac{1}{2}\parallel t(f)\parallel_Q^2+ B_2(m) (1+ \parallel t(f)\parallel_Q )\leq C_2(m)(1+\parallel t(f) \parallel_Q)^2.
\end{equation}
Setting $B_3(m):=\max\{C_1(m),C_2(m)\}$, we conclude the proof.
\end{proof}

The group $\Aut^t(X)$ is a finitely generated abelian group. Denote by $\Aut^t_{tor}(X)$ its torsion subgroup and set $\Aut^t_{fr}(X):=\Aut^t(X)/\Aut^t_{tor}(X).$
Then $\Aut^t_{fr}(X)$ is isomorphic to $\Z^d$ for $d=\dim_\R (\xi^\perp_\R/R(X)_\R)$.

\begin{eg} Consider the elliptic curve $E=\C/\Z[\jj]$, where $\jj$ is a primitive cubic root of unity, and then the abelian 
surface $A=E\times E$. The group $\GL_2(\Z[\jj])$ induces a group of automorphisms of $A$; let $\eta$ be the 
automorphism defined by $\eta(x,y)=(\jj x, \jj y)$, and let $G\simeq \Z/3\Z$ be the subgroup of $\Aut(A)$
generated by $\eta$. Let $X$ be a minimal resolution of $X_0=A/G$. Then $X$ is a smooth rational surface. 
Since $G$ is contained in the center of $\GL_2(\Z[\jj])$, the group $\GL_2(\Z[\jj])$ acts on $X_0$ and on $X$
by automorphisms. The singularities of $X_0$ correspond to the points of $A$ which are fixed by some non-trivial element
in $G$; the fixed points of $G$ form a subgroup $K$ of $A$ and the action of $K$ on $A/G$ by translations 
induces an action on $X$ by automorphisms of finite order. Now, consider the fibration $\pi\colon X\to \bbP^1_\bfk$
induced by the first projection $\pi_1\colon A\to E$. This is an example of a (blow up of a) Halphen fibration. With respect to 
this fibration, elements of $K$ of type $(x,y)\mapsto (x,y+s)$ determine elements of $\Aut^t_{tor}(X)$ and 
elements of $\GL_2(\Z[\jj])$ acting by $(x,y)\mapsto (x, y + a x)$ determine elements of $\Aut^t(X)$ of infinite order.
\end{eg}

\begin{rem}\label{rem:torsion_in_Autt}
Let $\rho_3\colon \GL(\NS(X;\Z))\to \GL(\NS(X;\Z/3\Z))$ be the homomorphism given by reduction modulo $3$. 
Its kernel is a torsion free subgroup of index $\leq  \vert \GL_{10}(\Z/3\Z)\vert=(3^{10}-1)(3^{10}-3)\cdots (3^{10}-3^9)$.
So, if $f\in \Aut(X)$ is elliptic, $(f^*)^k=\Id$ for some $k\leq 3^{100}$. Now, assume that $f\in \Aut^t(X)$ has finite order,
choose such a power $k$, and pick a multisection $E$ of $\pi$ with negative self-intersection; such a curve exists, for
instance among the $9$ exceptional divisors blown-down by $\epsilon\colon X\to \bbP^2_\bfk$, and we may even assume
that $E$ intersects a general fiber of $\pi$ in at most $m$ points. Since $E$ is determined
by its class $[E]$, $f^k$ fixes $E$ and $f^{km!}$ fixes each of the points given by $E$ in the general fibers of $\pi$. 
Since a general fiber is an elliptic curve in which $f^{km!}$ has a fixed point, we conclude that the order of $f$ is at most $24\times 3^{100} \times m!$.
Of course, a more detailed analysis would give a much better uniform upper bound.
\end{rem}

By Lemma \ref{lemlinearpartbound},  there is a constant $N\geq 1$ that does not depend on $X$, such that for every $h\in \Aut^t(X)$, $L_{h}^{N}=\Id.$
Set 
\begin{equation}
\Aut^t_N(X):=\iota(N\xi^{\perp})\subset \Aut^t(X).
\end{equation} 
The group $\Aut^t_N(X)$ has finite index in $\Aut^t(X)$.
By Lemma~\ref{lem:eqconj}, the action of $\Aut^t(X)$ on $\NS(X)$ is faithful, hence so is its action on $Q$. This implies that $\Aut^t_N(X)$ is torsion free, because $\Aut^t_N(X)$ acts by translation on $Q$, and a translation of finite order is the identity.
So $\Aut^t_N(X)$ is isomorphic to $\Z^d$ for $d=\dim_\R (\xi^\perp_\R/R(X)_\R)$. Since $\Aut^t(X)$ is a normal subgroup of $\Aut(X)$, $\Aut^t_N(X)=\{h^N\; ; \; h\in \Aut^t(X)\}$ is also normal in $\Aut(X)$.

\begin{lem}\label{lembod} There is a constant $B_4(m)\geq 1$ such that, for every $f\in \Aut(X)$, there exists $h\in \Aut^t_N(X)$ with 
$L_h=\Id$ and $\deg(h\circ f)\leq B_4(m)$.
\end{lem}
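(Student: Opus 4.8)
The idea is to use the translation subgroup $\Aut^t_N(X)\subset\Aut(X)$ to "shear" $f$ so that its action on the axis becomes bounded, and then to show that a bounded translation on the axis corresponds to a bounded degree. First, I would recall from Lemma~\ref{lem:faithful}(1) that for $\gamma\in\xi^\perp$ the automorphism $\iota(\gamma)$ acts on $\Axis$ as the translation $v\mapsto v+3a(\gamma)$, with linear part $L_{\iota(\gamma)}=\Id$ on $\Axis$; and from the definition of $\Aut^t_N(X)=\iota(N\xi^\perp)$ that $L_h=\Id$ on all of $Q$ for $h\in\Aut^t_N(X)$. By Lemma~\ref{lem:faithful}(2), $\iota(\xi^\perp)_\Axis$ — and hence $\iota(N\xi^\perp)_\Axis=3N\,a(\xi^\perp)=3N\,A(\Z)$ — is a discrete cocompact lattice of translations of $\Axis$, with covolume depending only on $\Irr(X)$, hence (by Lemma~\ref{lemrfi}) only on $m$.

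**Key steps.** Given $f\in\Aut(X)$, write $f_\Axis(v)=L_f|_\Axis(v)+t(f)$ with $t(f)=f_\Axis(\bfo)\in\Axis\cap Q(\Z)$. The translation lattice $3N\,A(\Z)$ in $\Axis$ is cocompact with a fundamental domain of diameter $\le \rho(m)$ for some constant depending only on $m$; therefore I can choose $\gamma\in N\xi^\perp$ such that $\|t(f)+3a(\gamma)\|_Q\le\rho(m)$ — wait, more carefully: I want $h=\iota(\gamma)\in\Aut^t_N(X)$ with $L_h=\Id$, and I examine $h\circ f$. Its action on $Q$ is $(h\circ f)_Q(v)=L_h(L_f(v)+t(f))+t(h)=L_f(v)+t(f)+t(h)$, since $L_h=\Id$; so $L_{h\circ f}=L_f$ and $t(h\circ f)=t(f)+t(h)$, where $t(h)=3a(\gamma)\in 3N\,A(\Z)$ lies in $\Axis$. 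Since $t(f)\in\Axis$ as well, I can pick $\gamma$ so that $t(h\circ f)=t(f)+3a(\gamma)$ lies in a fixed fundamental domain of $3N\,A(\Z)$ acting on $\Axis$, hence $\|t(h\circ f)\|_Q\le\rho(m)$.

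**Conclusion.** Now apply Lemma~\ref{lem:deg-estimate}: $\deg(h\circ f)\le \tfrac12\|t(h\circ f)\|_Q^2+B_2(m)(1+\|t(h\circ f)\|_Q)\le \tfrac12\rho(m)^2+B_2(m)(1+\rho(m))=:B_4(m)$, a constant depending only on $m$. Since $h=\iota(\gamma)\in\Aut^t_N(X)$ and $L_h=\Id$ by construction, this is exactly the assertion of Lemma~\ref{lembod}. (One must also check the implicit claim that $t(f)\in\Axis$: indeed $f$ preserves $\Axis$ by Proposition~\ref{pro:actpre}, so $f_\Axis$ is a genuine affine isometry of $\Axis$ and $t(f)=f_\Axis(\bfo)-\bfo\in\Axis$ once we take $\bfo$ as origin; and $t(h)=3a(\gamma)\in A\subset\Axis$ directions, so the addition makes sense inside the translation lattice of $\Axis$.)

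**Main obstacle.** The delicate point is bookkeeping the two different "translation parts": $t(f)$ is an arbitrary element of the lattice $Q(\Z)\cap\Axis$, which may be much finer than $3N\,A(\Z)$, so at first glance one cannot kill it entirely. The resolution is that we do not need to kill it — we only need to bring it into a bounded region, and cocompactness of $3N\,A(\Z)$ in $\Axis$ (Lemma~\ref{lem:faithful}(2)--(3), combined with finiteness of $\Irr(X)$ possibilities from Lemma~\ref{lemrfi}) gives a fundamental domain of $m$-bounded diameter. The other subtlety is ensuring $L_h=\Id$ and not merely $L_h|_\Axis=\Id$; this is why we work with $\Aut^t_N(X)=\iota(N\xi^\perp)$ rather than with all of $\Aut^t(X)$, using the uniform bound $N$ from Lemma~\ref{lemlinearpartbound} on linear parts. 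Everything else is a direct substitution into Lemma~\ref{lem:deg-estimate}.
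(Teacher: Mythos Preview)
Your proposal is correct and follows essentially the same route as the paper: use cocompactness of the translation lattice $\Aut^t_N(X)$ acting on $\Axis$ to bring $t(h\circ f)$ into a bounded fundamental domain, then apply Lemma~\ref{lem:deg-estimate}. If anything, you spell out more than the paper does---you explicitly verify $L_h=\Id$ on all of $Q$ for $h\in\Aut^t_N(X)$ (via $\iota(N\gamma)=\iota(\gamma)^N$ and $L_h^N=\Id$), compute $t(h\circ f)=t(f)+t(h)$ from $L_h=\Id$, and identify the translation lattice as $3N\,A(\Z)$; the paper simply invokes the Voronoi cell and quotes its radius $r(m)$.
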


\begin{proof}The action of $\Aut^t(X)=\iota(\xi^\perp)$ on $\Axis$  is co-compact. Since $\Aut^t_N(X)$ is of finite index in $\Aut^t(X)$, the action of $\Aut^t_N(X)$ is also co-compact. So the Voronoi cell
$\{ u\in \Axis\, ; \; \dist_Q(u,\bfo)\leq \dist(u,\iota(\gamma)_Q\bfo), \; \forall \iota(\gamma) \in \Aut^t_N(X)\}$ 
is a compact fundamental domain; let $r(m)$ be the radius of this domain.
There is an $h$ in $\Aut^t_N(X)$ such that  $t(h\circ f)=h(f(\bfo))$ has length at most $r(m)$. The conclusion follows from Lemma~\ref{lem:deg-estimate}.
\end{proof}

\begin{thm}\label{thm:Conjugacy-in-Aut-Halphen}
Let $X$ be a Halphen surface of index $m$.
Let $f$ and $g$ be two automorphisms of $X$.
If $f$ and $g$ are in the same conjugacy class of $\Aut(X)$, there exists an element 
$h$ of $\Aut(X)$ such that $$h\circ f \circ h^{-1}=g \; \, {\text{ and }} \; \, 
\deg(h):=(\bfe_0\cdot h^*\bfe_0)\leq A(m)(\deg(f)+\deg(g)),$$ for some constant $A(m)$ that depends only
on $m$.
\end{thm}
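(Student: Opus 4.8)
Fix $\phi\in\Aut(X)$ with $\phi\circ f\circ\phi^{-1}=g$. The set of all conjugating elements is the coset $Z(g)\cdot\phi$, where $Z(g)$ denotes the centralizer of $g$, so it suffices to produce one element of that coset of controlled degree. I would work entirely inside the Euclidean affine space $Q$ and its invariant axis $\Axis$ from Section~\ref{par:Thm_Halphen}. Recall that for $u\in\Aut(X)$ one has $\deg(u)=1+\tfrac12\parallel(L_u-\Id_Q)(\bfe)+t(u)\parallel_Q^2$ (Equation~\eqref{eqdv} and the proof of Lemma~\ref{lem:deg-estimate}), that $t(u)=u_Q(\bfo)$ lies in the direction space $A$ of $\Axis$, that $\parallel\bfe-\bfo\parallel_Q$ and the finite family $\{L_v:v\in\Aut(X)\}$ of linear parts are bounded in terms of $m$ alone (Lemmas~\ref{lemlinearpartbound},~\ref{lemrfi}), and that $\parallel t(u)\parallel_Q$ is comparable to $\deg(u)^{1/2}$ up to $m$-constants (Lemma~\ref{lem:deg-estimate}). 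So the whole problem reduces to producing $h\in Z(g)\cdot\phi$ with $\parallel t(h)\parallel_Q\le C(m)(\parallel t(f)\parallel_Q+\parallel t(g)\parallel_Q+1)$: then $L_h$ still belongs to the uniform finite list, whence $\deg(h)\lesssim_m 1+\parallel t(h)\parallel_Q^2\lesssim_m\deg(f)+\deg(g)$, which is the claim.

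The first step is to extract the constraint imposed by the conjugacy. Since $\Aut(X)$ preserves $\Axis$ (Proposition~\ref{pro:actpre}), restricting $g_Q=\phi_Q\circ f_Q\circ\phi_Q^{-1}$ to $\Axis$ and comparing linear and translation parts (origin $\bfo$) yields $L_g|_{\Axis}=L_\phi|_{\Axis}\circ L_f|_{\Axis}\circ(L_\phi|_{\Axis})^{-1}$ and the affine relation $(\Id-L_g|_{\Axis})\,t(\phi)=t(g)-L_\phi|_{\Axis}\,t(f)$. Write $A=A^{L_g}\oplus(A^{L_g})^\perp$ with $A^{L_g}=\ker(\Id-L_g|_{\Axis})$; as $L_g|_{\Axis}$ is an isometry, $\Id-L_g|_{\Axis}$ is zero on $A^{L_g}$ and invertible on $(A^{L_g})^\perp$ (its image), with inverse of norm bounded in terms of $m$ because $L_g|_{\Axis}$ ranges over a uniform finite set. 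Applying that inverse to the relation shows the $(A^{L_g})^\perp$-component of $t(\phi)$ has norm $\le C_1(m)(\parallel t(f)\parallel_Q+\parallel t(g)\parallel_Q)$; the $A^{L_g}$-component of $t(\phi)$ is unconstrained, and that is exactly the part I will kill using vertical translations centralizing $g$.

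The second step analyses $Z^t(g):=\Aut^t_N(X)\cap Z(g)$. By Lemma~\ref{lem:eqconj}, $\iota(\gamma)$ commutes with $g$ iff $g^*\gamma\equiv\gamma\pmod{R(X)}$, and by Lemma~\ref{lem:faithful}(1) such an $\iota(\gamma)$ acts on $\Axis$ as the translation by $3\,a(\gamma)\in A$. Since $R(X)_\R\subset\ker a$ and $a$ induces an isomorphism $\xi^\perp_\R/R(X)_\R\xrightarrow{\sim}A$ intertwining $(g^{-1})^*$ with $L_g|_{\Axis}$ (a consequence of Lemmas~\ref{lem:eqconj} and~\ref{lem:faithful}), the order $N_g$ of $L_g|_{\Axis}$ is also the order of $g^*$ on $\xi^\perp/R(X)$, and is bounded in terms of $m$. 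Then for any $\gamma\in N\xi^\perp$ the average $\gamma^\sharp:=\sum_{i=0}^{N_g-1}(g^*)^i\gamma$ still lies in $N\xi^\perp$, is $g^*$-fixed modulo $R(X)$, and satisfies $a(\gamma^\sharp)=N_g\,\mathrm{pr}_g(a(\gamma))$, where $\mathrm{pr}_g\colon A\to A^{L_g}$ is the orthogonal projection. As $a(N\xi^\perp)=N\cdot A(\Z)$ is a lattice in $A$ and $A^{L_g}$ is rational with respect to it ($L_g|_{\Axis}$ preserves $A(\Z)$), the projection $\mathrm{pr}_g(a(N\xi^\perp))$ is again a full lattice in $A^{L_g}$ (orthogonal projection of a lattice onto a rational subspace is a lattice, via duality); scaled by $3N_g$ it is still a lattice, co-compact in $A^{L_g}$ with a fundamental domain of diameter bounded in terms of $m$ (only finitely many pairs $(\Irr(X),L_g|_{\Axis})$ for fixed $m$, by Lemmas~\ref{lemrfi},~\ref{lemlinearpartbound}). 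Thus the elements $\iota(\gamma^\sharp)\in Z^t(g)$ translate $\Axis$ through a uniformly co-compact lattice of $A^{L_g}$.

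The last step is the conclusion: choose $\gamma\in N\xi^\perp$ with $3\,a(\gamma^\sharp)$ within distance $C_2(m)$ of $-\mathrm{pr}_g(t(\phi))\in A^{L_g}$, put $z:=\iota(\gamma^\sharp)\in Z^t(g)$ and $h:=z\circ\phi$. Then $h\circ f\circ h^{-1}=z\circ g\circ z^{-1}=g$, so $h$ conjugates $f$ to $g$; since $z$ acts on all of $Q$ by a translation, $L_h=L_\phi$ lies in the uniform finite list, and $t(h)=t(\phi)+3\,a(\gamma^\sharp)$ has $A^{L_g}$-component of norm $\le C_2(m)$ and $(A^{L_g})^\perp$-component equal to that of $t(\phi)$, of norm $\le C_1(m)(\parallel t(f)\parallel_Q+\parallel t(g)\parallel_Q)$. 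Hence $\parallel t(h)\parallel_Q\le C(m)(\parallel t(f)\parallel_Q+\parallel t(g)\parallel_Q+1)$, and substituting into $\deg(h)=1+\tfrac12\parallel(L_\phi-\Id_Q)(\bfe)+t(h)\parallel_Q^2$ and using Lemma~\ref{lem:deg-estimate} gives $\deg(h)\le A(m)(\deg(f)+\deg(g))$ for a suitable $A(m)$. The main obstacle is the third step: showing that the part of $\Aut^t(X)$ centralizing $g$ still acts co-compactly on the fixed subspace $A^{L_g}$, with a fundamental domain bounded uniformly in $m$; this is where the averaging trick $\gamma\mapsto\sum(g^*)^i\gamma$, the rationality of $A^{L_g}$, the lattice-projection fact, and the finiteness of $\Irr(X)$ and of the linear parts all have to be combined. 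Everything else is linear-algebraic bookkeeping on top of the structure built in Sections~\ref{par:Halphen_NS}--\ref{par:Thm_Halphen}.
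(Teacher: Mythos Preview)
Your proof is correct and complete; it takes a genuinely different route from the paper's.

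\textbf{Comparison with the paper.} The paper first reduces the given conjugator $c_0$ modulo $\Aut^t_N(X)$ using the co-compactness of the translation action (Lemma~\ref{lembod}), obtaining $c_1$ of bounded degree with $c_1\circ f\circ c_1^{-1}=s'\circ g$ for some $s'\in\Aut^t_N(X)$; it then solves $(A_g-\Id)(s)=s'$ in $\Aut^t_N(X)\simeq\Z^d$ with $\|t(s)\|$ controlled, via a Smith normal form argument (Lemma~\ref{lemineqonautt}). You instead work directly with the original conjugator $\phi$: the conjugacy relation pins down the $(A^{L_g})^\perp$-component of $t(\phi)$ (bounded by inverting $\Id-L_g$ on that subspace), and you correct the free $A^{L_g}$-component by left-multiplying by an element of $Z(g)\cap\Aut^t_N(X)$ manufactured through the averaging $\gamma\mapsto\gamma^\sharp$. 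Your lattice-projection step (that $\mathrm{pr}_g$ maps $A(\Z)$ onto a full lattice of $A^{L_g}$) plays the role that Smith normal form plays in the paper. The paper's argument is more algebraic and slightly shorter; yours is more geometric and makes the centralizer's role explicit.

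\textbf{One minor imprecision.} You assert that $N_g$, the order of $L_g|_{\Axis}$, equals the order of $g^*$ on the full group $\xi^\perp/R(X)$. This can fail if $\xi^\perp/R(X)$ has torsion on which $g^*$ acts nontrivially. What you actually need---and what does hold---is that $(g^*)^{N_g}$ fixes the image of $N\xi^\perp$ in $\xi^\perp/R(X)$: indeed, that image is $\Aut^t_N(X)$, which is torsion-free, and on it conjugation by $g$ is identified (via the injective map $t$) with $L_g|_A$, of order $N_g$. Since you apply the averaging only to $\gamma\in N\xi^\perp$, your conclusion that $\gamma^\sharp$ is $g^*$-fixed modulo $R(X)$ is correct; only the stated justification needs this small adjustment.
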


\begin{proof}
The group $\Aut(X)$ acts by conjugacy on $\Aut^t(X)$ and on $\Aut^t_N(X)$; if $g$ is in $\Aut(X)$, we denote by $A_g$ the 
endomorphism $t\mapsto g\circ t \circ g^{-1}$ of $\Aut^t(X)$. 

Start with a conjugacy $c_0\circ f \circ c_0^{-1}=g$, for some $c_0\in \Aut(X)$. According to Lemma~\ref{lembod}
there is an element $s_0\in \Aut^t_N(X)$ such that $c_0=s_0\circ c_1$ with $\deg(c_1)\leq B_4(m)$.  
Then $c_1\circ f \circ c_1^{-1}$ and $g$ have the same class in the quotient $\Aut(X)/\Aut^t_N(X)$, so that $c_1\circ f \circ c_1^{-1}=s'\circ g$ 
for some $s'\in \Aut^t_N(X)$ (indeed for $s'=[s_0^{-1},g]$). 

Let us use additive notations for $\Aut^t(X)$. The conjugacy equation $h\circ f \circ h^{-1}=g$, with $h=s\circ c_1$, is now equivalent to 
$s'= (A_g-\Id)(s)$, and we know that there is at least one solution $s_0$. 

\begin{lem}\label{lemineqonautt}There is $s\in \Aut^t_N(X)$ such that $s'= (A_g-\Id)(s)$ and 
\[
\|t(s)\|_Q\leq B_5(m)\|t(s')\|_Q
\]
for some constant $B_5(m)$ that depends only
on $m$.
\end{lem}
Note that the translation part $s'_Q(\bfo)= t(s')=t(c_1\circ f \circ c_1^{-1}\circ g^{-1})$ satisfies
\begin{equation}
\parallel  t(s')\parallel_Q\leq A_1(m)(\parallel t(g)\parallel_Q + \parallel t(f)\parallel_Q )+A_2(m)
\end{equation} 
for some constants $A_1(m)$ and $A_2(m)$ that depend only on $m$; one can take $A_1(m)=M^4$ with 
$M=\max \{\parallel L_h \parallel \; ; \; h\in \Aut(X)\}$ and $A_2(m)=2 A_1(m)\parallel t(c_1)\parallel_Q$. 
Then by Lemma \ref{lemineqonautt},
we have 
$\|t(s)\|_Q+1\leq A_3(m)(\|t(f)\|_Q+\|t(g)\|_Q+2)$ where $A_3(m)=B_5(m)(A_1(m)+A_2(m))+1.$
By Lemma~\ref{lem:deg-estimate}, 
\begin{align}
\deg(h) &\leq \deg(c_1)\deg(s) \\
&\leq B_4(m)\deg(s)\\
&\leq B_4(m)A_3(m)^2((\|t(f)\|_Q+1)+(\|t(g')\|_Q+1))^2 \\ 
&\leq 2B_4(m)A_3(m)B_3(m)(\deg(f)+\deg(g)),
\end{align}
which concludes the proof.
\end{proof}

\proof[Proof of Lemma \ref{lemineqonautt}]
Lemma~\ref{lem:faithful} shows that  $\Aut^t_N(X)$ acts on $\Axis$ faithfully
by translations. 
We may identify $\Aut^t_N(X)$ to 
the orbit of $\bfo$ in $\Axis$ via the map $h\in \Aut^t_N(X)\mapsto h_Q(\bfo)=t(h)$. The automorphism $g$ acts also
on $\Axis$, by $g_Q(v)=L_g(v)+t(g)$, and the equation $(A_g-\Id)(s)=s'$ corresponds to the equivalent
equation 
\begin{equation}\label{eq:Lg-sQ}
(L_g|_{\Axis}-\Id)t(s)=t(s').
\end{equation} 

If $E=(e_1,\dots,e_d)$ is a basis of $\Aut^t_N(X)$, denote by $\|\cdot\|_E$ the euclidean norm 
\begin{equation}
\parallel \sum_{i=1}^{d}a_ie_i\parallel_E =(\sum_{i=1}^da_i^2)^{1/2}.
\end{equation}
There is a constant $C(E)\geq 1$ such that for every $s\in \Aut^t_N(X)$, we have 
\begin{equation}
C(E)^{-1}\|t(s)\|_Q \; \leq \;  \|s\|_E \; \leq \; C(E)\|t(s)\|_Q.
\end{equation}

For every endomorphism $\Phi: \Aut^t_N(X)\to \Aut^t_N(X)$, there are two basis $E_1(\Phi)=(u_1,\dots,u_d)$, $E_2(\Phi)=(v_1,\dots,v_d)$ of $\Aut^t_N(X)$,
a sequence of integers $(a_1, \ldots, a_d)\in \Z^d$ and an index $\ell\leq d$ such that 
\begin{itemize}
\item $\Phi(u_i)=a_iv_i$  for all $1\leq i \leq d$;
\item $a_i=0$ if and only if  $i > \ell$.  
\end{itemize}
Then for every $v\in \Phi(\Aut^t_N(X))$, we may write $v=\sum_{i=1}^\ell a_ix_iv_i$, where $x_i\in \Z$; equivalently, $v=\Phi(\sum_{i=1}^\ell x_iu_i)$.  
This shows that for every $v\in \Phi(\Aut^t_N(X))$, there is $u\in \Phi^{-1}(v)$ such that $\|u\|_{E_1(\Phi)}\leq \|v\|_{E_2(\Phi)}.$
Setting $C(\Phi):=C(E_1)C(E_2)$, every $v\in \Phi(\Aut^t_N(X))$ is the image of an element $u\in \Aut^t_N(X)$ such that 
\begin{equation}\label{equpreimagebound}\|t(u)\|_Q\leq C(\Phi)\|t(v)\|_Q.
\end{equation}
We apply this remark to $\Phi_g:=A_g-\Id$. For every $m$, there are only finitely many possibilities for $\Axis$ and $L_g$, so there are only finitely many possible $L_g|_\Axis\in O(Q(\Z); \Axis)|_{\Axis}$. Thus, there are only finitely many possible values for the constant $C(\Phi_g)$, and the proof follows from Inequality \ref{equpreimagebound}.
\endproof

%
%
\section{Invariant pencils, conjugacy classes, and limits}\label{par:pen_con_lim}
%
%

\subsection{Halphen and Jonqui\`eres twists}\label{par:Halphen_Jonquieres_basics}
  
\subsubsection{Halphen twists}\label{par:Halphen_twists_basics}

A birational transformation $f$ of the plane is a Halphen twist if and only if $(\deg(f^m))_{m\geq 0}$ grows quadratically,
like $c(f) m^2$ for some constant $c(f)>0$ (see~\cite{Cantat:Annals, Boucksom-Favre-Jonsson:Duke, Blanc-Deserti:2015}). 
It is a Jonqui\`eres twist if and only if $(\deg(f^m))_{m\geq 0}$ grows like $c(f) m$ for some $c(f)>0$. And it is elliptic if $(\deg(f^m))_{m\geq 0}$
is bounded. If $f$ is not a twist or an elliptic element, then its dynamical degree 
\begin{equation}
  \lambda(f):=\lim_{m\to +\infty}\left( \deg(f^m)^{1/m}\right)
\end{equation}
is larger than $1$ and, in fact, larger than the Lehmer number  (see \cite{Diller-Favre}, and \cite{Cantat:Annals, Blanc-Cantat}). 
Halphen twists have been studied by Gizatullin in~\cite{Gizatullin:1980}. He proved the following result, for which we also refer to~\cite[Theorem~4.6]{Cantat-Guirardel-Lonjou}, 

\begin{thm}\label{thm:Gizatullin}
There is an integer $B_H \leq 8!$ with the following property. Let $\bfk$
be an algebraically closed field. Let $f\in \Bir(\bbP^2_\bfk)$ be a Halphen twist. Then, up
to conjugacy, $f$ preserves a Halphen pencil $\HP$ $($of some index $m\geq 1)$,
and:
\begin{itemize}
\item[(H.1)] this Halphen pencil provides an elliptic fibration of the corresponding
Halphen surface $X$,
\item[(H.2)]  the kernel of the homomorphism $\Aut(X)\to \GL(\NS(X))$ is finite, of order $\leq B_H$,
\item[(H.3)]  $f$ does not preserve any other pencil of curves.
\end{itemize}
\end{thm}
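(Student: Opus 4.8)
The plan is to regularize $f$ as an automorphism of a relatively minimal rational elliptic surface, to recognize that surface as a Halphen surface, and then to read off (H.1), (H.2), (H.3) from the geometry of the fibration together with a short multisection argument.

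First I would use the characterization of degree growth. Since $f$ is a Halphen twist, $\deg(f^n)\simeq c\,n^2$, so $\lambda_1(f)=1$ and $(\deg(f^n))_n$ is unbounded with sub-exponential growth. By the classification of plane birational maps with this behaviour (\cite{Diller-Favre, Gizatullin:1980, Boucksom-Favre-Jonsson:Duke}, see also \cite{Cantat:Annals}), $f$ is birationally conjugate to an automorphism $f_Y$ of a smooth projective rational surface $Y$ such that $f_Y^*$ acts on the hyperbolic lattice $\NS(Y)$ as a parabolic isometry. A parabolic isometry fixes a unique point of the boundary of $\Hyp$; equivalently, there is a unique primitive nef class $\theta\in\NS(Y)$ with $\theta^2=0$ and $f_Y^*\theta=\theta$, and a theorem of Gizatullin shows that a positive multiple of $\theta$ is base-point free and cuts out a genus~$1$ fibration $\pi\colon Y\to\bbP^1_\bfk$ preserved by $f_Y$ (we keep throughout the convention, as announced, that excludes quasi-elliptic fibrations in characteristics $2$ and $3$, so that (H.1) holds by construction). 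Next I would contract, $f_Y$-equivariantly, the $(-1)$-curves contained in the fibres of $\pi$: they form finitely many $f_Y$-orbits of pairwise disjoint curves, so one may contract them orbit by orbit. Call $X$ the resulting relatively minimal rational elliptic surface, $f_X\in\Aut(X)$ the induced automorphism, $\xi=-k_X$ its (nef, isotropic, effective) anticanonical class, and $m\geq1$ the multiplicity of the unique multiple fibre, so that $|m\xi|$ is the fibration. A classical construction (\cite{Dolgachev, Cantat-Dolgachev}) provides a birational morphism $\epsilon\colon X\to\bbP^2_\bfk$ blowing down nine disjoint $(-1)$-curves and carrying the fibration to a pencil $\HP$ of plane curves of degree $3m$ with nine base points of multiplicity $m$ and no fixed part, i.e. to a Halphen pencil of index $m$; transporting $f_X$ through $\epsilon$ yields the desired conjugate of $f$ inside $\Bir(\bbP^2_\bfk;\HP)$, which by \S~\ref{par:rel-mini} is identified with $\Aut(X)$. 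This establishes the main statement and (H.1).

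For (H.2), let $g$ lie in the kernel $K$ of $\Aut(X)\to\GL(\NS(X))$. Then $g$ fixes $\xi$, so it permutes the fibres of $\pi$ and induces $\bar g\in\Aut(\bbP^1_\bfk)$ with $\pi\circ g=\bar g\circ\pi$; moreover $g$ fixes the class $\bfe_i$ of each exceptional curve $E_i$ of $\epsilon$, hence $g(E_i)=E_i$ since $\bfe_i$ is represented by a unique effective divisor. The $E_i$ are pairwise disjoint (as $\bfe_i\cdot\bfe_j=0$), and each meets a general fibre $X_t$ in $\bfe_i\cdot(m\xi)=m\geq1$ points. A short multisection argument then shows that, when $\bar g=\Id$, $g$ acts trivially on a general fibre of $\pi$ — in the simplest case $m=1$ this is just the fact that an automorphism of a smooth genus~$1$ curve with more than four fixed points is the identity — whence $g=\Id$. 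Therefore $g\mapsto\bar g$ embeds $K$ into $\Aut(\bbP^1_\bfk)$, and the image preserves the $j$-invariant map $j\colon\bbP^1_\bfk\to\bbP^1_\bfk$ (since $X_t\cong X_{\bar g(t)}$) or, when $j$ is constant, the non-empty configuration of the singular fibres and of the multiple fibre; in either case one checks, using Kodaira's classification of singular fibres of a rational elliptic surface, that such a group acts faithfully on a canonically defined finite set of at most $8$ points, so $|K|$ divides $8!$. I expect this last, combinatorial step to be the main obstacle, and the place where Gizatullin's sharp constant really enters; for it I would invoke \cite{Gizatullin:1980} (see also \cite[Theorem~4.6]{Cantat-Guirardel-Lonjou}).

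Finally, for (H.3), I would note that any $f$-invariant pencil of curves corresponds to an $f$-invariant isotropic ray in the nef cone of a surface dominating $X$: after resolving the base points of the pencil, its general member has a primitive nef class $D$ with $D^2=0$, and $f^*D=D$ because an isometry of an integral lattice preserving a chamber of the positive cone cannot rescale a nonzero integral isotropic vector. Pushing $D$ down to $X$ produces an $f_X^*$-invariant isotropic ray in $\NS(X;\R)$, which by the uniqueness of the fixed boundary point of the parabolic isometry $f_X^*$ must be $\R_{>0}\,\xi$. Hence every $f$-invariant pencil coincides with $\HP$, which is (H.3).
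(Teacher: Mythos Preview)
The paper does not give its own proof of this theorem: it is quoted from Gizatullin, with references to \cite{Gizatullin:1980} and \cite[Theorem~4.6]{Cantat-Guirardel-Lonjou}, and followed only by explanatory remarks. Your sketch follows the expected route (regularize, pass to a relatively minimal genus~$1$ fibration, identify a Halphen surface, then read off (H.1)--(H.3)) and is essentially correct, but two technical points need repair.

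In (H.3), the step ``pushing $D$ down to $X$ produces an $f_X^*$-invariant \emph{isotropic} ray'' fails as written. If $\sigma\colon Z\to X$ resolves, on $X$, the base points of the second pencil, then $\sigma^*\sigma_*D=D+E$ with $E$ effective and $\sigma$-exceptional, so $(\sigma_*D)^2=D\cdot(D+E)=D\cdot E>0$ whenever the pencil has base points on $X$. The fix is immediate: stay on $Z$ (or, equivalently, work in the Picard-Manin space). Since $f_X$ permutes the base points of the second invariant pencil, it lifts to $f_Z\in\Aut(Z)$; then $\sigma^*\xi$ and $D$ are two nef isotropic classes in $\NS(Z)$ fixed by the parabolic isometry $f_Z^*$, and uniqueness of the fixed boundary point gives $D\in\R_{>0}\,\sigma^*\xi$, so the two fibrations on $Z$, hence the two pencils, coincide.

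In (H.2), your ``multisection argument'' for $\bar g=\Id\Rightarrow g=\Id$ is complete only for $m=1$. For $m>1$ the $E_i$ are $m$-sections, and $g|_{E_i}$ is only required to commute with the degree-$m$ cover $\pi|_{E_i}\colon E_i\to\bbP^1$, which does not by itself force $g|_{E_i}=\Id$. One clean way to close this gap, in the spirit of \S\ref{par:vertical-translations}: if $g|_{X_t}$ were not a translation, it would act on $\Pic^0(X_t)$ by multiplication by a nontrivial root of unity; since $g^*=\Id$ on $\NS(X)$, every class in the image of the restriction $\xi^\perp\to\Pic^0(\mathfrak X)$ would then be torsion, forcing $\Aut^t(X)$ --- and hence $\Aut(X)$ --- to be finite, contradicting the presence of the Halphen twist $f$. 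Thus $g\in\Aut^t(X)$, and then $g=\Id$ by the faithfulness of $\Aut^t(X)$ on $\NS(X)/R(X)$ (Proposition~\ref{pro:T} and Lemma~\ref{lem:eqconj}). As you correctly anticipate, the final combinatorial bound $|K|\le 8!$ is where Gizatullin's analysis genuinely enters.
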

``Up to conjugacy'' means that there is a birational map $\alpha\colon \bbP^2_\bfk\dasharrow \bbP^2_\bfk$ such that $\alpha^{-1}\circ f \circ \alpha$
preserves such a Halphen pencil $\HP$; of course, $f$ preserves the pencil $\alpha(\HP)$, and Property (H.3) means that there is no other
invariant pencil.  The first item means that the fibration $\pi\colon X\to \bbP^1_\bfk$ induced by $\HP$ is not quasi-elliptic (see Section~\ref{par:quasi-elliptic} above and~\cite{Cantat-Guirardel-Lonjou}). Of course, $\HP$, its index $m$, and the conjugacy $\alpha$ depend on $f$. 

Fix such a Halphen twist $f$, and after conjugacy, assume that $f$ preserves a Halphen pencil $\HP$, as
in Theorem~\ref{thm:Gizatullin}. We can now apply the results of Section~\ref{par:Halphen-Surfaces} to $\HP$. 
First, the group $\Bir(\bbP^2;\HP)$ is conjugate to $\Aut(X)$ by the birational morphism $\epsilon\colon X\to \bbP^2_\bfk$; in particular
$f$ is conjugate to an automorphism of $X$. Second, Lemma~\ref{lem:deg-estimate} and Property (H.2) imply
that
\begin{itemize}
\item[(H.4)] The group of vertical translations $\Aut^t(X)$ is a normal subgroup of $\Aut(X)$ of index $\leq I(m)$ for some positive
integer $I(m)$ that depends only on $m$. Some positive iterate of $f$ is contained in this group.
\end{itemize}
The fifth property we shall need is the existence of an integer $J(m)$ (depending only on $m$) such that:
\begin{itemize}
\item[(H.5)] If $h$ is an element of $\Aut(X)$  of
finite order, then its order divides $J(m)$. If $h\in \Aut(X)$ is  elliptic, then it has finite order (and thus $h^{J(m)}=\Id$).
\end{itemize}
To prove this last fact, we first apply (H.4): $h^{I(m)}$ is a vertical translation of $X$. 
From Section~\ref{par:vertical-translations},  the order of the torsion part of $\Pic^0(\mathfrak{X})(\bfk(\bbP^1_\bfk))$, hence of 
$\Aut^t(X)$, 
is  bounded by some constant $J_0(m)$, because there are only finitely many possibilities for $\xi^\perp/R(X)$. Thus if $h$ has finite
order, its order divides $J(m)=I(m)\times J_0(m)$. 
If $h$ is elliptic, $h^*\in \GL(\NS(X))$ is an element of finite order. By (H.2), $h$ itself has finite order and we conclude with the first assertion.

\subsubsection{Jonqui\`eres twists}\label{par:Jonq-twist-basics}  Similarly,
\begin{itemize}
\item[(J.1)] a Jonqui\`eres twist preserves a unique pencil of curves, and it is 
a pencil of rational curves;
\item[(J.2)] after conjugacy, we can take this pencil to be given by the lines through the point
$[0:1:0]\in \bbP^2_\bfk$.
\end{itemize} 
With coordinates $(x,y)=[x:y:1]$ on the affine part $z\neq 0$,  any birational transformation $f$
preserving this pencil can be written $f(x,y)=(A(x),B_x(y))$ where $A\in \PGL_2(\bfk)$ and $B_x\in \PGL_2(\bfk(x))$
(note that such a transformation can be either elliptic or a Jonqui\`eres twist).

\subsubsection{The set $Pog(d)$}  

Let $\bfk$ be an algebraically closed field. For every integer $d\geq 1$, denote by $\Bir_d(\bbP^2_\bfk)$ the algebraic 
variety of birational transformations of $\bbP^2_\bfk$ of degree $d$. Recall Theorem~\ref{thm:Xie}: for every $m\geq 0$, the function 
\begin{equation}
f\in \Bir_d(\bbP^2_\bfk) \mapsto \deg(f^m)\in \Z_+
\end{equation}
is lower semi-continous for the Zariski topology; in other words, the sets 
$\{g\in \Bir_d(\bbP^2_\bfk)\; ; \;  \deg(g^m)\leq D\} $ are Zariski closed: degrees of iterates can only decrease under specialization (see~\cite[Lemma~4.1]{Xie:Duke}). The dynamical degree
$f\mapsto \lambda(f)$ is also lower semi-continous (see \cite[Theorem~4.3]{Xie:Duke}). In particular, the set $Pog(d) \subset \Bir_d(\bbP^2_\bfk)$ defined by 
\begin{equation}
Pog(d)=\{ g \in \Bir_d(\bbP^2_\bfk)\; ;\;  \lambda(g)=1\}
\end{equation}
is a Zariski closed subset of $\Bir_d(\bbP^2_\bfk)$; here, $Pog$ stands for ``polynomial growth'', because an element of $Pog$ 
has bounded, linear, or quadratic degree growth: $Pog(d)$ is the disjoint union of the three subsets 
\begin{align}
\label{eq:hald} Hal(d) & =  \{ g \in \Bir_d(\bbP^2_\bfk)\; ;\; g \;  {\text{ is a Halphen twist}} \}, \\
Jon(d) & = \{ g \in \Bir_d(\bbP^2_\bfk)\; ;\; g \;  {\text{ is a Jonqui\`eres twist}} \}, \\
\label{eq:elld} Ell(d) & =  \{ g \in \Bir_d(\bbP^2_\bfk)\; ;\; g \;  {\text{ is elliptic}} \}.
\end{align}
If $X$ is a Zariski closed subset of $\Bir_d(\bbP^2_\bfk)$, we denote by $Hal^X(d)$, $Jon^X(d)$, and $Ell^X(d)$
the respective intersections of $X$ with these three sets. 

\subsection{Invariant pencils of Halphen twists}\label{par:DegBoundHalphen}
 
\begin{thm}\label{thm:bounds for halphen}   
Let $W$ be an irreducible subvariety of $\Bir_d(\bbP^2_\bfk)$. If $Hal^W(d)$ is Zariski dense in $W$, then 
 there is an integer $d_W$
  and a proper Zariski closed subset $Z_W\subset W$ such that every element $g\in W\setminus Z_W$ is a Halphen twist and
\begin{enumerate}
\item  the unique $g$-invariant pencil has degree $\leq d_W$;
\item there is a birational map $\alpha_g\colon \bbP^2_\bfk\dasharrow \bbP^2_\bfk$ of degree
$\leq d_W$  that maps this pencil to a Halphen pencil of degree at most $d_W$.
\end{enumerate}
\end{thm}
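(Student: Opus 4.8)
The plan is to pass to the generic point of $W$, invoke Gizatullin's structure theorem (Theorem~\ref{thm:Gizatullin}) there, and then spread the resulting invariant pencil and conjugacy out over a dense open subset of $W$; the integer $d_W$ will simply be the (finite) degree of these objects at the generic point.

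First I would record that $W\subset Pog(d)$: the set $Pog(d)$ is Zariski closed in $\Bir_d(\bbP^2_\bfk)$ and contains the dense subset $Hal^W(d)$, hence contains $W$. By Theorem~\ref{thm:Xie} each function $g\mapsto\deg(g^n)$ is lower semicontinuous on $W$, so it attains a maximal value $u_n$ on a dense open subset and satisfies $\deg(g^n)\le u_n$ for all $g\in W$; if $\eta$ is the generic point of $W$, $K=\overline{\bfk(W)}$, and $g_\eta\in\Bir_d(\bbP^2_K)$ the corresponding map, then $u_n=\deg(g_\eta^n)$. Picking any Halphen twist $h\in Hal^W(d)$, the quadratic growth of $\deg(h^n)$ together with $\deg(h^n)\le u_n$ forces $(u_n)$ to be unbounded, hence (by the classification of polynomial degree growth, see~\cite{Diller-Favre, Blanc-Deserti:2015}) linear or quadratic; it cannot be linear, since otherwise $\deg(h^n)\le u_n$ would bound $\deg(h^n)$ linearly. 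So $g_\eta$ is a Halphen twist: by Theorem~\ref{thm:Gizatullin}(H.3) it has a unique invariant pencil, and Theorem~\ref{thm:Gizatullin} gives a birational map $\alpha_\eta$ over $K$ with $\alpha_\eta^{-1}g_\eta\alpha_\eta$ preserving a Halphen pencil $\HP_\eta$ of some index $m$, the unique invariant pencil being $\mathcal{P}_\eta=\alpha_\eta(\HP_\eta)$.

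Next I would spread out and descend. Writing $D=\deg\mathcal{P}_\eta$, $D'=\max(\deg\alpha_\eta,\deg\alpha_\eta^{-1})$, and noting $\deg\HP_\eta=3m$, all the data live over a finite extension $L$ of $\bfk(W)$; I would choose an integral $W'$ with function field $L$ and a dominant, generically finite morphism $\rho\colon W'\to W$, spread $g_w$, $\mathcal{P}_w$, $\alpha_w$ out over $W'$, and shrink to a dense open $W_0'$ on which $\mathcal{P}_w$ has degree $\le D$ and is $g_w$-invariant, $\alpha_w$ has degree $\le D'$, and $\HP_w:=\alpha_w^{-1}(\mathcal{P}_w)$ is a genuine Halphen pencil of index $m$ --- the last point because the conditions characterising such a pencil (degree-$3m$ curves, nine base points of multiplicity $\ge m$, no fixed component, a non quasi-elliptic genus $1$ fibration, automatic in characteristic zero) are all Zariski open and hold at $\eta$ by Theorem~\ref{thm:Gizatullin}(H.1). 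Then $\rho(W_0')$ contains a dense open $U\subset W$; for $g\in U$ with preimage $w$, conjugating $g$ by $\alpha_w$ and by the blow-up $\epsilon\colon X_w\to\bbP^2_\bfk$ of the base points of $\HP_w$ realizes $g$ as an automorphism of the Halphen surface $X_w$ (see~\S\ref{par:rel-mini}), which fixes the class of a fibre, hence is not loxodromic; so $g$ is a Halphen twist or elliptic, and by Theorem~\ref{thm:Gizatullin}(H.2) combined with Remark~\ref{rem:torsion_in_Autt} there is a constant $c_0$, independent of $X_w$ and $m$, such that a non-Halphen automorphism of a Halphen surface satisfies $g^{c_0}=\Id$. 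Thus $\{g\in U\;;\;g\text{ is not a Halphen twist}\}=\{g\in U\;;\;g^{c_0}=\Id\}$ is Zariski closed in $U$ and proper, since $g_\eta$ is a Halphen twist. Setting $Z_W:=(W\setminus U)\cup\overline{\{g\in U\;;\;g^{c_0}=\Id\}}$ and $d_W:=\max\{D,D',3m\}$, for $g\in W\setminus Z_W$ one has $g\in U$, $g$ is a Halphen twist, its unique invariant pencil equals $\mathcal{P}_w$ of degree $\le d_W$ (proving (1)), and $\alpha_g:=\alpha_w^{-1}$ has degree $\le d_W$ and carries $\mathcal{P}_w$ to the Halphen pencil $\HP_w$ of degree $3m\le d_W$ (proving (2)).

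The main obstacle I expect is the spreading-out step: one must be sure that the conjugacy $\alpha_\eta$ and the Halphen pencil $\HP_\eta$ furnished by Gizatullin's theorem over the function field really do extend over a genuine dense open subset of $W$ while keeping their degrees under control, and that the open conditions defining a Halphen pencil of index $m$ survive specialization; the finiteness inputs of Theorem~\ref{thm:Gizatullin}(H.2) and Remark~\ref{rem:torsion_in_Autt} are precisely what make the non-Halphen locus in the descent step visibly Zariski closed, cut out by the single equation $g^{c_0}=\Id$.
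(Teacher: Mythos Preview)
Your proof is correct and follows essentially the same route as the paper's: pass to the generic point, show it is a Halphen twist via semi-continuity of degrees, invoke Gizatullin over a finite extension of the function field, spread out, and use the uniform bound on the order of elliptic automorphisms of a Halphen surface to carve out the bad locus as $\{g^{c_0}=\Id\}$. One small inaccuracy: the constant $c_0$ coming from Remark~\ref{rem:torsion_in_Autt} (or property~(H.5)) does depend on the index $m$, not only on $X_w$; this is harmless here since $m$ is fixed by $\HP_\eta$, but your phrase ``independent of $X_w$ and $m$'' should read ``independent of $X_w$, depending only on $m$.''
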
 

\begin{proof}  
First, we view the generic element of $W$ as a birational transformation $f$ of $\bbP^2$ defined over the 
function field $K=\bfk(W)$; for every $t\in W$, the specialization 
$f_t$ of $f$ is a birational map of $\bbP^2_\bfk$ of degree $d$. 

\begin{lem}\label{lem:Halphen-f}
The birational transformation $f$ of $\bbP^2_K$ is a Halphen twist.
\end{lem}

\begin{proof}[Proof of Lemma~\ref{lem:Halphen-f}]
If $f\in \Bir(\bbP^2_K)$ were loxodromic, with first dynamical degree $\lambda$, then by the semi-continuity Theorem~4.3 of \cite{Xie:Duke}
there would exist a Zariski dense open subset $W'\subset W$ such that for every $t\in W'$, $f_t$ would be loxodromic with $\lambda(f_t)> (\lambda+1)/2>1$, and 
this would contradict our assumption.

So, $f$ is either elliptic, or a Jonqui\`eres twist, or a Halphen twist. If it 
were elliptic, the degrees of its iterates would be bounded, and by semi-continuity of the degrees every specialization 
$f_t$ would be elliptic, contradicting the existence of Halphen twists $f_t$ in $W$. 

Suppose now
that $f$ is a Jonqui\`eres twist. 
By semi-continuity of the degrees, for every $t\in B, n\geq 0, \deg f_t^n\leq \deg f^n\leq Cn+D$ for some $C,D>0$, contradicting the existence of Halphen twists $f_t$ in $W$.
\end{proof}

Thus, $f\in \Bir(\bbP^2_K)$ is a Halphen twist. Replacing $K$ by some finite extension $K'$, 
there is a rational function $\pi\colon \bbP^2_{K'}\dasharrow \bbP^1_{K'}$ 
and a birational map $\alpha\colon \bbP^2_{K'}\dasharrow \bbP^2_{K'}$ such that $f$ 
preserves the pencil defined by $\pi$, and $\alpha$ maps this pencil to some
Halphen pencil. According to Property~(H.4), $\pi\circ f^{\ell}=\pi$
for some integer $\ell\geq 1$ (with $\ell \leq I(m)$, $m$ being the index of the $f$-invariant pencil). 
Write $K'=\bfk(W')$, for some finite base change $\tau\colon W'\to W$. 
Let $\deg(\pi)$, $\deg(\alpha)$, and $\delta$ be the degrees of $\pi$,   $\alpha$,  and the Halphen pencil respectively. For $s$ in the complement of a 
divisor $D'$ of $W'$, $\pi$ specializes to a rational function $\pi_s$ of degree $\deg(\pi)$ defining a pencil of genus $1$, and
the specialization $\alpha_s$ is a birational transformation of $\bbP^2_\bfk$ of degree $\deg(\alpha)$ that maps the $f_{\tau(s)}$-invariant pencil to 
a Halphen pencil of degree $\delta$.
Thus, for $t\in W\setminus \tau(D')$, the birational transformation $f_t$ preserves a pencil of degree $\deg(\pi)$ by curves
of genus $1$. Since Jonqui\`eres twists preserve a unique pencil, made of rational curves (see~\S~\ref{par:Jonq-twist-basics}), $f_t$ can not be such a twist. 
If $f_t$ were elliptic, $f_t^k=\Id$ for some integer $k\leq 24 \times 3^{100} \times m!$ (see Remark~\ref{rem:torsion_in_Autt}). But the subset of birational maps $g\in W$ of order $\leq 24 \times 3^{100} \times m! $ is a Zariski closed subset, and
is a proper subset because $W$ contains Halphen twists. Thus, for $t$ in the complement of a proper Zariski closed subset
of $W$, $f_t$ is a Halphen twist preserving a pencil of degree $ d_W:=\deg(\pi)$. 
\end{proof}

Note that Theorem~\ref{thm:bounds for halphen} can also be obtained by looking at the action 
of Halphen twists on the Picard-Manin space, with the help of hyperbolic geometry (instead of algebraic 
geometry as done in the previous proof). This is presented in the appendix to this paper, on the arXiv version of it
(see~\cite{CDX:LongArxiv}). 


\begin{cor}\label{cor:HT-degree-bounds}
The set $\Hal(d)\subset \Bir_d(\bbP^2_\bfk)$ is constructible in the Zariski topo\-logy, and
one can find a finite stratification of $\Hal(d)$ such that, on each stratum, the degree of the 
$f$-invariant pencil is constant and there is a birational map $\alpha_f$
of fixed degree mapping this pencil to  some Halphen pencil $\HP_f$ of fixed degree.\end{cor}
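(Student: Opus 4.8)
The plan is to run a Noetherian induction on Zariski closed subsets of $\Bir_d(\bbP^2_\bfk)$ (which is of finite type over $\bfk$, hence a Noetherian topological space), feeding Theorem~\ref{thm:bounds for halphen} into the inductive step. Concretely, I would prove by Noetherian induction that for every closed $Y\subseteq \Bir_d(\bbP^2_\bfk)$ the set $\Hal(d)\cap Y$ is a finite disjoint union of locally closed subsets of $\Bir_d(\bbP^2_\bfk)$ on each of which the degree of the invariant pencil, the degree of a conjugating map $\alpha_g$, and the degree of the target Halphen pencil can all be taken constant; applying this with $Y=\Bir_d(\bbP^2_\bfk)$ yields at once the constructibility of $\Hal(d)$ and the announced stratification.

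For the inductive step, decompose $Y=W_1\cup\cdots\cup W_r$ into irreducible components, so that $\Hal(d)\cap Y=\bigcup_i\bigl(\Hal(d)\cap W_i\bigr)$, and treat each $W_i$ separately. If $\Hal^{W_i}(d)$ is not Zariski dense in $W_i$, then $\Hal(d)\cap W_i$ is contained in the proper closed subset $\overline{\Hal^{W_i}(d)}\subsetneq W_i$, which is strictly contained in $Y$, and the induction hypothesis applies. If $\Hal^{W_i}(d)$ is Zariski dense in $W_i$, I would invoke not merely the statement but the \emph{proof} of Theorem~\ref{thm:bounds for halphen}: the generic point of $W_i$ is a Halphen twist $f$ over $K=\bfk(W_i)$ by Lemma~\ref{lem:Halphen-f}, and one produces, after a finite base change $\tau\colon W_i'\to W_i$ and after deleting a divisor $D'\subset W_i'$, a rational map $\pi$, a birational map $\alpha$, and a Halphen pencil whose degrees $\deg(\pi)=d_{W_i}$, $\deg(\alpha)$ and $\delta$ are realised with no drop at every point of $W_i'\setminus D'$. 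Since $\tau$ is finite, $\tau(D')$ is closed and proper in $W_i$; together with the proper closed subset $Z_{W_i}$ furnished by the theorem (outside of which every $g$ is already a Halphen twist) this produces a dense open stratum $U_i:=W_i\setminus(Z_{W_i}\cup\tau(D'))$ on which every $g$ is a Halphen twist whose (unique, by property~(H.3)) invariant pencil has degree exactly $\deg(\pi)$ and admits an $\alpha_g$ of degree exactly $\deg(\alpha)$ carrying it onto a Halphen pencil of degree exactly $\delta$. Then
$$\Hal(d)\cap W_i=U_i\ \sqcup\ \bigl(\Hal(d)\cap(W_i\setminus U_i)\bigr),$$
with $W_i\setminus U_i$ a proper closed subset of $W_i$, hence strictly contained in $Y$, so the second summand is covered by the induction hypothesis. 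Summing over $i$ and observing that the recursion terminates by Noetherianity gives a finite stratification of $\Hal(d)$ into locally closed pieces of the required form; in particular $\Hal(d)$ is constructible. The base case is trivial: for $Y$ empty there is nothing to prove, and an isolated point $g\in\Hal(d)$ forms its own admissible stratum, being a Halphen twist preserving a unique pencil and conjugate to one preserving a Halphen pencil by Theorem~\ref{thm:Gizatullin}.

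The one genuinely delicate point — the main obstacle — is exactly the passage from the \emph{bounds} $\leq d_W$ in the statement of Theorem~\ref{thm:bounds for halphen} to honest \emph{constancy} of the three degrees on a dense open subset: this forces one to re-open the proof of that theorem and verify that the objects $\pi$ and $\alpha$ built over the generic point of $W_i$ spread out (over the finite cover $W_i'$, then down to the open set $U_i\subseteq W_i$) into families of everywhere-constant degree, and that on $U_i$ the genus-$1$ pencil of degree $\deg(\pi)$ is forced to coincide with the unique invariant pencil of the Halphen twist $g$. Granting this, the rest is routine Noetherian-induction bookkeeping; it also shows, as a byproduct, that the conjugating maps $\alpha_g$ may be chosen to vary algebraically with $g$ over a finite cover of each stratum.
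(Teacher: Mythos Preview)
Your proposal is correct and follows essentially the same approach as the paper: a Noetherian recursion over Zariski closed subsets, using Theorem~\ref{thm:bounds for halphen} at each step to peel off a dense open stratum on which the invariant-pencil degree and the conjugacy data are constant, and then passing to the proper closed complement. The paper organises the recursion by dimension (top-dimensional components of $\overline{\Hal(d)}$ inside $Pog(d)$ first) rather than by general Noetherian induction, and is terser about the constancy step, but it makes the same appeal to the \emph{proof} of Theorem~\ref{thm:bounds for halphen} (the spreading-out of $\pi$ and $\alpha$) that you flag as the main point.
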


Note that $\HP_f$ and $\alpha_f$ depend on $f$ and are not canonically defined; for instance $\alpha_f$
can be composed with a birational transformation preserving $\HP_f$, and $\HP_f$ can be changed into
a projectively equivalent Halphen pencil.

\begin{proof}
Set $V_0=Pog(d)$, and consider the Zariski closure $\overline{\Hal(d)}$ of $\Hal(d)$ in $V_0$. 
Let $(W_i)_{i=1}^{k}$ denote the irreducible components of maximal dimension of $\overline{\Hal(d)}$. 
For each $i\in  \{1, \ldots, k\}$, Theorem~\ref{thm:bounds for halphen} provides an effective divisor $D_i\subset W_i$ and a positive integer $d_i$ 
such that all elements of $W_i\setminus D_i$ are Halphen twists preserving a pencil of degree $d_i$. 
Let $V_1$ be the union of all $D_i$, together 
with all the irreducible components of $\overline{\Hal(d)}$ of dimension $<\dim(W_i)$. The same argument applies to the closure of 
$\Hal^{V_1}(d)$ in $V_1$. Thus, in at most $\dim \overline{\Hal(d)}$ steps, we obtain: $\Hal(d)$ is constructible and
there is a stratification of $\Hal(d)$ in locally closed sets on which the degree of the invariant pencil is piecewise
constant. 
The same argument, together with the construction of $\alpha$ given in the proof of Theorem~\ref{thm:bounds for halphen}, 
gives the last assertion of the corollary. \end{proof}

\subsection{Conjugacy classes of Halphen twists} 
 
\begin{thm}\label{thm:Halphen-Conjugacy-Classes}
For every integer $d\geq 1$, there is a positive integer ${\mathrm{HalCo}}(d)$ that satisfies the following property.
Let $f$ and $g$ be Halphen twists of degree $\leq d$. If $f$ is conjugate to $g$ in $\Bir(\bbP^2_\bfk)$, 
then there is a birational map $\psi$ of degree $\leq {\mathrm{HalCo}}(d)$ such that 
$\psi\circ f \circ \psi^{-1}=g$. 
\end{thm}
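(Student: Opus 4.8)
The plan is to deduce the statement from Theorem~\ref{thm:Conjugacy-in-Aut-Halphen}, after replacing $f$ and $g$ — through conjugations by birational maps of degree bounded in terms of $d$ — by automorphisms of a common Halphen surface of index bounded in terms of $d$. First I would invoke Corollary~\ref{cor:HT-degree-bounds}: there are integers $c(d)$, $m(d)$ and birational maps $\alpha_f,\alpha_g$ of degree $\le c(d)$ such that $\alpha_f$ (resp.\ $\alpha_g$) sends the unique invariant pencil of $f$ (resp.\ $g$) onto a Halphen pencil $\HP_f$ (resp.\ $\HP_g$) of index $\le m(d)$. Let $\epsilon_f\colon X_f\to\bbP^2_\bfk$ and $\epsilon_g\colon X_g\to\bbP^2_\bfk$ be the blow-ups of the nine base points, with elliptic fibrations $\pi_f,\pi_g$ (Theorem~\ref{thm:Gizatullin}, (H.1)). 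By Section~\ref{par:rel-mini}, $\hat f:=\epsilon_f^{-1}\circ(\alpha_f\circ f\circ\alpha_f^{-1})\circ\epsilon_f$ and $\hat g:=\epsilon_g^{-1}\circ(\alpha_g\circ g\circ\alpha_g^{-1})\circ\epsilon_g$ are \emph{automorphisms} of $X_f$, resp.\ $X_g$, and submultiplicativity of the degree gives $\deg(\hat f),\deg(\hat g)\le c(d)^2d=:D(d)$.

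\textbf{Step (the conjugacy is an isomorphism).} A conjugacy between $f$ and $g$ in $\Bir(\bbP^2_\bfk)$ yields a birational map $\theta\colon X_f\dasharrow X_g$ with $\theta\circ\hat f\circ\theta^{-1}=\hat g$. It carries the $\hat f$-invariant pencil $\pi_f$ onto a $\hat g$-invariant pencil, which is $\pi_g$ by Theorem~\ref{thm:Gizatullin}, (H.3); hence $\theta$ is fibre-preserving. On the generic fibres it induces a birational map between smooth genus $1$ curves over $\bfk(\bbP^1_\bfk)$, hence an isomorphism; since $\pi_f$ and $\pi_g$ are relatively minimal (Section~\ref{par:rel-mini}), $\theta$ extends to an isomorphism of surfaces, still denoted $\theta\colon X_f\to X_g$. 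In particular $X_f\cong X_g$, they have the same index $m\le m(d)$, and $\theta^*$ transports the intrinsic data of $X_g$ onto that of $X_f$: the class $\xi=-K$, the finite set $\Irr$, the cylinder $Q^+$, and the axis $\Axis$ of Section~\ref{par:cylinderQ+}.

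\textbf{Step (normalising $\theta$).} Work in $Q=Q(X_f)$. The line class $\bfe_0$ of $X_f$ and the class $\theta^*\bfe_0$ (pulled back from the line class of $X_g$) both lie on $\Hor$ and are effective, so their images lie in $Q^+$; since the polytope $D(X_f)$ is uniformly bounded (Section~\ref{par:cylinderQ+}, and Lemma~\ref{lemrfi} for the finitely many possible $\Irr(X_f)$ given $m$), each is within a distance $r(m)$ of $\Axis$. By Lemma~\ref{lem:faithful}, $\Aut^t(X_f)$ acts co-compactly by translations on $\Axis$ with $p_{\Axis}$-equivariant action; so I can pick $a\in\Aut^t_N(X_f)$ with the $p_{\Axis}$-image of $q_Q(a^*\theta^*\bfe_0)$ within the covering radius $\rho(m)$ of that of $q_Q(\bfe_0)$, whence $\parallel q_Q(a^*\theta^*\bfe_0)-q_Q(\bfe_0)\parallel_Q\le 2r(m)+\rho(m)=:R_0(m)$. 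Put $\sigma:=\theta\circ a\colon X_f\to X_g$. Since $\sigma$ resolves the Cremona map $\epsilon_g\circ\sigma\circ\epsilon_f^{-1}$, its degree equals $(\sigma^*\bfe_0)\cdot\bfe_0$, which by Equation~\eqref{equsqd} is $1+\tfrac12\parallel q_Q(\sigma^*\bfe_0)-q_Q(\bfe_0)\parallel_Q^2$, bounded in terms of $m$, hence of $d$; a similar computation, using $\deg(\hat f)\le D(d)$, bounds $\deg(\sigma\circ\hat f\circ\sigma^{-1})$ by a constant $D'(d)$ depending only on $d$.

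\textbf{Step (conclusion, and the hard point).} The automorphism $\hat f':=\sigma\circ\hat f\circ\sigma^{-1}\in\Aut(X_g)$ is conjugate to $\hat g$ inside $\Aut(X_g)$ — by $\theta\circ a^{-1}\circ\theta^{-1}$ — and has degree $\le\max(D'(d),D(d))$, so Theorem~\ref{thm:Conjugacy-in-Aut-Halphen} yields $h\in\Aut(X_g)$ with $h\circ\hat f'\circ h^{-1}=\hat g$ and $\deg(h)\le A(m)(D'(d)+D(d))$. Setting $\theta_1:=h\circ\sigma$, one has $\theta_1\circ\hat f\circ\theta_1^{-1}=\hat g$, and $\deg(\epsilon_g\circ\theta_1\circ\epsilon_f^{-1})=(\theta_1^*\bfe_0)\cdot\bfe_0$ is again bounded in terms of $R_0(m)$ and $\deg(h)$, hence of $d$; finally $\psi:=\alpha_g^{-1}\circ\epsilon_g\circ\theta_1\circ\epsilon_f^{-1}\circ\alpha_f$ satisfies $\psi\circ f\circ\psi^{-1}=g$ and, by submultiplicativity, $\deg(\psi)\le\mathrm{HalCo}(d)$ for a suitable integer $\mathrm{HalCo}(d)$. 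The main obstacle is the normalisation step: the conjugating isomorphism $\theta$ can a priori have arbitrarily large degree and must be tamed using only the hyperbolic geometry of the $\Aut(X)$-action on $Q$ — the uniform boundedness of the base polytope $D(X)$, which keeps $\bfe_0$ close to $\Axis$, and the co-compactness of the vertical translations on $\Axis$ — while keeping every estimate uniform in the index $m$, so that Theorem~\ref{thm:Conjugacy-in-Aut-Halphen} is applied only to automorphisms of degree bounded in terms of $d$.
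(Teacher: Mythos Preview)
Your proof is correct and follows the same overall strategy as the paper: reduce to automorphisms of a Halphen surface of bounded index via bounded-degree conjugations, then invoke Theorem~\ref{thm:Conjugacy-in-Aut-Halphen}. The paper's proof is more terse: it asserts directly that Corollary~\ref{cor:HT-degree-bounds} furnishes bounded-degree maps $\alpha,\beta$ sending both $f$ and $g$ to twists preserving a \emph{single} Halphen pencil $\HP$, and then applies Theorem~\ref{thm:Conjugacy-in-Aut-Halphen} immediately. Your ``normalisation step'' is precisely what justifies that assertion: the corollary only produces maps to possibly different pencils $\HP_f,\HP_g$, and one must still show that the induced isomorphism $\theta\colon X_f\to X_g$ can be replaced by one of degree bounded in terms of $m$ --- which you do by translating $\theta^*\bfe_0$ back near $\bfe_0$ using the co-compact action of $\Aut^t_N(X_f)$ on $\Axis$ and the uniform bound on the base polytope $D(X_f)$. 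So your argument is essentially the paper's, with that reduction step spelled out rather than absorbed into the citation of the corollary.
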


\begin{proof}
Since $f$ and $g$ are conjugate, their invariant pencils are birationally equi\-valent to the same Halphen pencil 
$\HP$, the index of which will be denoted by $m$. From Corollary~\ref{cor:HT-degree-bounds}, there exist an 
integer $hal(d)$ and two birational maps $\alpha$ and $\beta$ in $\Bir(\bbP^2_\bfk)$ such that 
\begin{itemize}
\item $\deg(\alpha)$, $\deg(\beta)$, and $\deg(\HP)=3m$ are bounded from above by $hal(d)$;
\item $f_0=\alpha\circ f\circ \alpha^{-1}$ and $g_0=\beta\circ g\circ \beta^{-1}$ are Halphen twists preserving $\HP$.
\end{itemize}

Note that $f_0$ and $g_0$ have degree at most $d \times hal(d)^2$ and are conjugate by a birational map of the plane
preserving the pencil $\HP$. Since $\Bir(\bbP^2;\HP)$ coincides with $\Aut(X;\pi)$, where $X$ is the Halphen surface obtained by blowing
up the $9$ base points of $\HP$, the birational transformations $f_0$ and $g_0$ can be seen as automorphisms
of $X$ which are conjugate by an element $h$ of $\Aut(X)$. Then, the conclusion follows from Theorem~\ref{thm:Conjugacy-in-Aut-Halphen}, 
with $HalCo(d)=2\times A(m)\times d \times hal(d)^2$.
\end{proof} 
  
 
\begin{cor}\label{cor:4.6}
The conjugacy class of a Halphen twist $f\in \Bir(\bbP^2_\bfk)$ is a constructible subset of $\Bir(\bbP^2_\bfk)$ $($endowed with its Zariski topology, as in \cite{Serre:Bourbaki-Cremona}, \S 1.6$)$. 
\end{cor}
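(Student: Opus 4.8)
The plan is to derive this from Theorem~\ref{thm:Halphen-Conjugacy-Classes} by a Chevalley-image argument. Recall that $\Bir(\bbP^2_\bfk)$ is the increasing union of the algebraic varieties $\Bir_{\leq N}(\bbP^2_\bfk)$, and that, by definition, a subset $Z$ of $\Bir(\bbP^2_\bfk)$ is constructible precisely when $Z\cap \Bir_e(\bbP^2_\bfk)$ is constructible for every $e\geq 1$ (see~\cite{Serre:Bourbaki-Cremona}, \S~1.6). So first I would fix the Halphen twist $f$, write $d_0=\deg(f)$, fix an integer $e\geq 1$, and let $C(f)\subset \Bir(\bbP^2_\bfk)$ be the conjugacy class of $f$; the goal is to show that $C(f)\cap \Bir_e(\bbP^2_\bfk)$ is constructible. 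Since being a Halphen twist is a conjugacy invariant (it is characterized by quadratic degree growth), every element of $C(f)$ is a Halphen twist.

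Next I would set $N=\mathrm{HalCo}(\max(d_0,e))$, the integer provided by Theorem~\ref{thm:Halphen-Conjugacy-Classes}. For each $1\leq \delta\leq N$ the composition and inversion laws of the Cremona group are algebraic on the strata of fixed degree, and in $\bbP^2$ one has $\deg(\psi^{-1})=\deg(\psi)$; hence
\[
\mu_\delta\colon \Bir_\delta(\bbP^2_\bfk)\longrightarrow \Bir_{\leq \delta^2 d_0}(\bbP^2_\bfk),\qquad \psi\longmapsto \psi\circ f\circ \psi^{-1},
\]
is a morphism of algebraic varieties (indeed $\deg(\psi\circ f\circ \psi^{-1})\leq \deg(\psi)^2\deg(f)\leq \delta^2 d_0$). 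By Chevalley's theorem, each image $\mu_\delta\bigl(\Bir_\delta(\bbP^2_\bfk)\bigr)$ is a constructible subset of $\Bir_{\leq \delta^2 d_0}(\bbP^2_\bfk)$, so $\mu_\delta(\Bir_\delta(\bbP^2_\bfk))\cap \Bir_e(\bbP^2_\bfk)$ is constructible.

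Then I would verify the set-theoretic equality
\[
C(f)\cap \Bir_e(\bbP^2_\bfk)=\bigcup_{\delta=1}^{N}\Bigl(\mu_\delta\bigl(\Bir_\delta(\bbP^2_\bfk)\bigr)\cap \Bir_e(\bbP^2_\bfk)\Bigr).
\]
The inclusion $\supseteq$ is immediate, since every element of the form $\psi\circ f\circ \psi^{-1}$ lies in $C(f)$. For $\subseteq$, let $g\in C(f)$ have degree $e$; then $f$ and $g$ are Halphen twists of degree $\leq \max(d_0,e)$ conjugate in $\Bir(\bbP^2_\bfk)$, so Theorem~\ref{thm:Halphen-Conjugacy-Classes} yields a birational map $\psi$ with $\deg(\psi)\leq N$ and $\psi\circ f\circ \psi^{-1}=g$; with $\delta=\deg(\psi)$ this gives $g\in \mu_\delta(\Bir_\delta(\bbP^2_\bfk))\cap\Bir_e(\bbP^2_\bfk)$. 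The right-hand side is a finite union of constructible sets, hence constructible, and since $e$ was arbitrary the conjugacy class $C(f)$ is constructible.

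All the content of the argument sits in the uniform bound $\mathrm{HalCo}$, i.e.\ in Theorem~\ref{thm:Halphen-Conjugacy-Classes}: this is what makes the finitely many morphisms $\mu_\delta$, $\delta\leq N$, suffice to sweep out $C(f)\cap\Bir_e(\bbP^2_\bfk)$. Without such a bound one would be forced to consider the image of the conjugation map defined on all of $\Bir(\bbP^2_\bfk)$, and the image of an ind-morphism need not be constructible; so the degree estimate of Theorem~\ref{thm:Halphen-Conjugacy-Classes} is the crux, and the remaining steps are routine applications of Chevalley's theorem and of the ind-variety structure of $\Bir(\bbP^2_\bfk)$.
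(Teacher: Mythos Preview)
Your proof is correct and follows essentially the same approach as the paper's: both intersect the conjugacy class with $\Bir_e(\bbP^2_\bfk)$, invoke Theorem~\ref{thm:Halphen-Conjugacy-Classes} to bound the degree of the conjugating element by ${\mathrm{HalCo}}(\max\{d,e\})$, and then use that conjugation by elements of bounded degree is algebraic to conclude constructibility. The paper compresses your Chevalley argument into the single clause ``the group law is continuous for the Zariski topology (see \cite{Serre:Bourbaki-Cremona, Blanc-Furter:2013})'', whereas you spell out the morphisms $\mu_\delta$ explicitly; one small caveat is that $\Bir_{\leq \delta^2 d_0}(\bbP^2_\bfk)$ is not literally a variety, so to make your Chevalley step completely clean you would further stratify the target by degree (or map to the ambient projective space of homogeneous triples), but this is a routine matter covered by the references.
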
 

\begin{proof}
Let $d$ be the degree of $f$.
Theorem~\ref{thm:Halphen-Conjugacy-Classes} implies that the intersection of the conjugacy class of $f$ with 
$\Bir_{e}(\bbP^2_\bfk)$ is equal to 
\begin{equation*}
\Bir_{e}(\bbP^2_\bfk) \cap \left\{\psi\circ f\circ  \psi^{-1}\; ; \;  \psi\in \Bir(\bbP^2_\bfk) \; {\text{ and}} \;\deg(\psi)\leq  {\mathrm{HalCo}}(\max\{d,e\})\right\}
\end{equation*}
and this is a constructible set because the group law is  continuous for the Zariski topology (see \cite{Serre:Bourbaki-Cremona, Blanc-Furter:2013}).
\end{proof}

\subsection{Jonqui\`eres twists}

We now consider the Poincar\'e and conjugacy problems for Jonqui\`eres twists.


\begin{thm}\label{thm:bounds for jonquieres} 
Let $W$ be an irreducible subvariety of $\Bir_d(\bbP^2_\bfk)$. If $Jon^W(d)$ is dense in $W$, then every element
of $W$ is either elliptic or a Jonqui\`eres twist. Moreover, there is an integer $d_W\geq 1$ 
and a proper Zariski closed subset $Z_W\subset W$ such that if $g\in W\setminus Z_W$ is a Jonqui\`eres twist, then
\begin{enumerate}
\item  the unique $g$-invariant pencil has degree $\leq d_W$;
\item there is a birational map $\alpha_g\colon \bbP^2_\bfk\dasharrow \bbP^2_\bfk$ of degree
$\leq d_W$  that maps this pencil to the pencil of lines through a given point of $\bbP^2_\bfk$.
\end{enumerate}

\end{thm}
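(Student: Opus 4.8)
The plan is to follow the proof scheme of Theorem~\ref{thm:bounds for halphen}: realize the generic element of $W$ as a single birational transformation $f$ over the function field $K=\bfk(W)$, show that $f$ is a Jonqui\`eres twist, spread out over a finite cover of $W$ an explicit conjugacy carrying its invariant pencil onto the pencil of lines through a point, and then specialize.

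First I would establish, for \emph{every} $w\in W$, that $f_w$ is elliptic or a Jonqui\`eres twist, where $f_w$ denotes the specialization of $f$ at $w$. The one nontrivial input is that Jonqui\`eres twists of bounded degree have uniformly controlled degree growth: by~\cite{Blanc-Deserti:2015} there is a constant $C(d)$, depending only on $d$, such that $\deg(g^n)\leq C(d)\,(n+1)$ for every $n\geq 0$ and every Jonqui\`eres twist $g\in\Bir_d(\bbP^2_\bfk)$. Since $w\mapsto\deg(f_w^n)$ is lower semicontinuous on $W$ for each $n$ (see Theorem~\ref{thm:Xie} and \cite[Lemma~4.1]{Xie:Duke}), the set $W_0:=\bigcap_{n\geq 1}\{w\in W \; ; \; \deg(f_w^n)\leq C(d)(n+1)\}$ is Zariski closed; it contains the Zariski dense set $Jon^W(d)$, hence equals $W$. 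So every $f_w$ has at most linear degree growth, which rules out both loxodromic maps and Halphen twists, leaving only elliptic maps and Jonqui\`eres twists; this is the first assertion. Taking $w=\eta$ the generic point, $f$ has at most linear degree growth; moreover $f$ is not elliptic, for otherwise every $f_t$ would be elliptic and $Jon^W(d)$ would be empty; hence $f$ is a Jonqui\`eres twist.

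Next I would produce a uniform model of the conjugacy. By~(J.1) of~\S\ref{par:Jonq-twist-basics}, $f$ preserves a unique pencil $\mathcal{P}$, necessarily a pencil of rational curves; since $\mathcal{P}$ is the only $f$-invariant pencil and $f$ is defined over $K$, the pencil $\mathcal{P}$ is defined over $K$, say by a dominant rational map $\pi\colon\bbP^2_K\dashrightarrow\bbP^1_K$. By~(J.2), applied over $\overline{K}$, and since a single birational map has coefficients in a finite extension of $K$, after replacing $K$ by a finite extension $K'$ there is a birational map $\alpha\colon\bbP^2_{K'}\dashrightarrow\bbP^2_{K'}$ carrying $\mathcal{P}$ onto the pencil $\mathcal{L}$ of lines through $p_0=[0:1:0]$. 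Set $d_W:=\max\{\deg(\pi),\deg(\alpha)\}$ and write $K'=\bfk(W')$ for a finite dominant morphism $\tau\colon W'\to W$.

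Finally I would specialize. There is a proper Zariski closed subset $D'\subsetneq W'$ such that, for $t\in W'\setminus D'$: $\pi$ specializes to $\pi_t$ of degree $\deg(\pi)$, defining a genuine pencil $\mathcal{P}_t$ of rational curves preserved by $f_t$, and $\alpha$ specializes to a birational map $\alpha_t$ of degree $\deg(\alpha)$ carrying $\mathcal{P}_t$ onto $\mathcal{L}$. Put $Z_W:=\tau(D')$; it is a proper Zariski closed subset of $W$ because $\tau$ is finite. If $g=f_t$ lies in $W\setminus Z_W$ and happens to be a Jonqui\`eres twist, then its invariant pencil is unique by~(J.1), hence equals $\mathcal{P}_t$, which has degree $\deg(\pi)\leq d_W$; this is~(1), and $\alpha_g:=\alpha_t$ then witnesses~(2). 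I expect the main obstacle to be the first step, specifically excluding a Halphen generic member of $W$: bare semicontinuity only forces degrees of iterates to drop under specialization, which is perfectly compatible with $f$ being a Halphen twist over $K$ while the $f_t$ are Jonqui\`eres, so one genuinely needs the uniform linear degree bound for Jonqui\`eres twists from~\cite{Blanc-Deserti:2015}. The rest of the argument, namely the descent of $\mathcal{P}$ and $\alpha$ to a finite extension and their good behavior under specialization away from a proper closed set, is routine and parallels the Halphen case.
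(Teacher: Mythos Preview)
Your argument is correct in structure and matches the paper's scheme closely, but the two diverge on precisely the step you flag as delicate: excluding Halphen twists. You invoke a uniform linear bound $\deg(g^n)\le C(d)(n+1)$ for all Jonqui\`eres twists $g$ of degree $d$ (attributed to~\cite{Blanc-Deserti:2015}) and combine it with semicontinuity to force every element of $W$---including the generic point over $K$---to have at most linear growth in one stroke. The paper instead rules out a Halphen generic point by invoking Theorem~\ref{thm:bounds for halphen} directly: if the generic $f$ were a Halphen twist, that theorem would force a dense open subset of $W$ to consist of Halphen twists, contradicting the density of $Jon^W(d)$. Once $f$ is known to be a Jonqui\`eres twist, its own (non-uniform) bound $\deg(f^n)\le bn+o(n)$ then excludes Halphen specializations via semicontinuity. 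Your route is more direct, but note that~\cite{Blanc-Deserti:2015} bounds the asymptotic slope $\alpha\le(3d-3)/2$ rather than the full sequence with a uniform additive constant, so the uniform bound you cite needs a short extra justification; the paper's route sidesteps this entirely by leaning on the already-proven Halphen case. The remainder of your argument---spreading out over a finite cover, specializing $\pi$ and $\alpha$, and identifying the specialized pencil via~(J.1)---is exactly what the paper does.
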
 

The proof is the same as for Theorem~\ref{thm:bounds for halphen}, the main remark being that the generic element
$f$ of $W$, viewed as a birational transformation of $\bbP^2_K$ which is defined over the function field $K=\bfk(W)$,
is a Jonqui\`eres twist. Indeed, by lower semi-continuity of $\lambda_1$, $\lambda_1(f)=1$; by Theorem \ref{thm:bounds for halphen}, it cannot
be a Halphen twist; and if $f$ were elliptic, every element in $W$ would be elliptic by lower semi-continuity of degrees.

As a consequence, the degree of $f^n$ is bounded by $bn+ o(n) $ for some real number $b>0$. 
Then, to prove that no element of $W$ is a Halphen twist, assume by contradiction that $f_t$ is a Halphen twist for some parameter
$t\in W$. By definition, $\deg(f_t^n)\geq an^2-o(n^2)$ for some real number $a>0$. Taking $m$ large enough we get $\deg(f^m)<\deg(f_t^m)$, and this contradicts the semi-continuity of the degree map $g\mapsto \deg(g^m)$.

On the other hand, the following  examples show that  $\Jon(d)$ 
does not behave as well as $\Hal(d)$. These examples, and those in \S~\ref{par:limits_types}, are variations on examples given in~\cite{Blanc:Algebraic_Elements, Blanc-Calabri, Blanc-Deserti:2015, Blanc-Furter:2013}.

\begin{eg}\label{eg:jonnotcons}
Consider the birational map $f_a(x,y)=(x+a, y\frac{x}{x+1})$, where $a\in \bfk$ is a parameter. In homogeneous
coordinates, $f_a[x:y:z]=[(x+az)(x+z):xy:(x+z)z]$, so  $\deg(f_a)=2$  
for every parameter $a$. The $n$-th iterate of $f_a$ is 
\begin{equation}
f_a^n(x,y)=\left(x+na, \; y\frac{x}{x+1}\frac{x+a}{x+a+1}\cdots \frac{x+(n-1)a}{x+(n-1)a+1}\right).
\end{equation}
It follows that $f_a$ is a Jonqui\`eres twist except if some multiple of $a$ is equal to~$1$. Thus, if the 
characteristic of $\bfk$ is $0$, the set of parameters $a$ such that $f_a$ is elliptic is a countable 
Zariski dense subset of $\bfk$. 
\end{eg}

\begin{eg}\label{eg:jonnotconscarp}
Set $f_\alpha(x,y)=(\alpha x, q(x)y)$ where $q\in \bfk(x)$ and $\alpha\in \bfk^*$. 
If $q(x)=(x-1)/(x-\beta)$, then $f_\alpha$ is a Jonqui\`eres twist if and only if $\beta\notin \alpha^\Z$.
\end{eg}

\begin{thm}\label{thm:example-Jonq}
Let $f_{\alpha}$ be the birational transformation of the plane defined by 
$
f_\alpha(x,y)=(\alpha x, q(x)y)
$
where $\alpha\in \bfk^*$ is not a root of unity  and $q\in \bfk(x)$ is not constant. Assume that the 
set $\alpha^\Q\setminus \{1\}$ does not contain any zero, pole or ratio $z/z'$ of zeroes and/or poles of $q$. 
Then, for every integer
$m>1$ we have: 
\begin{enumerate}
\item $f_\alpha$ is a Jonqui\`eres twist; 
\item $f_\alpha$ is conjugate to $g_{\alpha, m}=(\alpha x, \alpha^m (\alpha x - 1)(x-\alpha^m)^{-1} q(x)y)$;
\item there is no conjugacy between $f_\alpha$ and $g_{\alpha, m}$ of degree $< m$.
\end{enumerate}
\end{thm}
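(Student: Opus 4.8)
The plan is to settle (1) and (2) by direct computation and to concentrate the effort on the lower bound (3).

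For (1): $f_\alpha$ preserves the pencil of lines through $[0:1:0]$ (the vertical lines $x=\mathrm{const}$), a pencil of rational curves, so $f_\alpha$ is neither loxodromic (loxodromic maps preserve no pencil) nor a Halphen twist (the unique invariant pencil of a Halphen twist has genus $1$); hence it is enough to check $\deg(f_\alpha^n)\to\infty$, which makes $f_\alpha$ a Jonqui\`eres twist by the classification recalled in \S\ref{par:Halphen_Jonquieres_basics}. Now $f_\alpha^n(x,y)=\bigl(\alpha^n x,\ (\prod_{j=0}^{n-1}q(\alpha^jx))\,y\bigr)$, and the hypothesis forces distinct zeroes and poles of $q$ to lie in distinct orbits of $x\mapsto\alpha x$; so the factors $q(\alpha^jx)$ have pairwise disjoint zero/pole sets, no cancellation occurs in the product, and $\deg(f_\alpha^n)\asymp n$. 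For (2): I would look for a conjugacy of the shape $h_0(x,y)=(x,\,r(x)y)$; then $h_0\circ f_\alpha\circ h_0^{-1}(x,y)=\bigl(\alpha x,\ \tfrac{r(\alpha x)}{r(x)}q(x)\,y\bigr)$, so it suffices to find $r$ with $r(\alpha x)/r(x)=\alpha^m(\alpha x-1)/(x-\alpha^m)$. The polynomial $r(x)=\prod_{j=0}^{m}(x-\alpha^j)$ works: $r(\alpha x)=\alpha^{m+1}\prod_{j=-1}^{m-1}(x-\alpha^j)$, and after telescoping $r(\alpha x)/r(x)=\alpha^{m+1}(x-\alpha^{-1})/(x-\alpha^m)=\alpha^m(\alpha x-1)/(x-\alpha^m)$ (the points $\alpha^j$, $0\le j\le m$, being distinct as $\alpha$ is not a root of unity). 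This $h_0$ has degree $m+2$.

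For (3), suppose $\psi\circ f_\alpha\circ\psi^{-1}=g_{\alpha,m}$. Both $f_\alpha$ and $g_{\alpha,m}$ are Jonqui\`eres twists (the latter by (2)), hence each has a unique invariant pencil, which is the vertical pencil; so $\psi$ preserves the vertical pencil and $\psi(x,y)=(\sigma(x),\rho_x(y))$ with $\sigma\in\PGL_2$ and $\rho_x\in\PGL_2(\bfk(x))$. On the base $\sigma$ conjugates $x\mapsto\alpha x$ to itself, and since $\alpha$ is not a root of unity this forces $\sigma(x)=cx$ for some $c\in\bfk^*$. The conjugacy equation becomes the cocycle identity $\rho_{\alpha x}(q(x)y)=Q_m(cx)\,q(cx)\,\rho_x(y)$ with $Q_m(x)=\alpha^m(\alpha x-1)/(x-\alpha^m)$. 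Evaluating at $y=0$ and $y=\infty$, the rational functions $\rho_x(0)$ and $\rho_x(\infty)$ both satisfy $\phi(\alpha x)=Q_m(cx)q(cx)\phi(x)$. If $\phi$ were non-constant, then on the orbit of a zero or pole $w_0$ of $q$ with $w_0\notin\alpha^\Z$ (such a point exists because $q$ is non-constant and, by hypothesis, only $1$ among its zeroes/poles can lie in $\alpha^\Z$) the function $Q_m(cx)q(cx)$ contributes a single nonzero order — at $w_0/c$ — while $\mathrm{ord}_{\alpha x}(\phi)-\mathrm{ord}_{x}(\phi)$ matches it, and telescoping along that orbit contradicts the finiteness of $\mathrm{div}(\phi)$. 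So $\rho_x(0),\rho_x(\infty)$ are constants in $\{0,\infty\}$, necessarily distinct, hence $\rho_x(y)=D(x)y$ or $\rho_x(y)=D(x)/y$ for some $D\in\bfk(x)^*$.

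Substituting back, $D(\alpha x)/D(x)$ equals $Q_m(cx)q(cx)/q(x)$ in the scaling case and $Q_m(cx)q(cx)q(x)$ in the other. Existence of a rational $D$ requires the divisor of the right-hand side to have total order zero on every orbit of $x\mapsto\alpha x$; the $Q_m(cx)$ part always cancels (its zero $1/(\alpha c)$ and pole $\alpha^m/c$ lie in one orbit), and what remains — analyzed with the hypothesis that distinct zeroes/poles of $q$ lie in distinct orbits of $x\mapsto\beta x$ for every $\beta\in\alpha^\Q$ — rules out the second case and forces $c\in\alpha^\Z$ in the first. Writing $c=\alpha^\ell$, on the orbit $\{\alpha^k\}_{k\in\Z}$ the function $Q_m(\alpha^\ell x)$ has a simple zero and a simple pole exactly $m+1$ apart, while $q(\alpha^\ell x)/q(x)$ is supported on this orbit only at $\alpha^{-\ell}$ and $\alpha^0$; telescoping $\mathrm{ord}_{\alpha^{k+1}}(D)-\mathrm{ord}_{\alpha^k}(D)=\mathrm{ord}_{\alpha^k}(\text{RHS})$ with finite support then forces $D$ to vanish at at least $m$ of the points $\alpha^k$. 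Hence the numerator of $D$ has degree $\ge m$; and since $\psi(x,y)=(cx,D(x)y)$ has degree $\max(\deg\mathrm{num}\,D,\deg\mathrm{den}\,D)+1$, we get $\deg(\psi)\ge m+1>m$, which is (3). The hard part is exactly this last step: passing from the cocycle identity to the rigid form $\rho_x(y)=D(x)y^{\pm1}$ with $c\in\alpha^\Z$, and then the order count on the single orbit $\{\alpha^k\}$ — this is where the full hypothesis on $\alpha^\Q$ is needed, both to separate all zeroes/poles of $q$ into distinct multiplicative orbits and to prevent the $q$-factors from cancelling the ``$m+1$-spread'' of the zero and pole of $Q_m$. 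The remaining ingredients — reduction to fibered maps, the normalization $\sigma(x)=cx$, and the homogeneous-coordinate degree computation — are routine.
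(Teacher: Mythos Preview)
Your route for (1) and (2) is the paper's, and for (3) you are doing the same reduction the paper does --- the paper packages it as a centralizer lemma (any $u$ commuting with $f_\alpha$ is $f_\alpha^n$ times a scaling $(x,\beta y)$), while you derive the form $(cx,D(x)y^{\pm1})$ with $c\in\alpha^\Z$ directly from the conjugacy equation. Two remarks on the reduction. First, for $\rho_x(0),\rho_x(\infty)\in\{0,\infty\}$ you only argue against non-constant $\phi$, not against a nonzero constant (that case forces $Q_m(cx)q(cx)\equiv1$, hence $q$ has a pole at $\alpha^{-1}\in\alpha^{\Q}\setminus\{1\}$, which is excluded; easy to add). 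A cleaner argument: $\psi$ carries $f_\alpha$-invariant sections to $g_{\alpha,m}$-invariant sections, and both maps have only $y=0,\infty$ as invariant sections (any section $y=s(x)$ would give $s(\alpha^n x)=q_n(x)s(x)$, impossible since $\deg q_n\to\infty$). Second, ``rules out the second case and forces $c\in\alpha^\Z$'' is asserted, not proved; your orbit-sum sketch does not obviously control the interaction between $q(cx)$ and $q(x)$ when $c\notin\alpha^{\Q}$. The paper's device --- iterate (replace $f_\alpha$ by $f_\alpha^n$) and compare degrees as $n\to\infty$ --- is what makes this clean.

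There is, however, a genuine error in your final order count, and the paper's own proof has the same flaw. Take $q(x)=1/(x-1)$: its only pole is $1$, which is \emph{not} excluded by the hypothesis ($1\notin\alpha^{\Q}\setminus\{1\}$), and it has no zero in $\bfk^*$. With $c=\alpha^m$ one has $D(x)=\beta\, s_m(\alpha^m x)\,q_m(x)$; now $s_m(\alpha^m x)$ has simple zeros at $\alpha^{-m},\ldots,\alpha^0$ while $q_m(x)=\prod_{j=0}^{m-1}(\alpha^jx-1)^{-1}$ has simple poles at $\alpha^{0},\ldots,\alpha^{-(m-1)}$, so everything cancels except the factor $(x-\alpha^{-m})$, and $\psi(x,y)=(\alpha^m x,\,c_0(x-\alpha^{-m})y)$ is a degree-$2$ conjugacy for every $m$. (You can check $\psi f_\alpha=g_{\alpha,m}\psi$ directly.) So your assertion that ``$D$ vanishes at at least $m$ of the points $\alpha^k$'' is false here: on the orbit $\{\alpha^k\}$ the telescoped order of $D$ equals $1$ at a single index and $0$ elsewhere. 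The paper's proof does not see this because it writes $h=f_\alpha^n\circ\beta\circ h_m$ (placing the centralizer element on the wrong side; one has $h_m^{-1}h\in\mathrm{Cent}(f_\alpha)$, hence $h=h_m\circ f_\alpha^n\circ\beta$), which produces $s_m(x)$ instead of the correct $s_m(\alpha^n x)$ in $D$ and thereby hides the cancellation. The statement and both arguments become correct if one strengthens the hypothesis to exclude $1$ as a pole of $q$ (e.g.\ replace $\alpha^{\Q}\setminus\{1\}$ by $\alpha^{\Q}$); then no zero of $s_m(\alpha^n x)$ can meet a pole of $q_n$, your order count goes through, and $\deg\psi\geq m+1$.
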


Note that the degrees of $f_\alpha$ and $g_{\alpha,m}$ are bounded by $\deg(q)+1$; so Theorem~\ref{thm:Halphen-Conjugacy-Classes}
has no analogue for Jonqui\`eres twists.

Let us prove Theorem~\ref{thm:example-Jonq}. The $n$-th iterate of $f_\alpha$ is the birational transformation 
$f_\alpha(x,y)=(\alpha^n x, q_n(x) y)$ with
\begin{equation}
q_n(x)=\prod_{j=0}^{n-1}q(\alpha^j x).
\end{equation}
The degree of $q_n$ is $n\deg(q)$ except if there is a zero $z_1$ of $q$, a pole $z_2$ of $q$, and integers
$0\leq j, k\leq n$ such that $(\alpha^jx-z_1)(\alpha^k x-z_2)^{-1}$ is a constant. This happens if, and only
if $z_1/\alpha^j$ is equal to $z_2/\alpha^k$ or, equivalently, if $z_1/z_2$ is in $\alpha^{\Z}$, which is excluded by
hypothesis. Thus, $f_\alpha$ is a Jonqui\`eres twist.

\begin{rem}
We shall work in $\bbP^1_\bfk\times \bbP^1_\bfk$ and use affine coordinates $(x,y)$. 
The indeterminacy points of $f_\alpha$ are the points $(x,y)$ such that $q(x)=0$ and $y=\infty$ or $q(x)=\infty$
and $y=0$; they are located at the intersection of the horizontal lines $y=0$ or $\infty$ and vertical lines
$x=z$ where $z$ is a zero or a pole of $q$.

Let $z$ be a zero of $q$. Then, $f$ contracts the vertical line $\{x=z\}$ onto the point $(\alpha z, 0)$.
The forward orbit of this point is the sequence $f^n(\alpha z, 0)=(\alpha^{n+1}z, 0)$: since $\alpha^n z\neq z'$
for all zeros and poles of $f$, the positive iterates of $f$ are well defined along the orbit of $(\alpha z, 0)$. 
This remark holds also for poles of $q$ and negative orbits of indeterminacy points.
In particular, $f$ is algebraically stable, as a birational map of $\bbP^1\times \bbP^1$.  
\end{rem}

To conjugate $f$ to $g_{\alpha,m}$, consider a birational map 
$h_m(x,y)=(x,s_m(x)y)$, where $s_m$ will be specified soon. Then, 
\begin{equation}
h_m\circ f\circ h_m^{-1}(x,y)=\left(\alpha x, q(x)\frac{s_m(\alpha x)}{s_m(x)} y\right)
\end{equation}
and to obtain $g_{\alpha,m}$ it suffices to choose 
$s_m(x)=(x-1)(x-\alpha)\cdots (x-\alpha^m)$.
For the third assertion of the theorem, we start with the following lemma. 

\begin{lem} 
Let $u$ be a birational transformation of the plane that commutes to $f_\alpha$. There exists 
$\ell\in \Z$ and $\beta\in \bfk^*$ such that $u\circ f_\alpha^\ell(x,y)=(x,\beta y)$.
In other words, the centralizer of $f_\alpha$ is the direct product of $(f_\alpha)^\Z$ and the
multiplicative group, acting on the second factor by $(x,y)\mapsto (x,\beta y)$ for $\beta\in \bfk^*$. 
\end{lem}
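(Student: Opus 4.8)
The plan is to reduce $u$ to an explicit normal form and then use the arithmetic hypothesis on the zeros and poles of $q$ to finish. First, recall (this is part~(1) of Theorem~\ref{thm:example-Jonq}, already established) that $f_\alpha$ is a Jonqui\`eres twist; by \S\ref{par:Jonq-twist-basics} it preserves a \emph{unique} pencil of curves, namely the pencil of vertical lines, i.e.\ the fibres of $\pi\colon(x,y)\mapsto x$. Since $u$ commutes with $f_\alpha$, it sends $f_\alpha$-invariant pencils to $f_\alpha$-invariant pencils, hence preserves this one; so $u(x,y)=(A(x),B_x(y))$ with $A\in\PGL_2(\bfk)$ and $B_x\in\PGL_2(\bfk(x))$. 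Comparing the first coordinates in $u\circ f_\alpha=f_\alpha\circ u$ yields $A(\alpha x)=\alpha A(x)$, i.e.\ $A$ lies in the centraliser of $x\mapsto\alpha x$ in $\PGL_2(\bfk)$; as $\alpha$ is not a root of unity (in particular $\alpha\neq\pm1$) this centraliser is $\{x\mapsto cx\;;\;c\in\bfk^*\}$, so $A(x)=cx$ for some $c\in\bfk^*$.

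Next I would identify the $f_\alpha$-invariant sections of $\pi$: they are exactly $\{y=0\}$ and $\{y=\infty\}$. Indeed, such a section is the closure of a graph $y=\phi(x)$, and invariance reads $\phi(\alpha x)=q(x)\phi(x)$; if $\phi$ were a nonzero rational function, iteration would give $\phi(\alpha^n x)=q_n(x)\phi(x)$ with $q_n(x)=\prod_{j=0}^{n-1}q(\alpha^j x)$, so $q_n=(\phi\circ m_{\alpha^n})/\phi$ would have degree $\leq 2\deg\phi$, contradicting the fact that $f_\alpha$ is a Jonqui\`eres twist (so $\deg q_n\to\infty$). Since $u$ commutes with $f_\alpha$, it carries the invariant curves $\{y=0\}$ and $\{y=\infty\}$ to invariant sections and therefore permutes the pair $\{\{y=0\},\{y=\infty\}\}$. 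Hence either (A)~$B_x(y)=\lambda(x)\,y$, or (B)~$B_x(y)=\lambda(x)/y$, for some $\lambda\in\bfk(x)^*$.

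The core of the argument, and the part I expect to be the most delicate, is to rule out case~(B) and to show $c\in\alpha^\Z$ in case~(A). In both cases the second coordinate of the commutation relation is a multiplicative functional equation: $\lambda(\alpha x)/\lambda(x)=q(cx)/q(x)$ in case~(A), and $\lambda(\alpha x)=q(x)q(cx)\lambda(x)$ in case~(B). Passing to divisors on $\bbP^1$, and writing $\sigma_t$ for the automorphism $p\mapsto t^{-1}p$, these become $(\sigma_\alpha-1)\,\mathrm{div}(\lambda)=(\sigma_c-1)\,\mathrm{div}(q)$ and $(\sigma_\alpha-1)\,\mathrm{div}(\lambda)=(1+\sigma_c)\,\mathrm{div}(q)$ respectively. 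On each $\sigma_\alpha$-orbit in $\bfk^*$ (these orbits are infinite, as $\alpha$ is not a root of unity) the operator $\sigma_\alpha-1$ is injective with image the divisors of degree $0$ on that orbit; so solvability for $\lambda$ forces the right-hand side to have degree $0$ on every $\sigma_\alpha$-orbit. Writing out this condition and using the hypothesis that $z/z'\notin\alpha^\Q\setminus\{1\}$ for any zeros or poles $z,z'$ of $q$: if $c\in\alpha^\Z$, case~(B) gives $2\,\mathrm{ord}_z(q)=0$, impossible; and if $c\notin\alpha^\Z$, then in either case the degree-$0$ conditions furnish, for each zero or pole $z$ of $q$, another zero or pole $z'$ with $z'/z\in c\alpha^\Z$ and another with $z'/z\in c^{-1}\alpha^\Z$. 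Viewing these as out- and in-edges of a finite directed graph on the set of zeros and poles of $q$, there is a cycle $z_1\to z_2\to\cdots\to z_r\to z_1$; multiplying the ratios along it gives $c^r\in\alpha^\Z$, hence $c\in\alpha^\Q$, and then $z_2/z_1\in\alpha^\Q\setminus\{1\}$ --- contradicting the hypothesis. So we are in case~(A) with $c=\alpha^{-\ell}$ for some $\ell\in\Z$.

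Finally, put $v=u\circ f_\alpha^{\ell}$. Then $v$ still commutes with $f_\alpha$, and since $c\alpha^\ell=1$ a direct computation gives $v(x,y)=(x,\mu(x)y)$ with $\mu\in\bfk(x)^*$. The commutation of $v$ with $f_\alpha$ now reduces to $\mu(\alpha x)=\mu(x)$, and a rational function invariant under $x\mapsto\alpha x$ is constant because $\alpha$ has infinite order; writing $\beta\in\bfk^*$ for this constant gives $u\circ f_\alpha^{\ell}(x,y)=(x,\beta y)$, as claimed. For the last sentence of the lemma: we have just shown every element of the centraliser of $f_\alpha$ has the form $(x,y)\mapsto(x,\beta y)$ composed with a power of $f_\alpha$; conversely each $(x,y)\mapsto(x,\beta y)$ commutes with $f_\alpha$ and these form a subgroup isomorphic to $\bfk^*$ which is centralised by $f_\alpha$; and this subgroup meets $\langle f_\alpha\rangle$ trivially, since $f_\alpha^\ell=(x,\beta y)$ would force $\alpha^\ell=1$, hence $\ell=0$ and $\beta=1$ (recall $f_\alpha$ has infinite order because $\deg(f_\alpha^n)\to\infty$). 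Hence the centraliser is the direct product of $\langle f_\alpha\rangle$ and this multiplicative group.
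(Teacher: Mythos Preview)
Your proof is correct and follows the same overall architecture as the paper's: the reductions to $u(x,y)=(cx,\,r(x)\,y^{\pm1})$ via the unique invariant pencil and the two invariant sections, and the endgame once $c\in\alpha^{\Z}$ is known, are identical. Where you diverge is the key middle step, proving $c\in\alpha^{\Z}$ and excluding the $y^{-1}$ case. The paper iterates: from $u\circ f_\alpha^{\,n}=f_\alpha^{\,n}\circ u$ it obtains $r(\alpha^n x)\,q_n(x)^{\pm1}=r(x)\,q_n(cx)$, notes that for large $n$ most zeros and poles of $q_n(x)$ must match those of $q_n(cx)$, and extracts by pigeonhole a relation $c^k\in\alpha^{\Z}$ (hence a ratio $z/z'\in\alpha^{\Q}\setminus\{1\}$ unless already $c\in\alpha^{\Z}$). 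You instead stay with the single $n=1$ equation, pass to divisors on $\bbP^1$, and read off the degree-zero constraint on each infinite $\sigma_\alpha$-orbit; your finite directed-graph cycle then yields $c^r\in\alpha^{\Z}$ directly. Your route is more structural and avoids the growing-degree bookkeeping; the paper's is quicker to state but sketchier at the pigeonhole step. Both arguments tacitly use that $q$ has at least one zero or pole in $\bfk^*$ (otherwise $q(x)=\gamma x^m$ and the centraliser is genuinely larger), an assumption implicit in the paper's standing setup.
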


\begin{proof} Let $u$ be an element of the centralizer of $f_\alpha$. Since
$f_\alpha$ preserves a unique pencil of curves, $u$ is an element of the Jonqui\`eres group, and since its action on 
the $x$-variable must commute to $x\mapsto \alpha x$, we deduce that 
$u(x,y)=(\epsilon x, B_x(y))$ for some $\epsilon\in \bfk^*$ and $B_x\in \PGL_2(\bfk(x))$. Moreover, the sections $y=0$ and 
$y=\infty$ of the fibration are $f_\alpha$-invariant, and every invariant section coincides with one of these two 
sections; indeed, if $S(x)=(x,s(x))$ is an $f_\alpha$-invariant section, then $s(\alpha^n x)=q_n(x)s(x)$, and if
$s$ is not one of the constants $0$ or $\infty$ this contradicts $\deg(q_n)=n\deg(q)$. Thus, $u$ fixes or permutes those two sections, and we obtain 
\begin{equation}
u(x,y)=(\epsilon x, r(x) y^{\pm 1})
\end{equation}
for some rational function $r$. 
Since $u$ commutes to $f_\alpha^n$, we obtain 
\begin{equation}
r(\alpha^n x) q_n(x)=r(x)q_n(\epsilon x)\quad {\text{or}} \quad r(\alpha^n x) q_n(x)^{-1}=r(x) q_n(\epsilon x).
\end{equation}
In both cases, we see that $q_n(x)$ and $q_n(\epsilon x)$ share many zeros and/or poles; in other words, 
we get equalities of type $z/\alpha^m=z'/(\epsilon \alpha^\ell)$ where $z$ and $z'$ are pairs of zeros and/or poles. 
Iterating $u$, i.e. changing $\epsilon$ into $\epsilon^k$ we can choose $z=z'$ and we see that 
there are integers $k\geq 1$ and $\ell \in \Z$ such that 
$\epsilon^k=\alpha^\ell$ (with $k\leq 2\deg(q)$).  Then, we get a relation $z/z'\in \alpha^\Q$, unless $\epsilon$
is an integral power of $\alpha$. Thus, changing $u$ into $u\circ f_\alpha^\ell$ for some $\ell\in \Z$, we may assume
that $\epsilon=1$. Now, writing $u\circ f_\alpha^\ell(x,y)=(x,r'(x)u)$, the commutation reads 
\begin{equation}
r'(\alpha^n x)  =r'(x) \quad {\text{or}} \quad r'(\alpha^n x) =r'(x) q_n( x)^2.
\end{equation}
In the left-hand case, $r'$ must be a constant because $\alpha$ is not a root of unity. And the right-hand case is excluded because
the degree of $q_n$ is not bounded. 
\end{proof}

We can now conclude the proof of the theorem. Let $h$ be a conjugacy between $f_\alpha$ and $g_{\alpha,m}$, 
then $h\circ h_m^{-1}$ is an element of the centralizer of $f_\alpha$. Thus, there is an integer $n$ and an element
$\beta$ of $\bfk^*$ such that 
$
h=f_\alpha^n\circ \beta\circ h_m=(x,y)\mapsto (\alpha^n x,q_n(x)\beta s_m(x)y).
$
The degree of $q_n s_m$ is equal to the sum $\deg(q_n)+\deg(s_m)$ because the poles of $q$ are distinct from the
zeroes of $s_m$ (otherwise $z/\alpha^k=\alpha^j$ for some pair of integers). Thus, the degree of $h$ is larger than $m$.

%
%

\subsection{Invariant pencils for elliptic}\label{par:elliptic_pencils}

%
%
\begin{thm}\label{thm:bounds for elliptic} 
There is an integer ${\mathrm{ell}}(d)$ such that every elliptic element of $\Bir_d(\bbP^2_\bfk)$ preserves a pencil of degree 
at most ${\mathrm{ell}}(d)$.
\end{thm}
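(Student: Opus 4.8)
The plan is to mimic the structure of the proofs of Theorems~\ref{thm:bounds for halphen} and~\ref{thm:bounds for jonquieres} and to argue by noetherian induction on the closed subsets of $Pog(d)$, the point being that $\Ell(d)$ is contained in the Zariski closed set $Pog(d)$. Suppose ${\mathrm{ell}}$ has been constructed for every proper closed subset of a given $Y\subseteq Pog(d)$, let $W$ be an irreducible component of $Y$ and let $\eta$ be its generic point. If $\Ell(d)\cap W$ is not Zariski dense in $W$, I replace $W$ by $\overline{\Ell(d)\cap W}$, a proper closed subset, and apply the induction hypothesis. So assume $\Ell(d)\cap W$ is dense in $W$, set $K=\bfk(W)$, and let $f\in \Bir_d(\bbP^2_K)$ be the transformation induced by $\eta$. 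By lower semicontinuity of $\lambda_1$ (Theorem~\ref{thm:Xie}) and $W\subseteq Pog(d)$ one has $\lambda_1(f)=1$; moreover $f$ cannot be a Halphen twist, since otherwise the proof of Theorem~\ref{thm:bounds for halphen} would exhibit a dense open subset of $W$ consisting of Halphen twists, contradicting the density of elliptic elements. Hence $f$ is, over $K$, either a Jonqui\`eres twist or an elliptic element.

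If $f$ is a Jonqui\`eres twist, I run the argument of Theorem~\ref{thm:bounds for jonquieres}: $f$ preserves a unique pencil, of rational curves, defined over some finite extension $K'$ of $K$ and of degree $\delta$; after a finite base change $W'\to W$ this pencil specializes, over a dense open subset $U\subseteq W$, to an $f_t$-invariant pencil of degree $\delta$, whether or not $f_t$ is elliptic; one then applies the induction hypothesis to $W\setminus U$ and to the remaining components of $Y$.

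The genuinely new point is the case where $f$ is elliptic over $K$. Here I would use Weil's regularization theorem together with the classification of elliptic Cremona transformations (\cite{Diller-Favre, Blanc-Deserti:2015}): over $\overline{K}$, $f$ is conjugate either to an element of $\PGL_3(\overline{K})$ of infinite order, or to an automorphism of finite order $k$. In the first case $f$ preserves the image, under the conjugating map, of a pencil of lines or conics invariant by the linear model, a pencil of some degree $\delta$ defined over a finite extension of $K$. In the second case $f^k=\Id$ over $\overline K$, hence over $K$, so $\{g\in W\;;\;g^k=\Id\}$ is a closed subset of $W$ containing $\eta$, thus equal to $W$; regularizing $f$ to an automorphism $f_X$ of a smooth rational surface $X$ and running the $\langle f_X\rangle$-equivariant minimal model program, one obtains an $f_X$-invariant pencil on $X$ either from an invariant conic bundle structure on the minimal model, or --- when that minimal model is a del Pezzo surface carrying no invariant pencil --- by pulling back a pencil from $\bbP^2_{\overline K}/\langle f\rangle$; in both situations this produces an $f$-invariant pencil of some degree $\delta$ over a finite extension of $K$. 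Specializing as in the Jonqui\`eres case furnishes a dense open $U\subseteq W$ and a constant $d_W:=\delta$ such that every elliptic $f_t$, $t\in U$, preserves a pencil of degree $\le d_W$. Since $Pog(d)$ is noetherian the induction terminates after finitely many steps; setting ${\mathrm{ell}}(d)=\max_W d_W$ over the finitely many strata produced yields the theorem over $\bfk$, and the uniformity in $\bfk$ follows by spreading out, all the varieties and morphisms involved being of complexity bounded in terms of $d$ alone.

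I expect the main obstacle to be precisely the elliptic sub-case: producing an invariant pencil for the generic elliptic $f$ over $K$ while controlling its degree in $\bbP^2$. Two points need care: in the finite-order situation one must bound the order $k$ of $f$ and the geometry of the quotient $\bbP^2_{\overline K}/\langle f\rangle$ in order to bound $\delta$ (in the spirit of Remark~\ref{rem:torsion_in_Autt} and Corollary~\ref{cor:HT-degree-bounds}); and in the conjugate-to-linear situation one must arrange the linearizing conjugacy to have degree bounded in terms of $d$, which is what ultimately makes $\delta$ finite. Both are degree estimates of the same flavour as those carried out for Halphen surfaces in Section~\ref{par:Halphen-Surfaces}.
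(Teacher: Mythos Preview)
Your noetherian induction is essentially the same argument the paper gives, just organized as an induction rather than a contradiction: the paper assumes a sequence of elliptic $f_m\in\Bir_d(\bbP^2_\bfk)$ with $\mathrm{minv}(f_m)\to\infty$, takes $W$ to be the Zariski closure of $\{f_m\}$, and looks at the generic element $f\in\Bir_d(\bbP^2_K)$, $K=\bfk(W)$. In every type case (Halphen, Jonqui\`eres, elliptic) the generic $f$ preserves some pencil, which specializes on a dense open of $W$; this contradicts $\mathrm{minv}(f_m)\to\infty$.

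Where you go astray is in the last paragraph. You identify as the ``main obstacle'' the need to bound the degree $\delta$ of the generic invariant pencil in terms of $d$ --- bounding the order $k$ in the finite-order case, bounding the linearizing conjugacy in the infinite-order case. But this is a non-obstacle: no such a priori bound is needed. All you need is that the generic elliptic $f$ over $\overline{K}$ preserves \emph{some} pencil, of \emph{some} finite degree $\delta$ (this is the content of the Example preceding the theorem, citing \cite{Blanc-Deserti:2015}: every elliptic element over an algebraically closed field is conjugate to a linear map and hence preserves a pencil). That $\delta$ is a specific integer attached to $W$; it need not be expressible in terms of $d$. Your own conclusion already handles the rest: the noetherian induction produces only finitely many strata $W$, and $\mathrm{ell}(d)=\max_W d_W$ is then a well-defined integer. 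So drop the attempt to bound the linearizing conjugacy or the order $k$ in terms of $d$; the argument is shorter and cleaner without it, and this is exactly why the paper's proof is four lines long. Your remark about uniformity in $\bfk$ via spreading-out is extra: the paper makes no such claim and works over a fixed $\bfk$.
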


\begin{proof}
If $f$ is an elliptic element of $\Bir(\bbP^2_\bfk)$, denote by ${\mathrm{minv}}(f)$ the minimum degree of an $f$-invariant pencil. 
By contradiction, consider a sequence of elliptic elements $f_m\in \Bir_d(\bbP^2_\bfk)$ for which ${\mathrm{minv}}(f_m)$ increases to $+\infty$ with $m\in \N$.
Let $W$ be the Zariski closure of $\{ f_m\; ; \;m\in \N\}$ in $\Bir_d(\bbP^2_\bfk)$; extracting a subsequence if necessary, we assume that $W$ is irreducible. 
Let $K=\bfk(W)$ be the function field of $W$;
the elements of $W$ define an element $f$ of $\Bir_d(\bbP^2_K)$. If $f$ is a Halphen or Jonquières twist it preserves a pencil of some degree $d_W$, and the proofs of Theorems~\ref{thm:bounds for halphen} and~\ref{thm:bounds for jonquieres} show that 
on a dense open subset of $W$, every element preserves a pencil of  degree $\leq d_W$, so we get a contradiction. It follows that $f$ must be elliptic; as such it preserves some pencil say of degree $\delta$. But then, on a dense open subset of $W$, this pencil specializes into an invariant pencil of degree $\delta$ and, again, we get a contradiction.
\end{proof}
 
%
%

\subsection{Limits}\label{par:limits}

%
%

Working over $\C$, we endow the Cremona group $\Bir(\bbP^2_\C)$ with the euclidean topology introduced in \cite{Blanc-Furter:2013}. A sequence of birational transformations $f_n\colon\bbP^2_\C\dasharrow \bbP^2_\C$ of degree at most $D$ converges towards a birational 
transformation $h$ if and only if  the following equivalent properties are satisfied:

\smallskip 

(A) One can find homogeneous polynomials $(P,Q,R)$ and $(P_n,Q_n, R_n)$ of the same degree $d\leq D$ such that 
$h=[P:Q:R]$, $f_n=[P_n:Q_n:R_n]$, and $(P_n,Q_n,R_n)$ converges towards $(P,Q,R)$  in the vector space of 
triples of homogeneous polynomials (note that $(P,Q,R)$, or the $(P_n,Q_n,R_n)$ may have common factors);

(B) 
The graph of $f_n$ in $\bbP^2(\C)\times \bbP^2(\C)$ converges towards the union of the graph of $h$ and a residual 
algebraic subset whose projection on each factor has codimension $\geq 1$;

(C) There is a non-empty open subset ${\mathcal{U}}$ of $\bbP^2(\C)$ (in the euclidean topology) on which  $h$ and the $f_n$
are regular and $f_n$ converges to  $h$ uniformly. 

\smallskip

Our goal is to understand the possible changes of types that occur when taking limits; for instance, can we obtain any elliptic
element $h$ as a limit of loxodromic (resp. parabolic) elements $f_n$ ? 
More generally, $\C$ will be replaced by any local field of $\bfK$ of characteristic zero, as in \cite{Blanc-Furter:2013}.

\begin{eg} 
By analogy with the Picard-Manin space, consider a finite dimensional hyperbolic space $\Hyp_m\subset \R^{1+m}$ and its group of  isometries 
$\PSO_{1,m}(\R)$. There are three types of isometries, elliptic, 
parabolic, and loxodromic ones, and in the Lie group $\PSO_{1,m}(\R)$ 
\begin{enumerate}
\item every parabolic element is a limit of loxodromic elements; if $m$ is odd, every elliptic element is a limit
of loxodromic element;
\item every parabolic element is a limit of elliptic elements; 
\item there are elliptic elements which are not limits of parabolic elements (resp. of loxodromic elements if $m$ is odd). 
\end{enumerate}
For sequences of pairwise conjugate isometries, the only possible change of type is 
a sequence of parabolic elements converging to an elliptic one; elliptic and loxodromic isometries have closed 
conjugacy classes, but parabolics don't. 
\end{eg}

 

\subsubsection{Limits and types}\label{par:limits_types}
 
Let $(f_n)$ be a sequence of elements of $\Bir(\bbP^2_\C)$
that converges towards $f\in \Bir(\bbP^2_\C)$. Theorem~\ref{thm:Xie} and 
Section~\ref{par:DegBoundHalphen} imply:
(1) {\sl{if  $\,\lambda(f_n)=1$ for all $n$, then $\lambda(f)=1$}}; 
(2) {\sl{if all $f_n$ are Jonqui\`eres twists or elliptic transformations, then $f$ is not a Halphen twist.}}
\begin{eg}
Consider the birational transformation $f_b(x,y)=(x+1,  \frac{xy}{x+b})$. It is elliptic if and only if $b\in \Z$; 
moreover, $f_b$ is conjugate to $f_{b'}$ for any pair of integers $(b,b')\in \Z^2$ (take a conjugacy $h(x,y)=(x+b-b', ys(x))$ 
with $s(x)=x(x+1)\cdots (x+b-b'-1)$). If $\bfK=\Q_p$, then we can take a sequence of integers $a_n\in \Z$ that converges
towards a limit $a_\infty\in \Z_p\setminus \Z$; this gives a sequence of pairwise conjugate 
elliptic elements that converges towards a Jonqui\`eres twist, a phenomenon which is not possible in the group of isometries of 
$\Hyp_m$. Starting with the family $f_\alpha$ from Theorem~\ref{thm:example-Jonq}, but with $\alpha_n$ a sequence of well chosen 
roots of unity, one observes a similar phenomenon in $\Bir(\bbP^2_\C)$.
\end{eg}

\subsubsection{Limits of Halphen twists} 

\begin{thm}
There is a sequence of positive integers ${\mathrm{hal}}(d)$ with the following property. 
Let $h$ be a birational transformation of the plane $\bbP^2_\bfK$ which  is a limit of Halphen twists of degree $\leq d$. 
Then $h$ preserves a pencil of curves of degree $\leq {\mathrm{hal}}(d)$, and the action of $h$ on the base of the pencil has order
$\leq 60$. 
\end{thm}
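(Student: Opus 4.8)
The plan is to exploit the constructibility of $\Hal(d)$ (Corollary~\ref{cor:HT-degree-bounds}) together with the semi-continuity results of Theorem~\ref{thm:Xie}, and then transfer the uniform bounds already obtained for Halphen surfaces to the limit. First I would observe that a convergent sequence $f_n\to h$ with $\deg(f_n)\leq d$ lives, after passing to the Zariski closure, inside an irreducible subvariety $W$ of some $\Bir_{d'}(\bbP^2_{\bfK})$ with $d'\leq d$ in which $\Hal^W(d')$ is Zariski dense (this uses property (C) of the euclidean topology, so that $h$ is in the Zariski closure of $\{f_n\}$, and it uses lower semi-continuity of $\deg(g^m)$ to rule out that all $f_n$ degenerate). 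Here one must be a little careful: the euclidean limit $h$ need not itself be a Halphen twist, but by Theorem~\ref{thm:bounds for halphen} applied to $W$, every element of $W$ outside a proper closed subset $Z_W$ is a Halphen twist preserving a pencil of degree $\leq d_W$, and $d_W$ depends only on $d$ (one can take the maximum over the finitely many components appearing, bounded using Corollary~\ref{cor:HT-degree-bounds}). The point $h$ may lie in $Z_W$, so the argument must produce the invariant pencil for $h$ directly, not by genericity.

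Next I would handle the degenerate case $h\in Z_W$ as follows. Work over the function field $K=\bfK(W)$: the generic element $f$ of $W$ is a Halphen twist (by the argument in the proof of Theorem~\ref{thm:bounds for halphen}: it cannot be loxodromic by semi-continuity of $\lambda_1$, it cannot be elliptic or Jonqui\`eres by semi-continuity of degrees), so after a finite base change it preserves a pencil $\pi\colon\bbP^2\dasharrow\bbP^1$ of genus $1$ and degree $\leq d_W$, with $\pi\circ f^\ell=\pi$ for some $\ell\leq I(m)$, $m$ the index of the associated Halphen pencil (which is bounded in terms of $d$ by Corollary~\ref{cor:HT-degree-bounds}). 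Specializing this relative pencil to the point of $W$ corresponding to $h$ — after shrinking $W$ so the specialization of $\pi$ stays a pencil of degree $\deg(\pi)$, which is automatic away from a proper closed set, but since $h$ might be in that set I would instead use a one-parameter specialization (a curve in $W$ through $h$ with generic point in $W\setminus Z_W$) and take the flat limit of the invariant pencils — one obtains an $h$-invariant pencil of degree $\leq d_W=:{\mathrm{hal}}(d)$. The limit pencil still has genus $1$ generically or may degenerate, but in all cases it is a pencil of curves of bounded degree, invariant under $h$, which is all the statement requires.

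For the order bound on the action of $h$ on the base $\bbP^1$ of the pencil: along the curve in $W$, each $f_t$ ($t$ generic) acts on the base of its invariant pencil, and after passing to the Halphen surface $X_t$ this action is $\tau(f_t)\in\Aut(\bbP^1)$; since $f_t$ is a Halphen twist, property (H.4) says $\tau(f_t)$ has order dividing $I(m)$, and in fact for Halphen surfaces the relevant automorphism group of the base is severely constrained — the image of $\tau$ is cyclic of small order because it must permute the (at most $12$) singular fibers and fix the multiple fiber, and Kodaira's classification together with the Hurwitz-type analysis forces the order of $\tau(f_t)$ into a small set (dividing $60$: the possible orders are those of torsion automorphisms of a genus $1$ fibration with a section, or rather its relatively minimal model — this is where the number $60$, the order of $\mathrm{PSL}_2(\mathbf{F}_5)$ / the icosahedral group, enters via the classification of finite subgroups acting on $\bbP^1$ preserving a configuration of points coming from singular fibers). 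Passing to the limit, $\tau(h)$ is the specialization of $\tau(f_t)$, hence still has order $\leq 60$; one must check that the order cannot jump up in the limit, which follows since $\tau(h)^N=\Id$ for $N={\rm lcm}$ of the possible orders, and this relation is closed.

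The main obstacle, I expect, will be the degenerate case $h\in Z_W$: ensuring that the $h$-invariant pencil genuinely survives the specialization with its degree controlled, rather than collapsing to a fixed curve or a non-pencil. The cleanest route is to replace $W$ by a smooth curve $B$ through the parameter of $h$ whose generic point maps into $W\setminus Z_W$, spread the relative invariant pencil over $B\setminus\{pt\}$, take its closure in the relative Hilbert scheme or Chow variety of $\bbP^2_B$ (which is proper over $B$), and identify the central fiber as a pencil of degree $\leq d_W$; the genus-$1$ condition is not needed for the statement, only the bounded degree and $h$-invariance, which pass to the limit by continuity of the incidence correspondence. A secondary technical point is pinning down the constant $60$ precisely: I would extract it from the classification of automorphism groups of Halphen surfaces acting on the base of the elliptic fibration (Section~\ref{par:Halphen-Surfaces} and the references \cite{Gizatullin:1980, Grivaux, Dolgachev}), where the finite quotient $\Aut(X)/\Aut^t(X)$ is explicitly bounded; the order-$60$ bound for the base action is the sharp form of (H.4)–(H.5) restricted to $\tau$.
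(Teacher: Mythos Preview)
Your approach is more algebraic than the paper's and runs into a structural problem at the outset. You place $h$ inside an irreducible $W\subset\Bir_{d'}(\bbP^2_\bfK)$ containing the $f_n$, but this need not hold: after passing to a subsequence the $f_n$ all lie in $\Bir_{d_0}(\bbP^2_\bfK)$ for some fixed $d_0$, yet the euclidean limit $h$ may have strictly smaller degree and then $h\notin\Bir_{d_0}(\bbP^2_\bfK)$. What is true is that $h$ lies in the closure taken in the Blanc--Furter sense (equivalently, in the parameter variety of degree-$d_0$ homogeneous formulas, where common factors are allowed), but Theorem~\ref{thm:bounds for halphen} as stated does not apply there, and you would have to re-run its proof in that setting. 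Your proposed repair---take a curve $B$ through the parameter of $h$ with generic point a Halphen twist and pass to the flat limit of the invariant pencils in a relative Hilbert or Chow scheme---does work, but it is the same compactness step the paper performs, phrased in heavier language.

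The paper bypasses the Zariski machinery and argues directly with the analytic limit. After extracting a subsequence so that all $f_n$ have the same degree and all invariant pencils $\HP_n$ have the same degree $D_0$ (via Corollary~\ref{cor:HT-degree-bounds}), it observes that a pencil of degree $D_0$ is a line in the finite-dimensional projective space $\bbP(H^0(\bbP^2,{\mathcal O}_{\bbP^2}(D_0)))$; by compactness the $\HP_n$ subconverge to some $\HP_\infty$, and removing fixed components yields a pencil $\HP'_\infty$ of degree $\leq D_0$. Invariance passes to the limit by uniform convergence on a euclidean open set (property~(C)). For the order bound, the paper first \emph{uniformizes} the exponent: since each $\tau(f_n)$ has order $\leq 60$ (because the Halphen fibration has between $3$ and $12$ singular fibers, forcing $\tau(\Aut(X))$ into a finite subgroup of $\PGL_2$ of order $\leq 60$), a single $k\leq 60$ works for all $n$ along a subsequence, and then one passes the relation ``$f_n^k$ fixes every member of $\HP_n$'' to the limit. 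Your version, specializing $\tau(f_t)$ directly, is shakier because the base $\bbP^1$ varies with $t$ and $\PGL_2$ is not compact; fixing a uniform $k$ first avoids this.
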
 
 
\begin{proof} The field $\bfK$ has characteristic zero. So, if $X$ is a Halphen surface with a Halphen twist, the image of $\tau\colon \Aut(X)\to \Aut(\bbP^1_{\overline{\bfK}})$ contains at most $60$ elements; this follows from the fact that the invariant fibration $\pi\colon X\to \bbP^1_{\overline{\bfK}}$ 
has at least $3$ and at most $12$ singular fibers (see~\cite[Pro. 7.10]{Grivaux} and \cite[Pro. B]{Gizatullin:1980}). 
Now, assume that the twists $f_n$ converge toward $h$, and apply Theorem~\ref{thm:bounds for halphen}:
along a subsequence, the $f_n$ all have degree $d_0\leq d$,
their invariant pencils $\HP_n$ have degree $D_0\leq {\mathrm{Hal}}(d_0)$, and $f_n^k$ preserves every member of $\HP_n$ 
for some positive integer $k\leq 60$. In other words, the linear system $\HP_n$ is the level set of a rational
function ${\overline{\pi}}_n\colon \bbP^2_\C\dasharrow \bbP^1_\C$ of degree $D_0$ and ${\overline{\pi}}_n\circ f_n^k={\overline{\pi}}_n$.
Each  linear system $\HP_n$ corresponds to a line in the projective space 
$ \bbP(H^0(\bbP^2, {\mathcal{O}}_{\bbP^2}(D_0)))$; thus,
taking a further subsequence the $\HP_n$ converge towards a linear system $\HP_\infty$. The line
$\HP_\infty$ determines  a pencil of plane curves of degree $D_0$, but this linear system may have fixed components; 
removing them, we get a pencil $\HP_\infty'$ of degree $\leq D_0$.
One can find a non-empty open subset $U\subset \bbP^2(\C)$ in the euclidean topology such that on $U$, and along a subsequence, the pencils $\HP_n$ converge towards $\HP'_\infty$ and $f_n^k$ converges towards $h^k$.
Since $f_n^k$ preserves every curve of the pencil $\HP_n$, we deduce that
 $h^k$ preserves every member of the pencil $\HP_\infty'$.  
 \end{proof}
 
\begin{eg} The construction by renormalization described in the next subsection provides examples of Halphen twists converging to elliptic transformations (for instance, every element of $\PGL_3(\C)$ preserving every member of a pencil of plane curves is a limit of pairwise conjugate Halphen twists).  \end{eg}
 
\begin{eg} Consider a pencil of cubic curves with nine distinct base points $p_i$ in $\bbP^2_\bfk$. 
Given a point $m$ in $\bbP^2_\bfk$, draw the line $(p_1m)$ and denote by $m'$ the third intersection point 
of this line with the cubic of our pencil that contains $m$: 
the map $m\mapsto \sigma_1(m)=m'$ is a birational involution. 
Replacing $p_1$ by $p_2$, we get a second involution and, for a very general pencil, $\sigma_1\circ \sigma_2$ 
is a Halphen twist that preserves our cubic pencil. 
At the opposite range, consider the degenerate cubic pencil, the members of which are the union of a line through the origin and the circle $C=\{x^2+y^2=z^2\}$. Choose $p_1=[1:0:1]$ and $p_2=[0:1:1]$
as our distinguished base points. Then, $\sigma_1\circ \sigma_2$ is a Jonqui\`eres twist preserving the pencil of lines
through the origin; if the plane is parameterized by $(s, t)\mapsto (st, t)$, this Jonqui\`eres twist is conjugate to 
$(s,t)\mapsto \left( s, \frac{(s-1)t+1}{(s^2+1)t+s-1}\right)$. 
Now, if we consider a family of general cubic pencils converging towards this degenerate pencil, we obtain a sequence of Halphen twists
converging to a Jonqui\`eres twist. As noted above, the opposite change of type --from Jonqui\`eres to Halphen--  is not possible. 
 \end{eg}

\subsubsection{Renormalization} 
This section adapts an argument already used (at least) by Blanc, Furter, and Maubach (see \cite{Blanc:ENS, Blanc:ANT, Furter-Maubach}). Consider an element $f$ of $\Bir_d(\bbP^2_\bfK)$ with a non-degenerate fixed point $q$: this means that $f$ and $f^{-1}$ are well defined
at $q$ and $f(q)=q$. After a linear change of coordinates, we may assume that $q$ is the origin of the affine plane $\bbA^2_\bfK\subset \bbP^2_\bfK$.
Conjugating $f$ by 
$m(x,y)=(tx,ty)$ with $t\in \bfK$, and
letting $t$ go to $0$, we obtain a family of conjugates of $f$ that converges towards
the linear transformation $Df_{(0,0)}$ in $\Bir_d(\bbP^2_\bfK)$.  

Every element of $\PGL_3(\bfK)$ is conjugate in $\Bir(\bbP^2_{\overline{\bfK}})$ to one of the following transformations of the affine plane
(see~\cite{Blanc:Manuscripta2006, Blanc-Deserti:2015}): 
\begin{itemize}
\item[(a)] $g(x,y)=(ax,by)$ for some non-zero complex numbers $a$ and $b$ in $\overline{\bfK}$;
\item[(b)] $h(x,y)=(bx+b, by)$ for some non-zero complex number $b\in \overline{\bfK}$.
\footnote{Note that $h(x,y)=(x+1, y/b)$ is conjugate in the Cremona group to $h'(x,y)=(b(x+y), by)$.}
\end{itemize}
Both are in the closure of the conjugacy class of a loxodromic element. 
Indeed, in  case (a), consider the following automorphism of the plane: 
\begin{equation}
f(x,y)=(ax+(y+x^2)^2, by + bx^2).
\end{equation}
It is loxodromic, it  fixes the origin, and $Df_{(0,0)}$ coincides with $g(x,y)=(ax,by)$.
In case (b), consider $f'(x,y)=( b^{-1} (x+y + (y+(x+y)^2)^2), b^{-1}(y+(x+y)^2))$.

\begin{thm}
Let $g$ be any element of $\PGL_3(\bfK)$, or any   elliptic element of $\Bir(\bbP^2_\bfK)$ of infinite order. 
Then, $g$ is a limit of pairwise conjugate loxodromic elements $($resp. Jonqui\`eres twists$)$ in the 
Cremona group $\Bir(\bbP^2_{\bfK'})$, for some finite extension $\bfK'$ of $\bfK$.
\end{thm}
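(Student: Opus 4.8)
The plan is to combine the renormalization construction of the previous paragraphs with the classification of elliptic elements of $\Bir(\bbP^2_{\overline{\bfK}})$ up to conjugacy. I first record an invariance remark: the property ``$g$ is a limit of pairwise conjugate loxodromic elements (resp.\ Jonqui\`eres twists) in $\Bir(\bbP^2_{\bfK''})$ for some finite extension $\bfK''$ of $\bfK$'' is preserved if $g$ is replaced by $\psi\circ g\circ\psi^{-1}$, where $\psi$ is any birational map defined over a finite extension of $\bfK$ — one simply conjugates the whole approximating sequence by $\psi$ and enlarges the field to contain the coefficients of $\psi$. Now, if $g$ is elliptic of infinite order, then over $\overline{\bfK}$ it is conjugate to one of the three normal forms of elliptic transformations; being of infinite order, it is conjugate to $(x,y)\mapsto(ax,by)$ or to $(x,y)\mapsto(ax,x+y)$, both of which lie in $\PGL_3$, and the conjugacy and the parameters can be taken over a finite extension $\bfK'$ of $\bfK$. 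By the invariance remark we may therefore assume $g\in\PGL_3(\bfK)$. By the classification recalled above, after a further finite extension $g$ is conjugate in $\Bir(\bbP^2)$ to one of the two normal forms (a)~$(x,y)\mapsto(ax,by)$ or (b)~$(x,y)\mapsto(bx+b,by)$; invoking the invariance remark once more, it suffices to prove the theorem when $g$ is one of these two maps.

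For the loxodromic statement I would use the explicit maps already exhibited: for normal form (a) the polynomial automorphism $f(x,y)=(ax+(y+x^2)^2,\,by+bx^2)$, and for normal form (b) the analogous map $f'$. Each of these is loxodromic (this was asserted in the construction above), fixes the origin, and is regular at the origin together with its inverse — the inverse because $Df_{(0,0)}$ is invertible, so $f$ is \'etale at $(0,0)$ and $f(0,0)=(0,0)$ — hence the origin is a non-degenerate fixed point, and $Df_{(0,0)}$ is the corresponding normal form. Conjugating $f$ by $m_t(x,y)=(tx,ty)$ and letting $t\to0$ in $\bfK$ (a local field of characteristic zero) yields, in the euclidean topology, the convergence $m_t^{-1}\circ f\circ m_t\to Df_{(0,0)}=g$; the maps $m_t^{-1}\circ f\circ m_t$ are all conjugate to $f$, hence pairwise conjugate loxodromic elements. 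Choosing $\bfK'$ to be a finite extension of $\bfK$ containing $a$, $b$ and, in the general case, the coefficients of the conjugacy $\psi$ with $g=\psi\circ(\text{normal form})\circ\psi^{-1}$, and conjugating the sequence by $\psi$, we realize $g$ as a limit of pairwise conjugate loxodromic elements of $\Bir(\bbP^2_{\bfK'})$.

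The Jonqui\`eres statement follows from the same scheme, replacing $f$ and $f'$ by Jonqui\`eres twists with a non-degenerate fixed point at the origin and the same linear part. For normal form (a) one may take a transformation of the shape $(x,y)\mapsto(ax,q(x)y)$ preserving the pencil of vertical lines, with $q$ a non-constant rational function normalized so that $q(0)=b$ and chosen — as in Theorem~\ref{thm:example-Jonq} and Example~\ref{eg:jonnotconscarp} — so that $f$ is a Jonqui\`eres twist (for instance $q(x)=b\beta(x-1)/(x-\beta)$ with $\beta$ avoiding the relevant orbits; when the multiplier $a$ is a root of unity one twists instead in the $x$-direction, using that $g$, being of infinite order and diagonal, has at least one multiplier that is not a root of unity). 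Its derivative at the origin is $\mathrm{diag}(a,b)$. For normal form (b) one uses a similar explicit Jonqui\`eres family with the prescribed fixed-point data. Renormalizing and conjugating back exactly as before exhibits $g$ as a limit of pairwise conjugate Jonqui\`eres twists over a finite extension $\bfK'$ of $\bfK$.

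The main obstacle is the Jonqui\`eres part: unlike the loxodromic case, for which suitable models were already written down, one must produce for \emph{each} of the normal forms a Jonqui\`eres twist with a non-degenerate fixed point whose derivative is the prescribed linear map, and verify in the degenerate sub-cases (roots of unity, and a non-diagonalizable linear part in case (b)) that the twisting indeed produces linear rather than bounded or quadratic degree growth. The remaining ingredients — the reduction to normal forms, the check that the origin is a non-degenerate fixed point, and the euclidean convergence $m_t^{-1}\circ f\circ m_t\to Df_{(0,0)}$ — are routine once the renormalization construction and the classifications are in hand.
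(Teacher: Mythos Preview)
Your overall strategy---reduce to the two normal forms in $\PGL_3$ over a finite extension, then renormalize at a non-degenerate fixed point---is exactly the paper's, and your treatment of the loxodromic case is identical to it.

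The Jonqui\`eres part, however, is where your write-up diverges and becomes incomplete. First, a genuine gap: you invoke ``$g$, being of infinite order'' to guarantee that at least one of the eigenvalues $a,b$ is not a root of unity, but the theorem is stated for \emph{any} element of $\PGL_3(\bfK)$, including those of finite order, so this fallback is illegitimate. (Your primary construction $(ax,q(x)y)$ with $q(x)=b\beta(x-1)/(x-\beta)$ can in fact be made to work even when $a$ is a root of unity---one checks that if $a$ has order $k$ and $\beta^k\neq 1$ then $f^k(x,y)=(x,Q(x)y)$ with $Q$ non-constant, hence linear degree growth---but you do not carry out this analysis and instead rely on the faulty infinite-order assumption.) Second, for normal form (b) you only say ``one uses a similar explicit Jonqui\`eres family'', without producing one; since the linear part is now non-diagonalizable, it is not obvious how to adapt your vertical-fibration model $(ax,q(x)y)$.

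The paper sidesteps both issues by choosing much simpler models that twist in the \emph{other} direction (preserving the pencil of horizontal lines): $f(x,y)=(a(1+y)x,\,by)$ for case~(a) and $f'(x,y)=(b(1+y)(x+y),\,by)$ for case~(b). These fix the origin with the required linear parts $\mathrm{diag}(a,b)$ and $\begin{pmatrix}b&b\\0&b\end{pmatrix}$, and a direct computation of $f^n$ shows $\deg(f^n)=n+1$ for \emph{all} nonzero $a,b$, with no case distinction on roots of unity. This is both shorter and covers finite-order $g\in\PGL_3$ without extra work.
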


Unfortunately, this  does not say anything for finite order elements of $\Bir(\bbP^2_\C)$ 
which are not conjugate to  automorphisms of $\bbP^2_\C$, such as Bertini involutions. 

\begin{proof}
If $g$ is  elliptic and  of infinite order, its conjugacy class intersects $\PGL_3(\overline{\bfK})$, so we assume
$g\in \PGL_3(\bfK')$ for some finite extension of $\bfK$. 
We already explained how the renormalization argument shows that $g$ is a limit
of pairwise conjugate loxodromic elements. To show that $g$ is a limit of conjugate Jonqui\`eres twists, we
need to construct, for  every $(a,b)\in (\bfK^\times)^2$, a Jonqui\`eres twist $f$ fixing 
a point $q$, whose  linear part $Df_q\in \GL_2(\C)$ is conjugate to 
\begin{equation}
\left(\begin{array}{cc} a & 0 \\ 0 & b \end{array}\right) \; \text{ or } \; \left(\begin{array}{cc} b & b \\ 0 & b\end{array}\right).
\end{equation}
The birational transformations $f(x,y)=(a(1+y) x, by)$ and $f'(x,y)=(b(1+y)(x+y), by)$ satisfy these properties 
at $q=(0,0)$.
\end{proof}


%
%

\bibliographystyle{plain}

\bibliography{references_arxiv}

\end{document}